\newcommand{\A}{\mathcal{A}}
\newcommand{\U}{\mathcal{U}}
\newcommand{\R}{\mathbb{R}}
\newcommand{\ver}{\mathcal{Z}}
\newcommand{\C}{\mathbb{C}}
\newcommand{\Q}{\mathbb{Q}}
\newcommand{\N}{\mathbb{N}}
\newcommand{\B}{\mathcal{B}}
\newcommand{\Z}{\mathbb{Z}}
\newcommand{\GL}{\normalfont \text{GL}}
\newcommand{\ka}
{\kappa}
\newcommand{\al}
{\alpha}
\newcommand{\be}
{\beta}
\newcommand{\vv}{\normalfont \textbf{v}}
\newcommand{\SL}{\normalfont \text{SL}}
\newcommand{\SO}{\normalfont \text{SO}}
\newcommand{\SP}{\normalfont \text{SP}}
\newcommand{\Ad}{\normalfont \text{Ad}}
\newcommand{\zcl}{\normalfont \text{zcl}}
\newcommand{\G}{\mathcal{G}}
\newcommand{\e}{\epsilon}
\newcommand{\de}{\delta}
\newcommand{\De}{\Delta}
\newcommand{\la}{\lambda}
\newcommand{\La}{\Lambda}
\newcommand{\Ga}{\Gamma}
\newcommand{\ga}{\gamma}
\newcommand{\Lat}{\mathcal{L}}
\newcommand{\cA}{\mathcal{A}}
\newcommand{\omin}{\mathscr{S}}
\newcommand{\pbomin}{\mathscr{S}_{\mathrm{ pbd}}} %_{\text{pb}}}
\newcommand{\V}{\mathcal{V}}
\newcommand{\St}{\mathscr{S}}
\newcommand{\Span}{\normalfont \text{Span}}
\newcommand{\diag}{\normalfont \text{diag}}
\newcommand{\tr}{\normalfont \text{tr}}
\newcommand{\ad}{\normalfont \text{ad}}
\newcommand{\abs}[1]{\lvert #1 \rvert}
\DeclarePairedDelimiter\norm{\lVert}{\rVert}%
\newcommand{\F}{\mathscr F}
\newcommand{\cf}{\underline{c}}
\newtheorem{theorem}{Theorem}[section]
\newtheorem{lemma}[theorem]{Lemma}
\newtheorem{proposition}[theorem]{Proposition}
\newtheorem{corollary}[theorem]{Corollary}
\newtheorem*{example*}{Example}
\theoremstyle{definition}
\newtheorem{definition}[theorem]{Definition}
\theoremstyle{remark}
\newtheorem{remark}[theorem]{Remark}
\numberwithin{equation}{section}
\pgfplotsset{compat=1.18}
\begin{document}
\date{}

\title[Equidistribution of o-minimal curves]{Equidistribution of polynomially bounded o-minimal curves in homogeneous spaces}
\author{Michael Bersudsky, Nimish A. Shah, and Hao Xing}
\address{The Ohio State University, Columbus, OH 43210, USA}
\email{bersudsky87@gmail.com, shah@math.osu.edu, and thinkhowxh@gmail.com}

\begin{abstract}
We extend Ratner's theorem on equidistribution of individual orbits of unipotent flows on finite volume homogeneous spaces of Lie groups to trajectories of non-contracting curves definable in a polynomially bounded o-minimal structure.

To be precise, let $\varphi:[0,\infty)\to \SL(n,\R)$ be a continuous curve whose coordinate functions are definable in a polynomially bounded o-minimal structure; for example, rational functions. Suppose that $\varphi$ is non-contracting; that is, for any linearly independent vectors $v_1,\ldots,v_k$ in $\R^n$, $\varphi(t)\cdot (v_1\wedge\cdots\wedge v_k)\not\to0$ as $t\to\infty$. Then, there exists a unique closed connected subgroup $H_\varphi\subseteq \SL(n,\R)$   of smallest dimension such that $H_\varphi$ is generated by unipotent one-parameter subgroups and such that $\varphi(t)H_\varphi\to g_0H_\varphi$ in $\SL(n,\R)/H_\varphi$ as $t\to\infty$, for some $g_0\in \SL(n,\R)$. 

Let $\G\subseteq \SL(n,\R)$ be a closed subgroup and let $\Gamma\subseteq \G$ be a lattice. Suppose that $\varphi([0,\infty))\subseteq \G$. Then $H_\varphi\subseteq \G$, and for any $x\in \G/\Gamma$, the trajectory $\{\varphi(t)x:t\in [0,T]\}$ is equidistributed with respect to the measure $g_0\mu_{Lx}$ as $T\to\infty$, where $L\subseteq \G$ is a closed subgroup such that $\overline{H_\varphi x}=Lx$ and $Lx$ admits a unique $L$-invariant probability measure, denoted by $\mu_{Lx}$. 

A crucial new ingredient is the proof that for any finite-dimensional representation $V$ of $\SL(n,\R)$, there exist $T_0>0$, $C>0$, and $\alpha>0$ such that for any $v\in V$, the map $t\mapsto \norm{\varphi(t)v}$ is $(C,\alpha)$-good on $[T_0,\infty)$.
\end{abstract}

\maketitle
\tableofcontents

\section{Introduction}
\newcommand{\X}{X}
Let $\G$ be a Lie group, let $\Ga\subseteq \G$ be a lattice and consider  $\X:=\G/\Gamma$. For a closed subgroup $H\subseteq \G$ and an $x\in X$, the orbit $Hx$ is called \textit{periodic} if the orbit $Hx$ is closed and supports a unique $H$-invariant probability measure. We say that a probability measure $\mu$ on $\X$ is  \textit{homogeneous} if $\mu$  is supported on a periodic orbit $Hx$ and is the $H$-invariant probability on that orbit.  
Marina Ratner, in proving Raghunathan's conjectures stated in \cite{Dani81min_sets_of_hor}, showed that any Borel probability measure $\mu$ which is invariant and ergodic under the action of a $\Ad_{\G}$-unipotent one-parameter subgroup is homogeneous, see \cite{Ratner91a}. 
Using this measure classification and the non-divergence property of unipotent orbits due to Margulis and Dani \cite{Margulis71,Dani_1986}, Ratner~\cite{Ratner91duke} proved the following equidistribution result for unipotent flows: Let $U:=\{u(t):t\in\R\}$ be a one-parameter $\Ad_{\G}$-unipotent subgroup of $\G$ and let $x\in X$. Then there exists a closed subgroup $H$ of $\G$ containing $U$ such that $Hx$ is a periodic orbit, and for any $f\in C_c(\X)$, $\lim_{T\to\infty}\frac{1}{T}\int_0^Tf(u(t)x)dt=\int_\X f\,d\mu_{Hx}$, where $\mu_{Hx}$ is the homogeneous probability measure on $Hx$. 

More generally, for a continuous curve $\varphi:[0,\infty)\to \G$, $x\in X$, and $T>0$, consider the probability measure $\mu_{T,\varphi,x}$ on $X$ defined by:  
\begin{equation}\label{eq:main definition of measures averaging on curve}
    \mu_{T,\varphi,x}(f)=\frac{1}{T}\int_0^Tf(\varphi(t)x)\,dt,~ \forall f\in C_c(\X).
\end{equation}
In \cite{Shah1994LimitDO}, it was shown that the equidistribution of unipotent flows due to Ratner generalizes to polynomial curves. Namely, if $\G\subseteq\SL(n,\R)$ and $\varphi:\R\to \G$ is a map whose coordinate functions are polynonmials, then the measures $\mu_{T,\varphi,x}$ converge to  $\varphi(0)\mu_{Hx}$ as $T\to\infty$, where  $H\subseteq \G$ is the smallest closed subgroup such that $\{\varphi(0)^{-1}\varphi(t):t\in \R\}\subseteq H$ and the orbit $Hx$ is periodic. 

More recently, Peterzil and Starchenko \cite{Peterzil2018OminimalFO} studied such questions in the following setting: Let $\G$ be a closed subgroup of the group of the upper-triangular unipotent matrices in $\SL(n,\R)$ and let $\Gamma\subseteq \G$ be a (cocompact) lattice. They showed that if $S\subseteq \G$ is definable in an o-minimal structure, then there exists a definable set $S'\subseteq \G$ whose image in $\G/\Ga$ is the closure of the image of $S$ in $\G/\Ga$. Moreover, they proved the following \cite[Theorem 1.6]{Peterzil2018OminimalFO}: Let $\varphi:[0,\infty)\to \G$ be a curve definable in a polynomially bounded o-minimal structure, and let $x\in \G/\Gamma$ such that $\{\varphi(t)x:t\geq 0\}$ is dense in $G/\Gamma$. Then, $\mu_{T,\varphi,x}$ converges to the $\G$-invariant probability on $\G/\Gamma$  as $T\to\infty$.

Our goal in this paper is to generalize the above equidistribution results for curves definable in polynomially bounded o-minimal structures taking values in linear Lie groups, under the assumption that an additional non-contraction condition is satisfied. 
\subsection{O-minimal curves} We now provide an overview of the notion of an o-minimal structure. In this paper,  o-minimal structures refer to o-minimal structures on the real field. We have provided a more detailed summary of the required definitions, results, and references in Appendix~\ref{sec:Basic-notions-in o-minim}. 

The notion of o-minimality was introduced in \cite{generalization_tarski_seind_dries} as a generalization of semi-algebraic geometry. The semi-algebraic sets in $\R^d$ are formed by taking finite unions and intersections of algebraic sets $\{x\in \R^d: P(x)=0\}$ and of sets of the form $\{x\in\R^d:P(x)>0\}$, where $P$ is a polynomial in $d$ variables. These semi-algebraic sets form the smallest o-minimal structure, and our results are already new in the semi-algebraic setup.

More generally, an o-minimal structure is composed of sets in $\R^d$ for each $d\in \N$ such that several axioms are satisfied. The collection of semi-algebraic sets satisfy those axioms and form the smallest o-minimal structure. In a general  o-minimal structure, the collection of sets are referred to as the {\em definable sets\/}, and a function is called a {\em definable function\/} if its graph is a definable subset.  The definable sets and functions share many desirable properties, see~\Cref{sec:Basic-notions-in o-minim}. Particularly, the collection of definable functions is both rich and flexible: for example, in any o-minimal structure, all rational functions are definable; these are functions of the form $f(t) = \frac{P(t)}{Q(t)}$, on the domain where $Q(t) \neq 0$, and where $P$ and $Q$ are polynomials.
 Moreover, if a definable function is injective, its inverse is also definable, and the composition of definable functions remains definable. 

An o-minimal structure is called \emph{polynomially bounded}, if for every definable function $f:\R \to \R$, there exists $r\in \R$ such that $f(x)=O(x^r)$, as $x\to\infty$. The semi-algebraic o-minimal structure is polynomially bounded. Larger polynomially bounded o-minimal structures include as definable functions the analytic functions restricted to compact sets, and the power functions $t\mapsto t^r$, for $r\in \R$; see \cite{generalization_tarski_seind_dries,Miller_Expansions_of_real_field}. 

We note the following useful fact.
\begin{proposition}\emph{\cite{miller_exp_hard_avoid}}\emph{:} If  $f:[0,\infty)\to\R$ is definable in a polynomially bounded o-minimal structure, then either $f$ is eventually constantly zero, or there exists $r\in \R$ such that $\lim_{t\to\infty}f(t)/t^r=c\neq0$. In the latter case, we define \begin{equation}
        \deg(f):=r.\label{eq:def of degree}
    \end{equation}
\end{proposition}

\begin{remark}
    Our equidistribution results will be stated only for polynomially bounded o-minimal structures. As noted in \cite{Peterzil2018OminimalFO}, this setup is optimal in the following sense. Notice that the measures on the circle defined by 
    $$\frac{1}{T}\int_0^T f(\log(t+1)+\Z)\,dt,~f\in C(\R/\Z),$$
    do not converge as $T\to\infty$.  By \cite{miller_exp_hard_avoid}, if an o-minimal structure is not polynomially bounded, then 
    $x\mapsto e^x$ is definable, and as a consequence, $x\mapsto\log(x)$ will also be definable. 
\end{remark}

% \subsubsection{o-minimal curves in affine algebraic groups}

% We view $\mathbf G(\R)$ as a Lie group in the standard way via the subspace topology obtained by embedding algebraically $\mathbf G(\R)\hookrightarrow\SL(n,\R)\subseteq \M(n,\R)$.
We will denote by $\mathbf{G}$ an affine algebraic group defined over $\R$, and in what follows, we let $G:=\mathbf{G}(\R)$ be the real points of $\mathbf{G}$.
\begin{definition}
    Let $\R[G]$ be the coordinate ring of real polynomial functions on $G$. We say that a curve $\varphi:[0,\infty)\to G$  is \textit{definable} in an o-minimal structure, if the map $t\mapsto f(\varphi(t))$ on $[0,\infty)$ is definable, for all $f\in \R[G]$.
\end{definition}
We observe that if $V$ is a finite-dimensional rational representation, and  $\varphi:[0,\infty)\to G$ is definable in $\mathscr S$, then the curve $t\mapsto\varphi(t)\cdot v$ is definable in $\mathscr S$ for all $v\in V$. So, if $G$ is embedded in $\SL(n,\R)\subseteq \mathrm{M}(n,\R)$, and
\begin{equation}
	\varphi(t)=[\varphi_{i,j}(t)]_{1\le i,j\leq n},\ \forall t\in [0,\infty),
\end{equation}
then $\varphi$ is definable in $\omin$ if and only if each $\varphi_{i,j}:[0,\infty)\to\R$ is definable in $\omin$. 

\subsubsection{The non-contraction property } 
We now introduce the non-contraction property, which narrows down the polynomially bounded o-minimal curves to a class of curves whose orbits on rational linear representations of $G$ have the $(C,\alpha)$-good growth property introduced by Kleinbock and Margulis~\cite{Klein_Marg_flows_and_dioph}. 

\begin{definition}\label{def: non-contracting curves}
    We say that a definable curve $\varphi:[0,\infty)\to G$ is \textit{non-contracting} for $G$ if for all rational finite-dimensional representations $V$ of $G$ defined over $\R$, it holds that $\lim_{t\to\infty}\varphi(t)\cdot v\neq 0$ for all $v\in V\setminus \{0\}$.
\end{definition}

We note that for any definable curve $\varphi:[0,\infty)\to \GL(n,\R)$ and any $v\in \R^n$, $\lim_{t\to\infty} \varphi(t)\cdot v$ exists in $V\cup\{\infty\}$ by the monotonicity theorem~\ref{thm:monotonicity thm}.

\begin{remark}
Suppose that $G$ is a unipotent group and $\varphi:[0,\infty)\to G$ is a definable curve. Then $\varphi$ is non-contracting. This is so because, for any algebraic action of an algebraic unipotent group on a finite-dimensional vector space, every orbit of a nonzero vector is Zariski closed, and in particular, the closure of the orbit does not contain the origin. See \cite[Theorem~12.1]{Birks71}.
\end{remark}

We now give a practical criterion that allows one to verify the non-contraction property for curves in $\SL(n,\R)$, see also Proposition \ref
{prop:non contracting is intrinsic when radical is unipotent}. Consider the exterior representation of $\SL(n,\R)$ on $\bigwedge^k \R^n$ defined by $$g\cdot(v_1\wedge v_2\wedge\cdots\wedge v_k):=gv_1\wedge gv_2\wedge\cdots\wedge gv_k.$$

\begin{proposition}\label{reduction prop}

Suppose that $\varphi:[0,\infty)\to \SL(n,\R)$ is a continuous curve definable in an o-minimal structure. Then $\varphi$ is  non-contracting if and only if  for all $1\leq k\leq n-1$ and linearly independent vectors $v_1,..., v_k\in \R^n$, it holds that 
\begin{equation} \label{eq:non-contract-decoposible}
\lim_{t\to\infty}\varphi(t)\cdot (v_1\wedge \cdots \wedge v_k)\neq 0.
\end{equation}
\end{proposition}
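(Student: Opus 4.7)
The forward direction is immediate: each nonzero decomposable wedge $v_1\wedge\cdots\wedge v_k$ is a nonzero vector in the finite-dimensional $\SL(n,\R)$-representation $\bigwedge^k\R^n$, so the definition of non-contraction specializes to the decomposable wedge condition.

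For the reverse direction, I plan to pass from the decomposable wedge hypothesis to non-contraction on an arbitrary finite-dimensional representation $V$ via two successive reductions and a key upgrading step. By complete reducibility of finite-dimensional $\SL(n,\R)$-representations, it suffices to treat irreducible $V$. Each such $V$ embeds as a subrepresentation of some tensor product $W=\bigotimes_{j=1}^{m}\bigwedge^{k_j}\R^n$ of fundamental representations with $k_j\in\{1,\ldots,n-1\}$, and non-contraction passes trivially to subrepresentations, so the problem reduces to non-contraction on such $W$. For simple tensors one has $\|\varphi(t)(w_1\otimes\cdots\otimes w_m)\|=\prod_{j}\|\varphi(t)w_j\|$, reducing non-contraction on simple tensors to non-contraction on each factor $\bigwedge^{k_j}\R^n$; the remaining (non-simple) tensors will be handled by iterated invariant-theoretic pairings into trivial representations, analogous to the main upgrading step described next.

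The main technical step is thus to upgrade the decomposable wedge hypothesis to full non-contraction on each $\bigwedge^k\R^n$. The key tool is the non-degenerate $\SL(n,\R)$-equivariant pairing $\bigwedge^k\R^n\otimes\bigwedge^{n-k}\R^n\to\bigwedge^n\R^n\cong\R$ into the trivial representation (here $\SL(n,\R)$ acts as identity since $\det\varphi(t)=1$). Given a nonzero $w\in\bigwedge^k\R^n$, non-degeneracy of the pairing yields linearly independent auxiliary vectors $u_1,\ldots,u_{n-k}\in\R^n$ with $c:=w\wedge u_1\wedge\cdots\wedge u_{n-k}\neq 0$; $\SL(n,\R)$-equivariance then gives the constant identity $\varphi(t)w\wedge\varphi(t)u_1\wedge\cdots\wedge\varphi(t)u_{n-k}=c$ for all $t$. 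To deduce $\varphi(t)w\not\to 0$, I analyze the asymptotics via the singular value decomposition $\varphi(t)=O_1(t)D(t)O_2(t)$: by o-minimality and the compactness of $\mathrm{O}(n)$, each $O_i(t)$ converges and each singular value $d_i(t)$ has a well-defined limit in $[0,\infty]$, so every coordinate of $\varphi(t)w$ and of $\varphi(t)(u_1\wedge\cdots\wedge u_{n-k})$ is a definable function with a limit in $[-\infty,\infty]$. Combined with the hypothesis applied to the decomposable auxiliary wedge $u_1\wedge\cdots\wedge u_{n-k}$, the constant-wedge identity is to force $\varphi(t)w\not\to 0$.

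The hard part will be the $0\cdot\infty$ cancellation: even if $\varphi(t)w\to 0$, the auxiliary factor $\varphi(t)(u_1\wedge\cdots\wedge u_{n-k})$, whose norm is only guaranteed not to go to zero, may blow up in exactly the compensating way so that the wedge product remains the nonzero constant $c$. Defeating this requires using the decomposable wedge hypothesis simultaneously for several values of $k$ so that the $u_j$'s can be chosen with $\varphi(t)(u_1\wedge\cdots\wedge u_{n-k})$ actually bounded (equivalently, showing the "asymptotically bounded" subspace of decomposable wedges in $\bigwedge^{n-k}\R^n$ is large enough that its annihilator under the wedge pairing contains no $w$ that $\varphi$ contracts); the fine control on rates of definable functions provided by o-minimality is then used to rule out the remaining indeterminate-form scenarios, ultimately forcing any putative contraction on $\bigwedge^k\R^n$ to also contract some decomposable wedge, in contradiction with the hypothesis.
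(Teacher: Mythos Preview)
Your proposal has a genuine gap, and you have correctly located it yourself: the $0\cdot\infty$ cancellation in the wedge-pairing argument. Given $w\in\bigwedge^k\R^n$ with $\varphi(t)w\to 0$, you need a decomposable $u=u_1\wedge\cdots\wedge u_{n-k}$ with $w\wedge u\neq 0$ and $\varphi(t)u$ \emph{bounded}. But the hypothesis only tells you $\varphi(t)u\not\to 0$, not that it stays bounded; the sentence ``showing the `asymptotically bounded' subspace of decomposable wedges is large enough that its annihilator contains no contracted $w$'' is a restatement of what must be proved, not an argument. The appeals to SVD asymptotics and ``fine control on rates'' do not resolve this: if $O_2(t)\to O_2^\infty$ only at some slow definable rate, the error term $D(t)(O_2(t)(O_2^\infty)^{-1}-I)$ can dominate, so you cannot simply read off which fixed wedges stay bounded from the limiting singular directions. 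Moreover, the same obstacle recurs one level up: even granting non-contraction on each $\bigwedge^{k_j}\R^n$, non-contraction on $\bigotimes_j\bigwedge^{k_j}\R^n$ for non-simple tensors is \emph{not} a formal consequence (a sum $\sum v_i\otimes w_i$ can be contracted without any factor being contracted), and your ``iterated invariant-theoretic pairings'' defers to the very step you have not carried out.

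The paper bypasses all of this with a single external input from geometric invariant theory. Kempf's instability theorem (as packaged in \cite[Remark~2.3]{shah2023equidistribution}) says that if $\varphi(t)v\to 0$ for some nonzero $v$ in \emph{any} representation, then there exist $g_0\in\SL(n,\R)$, a dominant integral weight $\chi$, and constants $C,\beta>0$ with $\|gg_0\cdot w_\chi\|\le C\|g\cdot v\|^\beta$ for all $g$; here $w_\chi$ is the highest-weight vector in the corresponding tensor product of fundamental representations. This immediately gives $\varphi(t)g_0\cdot w_\chi\to 0$, and an Iwasawa-decomposition computation then isolates a single fundamental weight $\mu_i$ with $\mu_i(a_t)\to 0$, i.e.\ $\varphi(t)g_0\cdot(e_1\wedge\cdots\wedge e_i)\to 0$, contradicting the decomposable-wedge hypothesis. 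In short, the GIT inequality does in one stroke what your pairing/SVD outline is trying to assemble by hand; absent that inequality or a genuine substitute, your argument is incomplete.
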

% Proposition \ref
% {prop:non contracting is intrinsic when radical is unipotent}

% Due to the previous result, the following provides criteria for the non-contracting of definable curves in algebraic groups generated by unipotent one-parameter subgroups. 

% \begin{proposition} \label{prop:Non-contracting Criteria} 
% Let $G=\mathbf G(\R)$ be such that the identity component $G^0$ is generated by unipotent one-parameter subgroups. Suppose that $\rho:G\to \SL(n,\R)$ is a rational homomorphism with a finite kernel. Then, for any polynomially bounded definable curve $\varphi:[0,\infty)\to G$, if $\rho\circ\varphi$ is non-contracting in $\SL(n,\R)$, then $\varphi$ is non-contracting in $G$.
% \end{proposition}

\subsection{The main equidistribution result}
To state our main result, we need to introduce an important subgroup associated with our curve.  Let $\varphi:[0,\infty)\to G$ be a non-contracting, continuous curve definable in an o-minimal structure. In \Cref{thm:main result on the hull and correcting curve}, we will show that there exists a smallest closed connected subgroup $H_\varphi\subseteq  G$ \textit{generated by unipotent one-parameter subgroups} such that the image of $\varphi([0,\infty))$ in $G/H_\varphi$ is bounded.
The subgroup $H_\varphi$ will be called the \textit{hull} of $\varphi$.
We also establish the existence of a bounded definable curve $\be:[0,\infty)\to G$, which will be called a \textit{correcting curve}, such that $\be(t)\varphi(t)\in H_\varphi$ for all $t$.
Since $\be$ is  bounded and definable,  $\lim_{t \to \infty} \be(t) = \be_\infty \in G$ by the monotonicity theorem, see  \Cref{thm:monotonicity thm}.

Now suppose that $\varphi$ takes its values in a connected unimodular Lie subgroup $\G\subseteq G$ and the o-minimal structure is polynomially bounded. Then $H_\varphi\subseteq \G$,
%both the hull and any correcting curve also belong to $\mathcal G$, 
see Proposition~\ref{prop:hull in lie group}, and hence $\beta$ takes its values in $\G$.  Suppose that $\Gamma$ is a lattice in $\G$. 
Then, by Ratner's theorem \cite{Ratner91a}, since $H_\varphi$ is generated by one-parameter unipotent subgroups, for any $x_0\in \G/\Ga$ there exists a closed connected subgroup $L\subseteq \G$ such that $Lx_0$ is a periodic orbit, and $\overline{H_\varphi x_0}=Lx_0$. 

\begin{theorem}\label{thm: main equidistribution theorem}
  Let  $\G\subseteq  G$ be a connected Lie subgroup and  let $\Ga\subseteq \G$ be a lattice. Let \emph{$\varphi:[0,\infty)\to G$} be a non-contracting, continuous curve definable in a polynomially bounded o-minimal structure such that $\varphi\left([0,\infty)\right)\subseteq \G$. Let $x_0\in \G/\Ga$. 
   % Let $H_\varphi$ be the hull of $\varphi$ in $G$ and let $\be$ be a definable correcting curve in $G$, so that $\be\varphi\subseteq H_\varphi$, see Definition \ref{def:hull and correcting curve}. Then $H_\varphi\subseteq \G$, $\beta([0,\infty))\subseteq \G$, and the following holds: 
   Then, 
$$\lim_{T\to\infty}\mu_{T,\varphi,x_0}=\be_\infty^{-1}\mu_{L x_0},$$
  in the weak-$\ast$ topology, where $Lx_0=\overline{H_\varphi x_0}$ is periodic, and $\mu_{Lx_0}$ is the $L$-invariant probability measure on $Lx_0$. 
  
  % in view of Ratner's theorem, $\mu_{\overline{H_\varphi x_0}}$ is the $L$-invariant probability measure on $\overline{H_\varphi x_0}=Lx_0$ for a closed connected subgroup $L$ of $\G$. In particular, $$\lim_{T\to\infty}\mu_{T,\varphi,x_0}=\be_\infty^{-1}\mu_{\overline{H_\varphi x_0}},$$where $\be_\infty=\lim_{t\to\infty}\be(t)$. 
\end{theorem}
As mentioned above, Theorem \ref{thm: main equidistribution theorem} generalizes Shah's result on polynomial curves \cite{Shah1994LimitDO}, and generalizes the result of Peterzil and Starchenko on dense curves in nilmanifolds \cite{Peterzil2018OminimalFO}.
\begin{remark} \label{rem:contraction} The non-contraction assumption in Theorem \ref{thm: main equidistribution theorem} is essential. Indeed, if $\varphi$ is a contracting curve in $\SL(n,\R)$, then,  there exists $x_0 \in \SL(n,\R)/\SL(n,\Z)$ such that $\lim_{t \to \infty} \varphi(t)x_0 = \infty$. To see this, we note that if $\varphi$ is contracting, then by Proposition \ref{reduction prop}, there exist linearly independent vectors $v_1, \dots, v_k \in \R^n$ such that $\varphi(t) \cdot (v_1 \wedge \cdots \wedge v_k) \to 0$ as $t \to \infty$. Let $g_0 \in \SL(n,\R)$ satisfy $v_1 \wedge \cdots \wedge v_k = g_0 \cdot (e_1 \wedge \cdots \wedge e_k)$. Since the orbit $\SL(n,\Z) \cdot (e_1 \wedge \cdots \wedge e_k)$ is discrete and does not contain $0$, given a compact set $C \subseteq \SL(n,\R)$, it holds that  $\varphi(t)g_0 \notin C\SL(n,\Z)$ for all large $t$. \end{remark}

In the next result, proved in Section~\ref{sec:7}, we give an example of a definable curve in $\SL(n+1,\R)$ whose 
% nonzero trajectories diverge in every irreducible representation of $\SL(n+1,\R)$; in other words, it is non-contracting, and its 
hull is $\SL(n+1,\R)$. As a consequence, every trajectory of such a curve in the space of unimodular lattice in $\Z^{n+1}$ gets equidistributed (Corollary~\ref{cor:curve}).

\begin{proposition}\label{prop:main example for equidistribution}
Let $f_0,...,f_n$, and $h_1,\ldots, h_n$ be continuous real valued functions on $[0,\infty)$ definable in a polynomially bounded o-minimal structure. For $t\geq 0$, define
{\small 
\begin{equation}\label{eq:main example for equidistributing curve}
\varphi(t):=\begin{bmatrix}
    f_0(t)&f_1(t)&\cdots&f_n(t)\\
       &h_1(t)^{-1}&&\\
       &&\ddots&\\
       &&&h_n(t)^{-1}
\end{bmatrix}
\end{equation}
}
Suppose that the functions $h_i$'s and $f_i$'s defining $\varphi$ satisfy the following conditions:
\begin{enumerate}
    \item $f_0=h_1\cdots h_n$, $\deg f_0=n$, and $\deg h_1\geq \cdots \geq \deg h_n>0$. \label{itm:ex1}
    \item $\deg(f_0+g)\geq n$ for any $g$ in the linear span of $f_1,\ldots, f_n$. \label{itm:ex2}
    \item For any $i\in \{1,\ldots,n\}$, $\deg(f_i+g)> n-i$  for any $g$ in the linear span of $f_{i+1},\ldots,f_n$. \label{itm:ex3}
\end{enumerate}  
Then the  hull of $\varphi$ is $H_\varphi=\SL(n+1,\R)$.
\end{proposition}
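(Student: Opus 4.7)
The plan proceeds in two stages: first verify that $\varphi$ is non-contracting (so that the hull $H_\varphi$ is defined), then show $H_\varphi=\SL(n+1,\R)$ by ruling out all proper candidates. For non-contraction, I would invoke Proposition~\ref{reduction prop}, which reduces the task to showing $\varphi(t).(v_1\wedge\cdots\wedge v_k)\not\to 0$ for every $1\leq k\leq n$ and every linearly independent $v_1,\ldots,v_k\in\R^{n+1}$. Writing $v_j=a_{j0}e_0+\sum_{i\geq 1}a_{ji}e_i$, one has
\[
\varphi(t)v_j=\Bigl(a_{j0}f_0(t)+\sum_{i=1}^n a_{ji}f_i(t)\Bigr)e_0+\sum_{i=1}^n a_{ji}h_i^{-1}(t)e_i.
\]
Expanding the $k$-fold wedge in the standard basis of $\bigwedge^k\R^{n+1}$, the coordinate indexed by a $k$-subset $I\subset\{0,\ldots,n\}$ is a multiple of $\prod_{i\in I}h_i^{-1}$ if $0\notin I$ (hence decays to $0$), whereas for $0\in I$ it has the form $\prod_{i\in I\setminus\{0\}}h_i^{-1}\cdot\bigl(c_0^{(I)}f_0+\sum_{i\geq 1}c_i^{(I)}f_i\bigr)$, where the $c_i^{(I)}$'s are certain $k\times k$ minors of the coefficient matrix $(a_{ji})$. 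Choosing $I$ appropriately — using linear independence of the $v_j$'s to ensure either $c_0^{(I)}\neq 0$ or to reduce to that case by a change of basis, then applying condition (2) (or iteratively condition (3) when $c_0^{(I)}=0$ is forced) — the inner $f$-combination has degree large enough to beat the $\prod h_i^{-1}$ decay, so the wedge does not tend to zero.

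To identify the hull, I would argue by contradiction. Suppose $H_\varphi\subsetneq\SL(n+1,\R)$. Since $H_\varphi$ is generated by unipotent one-parameter subgroups it has no nontrivial characters and is a proper closed algebraic subgroup of $\SL(n+1,\R)$, so Chevalley's theorem supplies a rational representation $V$ and a nonzero vector $v\in V$ actually fixed by $H_\varphi$. The correcting curve $\beta$ is bounded with $\beta(t)\to\beta_\infty$ and $\beta(t)\varphi(t)\in H_\varphi$, so $\varphi(t).v=\beta(t)^{-1}.v\to\beta_\infty^{-1}.v$ is bounded in $V$. It therefore suffices to prove that no nonzero vector in any nontrivial irreducible rational representation of $\SL(n+1,\R)$ has bounded $\varphi$-orbit.

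Because every irreducible rational representation of $\SL(n+1,\R)$ embeds in a tensor product of copies of $\R^{n+1}$ and $(\R^{n+1})^*$, on which $\varphi(t)$ and $\varphi(t)^{-1}$ act by explicit expressions in $f_0,f_i,h_i^{-1}$ and their reciprocals, a weight-space asymptotic analysis parallel to the non-contraction step and again driven by conditions (1)--(3) should show that on every nontrivial weight component the leading growth of $\varphi(t).v$ is a strictly positive power of $t$, contradicting boundedness. \textbf{The main obstacle} I anticipate is executing this weight-by-weight computation uniformly across all irreducibles; a cleaner alternative would be to bypass representation theory and work directly with $\mathrm{Lie}(H_\varphi)$, extracting enough one-parameter unipotent subgroups of $H_\varphi$ from limits of suitably normalized conjugates or difference-quotients of $\beta(t)\varphi(t)$ to conclude that they span $\mathfrak{sl}(n+1,\R)$.
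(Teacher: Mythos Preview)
Your non-contraction step is along the right lines, but the paper actually proves something strictly stronger in Lemma~\ref{lemma:phi-expands}: for every $1\leq k\leq n$ and every nonzero $v\in\wedge^k\R^{n+1}$ (not just decomposable ones), $\varphi(t).v\to\infty$. This divergence property, not mere non-contraction, is what drives the rest of the argument; your computation can be tightened to give it, but you should be aware that it is what is needed.

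The real gap is in Stage~2. You correctly flag the obstacle --- a uniform weight-by-weight analysis across all irreducibles is intractable --- but you do not resolve it, and the vague alternative of ``extracting enough unipotents'' is not a proof. The paper bypasses this entirely via a three-step route that you are missing. First, the divergence property on wedges implies, via Kempf-type GIT (Lemma~\ref{lem:expanding implies orbit closed}) and the Matsushima criterion, that $H_\varphi$ is semisimple without compact factors and acts irreducibly on $\R^{n+1}$ (Lemma~\ref{lem:hull is semisimple if curve expands}). Second, the Peterzil--Steinhorn group of $\varphi$ is computed to be $\rho(s)=\begin{bsmallmatrix}1&sv\\0&1\end{bsmallmatrix}$ (Lemma~\ref{lem:ps group of our main example}); after correction this lies in $H_\varphi$, and by Jacobson--Morosov it extends to an $\SL(2,\R)$-subgroup whose action on $\R^{n+1}$ decomposes as a standard $\R^2$ plus a trivial complement. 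A classification result (\cite[Theorem~A.7]{shah2023equidistribution}) then forces $H_\varphi$ to be either $\SL(n+1,\R)$ or a conjugate of $\SP(2d,\R)$ with $n+1=2d$. Third, the symplectic case is ruled out because $\SP(2d,\R)$ fixes a nonzero vector in $\wedge^2\R^{2d}$, contradicting the divergence property already established. None of these three ingredients --- GIT-based semisimplicity, the concrete P.S.~group, or the $\SL/\SP$ dichotomy --- appears in your plan.
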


For example, let $ f_0(t)=t^n$. Suppose $f_1,\dots,f_n$ are rational functions such that $\deg f_1>\cdots>\deg f_n$, and $\deg f_i\neq n$ and $\deg f_i>n-i$ for all $1\leq i\leq n$. Let $h_i(t)=t^{r_i}$ for some $r_1\geq \cdots\geq r_n>0$ such that $\sum_{i=1}^n r_i=n$. Then, these functions satisfy the conditions of Proposition~\ref{prop:main example for equidistribution}.

Using Theorem~\ref{thm: main equidistribution theorem}, we obtain the following:
\begin{corollary}\label{cor:curve}
Let $\G$ be a Lie group and $\Gamma$ be a lattice in $\G$. Suppose that  $\rho:\SL(n+1,\R)\to \G$ is a continuous homomorphism. Let $\varphi:[0,\infty)\to \SL(n+1,\R)$ be a curve satisfying the conditions of Proposition~\ref{prop:main example for equidistribution}. Then, for any $x\in \G/\Gamma$, and any bounded continuous function $f$ on $\G/\Gamma$, 
$$\lim_{T\to\infty}\frac{1}{T}\int_0^Tf(\varphi(t)x_0)dt=\mu_{X}(f),$$ where $\mu_X$ is the homogeneous probability measure on the homogeneous space $\overline{\rho(\SL(n+1,\R))x}$.
\end{corollary}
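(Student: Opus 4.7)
The plan is to push the curve forward via $\rho$ to a curve $\tilde\varphi := \rho\circ\varphi$ in $G$ and apply Theorem~\ref{thm: main equidistribution theorem} on the orbit closure of $\rho(\SL(n+1,\R)) x_0$ in $G/\Gamma$. First I set $H := \rho(\SL(n+1,\R))$. Since $\SL(n+1,\R)$ is generated by unipotent one-parameter subgroups and $\rho$ is a continuous (hence analytic) Lie group homomorphism, $H$ is a connected Lie subgroup of $G$ generated by unipotent one-parameter subgroups. As $\ker\rho$ is discrete and central in $\SL(n+1,\R)$, the image $H$ is isomorphic to either $\SL(n+1,\R)$ or $\SL(n+1,\R)/\{\pm I\}$, both linear algebraic groups. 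By Ratner's orbit closure theorem applied to $H$ acting on $G/\Gamma$, there is a closed connected subgroup $L\supseteq H$ of $G$ with $\overline{H x_0} = L x_0$, and $L x_0$ admits a unique $L$-invariant probability measure $\mu_X$ with $L_{x_0}$ a lattice in $L$.

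I would then apply Theorem~\ref{thm: main equidistribution theorem} to $\tilde\varphi$ in $L$, viewed as a Lie subgroup of a suitable algebraic ambient group. The hypotheses would be verified as follows. \emph{Definability}: using the identification of $H$ with its linear algebraic structure, $\rho$ is given by polynomial formulas in the matrix entries of $\SL(n+1,\R)$, so $\tilde\varphi=\rho\circ\varphi$ is definable in the same polynomially bounded o-minimal structure as $\varphi$. \emph{Non-contraction}: any finite-dimensional rational representation $V$ of the ambient algebraic group restricts via $\rho$ to a representation of $\SL(n+1,\R)$ in which $\tilde\varphi(t). v = \rho(\varphi(t)). v = \varphi(t). v$; hence non-contraction of $\varphi$ (which is implicit in Proposition~\ref{prop:main example for equidistribution}, since the hull is defined only for non-contracting curves) yields non-contraction of $\tilde\varphi$. \emph{Hull identification}: $H_{\tilde\varphi} = H$. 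The containment $H_{\tilde\varphi} \subseteq H$ is automatic since $\tilde\varphi([0,\infty))\subseteq H$. For the reverse, if a proper subgroup $H' \subsetneq H$ generated by unipotent one-parameter subgroups satisfied $\tilde\varphi(t) H' \to g_0 H'$ in $H/H'$, then by continuity of $\varphi$ the path would lift through the finite covering $\SL(n+1,\R)/\rho^{-1}(H')^0 \to H/H'$ to a convergent curve $\varphi(t)\rho^{-1}(H')^0$ in $\SL(n+1,\R)/\rho^{-1}(H')^0$, where $\rho^{-1}(H')^0 \subsetneq \SL(n+1,\R)$ is a proper connected unipotent-generated subgroup, contradicting $H_\varphi = \SL(n+1,\R)$.

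Since $\tilde\varphi$ lies in its own hull $H$, the correcting curve can be chosen trivially so that $\beta_\infty = e$, and Theorem~\ref{thm: main equidistribution theorem} yields $\mu_{T,\tilde\varphi,x_0} \to \mu_{\overline{H x_0}} = \mu_X$ in the weak-$*$ topology on $C_c(G/\Gamma)$. The extension from compactly supported to bounded continuous test functions is a standard tightness argument: each $\mu_{T,\tilde\varphi,x_0}$ and $\mu_X$ is a probability measure supported on the closed set $Y = L x_0 \subseteq G/\Gamma$, so no mass escapes to infinity. The main obstacle I anticipate is precisely the technical step of realizing $L$ as a Lie subgroup of an algebraic ambient $\mathbf G(\R)$ so that Theorem~\ref{thm: main equidistribution theorem} can be invoked verbatim; this rests on the rigidity of continuous homomorphisms of semisimple Lie groups (so that $\rho$ is essentially algebraic on its image $H$) together with the algebraic structure that $L$ inherits from Ratner's theorem and its intermediate position between $H$ and the normalizer of its unipotent radical in $G$.
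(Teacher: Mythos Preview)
Your approach is essentially the paper's, but you take an unnecessary detour and leave open precisely the gap the paper closes in one line.

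The detour: you first invoke Ratner to find $L$ with $\overline{Hx_0}=Lx_0$, and then try to apply Theorem~\ref{thm: main equidistribution theorem} inside $L$. This is redundant, since Theorem~\ref{thm: main equidistribution theorem} already produces $L$ and $\mu_{\overline{H_\varphi x_0}}$ as part of its conclusion. The paper simply applies Theorem~\ref{thm: main equidistribution theorem} directly with $\mathcal G=G$ (the given Lie group with its lattice $\Gamma$), and the curve $\rho\circ\varphi$. For the hull identification you give a covering-space argument; the paper has this packaged as Lemma~\ref{lem:image of hull under homo}, which gives $H_{\rho\circ\varphi}=\rho(H_\varphi)=\rho(\SL(n+1,\R))$ immediately once $\rho$ is rational.

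The gap you flag---realizing everything inside an algebraic ambient group---is real, and the paper does not attempt to build such an embedding of $L$. Instead it observes that when $G$ is linear the result is immediate (embed $G\hookrightarrow\SL(N,\R)$; then $\rho$ is rational and Theorem~\ref{thm: main equidistribution theorem} applies verbatim), and when $G$ is not linear, the entire machinery of the paper (non-contraction, $(C,\alpha)$-goodness, linearization) only ever uses the curve through finite-dimensional representations, all of which factor through $\Ad_G:G\to\GL(\mathfrak g)$. So it suffices that $\Ad_G\circ\rho\circ\varphi$ be definable and non-contracting in $\GL(\mathfrak g)$, which holds since $\Ad_G\circ\rho$ is a rational representation of $\SL(n+1,\R)$ and $\varphi$ is non-contracting. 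Your proposed route through ``rigidity of continuous homomorphisms'' and ``the algebraic structure that $L$ inherits from Ratner's theorem'' would be substantially harder to make precise and is not needed.
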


\subsection{Growth of o-minimal functions}\label{sec:intro c alpha good}

We now describe the $(C,\al)$-good property, introduced by Kleibock and Margulis~\cite{Klein_Marg_flows_and_dioph}, concerning the growth of certain families of functions definable in polynomially bounded o-minimal structures. This property is well known for polynomials~\cite[4.1~Lemma]{Dani1993LimitDO},  but is new in the o-minimal setting. It is crucial for the techniques used in this paper.  

\begin{definition}
     Let $C$ and $\al$ be positive constants. We say that a function $f:[0,\infty)\to \R$ is $(C,\alpha)$-\textit{good} on $[T_0,\infty)\subseteq [0,\infty)$, if for every $\e>0$, and for every bounded interval $I\subseteq[T_0,\infty)$, it holds that:
\begin{equation} \label{eq:C-alpha-good}
|\{t\in I : |f(t)|\leq \e\}|\leq C \left(\frac{\e}{\|f\|_I}\right)^\al\cdot |I|.
\end{equation}
\end{definition}

For the following statement, we say that a subset $\F$ of a finite-dimensional vector space $\V$ over $\R$ is a \textit{definable cone}, if $\F$ is a cone in the vector space, and $\F$ is a definable set when identifying $\V\cong \R^n$ by choosing a basis.
% \begin{definition}
%     Let $\V $ be a finite-dimensional vector space spanned over $\R$ by real functions definable in a polynomially bounded o-minimal structure. Identify $\V$ with $\R^N$ by choosing a basis for $\V$. A subset $\F\subseteq \V$ is called a closed definable \textit{cone} if it is a closed definable subset of $\R^N$ such that if $v\in\F$ then $\al v\in \F$ for all scalars $\al\ge 0$.
% \end{definition}

% \begin{example}
%     Let $\psi:[0,\infty)\to \GL(m,\R)$ be a curve definable in a polynomially bounded o-minimal structure, and let $\|\cdot\|$ be the Euclidean norm. Then, the vector space $$\V:=\Span_\R\{\|\psi(t)\vv\|^2:\vv \in \R^m\}$$is finite dimensional, and $\F:=\{\|\psi(t)\vv\|^2:\vv \in \R^m\}$ is a closed definable cone.
% \end{example}

\begin{theorem}\label{thm:C_alpha_goodness}
\emph{($(C,\alpha)$-good property)}. Let $\V$ be a finite-dimensional real vector space spanned by functions $f:[0,\infty)\to\R$, each definable in the same polynomially bounded o-minimal structure. Suppose that $\F\subseteq \V$ is a closed definable cone such that for all $f\in\F\setminus \{0\}$ it holds that $$\lim_{t\to\infty}f(t)\neq0.$$Then, there exist $C>0$, $\alpha>0$ and $T_0\geq1$ such that for any $f\in\F\setminus \{0\}$, {\em $f$ is $(C,\alpha)$-good on $[T_0,\infty)$}.
\end{theorem}

We obtain the following statement as a direct corollary of Theorem \ref{thm:C_alpha_goodness}.

\begin{proposition}\label{prop:c alpha good ppty of curves and action in reps} 
Let $\varphi:[0,\infty)\to G$ be a non-contracting curve definable in a polynomially bounded o-minimal structure. Suppose that $V$ is a finite-dimensional rational representation of $G$. Fix a norm on $\|\cdot\|$ on $V$.
Then, there exist $T_0>0$, $C>0$, and $\alpha>0$  such that for all $v\in V\setminus \{0\}$ the function $\Theta_v(t):=\|\varphi(t)\cdot v\|$ is $(C,\al)$-good in $[T_0,\infty)$.  
\end{proposition}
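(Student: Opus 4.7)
The plan is to deduce this directly from Theorem~\ref{thm:C_alpha_goodness} by building an appropriate finite-dimensional space $\V$ and closed definable cone $\F$, and then passing from $\Theta_v^2$ to $\Theta_v$ via a square-root trick.

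First I fix an inner product on $V$ and a basis $e_1,\ldots,e_N$, and set
$$\V:=\Span_\R\{t\mapsto \langle \varphi(t).e_i,\varphi(t).e_j\rangle : 1\leq i,j\leq N\}.$$
Because $\varphi$ is definable and the representation of $G$ on $V$ is rational, each matrix entry of $\rho(\varphi(t))$ is the pullback by $\varphi$ of an element of $\R[G]$, hence definable in the ambient polynomially bounded o-minimal structure; therefore $\V$ is a finite-dimensional space of such definable functions. Expanding $v=\sum_i v_i e_i$ gives
$$\|\varphi(t).v\|^2=\sum_{i,j}v_iv_j\,\langle \varphi(t).e_i,\varphi(t).e_j\rangle,$$
so $\F:=\{t\mapsto \|\varphi(t).v\|^2 : v\in V\}\subset \V$. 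It is definable (as the image under $\R^N\ni v\mapsto \|\varphi(\cdot).v\|^2$, a polynomial map into $\V\cong \R^D$) and a cone, since scaling $v$ by $\sqrt{s}$ for $s\geq 0$ multiplies $\|\varphi(\cdot).v\|^2$ by $s$.

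The one point needing thought is verifying the hypotheses of Theorem~\ref{thm:C_alpha_goodness} for $\F$. For closedness in $\V$: suppose $v_n\in V$ and $\|\varphi(\cdot).v_n\|^2\to f$ in $\V$. I claim $\{v_n\}$ stays bounded --- otherwise, passing to a subsequence with $\|v_n\|\to\infty$ and $w_n:=v_n/\|v_n\|\to w$ with $\|w\|=1$, the non-contraction of $\varphi$ supplies some $t_0$ with $\|\varphi(t_0).w\|^2>0$, so $\|\varphi(t_0).v_n\|^2=\|v_n\|^2\|\varphi(t_0).w_n\|^2\to \infty$, contradicting convergence to $f(t_0)$. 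A convergent subsequence $v_n\to v$ then yields $f=\|\varphi(\cdot).v\|^2\in\F$. For the nondegeneracy hypothesis: if $v\neq 0$, non-contraction forces $\varphi(t).v\not\to 0$, hence $\|\varphi(t).v\|^2\not\to 0$; since this function is definable in a polynomially bounded o-minimal structure, the Monotonicity Theorem gives a limit in $[0,+\infty]$ and by the above that limit is nonzero.

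Theorem~\ref{thm:C_alpha_goodness} then furnishes constants $C>0$, $\alpha>0$, $T_0\geq 1$ such that every nonzero $f\in\F$ is $(C,\alpha)$-good on $[T_0,\infty)$. Applying this to $\Theta_v^2\in\F$ for $v\neq 0$, any bounded $I\subseteq [T_0,\infty)$ and any $\e>0$: since $\{t\in I:\Theta_v(t)\leq\e\}=\{t\in I:\Theta_v(t)^2\leq \e^2\}$ and $\|\Theta_v^2\|_I=\|\Theta_v\|_I^2$, one obtains
$$|\{t\in I:\Theta_v(t)\leq \e\}|\leq C\left(\frac{\e^2}{\|\Theta_v\|_I^2}\right)^{\alpha}|I|=C\left(\frac{\e}{\|\Theta_v\|_I}\right)^{2\alpha}|I|,$$
so $\Theta_v$ is $(C,2\alpha)$-good on $[T_0,\infty)$ with constants independent of $v$. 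The only genuinely nontrivial point in this plan is the closedness of $\F$; the rest is assembly around Theorem~\ref{thm:C_alpha_goodness}.
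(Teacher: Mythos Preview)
Your proof is correct and takes essentially the same approach as the paper, which states that Proposition~\ref{prop:c alpha good ppty of curves and action in reps} is an immediate consequence of Theorem~\ref{thm:C_alpha_goodness} and, in the example preceding that theorem, sets up exactly the cone $\F=\{\|\varphi(\cdot).v\|^2:v\in V\}$ you use. One minor remark: closedness of $\F$ does not actually need non-contraction, since invertibility of $\rho(\varphi(t_0))$ at any single $t_0$ already bounds $\{v_n\}$; non-contraction is genuinely used only for the non-decay hypothesis $\lim_{t\to\infty}f(t)\neq 0$.
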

Proposition \ref{prop:c alpha good ppty of curves and action in reps} is obtained from Theorem \ref{thm:C_alpha_goodness} as follows. By choosing a basis for the $G$-representation $V$, we identify  $V\cong \R^m$. Let $\psi:[0,\infty)\to \GL(m,\R)$ represent the image of $\varphi$ in the representation $V$.  Let $\|\cdot\|$ be the Euclidean norm on $\R^m$. Then, the vector space of functions $$\V:=\Span_\R\{t\mapsto\|\psi(t)\vv\|^2, \ t\geq 0:\vv \in \R^m\}$$is finite dimensional, and $\F:=\{t\mapsto\|\psi(t)\vv\|^2, \ t\geq 0:\vv \in \R^m\}$ is a closed definable cone. 

%Since $\varphi$ is non-contracting, Proposition \ref{prop:c alpha good ppty of curves and action in reps} follows from Theorem \ref{thm:C_alpha_goodness}.
\subsection{Proof ideas and structure of the paper} 

Our main theorem will be proved by the following strategy employed in many works on equidistribution in homogeneous spaces, for example \cite{Dani1993LimitDO,Mozes1995OnTS,Shah1994LimitDO}:
\begin{enumerate}
     \item\label{enu:workflow non escape of mass} Proving that there is no escape of mass (Section~\ref{sec:non escape of mass}) -- that is, any limiting measure $\mu$ of the probability measures $\mu_{T,\varphi,x}$ is a probability measure. 
    \item\label{enu:workflow invariance} Showing that any limiting measure $\mu$ of the measures $\mu_{T,\varphi,x}$ is invariant under a group generated by nontrivial unipotent one-parameter subgroups (Section~\ref{sec:unip invariance}). 
    \item\label{enu:workflow ergodic decomposition} Studying the ergodic decomposition of a limiting measure $\mu$ (Section \ref{sec:linearization}). By Ratner's theorem~\cite{Ratner91a}, every ergodic component appearing in the ergodic decomposition of $\mu$ is homogeneous. The theorem is proved once it is verified that $\mu$ is equal to exactly one uniquely determined ergodic component. For this step, we will use the linearization technique of \cite{Dani1993LimitDO} to prove avoidance of singular sets that contain ergodic components of various types. 
\end{enumerate}
 
To establish unipotent invariance, we will use an idea similar to the one used for polynomial curves \cite{Shah1994LimitDO}. This idea generalizes to the o-minimal setting via the notion of the Peterzil-Steinhorn group (see Section \ref{sec:unip invariance}), which was also used in \cite{Peterzil2018OminimalFO}.

The $(C,\alpha)$-good property will be used to prove the non-escape of mass, which is avoidance of the point at infinity, and to apply the linearization technique.

\subsection{Notation and conventions} \label{subsec:convention}
We will denote the Zariski closure of a subset $A$ of $\R^N$ by $\zcl(A)$. For a linear Lie group $H$, we denote by $H_u$ the subgroup of $H$ generated by all the one-parameter unipotent subgroups contained in $H$. We note that if $H\subseteq \GL(n,\R)$, then $H_u$ equals the connected component of the identity in $\zcl(H_u)$. 

% We will write ${\bf G}(\R)$ to mean that $G$ is the set of $\R$ points of an algebraic group ${\bf G}$ defined over $\R$.

\subsubsection*{Acknowledgement.}
We thank Chris Miller for helpful discussions. We thank anonymous referees for their careful reading and valuable suggestions which  helped us improve the readability of this article.
 
\section{On the non-contracting property and the hull\label{sec:consequences and properties of the non-contracting property}}
This section is divided into two main parts. In the first part, we prove Proposition \ref{reduction prop}, which gives a convenient criterion for verifying the non-contraction property. The second part of the section concerns the existence of the hull and the correcting curve, and discusses their behavior under homomorphisms.

\subsection{The criterion for the non-contracting property}\label{sec:crit for the non-contract}
Our main goal here is to prove Proposition \ref{reduction prop}, namely to reduce the verification of the non-contracting property for any vector in any representation to decomposable vectors in the exterior of the standard representation of $\SL(n,\R)$. Our proof follows from the observations in \cite[Section 2]{shah2023equidistribution}, which uses Kempf's numerical criteria on geometric invariant theory \cite{Kem78}. 

We recall the following preliminaries. Let ${S}$ denote the full diagonal subgroup of $\SL(n,\R)$. The group of the unipotent upper triangular matrices in $\SL(n,\R)$, denoted by $N$, is a maximal unipotent subgroup of $\SL(n,\R)$ normalized by ${S}$.
Let $X^\ast({S})$ denote the group of algebraic homomorphisms from ${S}$ to $\R^\ast$ defined over $\R$.
Then $X^\ast({S})$ is a free abelian group with $(n-1)$ generators, and we treat it as an additive group. For each $i\in\{1,\ldots,n-1\}$, let $\alpha_i\in X^\ast({S})$ be defined by $\alpha_i(\diag(t_1,\ldots,t_n))=t_i/t_{i+1}$.
Then $\Delta=\{\alpha_i:1\leq i\leq n-1\}$ is the set of simple roots on ${S}$ corresponding to the choice of the maximal unipotent subgroup $N$. For each $1\leq i\leq n-1$, let $\mu_i\in X^\ast({S})$ be defined by $\mu_i(\diag(t_1,\ldots,t_n))=t_1\cdots t_i$.
Then $\{\mu_i:1\leq i\leq n-1\}$ is the set of fundamental characters (weights) of ${S}$ with respect to our choice of simple roots. For each $1\leq i\leq n-1$, let $W_i$ denote the $i$-th exterior of the standard representation of $\SL(n,\R)$ on $\R^n$, called a \textit{fundamental representation} of $\SL(n,\R)$. So $W_i=\wedge^i \R^n$, and for any $v_1\wedge\cdots\wedge v_i\in W_i$ and $g\in \SL(n,\R)$, $g\cdot(v_1\wedge \cdots \wedge v_i)=gv_1\wedge \cdots \wedge gv_i$. Let $w_i=e_1\wedge \cdots \wedge e_i$. Then $w_i$ is fixed by $N$, and ${S}$ acts on the line $\R w_i$ via the fundamental character $\mu_i$.

A \textit{dominant} integral character (weight) is a non-negative integral combination of fundamental characters. Suppose $\chi=m_1\mu_1+\cdots+m_{n-1}\mu_{n-1}$, where $m_i\in\Z_{\geq 0}$. For each $i$, let $W_i^{\otimes m_i}$ denote the tensor product of $m_i$-copies of $W_i$, and $w_i^{m_i}:=w_i\otimes \cdots\otimes w_i\in W_i^{\otimes m_i}$. Consider the tensor product representation of $\SL(n,\R)$ on $W_\chi=W_1^{\otimes m_1}\otimes \cdots \otimes W_{n-1}^{\otimes m_{n-1}}$, and let $w_\chi=w_1^{m_1}\otimes \cdots \otimes w_{n-1}^{m_{n-1}}$. Then $w_{\chi}$ is fixed by $N$, and ${S}$ acts on the line $\R w_{\chi}$ via the character $\chi$.  We endow $W_\chi$ with a $\SO(n,\R)$-invariant norm.

\begin{proof}[Proof of Proposition \ref{reduction prop}] Let $V$ be a finite dimensional representation of $G:=\SL(n,\R)$ endowed with some norm. Suppose $v\in V$ is a  nonzero vector such that $\varphi(t)\cdot v\to 0$ as $t\to \infty$. Then $\overline{G\cdot v}\ni 0$, so $v$ is a $G$-\textit{unstable} vector in $V$. So, by \cite[Remark~2.3]{shah2023equidistribution}, there exist $g_0\in G$, a dominant integral character $\chi\in X^\ast({S})$, and constants $\beta>0$ and $C>0$ such that for any $g\in G$, 
\begin{equation} \label{eq:reduction}
    \norm{gg_0\cdot w_\chi}\leq C\norm{g\cdot v}^\beta.
\end{equation}

Therefore, $\lim_{t\to \infty} \varphi(t)g_0\cdot w_\chi=0$. Let $A={S}^0$, the component of the identity in ${S}$. Let $K=\SO(n,\R)$. Then by Iwasawa decomposition, $G=KAN$. For each $t$, we express $\varphi(t)g_0=k_ta_tn_t$, where $k_t\in K$, $a_t\in A$ and $n_t\in N$. Since the norm on $W_\chi$ is $K$-invariant and $N$ fixes $w_\chi$,
\[
\norm{\varphi(t)g_0\cdot w_\chi}=\norm{a_t \cdot w_\chi}=\chi(a_t)\norm{w_\chi}.
\]
By \eqref{eq:reduction}, $\lim_{t\to\infty} \varphi(t)g_0\cdot w_\chi=0$. Therefore, $\lim_{t\to\infty} \chi(a_t)=0$. Now 
\[
\chi(a_t)=\mu_1(a_t)^{m_1}\cdots \mu_{n-1}(a_t)^{m_{n-1}}.
\] 
Since $m_i\geq 0$, after passing to a subsequence, we can pick $i\in\{1,\ldots,n-1\}$ and a sequence $t_l\to\infty$ such that $\lim_{l\to\infty} \mu_i(a_{t_l})=0$. Therefore, since $N$ fixes $w_i$, and the norm is $K$-invariant,
\[
\norm{\varphi(t_l)g_0\cdot w_i}=\norm{{a_{t_l}\cdot w_i}}=\mu_i(a_{t_l})\norm{w_i}\to 0\text{, as $l\to\infty$.}
\]
Since the coordinate functions of $t\to\varphi(t)g_0\cdot w_i$ are definable in an o-minimal structure, we get $\lim_{t\to\infty}\varphi(t)g_0\cdot w_i=0$. Let $v_j=g_0e_j$ for each $j$. Since $w_i=e_1\wedge \cdots \wedge e_i$, $\varphi(t)\cdot (g_0\cdot w_i)=\varphi(t)\cdot (v_1\wedge \cdots \wedge v_i)\to 0$ as $t\to\infty$. This contradicts the assumption that the curve is non-contracting.
%~\ref{eq:non-contract-decoposible}. 
\end{proof}

We record the following statement. It will be used in the proof of Lemma~\ref{lem:hull is semisimple if curve expands}, which is in turn needed for proving Proposition \ref{prop:main example for equidistribution}. 

\begin{lemma}\label{lem:expanding implies orbit closed}
    Let $\varphi:[0,\infty)\to \SL(n,\R)$ is a map such that \begin{equation}\label{eq:div}
     \lim_{t\to\infty}\varphi(t)\cdot (v_1\wedge \dots \wedge v_k)=\infty,
 \end{equation}for all $1\leq k\leq n-1$ and linearly independent vectors $v_1,..., v_k\in \R^n$. Then for any finite-dimensional rational representation $V$ and any  $v\in V$, if $\{\varphi(t)\cdot v: t\geq 0\}$ is bounded in $V$, then $\SL(n,\R)\cdot v$ is Zariski closed.
\end{lemma}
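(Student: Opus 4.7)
The plan is to prove the contrapositive: assuming $\SL(n,\R).v$ is not Zariski closed, I will show that $\|\varphi(t).v\|$ is unbounded. The argument parallels the proof of Proposition~\ref{reduction prop}, but replaces Kempf's instability criterion (which applies only to unstable vectors) with the real Hilbert--Mumford theorem of Birkes~\cite{Birks71} (valid for all non-closed orbits), followed by a reduction to the unstable case.

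First, by Birkes' theorem, there exist $g_0 \in \SL(n,\R)$ and a one-parameter subgroup $\lambda$ of the maximal $\R$-split torus $\textbf{S}$ (which, after a Weyl group adjustment absorbed into $g_0$, lies in the closure of the positive Weyl chamber) such that $v_0 := \lim_{t\to 0}\lambda(t).(g_0.v)$ exists in $V$ and lies in the unique Zariski closed $\SL(n,\R)$-orbit of $\overline{\SL(n,\R).v}^{\mathrm{Zar}}\setminus \SL(n,\R).v$. Decomposing $g_0.v$ into $A$-weight spaces, the strictly positive-$\lambda$-weight part $v_+ := g_0.v - v_0$ satisfies $\lim_{t\to 0}\lambda(t).v_+ = 0$; hence $0 \in \overline{\SL(n,\R).v_+}^{\mathrm{Zar}}$, i.e., $v_+$ is $\SL(n,\R)$-unstable. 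Equivalently, its $G$-translate $w := g_0^{-1}.v_+ = v - v_1$, with $v_1 := g_0^{-1}.v_0$, is also unstable.

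Applying Kempf's theorem to the unstable vector $w$, exactly as in the proof of Proposition~\ref{reduction prop} via \cite[Remark 2.3]{shah2023equidistribution}, produces $g_1 \in \SL(n,\R)$, a dominant character $\chi_+$ of $\textbf{S}$, and constants $C,\beta>0$ such that $\|hg_1.w_{\chi_+}\| \leq C\|h.w\|^\beta$ for every $h \in \SL(n,\R)$. Setting $h = \varphi(t)$ and applying the triangle inequality yields
\begin{equation*}
\|\varphi(t)g_1.w_{\chi_+}\|\leq C\bigl(\|\varphi(t).v\|+\|\varphi(t).v_1\|\bigr)^\beta.
\end{equation*}
The left-hand side tends to infinity as $t\to\infty$ by the hypothesis on $\varphi$, by the same argument used at the end of the proof of Proposition~\ref{reduction prop}: $w_{\chi_+}$ is, up to a positive scalar, a tensor product of highest-weight vectors of the fundamental representations of $\SL(n,\R)$ with nonnegative exponents, and the hypothesis, applied to the linearly independent vectors $g_1.e_1,\ldots,g_1.e_k$, forces $\|\varphi(t)g_1.w_k\|\to\infty$ for each $k$. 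Hence $\|\varphi(t).v\|+\|\varphi(t).v_1\|\to\infty$, so to isolate $\|\varphi(t).v\|\to\infty$ it suffices to control $\|\varphi(t).v_1\|$. Since $v_1$ lies in the closed orbit $\SL(n,\R).v_0$ of strictly smaller dimension than $\dim \SL(n,\R).v$, the argument closes by induction on $\dim\overline{\SL(n,\R).v}^{\mathrm{Zar}}$, the base case being a fixed point (whose orbit is trivially closed).

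The main obstacle is controlling $\|\varphi(t).v_1\|$ in the inductive step: the induction hypothesis supplies ``bounded trajectory implies closed orbit'' for $v_1$, but not the converse, and a closed non-compact orbit may well carry an unbounded $\varphi$-trajectory. To overcome this, one may (i) refine the choice of $v_0$ within its $K$-orbit (where $K = \SO(n)$) inside the closed boundary orbit, using a Kempf--Ness-type minimization, so that the representative $v_1$ has $\|\varphi(t).v_1\|$ bounded; or (ii) reduce the representation $V$, using semisimplicity of $\SL(n,\R)$, modulo its $\SL(n,\R)$-trivial subrepresentation, ensuring that all intermediate closed orbits produced in the induction are singletons, which trivially have bounded $\varphi$-trajectories. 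With either reduction, the induction terminates and the contrapositive is established.
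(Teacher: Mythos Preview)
Your reduction to the unstable case via Birkes' theorem is a natural idea, but the ``main obstacle'' you flag is a genuine gap that neither of your proposed fixes closes.

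Fix (ii) is simply false. Even when $V$ has no trivial $\SL(n,\R)$-subrepresentation, the unique closed orbit in $\overline{\SL(n,\R).v}^{\mathrm{Zar}}\setminus \SL(n,\R).v$ need not be $\{0\}$. A concrete counterexample: take $\SL(3,\R)$ acting on $\mathfrak{sl}(3,\R)$ by the adjoint representation (no trivial subrepresentation, since $\SL(3,\R)$ is simple), and let $v=s+n$ with $s=\diag(1,1,-2)$ and $n=E_{12}$. Conjugating by $\diag(t,t^{-1},1)$, which centralizes $s$, sends $v$ to $s+t^2 n\to s$ as $t\to 0$; thus $s\neq 0$ lies in the closed boundary orbit. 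So your induction does not terminate at singletons.

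Fix (i) is not justified either. The Kempf--Ness point minimizes the norm on the closed orbit and is canonical up to $K$, but it has no relation to the particular curve $\varphi$; there is no reason its $\varphi$-trajectory should be bounded. Moreover, $v_0$ is not freely movable within its $K$-orbit: it is determined by the weight-zero part of $g_0.v$ for the chosen $\lambda$, and replacing $v_0$ by $k.v_0$ destroys the decomposition $g_0.v=v_0+v_+$ (and hence the unstability of the new $v_+$). You would need a genuinely new argument here, not a Kempf--Ness invocation.

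By contrast, the paper's proof avoids the decomposition $v=v_1+w$ altogether. It cites \cite[Corollary~2.5 and Remark~2.3]{shah2023equidistribution}, which supplies the inequality $\norm{gg_0.w_\chi}\leq C\norm{g.v}^\beta$ (for $\norm{g.v}\leq R$) directly under the hypothesis that $\SL(n,\R).v$ is not Zariski closed, not merely that $v$ is unstable. With this stronger input, the Iwasawa-decomposition argument you borrowed from Proposition~\ref{reduction prop} applies immediately to $v$ itself, and no induction or auxiliary vector $v_1$ is needed. If you want to repair your approach rather than quote that corollary, you would essentially need to reprove it; the shortcut through Birkes plus unstable-Kempf does not suffice.
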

\begin{proof}
    %We will use the notations above. 
    Suppose that $V$ is a finite-dimensional rational representation of $G=\SL(n,\R)$ and let $v\in V$ such that $G\cdot  v$ is not Zariski closed. So there exists a finite-dimensional rational representation $W$ of $G$ and a $G$-equivariant polynomial map $P:V\to W$ such that $P(v)\neq 0$ and $\overline{G\cdot P(v)}\ni 0$, see \Cref{lem:kempf lemma adjusted}. Therefore, 
    by \cite[Remark~2.3]{shah2023equidistribution}, there exist a $g_0\in G$, a nontrivial dominant integral character $\chi\in X^\ast({S})$, and constants $\beta>0$ and $C>0$ such that for any $g\in G$,
    \[
    \norm{gg_0\cdot w_\chi}\leq C\norm{g\cdot P(v)}^\beta=C\norm{P(g\cdot v)}^\beta,
    \]
    cf.~\cite[Corollary~2.5]{shah2023equidistribution}. 
    Since $P$ is continuous, given any $R>0$, there exists $C_R>0$ such that for any $g\in G$,
\begin{align} \label{eq:reduction in expanding case}
    \text{if $\|g\cdot v\|\leq R$, then } \norm{gg_0\cdot w_\chi}\leq C_R\norm{g\cdot v}^\beta.
\end{align}
Now suppose that $\varphi:[0,\infty)\to\SL(n,\R)$ is a map satisfying the diverging property \eqref{eq:div} and assume for contradiction that $\{\varphi(t)\cdot v:t\geq 0\}$ is bounded in $V$. Then, by \eqref{eq:reduction in expanding case}, we get that $\{\norm{\varphi(t)g_0\cdot w_\chi}:t>0\}$ is bounded. As in the  proof of Proposition \ref{reduction prop}, by Iwasawa decomposition, for all $t>0$, $\varphi(t)g_0=k_ta_tn_t$, where $k_t\in K$, $a_t\in A$ and $n_t\in N$, and
\[
\norm{\varphi(t)g_0\cdot w_\chi}=\norm{a_t\cdot w_\chi}=\chi(a_t)\norm{w_\chi}.
\]
We claim that $\lim_{t\to\infty}\chi(a_t)=\infty,$ which is a contradiction. Indeed, we have
\[
\chi(a_t)=\mu_1(a_t)^{m_1}\cdots \mu_{n-1}(a_t)^{m_{n-1}},
\]
for some $(m_1,\ldots,m_{n-1})\in\Z_{\geq 0}$ and $m_i>0$ for some $i$. Now
\[
\norm{\varphi(t)g_0\cdot w_i}=\norm{{a_{t}\cdot w_i}}=\mu_i(a_{t})\norm{w_i},
\]
where $w_i=e_1\wedge\cdots \wedge e_i$ and $1\leq i\leq n-1$. And by \eqref{eq:div}, we have $\lim_{t\to\infty}\norm{\varphi(t)g_0\cdot w_i}=\infty$. So, $\lim_{t\to\infty} \mu_i(a_t)\to\infty$.
\end{proof}

\subsection{On the hull and the correcting curve\label{sec:hull and correcting curve}} In what follows,  $G:=\mathbf{G}(\R)$ is the real points of an algebraic group $\mathbf{G}$ over $\R$. To define the hull and obtain its useful properties, we will need the following notion.
 \begin{definition}\label{def:observable gps}
   We say that a Zariski closed subgroup $H$ is \textit{observable} in $G$ if there is a finite-dimensional rational representation $V$ over $\R$ and  $v\in V$ such that $H=\{g\in G:g\cdot v=v\}$.
\end{definition}
The notion of observable groups was introduced \cite{Mostow_et_al_observable_groups}. More precisely, \cite{Mostow_et_al_observable_groups} defines observable groups as the algebraic subgroups of algebraic groups for which every finite-dimensional rational representation extends to a finite-dimensional rational representation of the ambient group. The above definition turns out to be equivalent to the extension property \cite[Theorem 8]{Mostow_et_al_observable_groups} (see \cite[Theorem 9]{Observable_groups_over_fields} for this statement for non-algebraically closed fields). The following known result will be useful, see e.g. \cite[Corollary 2.8]{Observable_group_grosshans}.
\begin{lemma}\label{lem:generated by uni implies observ} 
     Let $H$ be an algebraic subgroup of an algebraic group $G$ over $\R$. If the radical of $H$ is unipotent, then $H$ is an observable subgroup of $G$. 
\end{lemma}

 \begin{remark} \label{rem:genbyuni-observable}
 Suppose that $L$ is a closed subgroup of $\SL(n,\R)$ generated by unipotent one-parameter subgroups. Then $L=\zcl(L)^0$ and the radical of $\zcl(L)$ is unipotent (see \cite[Lemma 2.9]{Shah1991}). Hence, by \Cref{lem:generated by uni implies observ}, $\zcl(L)$ is observable in $\SL(n,\R)$. 
\end{remark}

In what follows, we say that a curve $\varphi([0,\infty))\subseteq G$ is \textit{bounded} modulo a closed subgroup $H\subseteq G$ if there exists a compact subset $K\subseteq G$ such that $\varphi([0,\infty))\subseteq KH$. 

% Alternatively, $\varphi$ is bounded modulo $H$ if its image in $G/H$ is contained in a compact subset of $G/H$.   
We fix an o-minimal structure $\omin$ for the rest of the following two subsections. We stress that the assumption of polynomial boundedness on $\omin$ is unnecessary here. 

Here is our main theorem in this section.

\begin{theorem}\label{thm:main result on the hull and correcting curve}
  Let $\varphi:[0,\infty)\to G$ be a continuous,  non-contracting curve definable in  $\omin$.
   Then, there exists a unique closed connected \emph{(}in the Hausdorff topology\emph{)} subgroup  $H_\varphi\subseteq G$  of the smallest dimension such that $H_\varphi$ is generated by one-parameter unipotent subgroup and such that $\varphi$ is bounded modulo $H_\varphi$. Moreover, the following properties hold: 
   
 \begin{enumerate}
  \item\label{enu:hull is minimal-Lie}The subgroup $H_\varphi$ is minimal in the following sense: if $H'$ is an observable group such that $\varphi$ is bounded modulo $H'$, then $H_\varphi\subseteq H'$. Moreover, if $V$ is a rational representation, and $v\in V$ is such that $\{\varphi(t)\cdot v:t\geq 0\}$ is bounded in $V$, then $\varphi$ is bounded modulo the isotropy group of $v$ and $H_\varphi\cdot v=\{v\}$. 
  \item \label{enu:curve contained in hull-Lie} There is a bounded, continuous  curve $\be: [0,\infty)\to G$ definable in $\mathscr S$ such that $\be(t)\varphi(t)\in H_\varphi$ for all $t\geq 0$.
  \item \label{enu:fixing property after correction-Lie} If $V$ is a rational finite-dimensional representation of $G$ over $\R$,  then for any $v\in V$, either $H_\varphi\cdot v=\{v\}$ or $\lim_{t\to\infty}\be(t)\varphi(t)\cdot v=\infty$.
\end{enumerate}  
\end{theorem}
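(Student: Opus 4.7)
My plan is to construct $H_\varphi$ by first identifying a minimal observable subgroup bounding $\varphi$, and then extracting its largest subgroup generated by one-parameter unipotent subgroups. Consider the family
\[
\F := \{H' \le G \text{ observable} : \varphi \text{ is bounded modulo } H'\}.
\]
This family contains $G$ itself (which is $\stab(1)$ in the trivial one-dimensional representation), so it is nonempty. The central intermediate claim, which I will call the \emph{bridge lemma}, is: for any rational representation $V$ of $G$ and any $v \in V$, if $\{\varphi(t).v : t \ge 0\}$ is bounded in $V$, then $\varphi$ is bounded modulo $\stab(v)$. Granting this, $\F$ is closed under intersection: if $H_i = \stab(v_i) \subseteq V_i$, then $H_1 \cap H_2 = \stab(v_1 \oplus v_2) \subseteq V_1 \oplus V_2$, and boundedness of $\varphi(t).(v_1 \oplus v_2)$ yields $H_1 \cap H_2 \in \F$. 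Choosing an element $H^* \in \F$ of smallest dimension, intersection closure forces uniqueness, and one replaces $H^*$ by its identity component.

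Since $H^*$ is observable, its radical is unipotent, so $(H^*)^0$ has a Levi decomposition $L \ltimes U$ with $L$ semisimple and $U$ its unipotent radical. Splitting $L = L_c \cdot L_{nc}$ into its compact and noncompact almost-direct factors, set $H_\varphi := L_{nc} \cdot U$. Because $L_c$ is compact, $\varphi$ being bounded modulo $(H^*)^0$ is equivalent to $\varphi$ being bounded modulo $H_\varphi$. The group $H_\varphi$ is observable (its radical is the unipotent $U$) and is generated by one-parameter unipotent subgroups (both $L_{nc}$, as a semisimple group with no compact factors, and $U$ are). Hence $H_\varphi \in \F$, and minimality of $H^*$ forces $L_c = \{e\}$ and $H_\varphi = H^*$.

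For the correcting curve, I invoke definable choice (Skolem) in the o-minimal structure $\omin$: writing $\varphi(t) \in K \cdot H_\varphi$ for a compact set $K$, one obtains a definable selection $t \mapsto \be(t)^{-1} \in K$ with $\be(t)\varphi(t) \in H_\varphi$; cell decomposition arranges continuity on $[T_0, \infty)$ for some $T_0$, while on $[0, T_0]$ continuity is arranged on a finite subdivision, and the Monotonicity Theorem gives $\be_\infty = \lim_{t \to \infty} \be(t) \in G$. Property~(\ref{enu:hull is minimal-Lie}) is now immediate from the bridge lemma: if $\{\varphi(t).v\}$ is bounded, then $\stab(v) \in \F$, whence $H_\varphi \le \stab(v)$, i.e., $H_\varphi.v = \{v\}$. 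Property~(\ref{enu:fixing property after correction-Lie}) reduces to (\ref{enu:hull is minimal-Lie}): if $\be(t)\varphi(t).v$ fails to diverge, then boundedness of $\be$ forces $\varphi(t).v$ to be bounded, so $H_\varphi.v = \{v\}$; since $\be(t)\varphi(t) \in H_\varphi$ this forces $\be(t)\varphi(t).v = v$ for all $t$.

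The principal obstacle is the bridge lemma itself: a priori, a bounded trajectory $\varphi(t).v$ in $V$ could escape the orbit $G.v$ through its boundary into a strictly smaller $G$-orbit, so boundedness in $V$ need not yield boundedness in $G/\stab(v)$. I would rule this out by combining the Monotonicity Theorem, which yields a definite limit $v_\infty = \lim_{t\to\infty} \varphi(t).v \in V$, with the Hilbert--Mumford/Kempf machinery used in the proof of Proposition~\ref{reduction prop}: if $v_\infty \notin G.v$, a Hilbert--Mumford one-parameter subgroup realizing the degeneration, combined with an Iwasawa decomposition analysis of $\varphi(t)$ as in the proof of Proposition~\ref{reduction prop}, produces a decomposable multivector $e_{i_1} \wedge \cdots \wedge e_{i_k}$ whose $\varphi$-orbit limits to zero, contradicting non-contraction via Proposition~\ref{reduction prop}.
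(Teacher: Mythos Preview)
Your overall architecture matches the paper's: build a minimal observable subgroup $H^*$ via the ``bridge lemma'' and intersection closure, then extract the subgroup generated by unipotents. The bridge lemma itself is essentially the paper's Lemma~\ref{lem:correction of bounded trajectory in reps}, and your sketch (limit exists by monotonicity, boundary escape contradicts non-contraction via a Kempf-type argument) is close in spirit, though the paper uses Kempf's Lemma~\ref{lem:kempf lemma adjusted} directly---an equivariant polynomial map $P:V\to W$ with $P(\overline{G.v}\setminus G.v)=0$ and $P(v)\neq 0$---rather than the Iwasawa/numerical-criterion argument from Proposition~\ref{reduction prop}; the latter is tailored to $G$-\emph{unstable} vectors ($\varphi(t).v\to 0$), not to orbit-boundary escape, so your sketch would need more care to go through.

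The genuine gap is the sentence ``Since $H^*$ is observable, its radical is unipotent.'' This implication is false: you have reversed \cite[Corollary~2.8]{Observable_group_grosshans}, which says that unipotent radical implies observable, not the converse. Reductive subgroups (e.g.\ a maximal torus in $\SL(n,\R)$) are observable, since $G/H$ is affine, yet their radical is a torus. So nothing prevents $H^*$ from having a nontrivial central torus in its Levi factor, and your subsequent step of discarding only the compact semisimple factor $L_c$ does not produce a group generated by unipotent one-parameter subgroups.

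The paper closes this gap in Lemma~\ref{lemma:genbyuni} by a different mechanism that uses the non-contracting hypothesis again, not just observability. One passes to the quotient $\tilde H_\varphi/L$, where $L$ is the Zariski closure of the subgroup generated by unipotents; this quotient is reductive with center a torus and compact semisimple part. For every character $\chi$ of the quotient, one builds (via observability of $\tilde H_\varphi$ in $G$) a two-dimensional representation on which $\tilde H_\varphi$ acts by $(\chi,\chi^{-1})$; non-contraction then forces $\chi\circ q\circ\psi\equiv 1$, so the corrected curve lands in the anisotropic part of the center times the compact semisimple factor, hence is bounded modulo $L$. Minimality then gives $\tilde H_\varphi=L$. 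Your argument needs an ingredient of this kind; the structure theory of observable groups alone will not supply it.
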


In view of the above statement, we can now define the hull.

\begin{definition}\label{def:hull and correcting curve}
    Let $\varphi:[0,\infty)\to G$ be a continuous non-contracting curve definable in  $\mathscr S$. We define the \textit{hull} $H_\varphi$ to be the smallest connected closed subgroup generated by unipotent elements such that $\varphi$ is bounded modulo $H_\varphi$, and we call a curve $\be:[0,\infty)\to G$ a \textit{correcting curve} if $\be(t)\varphi(t)\in H_\varphi$ for all $t\geq0$.
\end{definition}

\begin{remark}\label{rem:hull is identity compo of zcl}
In view of \Cref{rem:genbyuni-observable}, $H_\varphi=\zcl(H_\varphi)^0$ and $\zcl(H_\varphi)$ is the observable subgroup of smallest dimension modulo which $\varphi$ is bounded.
\end{remark} 

\begin{remark} \label{rem:hull of corrected curve is hull}
 %  (1) 
   We note that if $\varphi$ is as in the above theorem and $\be$ is a definable correcting curve, then the hull of $\psi(t):=\be(t)\varphi(t)$ is the same as the hull of $\varphi$, and the correcting curve of $\psi$ can be chosen to be the constant identity matrix. This follows from the uniqueness of the hull.

%   (2) Since $H_\varphi$ is generated by unipotent one-parameter subgroups, $H_\varphi$ equals the connected component of the identity in the $\R$-points of its Zariski closure $\zcl(H_\varphi)$.
\end{remark}

\begin{remark}
    Suppose that $\varphi:[0,\infty)\to \SL(n,\R)$ is a definable curve whose entry functions $\varphi_{i,j}$ generate an algebra which consists of functions that are either constant or diverging to $\infty$, e.g., when $t\mapsto \varphi_{i,j}(t)$ are polynomials for all $i,j$. Then,  by Theorem \ref{thm:main result on the hull and correcting curve},  $\varphi(0)^{-1}\varphi([0,\infty))\subseteq H_\varphi$. A correcting curve is given by the constant curve $\be(t)= \varphi(0)^{-1}$ for all $t\geq 0$.
\end{remark}

Before proving Theorem \ref{thm:main result on the hull and correcting curve}, we establish that the hull is intrinsically defined.
\begin{lemma}\label{lem:image of hull under homo}
 Let $G_1$ and $G_2$ be the real points of algebraic groups over $\R$, and let $f:G_1\to G_2$ be a rational homomorphism. Suppose that $\varphi:[0,\infty)\to G_1$ is a non-contracting curve definable in $\omin$. Then $f(H_\varphi)=H_{f\circ\varphi}$.
\end{lemma}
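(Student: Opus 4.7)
The plan is to establish the two inclusions $H_{f\circ\varphi}\subseteq f(H_\varphi)$ and $f(H_\varphi)\subseteq H_{f\circ\varphi}$ separately, using the minimality properties of the hull captured in Theorem \ref{thm:main result on the hull and correcting curve}. Before doing so, I would confirm that $f\circ\varphi$ has a well-defined hull: definability follows since $f$ is rational and each coordinate of $\varphi$ is definable, while non-contraction of $f\circ\varphi$ in $G_2$ is inherited from that of $\varphi$ in $G_1$, because any finite-dimensional rational representation of $G_2$ pulls back via $f$ to a rational representation of $G_1$ on which $(f\circ\varphi)(t).v$ and $\varphi(t).v$ agree.

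For $H_{f\circ\varphi}\subseteq f(H_\varphi)$, my plan is to present $f(H_\varphi)$ as a candidate in the defining minimality of $H_{f\circ\varphi}$. Since $f$ is a morphism of algebraic groups, it sends each unipotent one-parameter subgroup to either a unipotent one-parameter subgroup or the identity; moreover the image of a connected algebraic subgroup under a rational homomorphism is closed and connected. Hence $f(H_\varphi)$ is a closed, connected subgroup of $G_2$ generated by one-parameter unipotent subgroups. Writing $\varphi(t)=k_t h_t$ with $k_t$ in a fixed compact set $K\subset G_1$ and $h_t\in H_\varphi$, we get $(f\circ\varphi)(t)=f(k_t)f(h_t)$ with $f(k_t)\in f(K)$ compact and $f(h_t)\in f(H_\varphi)$, so $f\circ\varphi$ is bounded modulo $f(H_\varphi)$. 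The minimality defining $H_{f\circ\varphi}$ then yields the inclusion.

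For the reverse inclusion, I would use that $H_{f\circ\varphi}$ is observable in $G_2$. Indeed, $H_{f\circ\varphi}$ is connected and generated by one-parameter unipotent subgroups, so it carries no non-trivial rational character (each such character kills every unipotent one-parameter subgroup), whence its radical is unipotent and Grosshans' criterion cited in the text applies. Thus there exist a finite-dimensional rational representation $V$ of $G_2$ and a vector $v\in V$ with $H_{f\circ\varphi}=\{g\in G_2 : g.v=v\}$. Viewing $V$ as a $G_1$-representation via $f$, the $G_1$-stabilizer of $v$ is exactly $f^{-1}(H_{f\circ\varphi})$. Since $f\circ\varphi$ is bounded modulo $H_{f\circ\varphi}$ and $H_{f\circ\varphi}$ fixes $v$, the trajectory $\varphi(t).v=(f\circ\varphi)(t).v$ stays bounded in $V$. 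Part (\ref{enu:hull is minimal-Lie}) of Theorem \ref{thm:main result on the hull and correcting curve} then forces $H_\varphi.v=\{v\}$, so that $H_\varphi\subseteq f^{-1}(H_{f\circ\varphi})$ and hence $f(H_\varphi)\subseteq H_{f\circ\varphi}$.

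The step I expect to require the most care is the algebraic-groups input: confirming cleanly that $f(H_\varphi)$ is closed in $G_2$ and generated there by one-parameter unipotent subgroups, so that $f(H_\varphi)$ is eligible to be tested against the minimality clause defining $H_{f\circ\varphi}$. Once this input is in place, the remainder of the argument reduces to transporting a single bounded-orbit statement back and forth between $G_1$ and $G_2$ through the representation $V$ and its pullback.
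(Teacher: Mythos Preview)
Your proof is correct and follows essentially the same two-inclusion strategy as the paper: the first inclusion via minimality of the hull applied to the candidate $f(H_\varphi)$, and the second by realizing $H_{f\circ\varphi}$ (the paper more carefully uses $\zcl(H_{f\circ\varphi})$, but this is a cosmetic difference) as a stabilizer in a rational $G_2$-representation and pulling that representation back through $f$ to invoke Theorem~\ref{thm:main result on the hull and correcting curve}\eqref{enu:hull is minimal-Lie}. The only other stylistic difference is that the paper exhibits boundedness of $f\circ\varphi$ modulo $f(H_\varphi)$ via the correcting curve $\beta$, whereas you write $\varphi(t)=k_t h_t$ with $k_t$ ranging in a compact set.
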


\begin{proof}
   We start by showing that $H_{f\circ\varphi}\subseteq f(H_{\varphi})$. Let $\be:[0,\infty)\to G_1$ be a correcting curve of $\varphi$. Since $H_\varphi$ is generated by unipotents, we get that $f(H_\varphi)$ is generated by unipotents. Moreover, the curve $f\circ\varphi$ is bounded modulo $f(H_\varphi)$. In fact, $t\mapsto f(\be(t))$ is bounded, and for all $t\geq 0$: $$f(\beta(t)\varphi(t))=f(\be(t))f(\varphi(t))\in f(H_\varphi).$$ Thus, by minimality of the hull, we conclude that $H_{f\circ\varphi}\subseteq f(H_{\varphi})$. 
   
   We proceed to show the other inclusion. Since $\zcl(H_{f\circ\varphi})$ is observable, we choose a rational representation $\rho:G_2\to\GL(V)$ and a $v\in V$ such that $$\zcl(H_{f\circ\varphi})=\{g\in G_2:\rho(g)v=v\}.$$Since $f\circ \varphi$ is bounded modulo $H_{f\circ\varphi}$,  the trajectory, $$\rho((f\circ\varphi)(t))v=(\rho\circ f)(\varphi(t))v,~t\geq 0,$$is bounded in $V$. Now, $\eta:=\rho\circ f:G_1\to \GL(V)$ is a representation of $G_1$ such that $t\mapsto\eta(\varphi(t))v,~t\geq 0$ is bounded. By Theorem \ref{thm:main result on the hull and correcting curve}\eqref{enu:hull is minimal-Lie}, we obtain that $\rho(f(H_\varphi))v=\eta(H_\varphi)v=\{v\}$. That is, $f(H_\varphi)$ is contained in the isotropy group of $v$ which is equal to $\zcl(H_{f\circ\varphi})$. Since $f(H_\varphi)$ is generated by unipotent one-parameter subgroups, 
   \[
   f(H_\varphi)\subseteq \zcl(H_{f\circ\varphi})^0=H_{f\circ\varphi}.
   \]
\end{proof}

\subsection{Proving Theorem \ref{thm:main result on the hull and correcting curve}}
We begin with a ``curve correcting'' lemma.
\begin{lemma}\label{lem:curve bounded modification}Consider an algebraic action of $G$ on the $\R$ points of an affine or projective variety $\ver$ over $\R$.  Let $\varphi:[0,\infty)\to G$ be a continuous curve definable in  $\omin$. Suppose that $x\in \ver$ and $g_0\in G$ are such that
$$\lim_{t\to\infty}\varphi(t)\cdot x =g_0\cdot x.$$

Then, there exists a continuous $\omin$-definable curve $\delta:[0,\infty)\to G$ such that 
\[
\lim_{t\to\infty}\de(t)=e \text{ and }\delta(t)\varphi(t)\cdot x= g_0\cdot x,\ \forall t\geq 0.
\]
\end{lemma}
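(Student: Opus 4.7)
The plan is to construct $\delta$ first on a tail interval $[T,\infty)$ using a local cross section to the orbit map of $g_0.x$, and then to patch the remaining initial interval $[0,T]$ using a fixed element of the stabilizer of $g_0.x$.

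To start, I would set $H':=\stab_G(g_0.x)$, a real algebraic subgroup of $G$. Since the action is algebraic, the orbit map $\pi:G\to G.(g_0.x)$ given by $g\mapsto g.(g_0.x)$ has surjective differential at $e$ onto the tangent space of the orbit at $g_0.x$, so by the semialgebraic implicit function theorem it admits a continuous semialgebraic local right-inverse there: an open neighborhood $V\ni g_0.x$ in the orbit together with a continuous semialgebraic section $\sigma:V\to G$ satisfying $\sigma(g_0.x)=e$ and $\sigma(y).(g_0.x)=y$ for all $y\in V$. Because $\omin$ extends the semialgebraic structure on $\R$, the map $\sigma$ is also definable in $\omin$.

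Using the hypothesis $\varphi(t).x\to g_0.x$, I would choose $T\geq 0$ with $\varphi(t).x\in V$ for all $t\geq T$, and set $\delta_\infty(t):=\sigma(\varphi(t).x)^{-1}$ on $[T,\infty)$. This is continuous and $\omin$-definable as a composition of definable maps; from $\sigma(\varphi(t).x).(g_0.x)=\varphi(t).x$ one reads off $\delta_\infty(t)\varphi(t).x=g_0.x$; and by continuity of $\sigma$ at $g_0.x$ one gets $\delta_\infty(t)\to \sigma(g_0.x)^{-1}=e$ as $t\to\infty$. To extend to $[0,T]$, observe that $\tilde\delta(t):=g_0\varphi(t)^{-1}$ is continuous and $\omin$-definable on $[0,\infty)$ and tautologically satisfies $\tilde\delta(t)\varphi(t).x=g_0.x$. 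In particular the element $h_0:=\delta_\infty(T)\tilde\delta(T)^{-1}$ lies in $H'$, since both $\delta_\infty(T)$ and $\tilde\delta(T)$ send $\varphi(T).x$ to $g_0.x$. I would then define
\[
\delta(t):=\begin{cases} h_0\,\tilde\delta(t), & t\in[0,T],\\ \delta_\infty(t), & t\in[T,\infty),\end{cases}
\]
which agrees with itself at $t=T$ by the choice of $h_0$ (hence is continuous and $\omin$-definable), and satisfies $\delta(t)\varphi(t).x=h_0.(g_0.x)=g_0.x$ on $[0,T]$ because $h_0\in H'$ fixes $g_0.x$. Together with the equality on $[T,\infty)$ and the tail limit, this yields the required $\delta$.

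The one delicate point is the existence of a continuous definable local section $\sigma$ to the orbit map, which I would justify via the principal-bundle structure of $G\to G/H'$ being locally trivial in the real topology, combined with the semialgebraic implicit function theorem; these are classical facts of real algebraic geometry and remain valid in the o-minimal framework since $\omin$ contains all semialgebraic sets.
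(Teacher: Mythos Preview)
Your proof is correct and reaches the same conclusion, but the route differs from the paper's in a meaningful way. The paper never constructs an explicit section: it only uses that the orbit map $g\mapsto g.x$ is open in the real topology, so that for large $t$ the set $\{g\in G:\|g-I\|<1,\ g.x=g_0^{-1}\varphi(t).x\}$ is nonempty, and then invokes the o-minimal \emph{definable choice} theorem to pick some $\tilde\delta(t)$ in this set. The resulting $\tilde\delta$ is a priori only definable and bounded, not continuous, so the paper then appeals to the Monotonicity Theorem to get a limit $\delta_0$, corrects by $\delta_0^{-1}$ to force the limit to be $e$, and uses cell decomposition to get continuity on a tail before patching the initial interval exactly as you do.

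Your approach trades the model-theoretic choice function for a continuous semialgebraic local section to the orbit map, obtained via the semialgebraic implicit function theorem and local triviality of $G\to G/H'$. This buys you continuity and the correct limit $\delta_\infty(t)\to e$ for free, so you avoid the Monotonicity and cell-decomposition steps. The cost is the appeal to the semialgebraic section, which is classical but slightly heavier real-algebraic-geometry input than the paper's bare ``orbit map is open'' citation. Both arguments are valid; yours is more geometric and self-contained once the section is in hand, while the paper's is a clean illustration of how definable choice substitutes for explicit local trivializations in o-minimal arguments.
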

\begin{proof}
  Identify $G$  as a closed subgroup of $\SL(n,\R)$.  Consider the following definable set:\begin{align*}
    \mathcal{D}&:=\{(t,g):t>0, \ g\in G, \|g-I_n\|< 1\text{ and }g_0^{-1}\varphi(t)\cdot x=g\cdot x\}.
\end{align*}
Here $\|\cdot\|$ is a definable norm (e.g., the sum of squares norm). Since the orbit map is an open map (maps open sets in $G$ to open sets in $G\cdot x$ with respect to the induced topology from $\ver$, see \cite[Corollary 3.9]{Platonov_Rapinchuk_Rapinchuk_2023}), and since 
$$\lim_{t\to\infty}g_0^{-1}\varphi(t)\cdot x=x,$$ 
we get that for all $t$ large enough, say $t\geq T_0$, there exists $g\in G$ such that $(t,g)\in\mathcal{D}$. Using the choice function theorem, see Theorem \ref{thm:choice function}, there is a definable curve $\tilde{\delta}(t):[T_0,\infty)\to G$ such that $(t,\tilde \de(t))\in\mathcal D$ for all $t\geq T_0$. In particular, 
$$\tilde{\delta}(t)\cdot  x=g_0^{-1}\varphi(t)\cdot x, \,\forall t\geq T_0,$$ 
and $\tilde\de$ is bounded. Since the curve $\tilde \de (t)$ is bounded, by the monotonicity theorem, see \Cref{thm:monotonicity thm}, we have that $\lim_{t\to\infty}\tilde\de(t)=\delta_0$. Note that $\delta_0\cdot x=x$. Let $\delta(t):=g_0(\tilde{\delta}(t)\de_0^{-1})^{-1}g_0^{-1}$ for all $t\geq T_0$. Then, 
$$\de(t)\varphi(t)\cdot x=g_0\cdot x,\forall t\geq T_0.$$ 
By the monotonicity theorem, $\de(t)$ is continuous in $[T'_0,\infty)$, for some $T'_0\geq T_0$. We now extend $\de$ to the interval $[0,T'_0]$ by putting 
$$\de(t):=\de(T'_0)\varphi(T'_0)\varphi(t)^{-1},\,\forall t\leq T'_0.$$

Then $\de(t)\varphi(t)\cdot x=g_0\cdot x,\forall t\geq 0$. Since $\varphi$ is continuous and definable, $\de:[0,\infty)\to G$ is continuous, definable and bounded.
\end{proof}
%We now observe the following.

\begin{lemma}\label{lem:correction of bounded trajectory in reps}
  Suppose that $\varphi:[0,\infty)\to G$ is a non-contracting, continuous curve, definable in $\mathscr S$. Suppose that $V$ is a finite-dimensional rational representation of $G$ and $v\in V\setminus \{0\}$ is such that $\varphi(t)\cdot v$ is bounded. Then, there exists $g_0\in G$ such that 
  $$\lim_{t\to\infty}\varphi(t)\cdot v=g_0\cdot v.$$ 
  In particular, there exists a bounded, continuous curve $\be:[0,\infty)\to G$ definable in $\mathscr S$, such that $\be(t)\varphi(t)\cdot v=v$ for all $t\geq 0$.
\end{lemma}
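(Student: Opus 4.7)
The plan is to carry this out in three steps: establish that $w_0 := \lim_{t\to\infty}\varphi(t).v$ exists in $V$, show that $w_0 \in G.v$, and then extract $\beta$ from Lemma~\ref{lem:curve bounded modification}. For the first step, I would apply the Monotonicity Theorem \cite[Theorem 4.1]{Geom_cat_and_o_min_str} coordinate-wise: each coordinate of the bounded definable curve $t \mapsto \varphi(t).v$ is a bounded definable real-valued function, hence converges, so $w_0$ exists in $V$.

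The main (and hardest) step is to show $w_0 \in G.v$, which I would argue by contradiction. Suppose $w_0 \notin G.v$. Since $\varphi$ is continuous and $G.v$ is a locally closed submanifold of $V$, the point $w_0$ lies on the real-topology boundary of $G.v$; using continuity of $\varphi$ to keep the curve in a single connected component of $\mathbf{G}(\R).v$ (thereby ruling out the ``same complex orbit, different real orbit'' possibility), one sees that $w_0 \in \overline{\mathbf{G}.v}^{\text{Zar}} \setminus \mathbf{G}.v$. The stabilizer $H := \text{Stab}_{\mathbf{G}}(v)$ is observable by the very definition recalled in this section. Then, by Hilbert--Mumford/Kempf-type analysis in the style of \cite[Remark 2.3, Corollary 2.5]{shah2023equidistribution}, one can produce a rational $\mathbf{G}$-semi-invariant polynomial $P$ on $V$, with nontrivial character $\chi$, such that $P(v) \neq 0$ but $P$ vanishes on $\overline{\mathbf{G}.v}^{\text{Zar}} \setminus \mathbf{G}.v$. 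Evaluating along the curve,
\[
\chi(\varphi(t))\,P(v) \;=\; P(\varphi(t).v) \;\xrightarrow[t\to\infty]{}\; P(w_0) \;=\; 0,
\]
so $\chi(\varphi(t)) \to 0$. Viewing $\chi$ as a one-dimensional rational representation of $G$ on $\R$ (with $g.r = \chi(g)r$), this says that the orbit of the nonzero vector $1$ under $\varphi$ tends to zero, contradicting the non-contraction of $\varphi$.

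With $g_0 \in G$ satisfying $w_0 = g_0.v$ in hand, producing $\beta$ is a direct application of Lemma~\ref{lem:curve bounded modification} with $\ver := V$, $x := v$, and this $g_0$: it yields a bounded, continuous, $\omin$-definable curve $\delta : [0,\infty) \to G$ with $\delta(t)\varphi(t).v = g_0.v$ for all $t \geq 0$; setting $\beta(t) := g_0^{-1}\delta(t)$ gives the desired bounded, continuous, $\omin$-definable curve with $\beta(t)\varphi(t).v = v$. The main obstacle is the middle step, namely extracting the destabilizing semi-invariant $P$ (and hence the contracting character $\chi$) from the Zariski non-closedness of $\mathbf{G}.v$; this is the non-contracting analog of the expanding-case argument in Lemma~\ref{lem:expanding implies orbit closed}, and I expect the bulk of the technical work to lie there, including a careful handling of real-versus-complex orbit subtleties for possibly non-reductive $\mathbf{G}$.
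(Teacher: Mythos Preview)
Your three-step outline matches the paper's proof exactly: existence of the limit via monotonicity, membership in the orbit via a Kempf-type separation argument combined with non-contraction, and then Lemma~\ref{lem:curve bounded modification}. The gap is in your execution of the middle step.

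You propose to produce a scalar \emph{semi-invariant} $P$ on $V$ with a nontrivial rational character $\chi$ of $G$, vanishing on the orbit boundary but not at $v$, and then derive $\chi(\varphi(t))\to 0$. This cannot work in the generality of the lemma: when $G$ is semisimple (for instance $G=\SL(n,\R)$, which is the main case of interest, cf.\ Proposition~\ref{prop:main example for equidistribution}), $G$ has \emph{no} nontrivial rational characters, so every semi-invariant is an invariant; but invariants are constant on orbit closures and therefore can never separate $v$ from $w_0\in\overline{G.v}$. The references you cite, \cite[Remark~2.3, Corollary~2.5]{shah2023equidistribution}, concern characters of the maximal torus $\mathbf{S}\subset\SL(n,\R)$ and norm inequalities for highest-weight vectors, not semi-invariants of $G$; they do not furnish the object you need.

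The fix, and what the paper does, is to replace the scalar semi-invariant by a \emph{vector-valued} $G$-equivariant polynomial map. This is exactly Kempf's Lemma (stated and proved for arbitrary real algebraic $G$ in the Appendix as Lemma~\ref{lem:kempf lemma adjusted}): there exist a rational representation $W$ of $G$ and a $G$-equivariant polynomial map $P:V\to W$ with $P(v)\neq 0$ and $P(S)=0$, where $S=\zcl(G.v)\smallsetminus G.v$. Then
\[
\varphi(t).P(v)=P(\varphi(t).v)\longrightarrow P(w_0)=0,
\]
and since $P(v)\neq 0$ in the rational $G$-module $W$, this contradicts non-contraction of $\varphi$ directly. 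No characters are needed, and the real-versus-complex orbit issues you flag disappear because Lemma~\ref{lem:kempf lemma adjusted} is formulated entirely over $\R$.
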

\begin{proof}
   Since $\varphi:[0,\infty)\to G$ is definable, and since $V$ is a rational representation,  $\varphi(t)\cdot v$ is a bounded definable curve. By the monotonicity theorem (see Theorem \ref{thm:monotonicity thm}), $\lim_{t\to\infty}\varphi(t)\cdot v=v'\in V$. We claim that $v'\in G\cdot v$. Denote $S:=\zcl(G\cdot v)\setminus G\cdot v$, and  suppose for contradiction that $v'\in S$. Then, by Lemma \ref{lem:kempf lemma adjusted}, which is a slight modification of \cite[Lemma 1.1(b)]{Kem78}, there exists a rational representation $W$ of $G$ and  $G$-equivariant polynomial map $ P:V\to W$ such that $P(v)\neq 0$ and $P(S)=0$. In particular,  $P(v')=0$. Then 
   $$\lim_{t\to\infty}\varphi(t)\cdot P(v)=P(\lim_{t\to\infty}\varphi(t)\cdot v)=P(v')=0,$$
   which is a contradiction, as $\varphi$ is non-contracting.
   Thus, we conclude that there exists $g_0\in G$ such that $$\lim_{t\to\infty}\varphi(t)\cdot v=g_0\cdot v.$$

The existence of the bounded curve $\be$ is obtained by Lemma \ref{lem:curve bounded modification}.
\end{proof}

\begin{lemma}\label{lem:intersection of observable groups for bounding curve is again such}
  Suppose that $\varphi:[0,\infty)\to G$ is a non-contracting, continuous curve, definable in $\omin$. Let $H_1$ and $H_2$ be observable groups of $G$ such that the image of $\varphi$ is bounded in $G/H_i$ for $i\in\{1,2\}$. Then, the image of $\varphi$ is bounded in $G/(H_1\cap H_2)$.
\end{lemma}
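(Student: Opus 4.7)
The plan is to realize $H_1\cap H_2$ as the stabilizer of a single vector in a direct sum representation, and then apply the two preceding lemmas. Since $H_i$ is observable, choose finite dimensional rational representations $V_i$ of $G$ and vectors $v_i\in V_i$ with $H_i=\{g\in G:g.v_i=v_i\}$, for $i=1,2$. Form the direct sum representation $W:=V_1\oplus V_2$ and set $w:=(v_1,v_2)\in W$. The isotropy subgroup of $w$ under the diagonal action is precisely $H_1\cap H_2$, so it suffices to exhibit a compact set $K\subset G$ with $\varphi([0,\infty))\subset K\cdot\stab(w)$.

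First I would translate the boundedness hypothesis into a statement about the orbit of $w$. By assumption there exist compact sets $K_i\subset G$ with $\varphi(t)\in K_iH_i$ for $i=1,2$, so that $\varphi(t).v_i\in K_i.v_i$, a compact subset of $V_i$. Consequently $\varphi(t).w=(\varphi(t).v_1,\varphi(t).v_2)$ is bounded in $W$. Since $\varphi$ is non-contracting and continuous definable in $\omin$, I can now invoke Lemma~\ref{lem:correction of bounded trajectory in reps} applied to the representation $W$ and the vector $w$: it yields some $g_0\in G$ for which
\[
\lim_{t\to\infty}\varphi(t).w=g_0.w.
\]

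With this convergence in hand, I would apply Lemma~\ref{lem:curve bounded modification} (taking the variety $\ver$ to be the $G$-orbit of $w$ inside $W$) to produce a continuous $\omin$-definable curve $\de:[0,\infty)\to G$ with $\lim_{t\to\infty}\de(t)=e$ and $\de(t)\varphi(t).w=g_0.w$ for every $t\geq 0$. In particular $\de$ is bounded, and the identity $\de(t)\varphi(t).w=g_0.w$ says $g_0^{-1}\de(t)\varphi(t)\in\stab(w)=H_1\cap H_2$. Rearranging gives $\varphi(t)\in\de(t)^{-1}g_0(H_1\cap H_2)$, and since $\{\de(t)^{-1}g_0:t\geq 0\}$ sits in a compact subset of $G$, the image of $\varphi$ is bounded modulo $H_1\cap H_2$, as desired.

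There is no real obstacle here beyond bookkeeping: the whole argument is a formal consequence of the two correction lemmas already proved, together with the elementary observation that an intersection of stabilizers is the stabilizer on the direct sum. The only place the non-contraction hypothesis enters is through Lemma~\ref{lem:correction of bounded trajectory in reps}, which is essential to ensure that the limit $\lim_{t\to\infty}\varphi(t).w$ actually lies on the orbit $G.w$ rather than on its boundary; without this the residual element $g_0$ would not exist and the argument would collapse.
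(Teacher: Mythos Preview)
Your proposal is correct and follows essentially the same approach as the paper: realize $H_1\cap H_2$ as the stabilizer of $(v_1,v_2)$ in $V_1\oplus V_2$, observe that $\varphi(t).(v_1,v_2)$ is bounded, and correct $\varphi$ by a bounded definable curve into the stabilizer. The only cosmetic difference is that the paper invokes the full ``In particular'' conclusion of Lemma~\ref{lem:correction of bounded trajectory in reps} (which already packages Lemma~\ref{lem:curve bounded modification}) to obtain $\delta$ with $\delta(t)\varphi(t).(v_1,v_2)=(v_1,v_2)$ in one step, whereas you unpack that conclusion into its two constituent lemmas; the content is identical.
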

\begin{proof}
    For $i\in\{1,2\}$, let $V_i$ be a finite-dimensional rational representation such that $H_i=\{g\in G:g\cdot v_i=v_i\}$ where  $v_i\in V_i$. Consider the direct-sum representation $V_1\oplus V_2$, and notice that $$H_1\cap H_2=\{g\in G:g\cdot(v_1,v_2)=(v_1,v_2)\}.$$Since the image of $\varphi$ is bounded in $G/H_i$,  $\varphi(t)\cdot v_i$ is bounded for all $i\in\{1,2\}$. In particular, $\varphi(t)\cdot (v_1,v_2)$ is bounded in $V_1\oplus V_2$. Then, by Lemma \ref{lem:correction of bounded trajectory in reps}, there is a bounded continuous curve $\de:[0,\infty)\to G$ such that $\de(t)\varphi(t)\cdot (v_1,v_2)=(v_1,v_2)$ for all $t\geq 0$. Namely, $\de(t)\varphi(t)\in H_1\cap H_2$ for all $t\geq 0$, and therefore the image of $\varphi$ is bounded in $G/(H_1\cap H_2)$. 
\end{proof}
\begin{corollary}\label{cor:existance of minimal observable}
  Suppose that $\varphi:[0,\infty)\to G$ is a non-contracting, continuous curve definable in $\omin$. Then there exists a unique observable subgroup $\tilde H_{\varphi}\subseteq G$ of minimal dimension such  that $\varphi$ is bounded in $G/ \tilde H_{\varphi}$. Moreover, if $H$ is an observable subgroup such that $\varphi$ is bounded in $G/H$, then $ \tilde H_\varphi\subseteq H$.
\end{corollary}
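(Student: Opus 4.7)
The plan is to build $\tilde H_\varphi$ directly as the intersection of the entire family of admissible observable subgroups, and then verify that this intersection is itself admissible by combining Lemma~\ref{lem:intersection of observable groups for bounding curve is again such} with the Noetherian property of the Zariski topology on the algebraic group $\mathbf G$.

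Concretely, I would let $\mathcal{F}$ denote the family of all observable subgroups $H\leq G$ such that $\varphi$ is bounded modulo $H$; this family is nonempty because $G$ itself lies in $\mathcal{F}$. By the definition of observable, each $H\in\mathcal{F}$ is the stabilizer of a vector in a finite-dimensional rational representation of $\mathbf G$, and so is cut out by finitely many polynomial equations in the coordinate ring. In particular, after base change to $\C$, each such $H$ arises as the real locus of a Zariski closed subvariety $H_\C$ of $\mathbf G(\C)$. Since the Zariski topology on $\mathbf G(\C)$ is Noetherian, the intersection $\bigcap_{H\in\mathcal{F}} H_\C$ coincides with a finite subintersection $H_{1,\C}\cap \cdots \cap H_{k,\C}$ for some $H_1,\dots,H_k\in\mathcal{F}$. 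Passing to $\R$-points gives $\bigcap_{H\in\mathcal{F}} H = H_1\cap\cdots\cap H_k$.

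Iterated application of Lemma~\ref{lem:intersection of observable groups for bounding curve is again such} then shows that this finite intersection is itself observable and that $\varphi$ remains bounded modulo it; denote it $\tilde H_\varphi$. By construction $\tilde H_\varphi\in\mathcal{F}$ and $\tilde H_\varphi\leq H$ for every $H\in\mathcal{F}$, which immediately delivers both the minimality of dimension and the uniqueness asserted in the corollary. The main obstacle is the Noetherian step: one must be careful to first translate ``observable subgroup'' into ``Zariski closed subvariety of $\mathbf G$'' (working over $\C$ if desired) so that the descending chain condition applies and reduces the \emph{a priori} infinite intersection to a finite one; once that reduction is in hand, Lemma~\ref{lem:intersection of observable groups for bounding curve is again such} completes the argument.
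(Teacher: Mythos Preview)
Your proof is correct, but it takes a different route from the paper's. The paper simply picks an observable subgroup $H_0\in\mathcal{F}$ of smallest possible dimension, and then argues by contradiction: if $H\in\mathcal{F}$ does not contain $H_0$, then $H_0\cap H$ lies in $\mathcal{F}$ by Lemma~\ref{lem:intersection of observable groups for bounding curve is again such} yet has strictly smaller dimension than $H_0$, which is impossible. This is a pure dimension argument and avoids any appeal to Noetherianity. Your approach instead takes the full intersection $\bigcap_{H\in\mathcal{F}}H$ and uses the descending chain condition to reduce it to a finite subintersection, after which iterated use of the same lemma applies. Both arguments hinge on Lemma~\ref{lem:intersection of observable groups for bounding curve is again such}; yours is more ``global'' and yields the minimal object directly, while the paper's is shorter and sidesteps the Noetherian machinery entirely. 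Incidentally, your passage to $\C$ is unnecessary: the Zariski topology on $\mathbf G(\R)\subset\R^N$ is already Noetherian by Hilbert's basis theorem over $\R$, so you can run the descending chain argument directly with the real observable subgroups.
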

\begin{proof}
  Let $H_0$ be a Zariski connected observable group of the smallest dimension such that $\varphi$ is bounded in $G/H_0$. If $H$ is any connected observable group such that $\varphi$ is bounded in $G/H$, then by Lemma \ref{lem:intersection of observable groups for bounding curve is again such}, we get that $\varphi$ is bounded in $G/(H_0\cap H)$. If $H_0\not\subseteq H$, then $H\cap H_0$ is a proper subgroup of dimension strictly smaller than $\dim H_0$, which leads to a contradiction.
\end{proof}
\begin{corollary}\label{cor:proving the fixing property}
    Suppose that $\varphi:[0,\infty)\to G$  is a non-contracting, continuous curve, definable in $\omin$. Let $\tilde H_\varphi\subseteq G$ be the minimal observable group such that $\varphi$ is bounded modulo $\tilde H_\varphi$. Denote $H_\varphi:=\tilde H_\varphi^0$, the identity component of $\tilde H_\varphi$. 
    
    Then, there exists a bounded, continuous definable curve $\be:[T_0,\infty)\to G$ such that $\be(t)\varphi(t)\in H_\varphi$ for all $t\geq 0$. Moreover, if $V$ is a finite-dimensional rational representation of $G$ and $v\in V$ is such that $\varphi(t)\cdot v$ is bounded, then $\be(t)\varphi(t)\cdot v=v$ for all $t\geq T_0$. 
\end{corollary}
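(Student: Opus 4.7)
The plan is to construct $\be$ using a representation that realizes $\tilde H_\varphi$ as a stabilizer, then to deduce the fixing property from the minimality of $\tilde H_\varphi$ established in Corollary~\ref{cor:existance of minimal observable}.

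First, since $\tilde H_\varphi$ is observable, I would fix a finite-dimensional rational representation $V_0$ of $G$ and a vector $v_0 \in V_0$ such that $\tilde H_\varphi = \{g \in G : g.v_0 = v_0\}$. Because $\varphi$ is bounded modulo $\tilde H_\varphi$, the trajectory $t \mapsto \varphi(t).v_0$ lies in a compact subset of $V_0$, so Lemma~\ref{lem:correction of bounded trajectory in reps} produces a $g_0 \in G$ with $\lim_{t\to\infty}\varphi(t).v_0 = g_0.v_0$. Applying Lemma~\ref{lem:curve bounded modification} to the $G$-action on $V_0$ with $x = v_0$ yields a bounded continuous $\omin$-definable curve $\delta:[0,\infty)\to G$ such that $\delta(t)\varphi(t).v_0 = g_0.v_0$ for all $t\geq 0$. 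Setting $\be_0(t) := g_0^{-1}\delta(t)$ gives $\be_0(t)\varphi(t)\in \mathrm{Stab}_G(v_0) = \tilde H_\varphi$ on $[0,\infty)$.

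Next, I would pass from $\tilde H_\varphi$ to $H_\varphi$. Because $\tilde H_\varphi$ is the group of real points of an algebraic group, it has only finitely many connected components, and $H_\varphi = \tilde H_\varphi^0$ is one of them. Since $t\mapsto \be_0(t)\varphi(t)$ is continuous on the connected interval $[0,\infty)$, its image is connected and therefore lies in a single coset $h_0 H_\varphi$. Choosing any $T_0 \geq 0$ and setting $\be(t) := h_0^{-1}\be_0(t)$ for $t\geq T_0$, I obtain a bounded continuous $\omin$-definable curve with $\be(t)\varphi(t) \in H_\varphi$.

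For the fixing property, let $V$ be a finite-dimensional rational representation of $G$ and $v \in V$ with $\varphi(t).v$ bounded. Lemma~\ref{lem:correction of bounded trajectory in reps} again produces $g_1 \in G$ with $\lim_{t\to\infty}\varphi(t).v = g_1.v$. Using that the orbit map $G \to G.v$ is open (as cited in the proof of Lemma~\ref{lem:curve bounded modification}), any neighborhood of $g_1.v$ in the orbit is the image of a set of the form $g_1 U \cdot \mathrm{Stab}_G(v)$ for some neighborhood $U$ of $e$, so eventually $\varphi(t) \in g_1 \overline{U}\, \mathrm{Stab}_G(v)$; that is, $\varphi$ is bounded modulo the observable subgroup $\mathrm{Stab}_G(v)$. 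By the minimality in Corollary~\ref{cor:existance of minimal observable}, $\tilde H_\varphi \subseteq \mathrm{Stab}_G(v)$, so $\tilde H_\varphi.v = \{v\}$, and since $\be(t)\varphi(t)\in H_\varphi \subseteq \tilde H_\varphi$, we conclude $\be(t)\varphi(t).v = v$ for all $t\geq T_0$.

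The one step requiring genuine care is the deduction that boundedness of $\varphi(t).v$ in $V$ implies boundedness of $\varphi$ modulo $\mathrm{Stab}_G(v)$; the other steps are essentially packaging of the previous lemmas. The key input there is the openness of the orbit map for algebraic actions, which gives a local section near $g_1.v$ and turns the convergence $\varphi(t).v \to g_1.v$ into an assertion about $\varphi(t)$ in $G/\mathrm{Stab}_G(v)$; once this is in hand, the minimality of $\tilde H_\varphi$ among observable subgroups closes the argument.
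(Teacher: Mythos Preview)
Your proposal is correct and follows essentially the same route as the paper: construct the correcting curve via Lemma~\ref{lem:correction of bounded trajectory in reps} applied to a vector realizing $\tilde H_\varphi$ as a stabilizer, pass to the identity component by connectedness, and deduce the fixing property from the minimality of $\tilde H_\varphi$ in Corollary~\ref{cor:existance of minimal observable}. The only cosmetic difference is in the last step: the paper invokes the full conclusion of Lemma~\ref{lem:correction of bounded trajectory in reps} (the existence of a bounded $\be'$ with $\be'(t)\varphi(t).v=v$) to see directly that $\varphi$ is bounded modulo $\mathrm{Stab}_G(v)$, whereas you reprove this via openness of the orbit map---both are valid, but citing the lemma is slightly cleaner.
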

\begin{proof}
    By Lemma \ref{lem:correction of bounded trajectory in reps}, we obtain a continuous definable curve $\tilde \be:[0,\infty)\to G$ such that $\tilde\be(t)\varphi(t)\in \tilde H_\varphi$ for all $t\geq 0$. Since $\varphi$ is continuous, it is contained in a connected component of $\tilde H_\varphi$. By choosing a suitable $h_0\in \tilde H_\varphi$ we get that for $\be(t):=h_0\tilde \be(t)$ it holds $\be(t)\varphi(t)\in \tilde H^0_\varphi=H_\varphi$, for all $t\geq 0$. Now, if $V$ is a finite-dimensional rational representation such that $\varphi(t)\cdot v$ is bounded, then by Lemma \ref{lem:correction of bounded trajectory in reps}, we conclude that $\varphi$ is bounded modulo the observable group $H:=\{g\in G:g\cdot v=v\}$. Hence, by minimality of $\tilde H_\varphi$, we get that $\be(t)\varphi(t)\in H_\varphi\subseteq H$.
\end{proof}

The last property in  Theorem \ref{thm:main result on the hull and correcting curve} remaining to prove is that  the identity component of the   observable group of smallest dimension is generated by one-parameter unipotent subgroups. This is given by the following.

\begin{lemma} \label{lemma:genbyuni}
   Let $\varphi:[0,\infty)\to G$ be a $\omin$-definable, non-contracting, continuous curve, and let $\tilde H_\varphi$ be the smallest observable subgroup such that $\varphi$ is bounded modulo $\tilde H_\varphi$. Then $H_\varphi=\tilde H_\varphi^0$ is generated by one-parameter unipotent subgroups.
\end{lemma}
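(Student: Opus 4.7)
The plan is to let $N$ denote the closed connected normal subgroup of $H_\varphi$ generated by all one-parameter unipotent subgroups contained in $H_\varphi$, and to prove $H_\varphi = N$ by contradiction. If $N \subsetneq H_\varphi$ I will exhibit an observable subgroup (namely $N$) strictly smaller than $\tilde H_\varphi$ such that $\varphi$ is bounded modulo $N$, contradicting the minimality of $\tilde H_\varphi$ established in Corollary~\ref{cor:existance of minimal observable}.

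First I would observe that the Lie algebra of $N$ is generated by nilpotent elements, so its solvable radical is nilpotent and hence the radical of $N$ is unipotent; the criterion from~\cite{Observable_group_grosshans} then gives that $N$ is observable in $G$. A short Jordan-decomposition argument shows that $H_\varphi/N$ has no nontrivial unipotent elements: any lift $\tilde u \in H_\varphi$ of a unipotent class decomposes as $\tilde u_s \tilde u_u$ with $\tilde u_u \in N$, so the class is the image of the semisimple $\tilde u_s$ and is therefore trivial. Consequently $H_\varphi/N$ is a connected real linear algebraic group with no unipotent elements, so standard structure theory gives that its semisimple part is compact and its maximal central split torus $T_s$ is the full non-compact factor; concretely, there is a rational homomorphism $\pi_s \colon H_\varphi \to T_s$ whose kernel contains $N$ together with the maximal compact normal subgroup of $H_\varphi/N$.

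The main step is to bound $\pi_s(\beta(t)\varphi(t))$ in $T_s$. After absorbing a fixed element of $\tilde H_\varphi$ into $\beta$ (possible because $\tilde H_\varphi/H_\varphi$ is finite and $\beta\varphi$ is continuous), I may assume $\beta(t)\varphi(t) \in H_\varphi$ for all $t$. For each character $\chi \colon T_s \to \R^\ast$ I would set $\tilde\chi := \chi \circ \pi_s$. Since $H_\varphi$ is observable (as the identity component of the observable group $\tilde H_\varphi$), the extension property equivalent to observability yields a rational $G$-representation $\rho \colon G \to \GL(V)$ and a nonzero $v \in V$ with $\rho(h)v = \tilde\chi(h)\, v$ for all $h \in H_\varphi$. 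Then
\[
\rho(\varphi(t))\, v \;=\; \tilde\chi(\beta(t)\varphi(t))\, \rho(\beta(t))^{-1} v,
\]
and since $\beta$ is bounded in $G$ the quantity $\norm{\rho(\beta(t))^{-1} v}$ is bounded above and below by positive constants, so $\norm{\rho(\varphi(t))\, v}$ is comparable to $|\tilde\chi(\beta(t)\varphi(t))|$. The non-contraction hypothesis applied to $v$ forces $|\tilde\chi(\beta(t)\varphi(t))|$ to stay bounded below for large $t$, and applying the same argument to $\tilde\chi^{-1}$ via a semi-invariant for it provides an upper bound.

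Since characters separate points of $T_s$, this shows $\pi_s(\beta\varphi)$ is bounded, and hence $\beta\varphi$ is bounded modulo $N$; because $\beta$ is itself bounded $\varphi$ is also bounded modulo $N$. By the minimality of $\tilde H_\varphi$ among observable subgroups with this property, $\tilde H_\varphi \subseteq N$, and combined with $N \subseteq H_\varphi \subseteq \tilde H_\varphi$ together with the connectedness of $N$ this gives $H_\varphi = N$, as desired. I expect the main obstacle to be the character-extension step, which crucially relies on observability of $H_\varphi$ to realize each character of $T_s$ as a semi-invariant in a rational $G$-representation, and then on applying the non-contraction hypothesis symmetrically in $\tilde\chi$ and $\tilde\chi^{-1}$ to yield two-sided bounds.
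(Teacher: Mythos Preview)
Your approach is correct and follows essentially the same strategy as the paper: reduce to a reductive quotient with no unipotents, then use characters together with non-contraction to show $\varphi$ is already bounded modulo the unipotently-generated subgroup, contradicting minimality.

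Two small points where the paper's execution differs. First, the paper works with $\tilde H_\varphi$ throughout rather than $H_\varphi$: it lets $L$ be the Zariski closure of the subgroup of $\tilde H_\varphi$ generated by one-parameter unipotent subgroups, and extends characters of $\tilde H_\varphi/L$ to $G$-representations using the given observability of $\tilde H_\varphi$. This sidesteps your appeal to ``$H_\varphi$ is observable as the identity component of an observable group,'' which is true but not entirely immediate (it needs a short argument, e.g.\ embedding the regular representation of the finite group $\tilde H_\varphi/H_\varphi$ into a $G$-module and taking a direct sum with the fixed vector for $\tilde H_\varphi$). Second, rather than extending $\tilde\chi$ and $\tilde\chi^{-1}$ to two separate semi-invariants and bounding each, the paper builds a single two-dimensional $\tilde H_\varphi$-representation $(x,y)\mapsto(\chi(h)x,\chi(h)^{-1}y)$, extends it to $G$, and observes that non-contraction on $(x_0,0)$ and $(0,y_0)$ forces $\psi(t).(x_0,y_0)$ to be bounded; then Corollary~\ref{cor:proving the fixing property} gives $\psi(t).(x_0,y_0)=(x_0,y_0)$ exactly, so $\chi\circ q(\psi(t))\equiv 1$. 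This yields the stronger conclusion $q(\psi)\subset Z_aD$ rather than mere boundedness, though your boundedness conclusion is already enough.
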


\begin{proof}
    Let $\psi(t):=\be(t)\varphi(t)$ for all $t\geq 0$, where $\be$ is a correcting curve of $\varphi$. Let $L\subseteq \tilde H_\varphi$ be the Zariski closure of the subgroup generated by all one-parameter unipotent subgroups of $\tilde H_\varphi$. Then, $L$ is observable in $G$, see \Cref{rem:genbyuni-observable}. We will now show that the image of $\varphi$ in $ \tilde H_\varphi/L$ is bounded, which implies that $ \tilde H_\varphi=L$, and proves the claim, see Remark~\ref{rem:hull is identity compo of zcl}.
    
    Since $L$ is a normal subgroup of $\tilde H_\varphi$, we get that $\tilde H_\varphi/L$ is an affine algebraic group. Moreover, since $L$ contains the unipotent radical of $ \tilde H_\varphi$, we get that  $\tilde H_\varphi/L$ is  reductive. Then, there is an almost direct product decomposition $\tilde H_\varphi/L=ZD$, where $Z$ is the center, which is a torus, and $D$ is the derived group, which is a normal semi-simple subgroup. Notice that each simple factor of $D$ must be compact since a non-compact simple group is generated by unipotent one-parameter subgroups. Namely, $D$ is compact. Let $q:\tilde H_\varphi\to  \tilde H_\varphi/L$ be the natural quotient map. To finish the proof, we show below that for every character $\chi: \tilde H_\varphi/L\to\R^\times$  it holds that $\chi\circ q(\psi(t))=1$ for all $t\geq 0$. This will complete the proof since then $q\circ\psi([0,\infty))\subseteq Z_a D$, where $Z_a\subseteq Z$ is the maximal anisotropic compact torus subgroup of $Z$. 
   
    Let $\chi: \tilde H_\varphi/L\to\R^\times$ be a character, and consider the representation of $ \tilde H_\varphi$ on $\R^2$ defined by 
    \begin{equation} \label{eq:repR2}
     h\cdot(x,y):=\left(\chi(q(h))x,\chi(q(h))^{-1}y\right),\,\forall (x,y)\in\R^2,\,\forall h\in \tilde  H_\varphi.
    \end{equation}
    %Then, $ \tilde H_\varphi$ acts on $\R^2$ by pre-composing with $q$. Namely, $h\cdot (x,y):=q(h)\cdot (x,y)$.
    Since $ \tilde H_\varphi\subseteq G$ is observable,  the above representation  extends to a representation of $G$, see \cite[Theorem 9]{Observable_groups_over_fields}. Now, pick $x_0,y_0\in\R\setminus\{0\}$. Because $\psi$ is non-contracting, we get that $\psi(t)\cdot (x_0,0)$ and $\psi(t)\cdot (0,y_0)$ are bounded away from $0$. In view of \eqref{eq:repR2}, we deduce that $\psi(t)\cdot(x_0,y_0)$ is bounded. So, by Corollary \ref{cor:proving the fixing property}, $\psi(t)\cdot (x_0,y_0)=(x_0,y_0)$. Thus,  $\chi\circ q(\psi(t))=1$ for all $t\geq 0$.
\end{proof}

\subsection{The non-contracting property under homomorphisms}\label{sec:the non-contraction under homomo}
In this subsection, we show that the homomorphic image of a definable non\-/contracting curve is non\-/contracting, Lemma \ref{lem:image of non contracting is non contracting}, and also we show that if a definable curve in a homomorphic image is non\-/contracting, then it has a definable non\-/contracting lift, Lemma \ref{lem:non-contracting under homomorphisms}. Finally, we establish that the non\-/contracting property is intrinsic for curves in groups whose radical is unipotent,  \Cref{prop:non contracting is intrinsic when radical is unipotent}.

\begin{lemma} \label{lem:image of non contracting is non contracting}
Let $G_1$ and $G_2$ be the real points of affine algebraic groups over $\R$, and let $f:G_1\to G_2$ be a homomorphism of algebraic groups. Let $\varphi:[0,\infty)\to G_1$ be a continuous,  non-contracting curve in $G_1$ definable in $\omin$. Then $f\circ \varphi$ is a continuous,  non-contracting curve in $G_2$ definable in $\omin$.   
\end{lemma}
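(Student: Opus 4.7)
The plan is very short since the non-contracting property is preserved by pullback of representations. Given any finite dimensional rational representation $\rho_2:G_2\to \GL(V)$ over $\R$, I would consider the pullback representation $\rho_1:=\rho_2\circ f:G_1\to\GL(V)$. Since $f$ is a homomorphism of algebraic groups defined over $\R$ and $\rho_2$ is rational, $\rho_1$ is a rational representation of $G_1$ on $V$.

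Next, for any $v\in V\smallsetminus\{0\}$, observe that
\[
(f\circ\varphi)(t).v \;=\; \rho_2(f(\varphi(t)))\,v \;=\; \rho_1(\varphi(t))\,v \;=\; \varphi(t).v,
\]
where the last expression is with respect to the $G_1$-action via $\rho_1$. Since $\varphi$ is non-contracting in $G_1$, by definition $\lim_{t\to\infty}\varphi(t).v\neq 0$, and hence $\lim_{t\to\infty}(f\circ\varphi)(t).v\neq 0$. As $v$ and $V$ were arbitrary, this gives the non-contraction of $f\circ\varphi$ in $G_2$.

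For continuity and definability: continuity is immediate because $f$ is continuous as a map of real algebraic groups (indeed polynomial in the matrix entries), and the composition of continuous maps is continuous. For definability, let $\R[G_2]$ be the coordinate ring of $G_2$. Since $f$ is a morphism of affine algebraic varieties, $h\circ f\in\R[G_1]$ for every $h\in\R[G_2]$. Hence for each $h\in\R[G_2]$ the function $t\mapsto h(f(\varphi(t)))=(h\circ f)(\varphi(t))$ is definable in $\mathscr S$, since $\varphi$ is definable in $G_1$. By definition of definability of curves in $G_2$, this shows $f\circ\varphi$ is definable in $\mathscr S$.

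There is no real obstacle here; the only thing to be careful about is the bookkeeping that a rational representation of $G_2$ pulls back via the algebraic homomorphism $f$ to a rational representation of $G_1$, which is precisely the mechanism that transports the non-contraction hypothesis across $f$.
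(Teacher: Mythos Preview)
Your argument is correct and follows the same approach as the paper: pull back a rational representation of $G_2$ through $f$ to obtain a rational representation of $G_1$, then invoke the non-contracting hypothesis on $\varphi$. You even go further than the paper's proof by explicitly verifying continuity and definability of $f\circ\varphi$, which the paper leaves implicit.
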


\begin{proof}
    Let $\rho:G_2\to \GL(V)$ be a rational representation of $G_2$. Let $v\in V$. Then $\rho\circ f:G_1\to \GL(V)$ is a rational representation of $G_1$. Let $v\in V$. If $\rho(f\circ\varphi(t))v\to0$ as $t\to\infty$, then $(\rho\circ f)(\varphi(t))v\to0$ as $t\to\infty$. Since $\varphi$ is non-contracting, we get $v=0$.
\end{proof}

\begin{lemma}\label{lem:non contraction modulo normal unipotent subgroup}
    Suppose that $U\subseteq G$ is a normal, unipotent algebraic subgroup. Let $q:G\to G/U$ be the natural map. Then, a $\omin$-definable curve $\varphi:[0,\infty)\to G$ is non-contracting in $G$ if and only if $q\circ \varphi$ is non-contracting in $G/U$.
\end{lemma}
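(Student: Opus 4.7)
The forward direction, that non-contraction of $\varphi$ in $G$ implies non-contraction of $q\circ\varphi$ in $G/U$, is immediate from Lemma \ref{lem:image of non contracting is non contracting} applied to the quotient homomorphism $q: G\to G/U$, which is a rational homomorphism of algebraic groups.

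For the converse, I would argue by contrapositive. Suppose $\varphi$ is contracting in $G$, so there exist a finite-dimensional rational representation $\rho: G\to \GL(V)$ and a nonzero $v\in V$ with $\rho(\varphi(t)).v\to 0$ as $t\to\infty$. The plan is to produce a nonzero vector in some rational representation of $G/U$ whose orbit under $q\circ\varphi$ tends to zero, thereby showing $q\circ\varphi$ is also contracting.

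The key step is to construct a $G$-stable filtration
\[
0=V_0\subsetneq V_1\subsetneq\cdots\subsetneq V_m=V
\]
such that $U$ acts trivially on each successive quotient $V_i/V_{i-1}$. One sets $V_1:=V^U$, the subspace of $U$-fixed vectors. This subspace is nonzero because $U$ is a unipotent algebraic group acting rationally on a nonzero vector space (every such representation has a fixed vector), and it is $G$-invariant because $U$ is normal in $G$: for $w\in V^U$, $g\in G$, and $u\in U$, one has $u.(g.w)=g.((g^{-1}ug).w)=g.w$. Iterating this construction on the quotient $V/V_1$ produces the filtration, and the process terminates since $\dim V<\infty$. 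By construction, each $V_i/V_{i-1}$ carries the structure of a rational representation of $G/U$.

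To finish, let $i$ be the smallest index with $v\in V_i$, and set $\bar v := v+V_{i-1}\in V_i/V_{i-1}$. Then $\bar v\neq 0$ by minimality of $i$, and since the quotient map $V_i\to V_i/V_{i-1}$ is continuous and linear, $\rho(\varphi(t)).\bar v\to 0$ as $t\to\infty$. Because the action on $V_i/V_{i-1}$ factors through $G/U$, this orbit equals the orbit of $\bar v$ under $q\circ\varphi$ in a rational representation of $G/U$, contradicting the non-contraction of $q\circ\varphi$. The only technical point requiring care is the construction of the filtration, but this is a standard consequence of $U$ being a normal unipotent algebraic subgroup.
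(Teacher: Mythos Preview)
Your proof is correct and takes essentially the same approach as the paper: both exploit that $V^U$ is nonzero and $G$-stable (by normality of $U$) to pass to a subquotient on which $U$ acts trivially. The only difference is organizational---the paper takes $V$ of minimal dimension with a contracted vector and argues that minimality forces $V=V^U$ in two steps, whereas you build the full $U$-trivial filtration and locate the layer containing $v$; these are equivalent packagings of the same induction.
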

\begin{proof}
    By Lemma \ref{lem:image of non contracting is non contracting}, it suffices to prove that if $q\circ\varphi$ is non-contracting, then $\varphi$ is non-contracting. Suppose for contradiction that $q\circ\varphi$ is non-contracting and $\varphi$ is contracting. Let $V$ be a rational representation of $G$ of the smallest dimension such that there exists a $v\in V\setminus \{0\}$ with $\lim_{t\to\infty} \varphi(t)\cdot v=0$. Since $U$ is a unipotent group, the subspace of $U$-fixed points $V^U:=\{x\in V:u\cdot x=x,\forall u\in U\}$ is of positive dimension. Since $U$ is a normal subgroup, $V^U$ is $G$-invariant. Then $G$ acts on $V/V^U$, and the natural quotient map $\pi:V\to V/V^U$ is $G$-equivariant. Now, $$\lim_{t\to\infty}\varphi(t)\cdot \pi(v)=\lim_{t\to\infty}\pi(\varphi(t)\cdot v)=0.$$Since $\dim(V/V^U)<\dim(V)$, we get that $\pi(v)=0$. Namely, $v\in V^U$, and again, since $V$ is of smallest dimension with a $\varphi(t)$-contracting nonzero vector, we get $V=V^U$. Since $U$ acts trivially on $V$, the action of $G$ on $V$ factors through the action of $G/U$. Therefore, $\varphi(t)\cdot v=(q\circ\varphi(t))\cdot v$. Since $\varphi(t)\cdot v\to 0$ as $t\to\infty$, and $q\circ\varphi$ is non-contracting, we get a contradiction.
\end{proof}
\begin{lemma}\label{lem:existence of definable lifts}
Let $G_1$ and $G_2$ be the real points of algebraic groups over $\R$. Suppose that $f:G_1\to G_2$ is a homomorphism of algebraic groups, and $\varphi:[0,\infty)\to f(G_1)$ is a curve definable in $\mathscr S$. Then there exists a \emph{definable lift} $\psi:[0,\infty)\to G_1$ of $\varphi$. Namely, $\psi$ is definable in $\mathscr S$, and $f\circ \psi =\varphi$. 
\end{lemma}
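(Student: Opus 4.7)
The plan is to realize $\psi$ as a definable selection from the fiber data of $f$ along $\varphi$. More precisely, I would apply the definable choice theorem (Theorem 4.5 of \cite{Geom_cat_and_o_min_str}, already used in the proof of Lemma \ref{lem:curve bounded modification}) to a carefully chosen definable subset of $[0,\infty)\times G_1$ whose projection onto $[0,\infty)$ is surjective.

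First I would observe that since $f:G_1\to G_2$ is a morphism of real algebraic groups, its graph
\[
\Gamma_f:=\{(g_1,g_2)\in G_1\times G_2: f(g_1)=g_2\}
\]
is a semi-algebraic subset of $G_1\times G_2$, and hence definable in every o-minimal expansion of the real field, in particular in $\mathscr S$. Combined with the definability of $\varphi$, this yields that
\[
\D:=\{(t,g)\in[0,\infty)\times G_1: f(g)=\varphi(t)\}
\]
is a definable subset of $[0,\infty)\times G_1$; it is obtained from $\Gamma_f$ and the graph of $\varphi$ by products, permutations of coordinates, intersection, and projection, all of which preserve definability.

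Next, the hypothesis $\varphi([0,\infty))\subseteq f(G_1)$ says that for every $t\in[0,\infty)$ the fiber $\{g\in G_1:(t,g)\in\D\}$ is non-empty. Hence the projection $\pi_1:\D\to[0,\infty)$ is surjective. Applying the definable choice theorem to $\pi_1$ produces a definable function $\psi:[0,\infty)\to G_1$ whose graph is contained in $\D$, equivalently, $f\circ\psi=\varphi$ and $\psi$ is definable in $\mathscr S$. This is exactly the desired lift.

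There is no real obstacle here beyond verifying that the graph of an algebraic morphism lies in every o-minimal structure on $\R$ (which is immediate, since $\mathscr S$ contains the graphs of addition and multiplication, hence all polynomial equations and all semi-algebraic sets) and then invoking definable choice. Note that the statement asks only for a definable lift, not a continuous one, so no further work using the Monotonicity or Cell Decomposition theorems is needed; if continuity were desired on a cofinal subinterval, one could additionally apply cell decomposition to refine $\psi$.
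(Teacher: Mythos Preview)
Your proof is correct and follows essentially the same approach as the paper: define the definable set $\D=\{(t,g)\in[0,\infty)\times G_1: f(g)=\varphi(t)\}$ and apply the definable choice theorem. The paper's proof is more terse, omitting your explicit verification that $\D$ is definable and that the projection is surjective, but the argument is identical.
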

\begin{proof}
    Consider the definable set $$\mathcal D:=\{(t,g):g\in G_1,~t\geq 0,~f(g)=\varphi(t)\}.$$By the choice function theorem (see Theorem \ref{thm:choice function}), there exists a definable curve $\psi:[0,\infty)\to G_1$ such that $(t,\psi(t))\in \mathcal D$ for all $t\geq 0$.
\end{proof}
\begin{lemma}\label{lem:lifts for central finite covers of ss groups}
    Let $G_1$ and $G_2$ be the $\R$-points of semi-simple connected algebraic groups over $\R$, and let $f:G_1\to G_2$ be a surjective homomorphism with finite kernel. Let $\psi:[0,\infty)\to G_1$ be a definable curve. Suppose that $f\circ \psi$ is non-contracting for $G_2$, then $\psi$ is non-contracting for $G_1$. 
\end{lemma}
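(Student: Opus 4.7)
The plan is to argue by contrapositive: assume $\psi$ is contracting in $G_1$ and produce a representation of $G_2$ in which $f\circ\psi$ contracts a nonzero vector. The essential idea is that any representation of $\mathbf G_1$ on which $\ker f$ acts nontrivially fails to descend to $\mathbf G_2$, but this obstruction disappears after passing to a suitable tensor power.

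First I would take a finite-dimensional real rational representation $\rho:\mathbf G_1\to\GL(V)$ together with $v\in V\smallsetminus\{0\}$ such that $\rho(\psi(t))v\to 0$. Complexifying and decomposing $V\otimes_{\R}\C$ into complex-irreducible rational $\mathbf G_1$-representations, some summand $W$ must contain a nonzero component $w$ of $v$, and $\rho_W(\psi(t))w\to 0$ as $t\to\infty$ because each summand is $\mathbf G_1$-invariant. Since $f$ has finite kernel and $\mathbf G_1$ is Zariski connected (irreducible), $\ker f$ is a finite central subgroup of $\mathbf G_1$. Schur's lemma applied to the complex-irreducible $W$ then gives that $\ker f$ acts by a character $\chi:\ker f\to\C^\times$ whose image consists of roots of unity; choose $n\geq 1$ with $\chi^n\equiv 1$, for instance $n=|\ker f|$.

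On the tensor power $W^{\otimes n}$ the subgroup $\ker f$ acts trivially, so this $\mathbf G_1$-representation factors through the algebraic quotient $\mathbf G_1/\ker f$. Because $\mathbf G_1$ and $\mathbf G_2$ are Zariski connected of the same dimension and $f$ has finite kernel, $f$ is surjective as a morphism of algebraic groups and $\mathbf G_1/\ker f\cong \mathbf G_2$; hence there is a rational representation $\tilde\rho:\mathbf G_2\to\GL(W^{\otimes n})$ with $\tilde\rho\circ f=\rho_W^{\otimes n}$, and restricting scalars produces a real rational $\mathbf G_2$-representation. The tensor $w^{\otimes n}$ is nonzero, and
\[
\tilde\rho(f(\psi(t)))\,w^{\otimes n}=\bigl(\rho_W(\psi(t))\,w\bigr)^{\otimes n}\to 0\quad\text{as }t\to\infty,
\]
contradicting the hypothesis that $f\circ\psi$ is non-contracting in $G_2$.

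The main obstacle is the clean handling of the descent step: one must use both that $\ker f$ is \emph{finite} (so that the Schur characters take values in roots of unity, hence are killed by a tensor power) and that $\mathbf G_2$ is \emph{irreducible} (so that $f$ is algebraically surjective, ensuring $W^{\otimes n}$ is an honest $\mathbf G_2$-module rather than merely one for $f(\mathbf G_1)$; in the non-surjective case one would instead extend $W^{\otimes n}$ to a $\mathbf G_2$-representation by observability of $f(\mathbf G_1)$). A secondary technical point is the passage between real and complex representations: non-contraction in the sense of \Cref{def: non-contracting curves} is phrased over $\R$, but the argument produces a complex $\mathbf G_2$-representation, and restriction of scalars converts it to a real one without losing the contraction property.
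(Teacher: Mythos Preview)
Your proof is correct and follows essentially the same route as the paper: reduce to a complex-irreducible constituent, use Schur's lemma to see that the finite central kernel acts by roots of unity, pass to a tensor power so the action becomes trivial, and descend to $G_2$. In fact you are slightly more careful than the paper at one point: the paper applies Schur's lemma directly to $V\otimes\C$ for a real-irreducible $V$, whereas you correctly first pass to a complex-irreducible summand $W$ of $V\otimes\C$ before invoking Schur, and you also note explicitly that the non-surjective case can be handled via observability of the semisimple image $f(\mathbf G_1)$.
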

\begin{proof}
  Let $V$ be a finite-dimensional irreducible rational representation of $G_1$. By Schur's lemma, the center of $G_1$ acts via a character on $V\otimes \C$. Then, $\ker f$, which is central in $G_1$, acts trivially on the $n$-fold tensor product $W=\otimes^n(V\otimes \C)$, where $n$ is the order of $\ker f$. Therefore, the action of $G_1$ on $W$ factors through an action of $G_1/(\ker f)\cong G_2$ on $W$. Let $v\in V$. If $\psi(t)\cdot v\to 0$, then $f\circ\psi(t)\cdot  (v\otimes\cdots \otimes v)\to 0$ as $t\to\infty$. But, since $f\circ\psi$ is non-contracting, we get that $v=0$.
  \end{proof}
  
\begin{lemma}\label{lem:non-contracting under homomorphisms}
    Let $G_1$ and $G_2$ be the real points of affine algebraic groups over $\R$, and let $f:G_1\to G_2$ be a surjective homomorphism of algebraic groups. Let $\varphi:[0,\infty)\to G_2$ be a non-contracting curve in $G_2$ definable in $\omin$. Then, $\varphi$ has a $\omin$-definable lift $\psi:[0,\infty)\to G_1$ which is non-contracting in $G_1$. 
\end{lemma}

\begin{proof} In this proof, all subgroups considered are the open subgroups of $\R$-points of algebraic groups defined over $\R$.

    Since the hull is generated by one-parameter unipotent subgroups, by modifying $\varphi$ with a bounded correcting curve and replacing $G_2$ by $(G_2)_u$, we may assume without loss of generality that the radical of $G_2$ is unipotent, and we denote it by $U_2$. Let $\bar G_2=G_2/U_2$ and $q_2:G_2\to \bar G_2$ be the quotient homomorphism. Let $U_1$ be the unipotent radical of $G_1$. Then, there exists a reductive subgroup $M_1$ of $G_1$ such that $G_1=M_1U_1$ and $M_1\cap U_1=\{e\}$, see \cite[Page 11]{RA72}. Since $f$ is surjective, $f(U_1)$ is a unipotent normal subgroup of $G_2$, and hence $f(U_1)\subseteq U_2$. In fact, $f(U_1)=U_2$. This can be proved as follows. $f^{-1}(U_2)=(M_1\cap f^{-1}(U_2))U_1$. Since $f^{-1}(U_2)$ is a normal subgroup of $G_1$,  $M_1\cap f^{-1}(U_2)$ is reductive. So $f(M_1\cap f^{-1}(U_2))$ being a reductive subgroup of $U_2$, it is trivial. Since $f$ is surjective, we get $f(U_1)=U_2$. Therefore, $q_2\circ f:M_1\to \bar G_2$ is surjective. Since $\bar G_2$ is semisimple, $q_2\circ f([M_1,M_1])=\bar G_2$. Let $L_2=[M_1,M_1]\cap\ker(q_2\circ f)$. Since $L_2$ is a normal subgroup of the semisimple group $[M_1,M_1]$, there exists a semisimple normal subgroup $L_1$ of $[M_1,M_1]$ such that $[M_1,M_1]=L_1L_2$, where $L_1\cap L_2$ is a finite central subgroup of $L_1$. Therefore, $q_2\circ f: L_1\to \bar G_2$ is a surjective homomorphism whose kernel is $L_1\cap L_2$. Also, since $\bar G_2$ is generated by unipotent one-parameter subgroups, it has no compact simple normal subgroup. Therefore, $L_1$ also does not have a compact normal subgroup, and $L_1$ is generated by unipotent one-parameter subgroups.  
    
    Let $\tilde G_1=L_1U_1$. Since $q_2\circ f(L_1)=G_2/U_2$ and $f(U_1)=U_2$, we have $f:\tilde G_1\to G_2$ is a surjective map. Therefore, by Lemma~\ref{lem:existence of definable lifts}, there exists a definable $\psi:[0,\infty)\to \tilde G_1$ such that $f\circ \psi=\varphi$. Let $q_1:\tilde G_1\to \tilde G_1/U_1$ be the natural quotient map. Since $q_1:L_1\to \tilde G_1/U_1$ is surjective (it is an isomorphism), let $\tilde\psi:[0,\infty)\to L_1$ be a definable curve such that $q_1\circ\tilde\psi=q_1\circ\psi$. Since $U_1\subseteq\ker (q_2\circ f)$, $q_2\circ f$ factors through $q_1$. Therefore, we have $(q_2\circ f)\circ\tilde\psi=(q_2\circ f)\circ\psi=q_2\circ\varphi$. Now $q_2\circ f:L_1\to \bar G_2$ is surjective with finite central kernel, and $q_2\circ \varphi$ is definable and non-contracting in $\bar G_2$. Therefore, by Lemma~\ref{lem:lifts for central finite covers of ss groups}, $\tilde\psi$ is non-contracting in $L_1$. Therefore, $q_1\circ\psi=q_1\circ\tilde\psi$ is non-contracting in $\tilde G_1/U_1$. Therefore, by Lemma~\ref{lem:non contraction modulo normal unipotent subgroup}, $\psi$ is non-contracting in $\tilde G_1$. 
    Finally, by Lemma \ref{lem:image of non contracting is non contracting} we get that $\psi$ is non-contracting in $G_1$.
\end{proof}

The following observation extends Lemma~\ref{lem:lifts for central finite covers of ss groups}. 

% Notice that Proposition \ref{prop:Non-contracting Criteria} from the introduction is a direct consequence of the proposition below.

\begin{proposition}\label{prop:non contracting is intrinsic when radical is unipotent}
    Suppose that $G_1$ is a real algebraic group generated by unipotent one-parameter subgroups. Let $f:G_1\to G_2$ be a homomorphism of real algebraic groups with finite kernel. Then a definable curve $\varphi:[0,\infty)\to G_1$ is non-contracting in $G_1$ if and only if $f\circ\varphi$ is non-contracting in $G_2$.
\end{proposition}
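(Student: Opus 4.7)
The plan is to prove both directions of the biconditional. The forward implication is immediate from Lemma~\ref{lem:image of non contracting is non contracting}, so the substance lies in the converse, where I assume $f\circ\varphi$ is non-contracting in $G_2$ and argue that $\varphi$ is non-contracting in $G_1$.

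First I would reduce the non-contraction hypothesis to a statement about $f(G_1)$ rather than $G_2$. Since $G_1$ is generated by one-parameter unipotent subgroups, it is connected, and its image $f(G_1)\subseteq G_2$ is again an algebraic subgroup generated by unipotents. The reductive quotient of such a group must be semisimple (its central torus would contain no unipotents, hence must be trivial), so $f(G_1)$ has unipotent radical and is observable in $G_2$ by \cite[Corollary~2.8]{Observable_group_grosshans}. Using the extension property characterization of observability, every rational representation of $f(G_1)$ embeds as a subrepresentation of a rational representation of $G_2$, so non-contraction of $f\circ\varphi$ in $G_2$ passes down to non-contraction of $f\circ\varphi$ viewed as a curve in $f(G_1)$.

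Next I would exploit that $\ker f$ is a finite normal subgroup of the connected group $G_1$: for any fixed $k\in\ker f$ the conjugation map $g\mapsto gkg^{-1}$ is continuous with image in the discrete set $\ker f$, hence constant, so $\ker f$ is central in $G_1$. Setting $n=|\ker f|$, suppose for contradiction that there exist a finite-dimensional rational representation $V$ of $G_1$ and $v\in V\smallsetminus\{0\}$ with $\varphi(t).v\to 0$. Complexifying $V$ and decomposing $V\otimes\C$ into $G_1$-irreducibles as $\bigoplus V_i$, the summands are $G_1$-invariant, so some nonzero projection $v_i$ of $v$ onto an irreducible component $V_i$ satisfies $\varphi(t).v_i\to 0$. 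By Schur's lemma the central subgroup $\ker f$ acts on $V_i$ by a character $\chi:\ker f\to\C^{\times}$, and since $\chi(\ker f)$ is a finite subgroup of $\C^\times$ of order dividing $n$, we have $\chi^n\equiv 1$. Therefore $V_i^{\otimes n}$ descends to a rational representation of $f(G_1)=G_1/\ker f$ on which $(f\circ\varphi)(t).v_i^{\otimes n}=(\varphi(t).v_i)^{\otimes n}\to 0$ while $v_i^{\otimes n}\neq 0$, contradicting the reduced non-contraction of $f\circ\varphi$ in $f(G_1)$.

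The main subtlety I expect to navigate is the tension between real representations (as in the definition of non-contraction) and complex representations (as needed to invoke Schur's lemma). This is mild: non-contraction over $\R$ is equivalent to non-contraction over $\C$, since complexifying a real representation preserves contracting vectors, and a complex representation viewed as a real one of twice the dimension has the same underlying vectors up to a comparable norm. The other minor point to justify carefully is the claim that $f(G_1)$ is observable; once the structure of groups generated by unipotents is unpacked, this reduces to the cited result of Grosshans.
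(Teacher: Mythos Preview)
Your approach is more direct than the paper's, but contains one genuine gap. The paper proceeds by invoking Lemma~\ref{lem:non-contraction under homomorphisms} to produce a definable non-contracting lift $\psi$ of $f\circ\varphi$, and then uses a covering-space argument (continuity plus finiteness of $\ker f$) to conclude that $\varphi=z\psi$ for some central $z\in\ker f$. Your route bypasses Lemma~\ref{lem:non-contraction under homomorphisms} entirely: you observe that $\ker f$ is central (since $G_1$ is connected and $\ker f$ is finite normal), and then run the tensor-power trick of Lemma~\ref{lem:lifts for central finite covers of ss groups} directly on $G_1$ rather than first reducing to a semisimple situation. This is a genuine simplification, and your handling of the reduction to $f(G_1)$ via observability matches the paper exactly.

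The gap is your claim that $V\otimes\C$ decomposes as a direct sum $\bigoplus V_i$ of $G_1$-irreducibles. Since $G_1$ is only assumed to be generated by unipotent one-parameter subgroups, its solvable radical is unipotent but need not be trivial; for instance $G_1$ could itself be unipotent, or a nontrivial semidirect product of a semisimple group with a unipotent one. Such groups are not reductive, and their rational representations are generally not completely reducible, so the projection onto an ``irreducible summand'' is not available. The fix is routine: replace the direct-sum decomposition by a composition series $0=W_0\subset W_1\subset\cdots\subset W_k=V\otimes\C$, take $j$ minimal with $v\in W_j$, and work with the nonzero image $\bar v$ of $v$ in the irreducible subquotient $W_j/W_{j-1}$. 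The quotient map is $G_1$-equivariant and continuous, so $\varphi(t).\bar v\to 0$, and Schur's lemma applies to $W_j/W_{j-1}$ exactly as you intended. With this correction your argument goes through and is shorter than the paper's.
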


\begin{proof}
    If $\varphi$ is non-contracting in $G_1$, then by Lemma~\ref{lem:image of non contracting is non contracting}, it follows that $f\circ\varphi$ is non-contracting in $G_2$. 
    
    Now suppose that $f\circ\varphi$ is non-contracting in $G_2$. Since $G_1$ is generated by unipotent one-parameter subgroups, $f(G_1)$ is a subgroup of $G_2$ generated by unipotent one-parameter subgroups. Therefore, the solvable radical of $f(G_1)$ is unipotent. Hence, $f(G_1)$ is an observable subgroup of $G_2$, see~\cite[Corollary~2.8]{Observable_group_grosshans}. Therefore, given any finite-dimensional rational representation of $V$ of $f(G_1)$, $V$ is a subrepresentation of a finite-dimensional representation $W$ of $G_2$ restricted to $f(G_1)$. Let $v\in V$. Suppose that $f\circ\varphi(t)\cdot v\to 0$. Since $v\in V\subseteq W$ and $f\circ\varphi$ is non-contracting in $G_2$, we conclude that $v=0$. This proves that $f\circ\varphi$ is non-contracting for $f(G_1)$. Therefore, replacing $G_2$ by $f(G_1)$, we may assume that $f$ is surjective. Therefore, by Lemma~\ref{lem:non-contracting under homomorphisms} there exists a lift $\psi:[0,\infty)\to G_1$ of $f\circ\varphi$ which is definable and non-contracting in $G_1$. By the monotonicty theorem (see Theorem \ref{thm:monotonicity thm}), there is a $T_0\geq 0$ such that $\varphi$ and $\psi$ are continuous on $[T_0,\infty)$.  Let $z=\varphi(T_0)\psi(T_0)^{-1}\in\ker f$. So, $t\mapsto z\psi(t)$ and $t\mapsto \varphi(t)$ are two lifts of $f\circ\varphi$ from $[T_0,\infty)$ to $G_1$ from the point $\varphi(T_0)\in G_1$. Since $\ker f$ is finite, $f$ is a covering map, we get
    $\varphi(t)=z\psi(t)$ for all $t\in [0,\infty)$. Since $\psi$ is non-contracting, so is $\varphi$.
\end{proof}

\section{\texorpdfstring{$(C,\al)$}{(C,α)}-good property} 
\label{sec:c alpha good property}

In our setting, a property more fundamental than the $(C,\alpha)$-good property (Theorem \ref{thm:C_alpha_goodness}) is the following.

\begin{definition}
	Let $\de\in (0,1]$. A family $\tilde \F$ of real functions defined on $[T_0,\infty)$ is called \textit{$\de$-good} if there exists a constant $M(\de)$ depending only on $\de$ such that
	\begin{equation}
		\frac{\|f\|_{I}}{\|f\|_{I_{\de}}} \le M(\de), 
	\end{equation}
	for all $f\in \tilde\F$ and all bounded sub-intervals $I_{\de}\subseteq I\subseteq [T_0,\infty]$ satisfying $|I_{\de}|=\de |I|$.
\end{definition}

Such inequalities are well-known in the literature as Remez-type inequalities; see \cite{Remez_ineq}. 

We will prove the following result, from which Theorem \ref{thm:C_alpha_goodness} will be deduced. For the rest of this section, we fix a \emph{polynomially bounded} o-minimal structure $\pbomin$. 
\begin{theorem}\label{thm:delta goodness for linear combinations of o-minimal} 
Let $\V $ be a finite-dimensional vector space of real functions defined on $[0,\infty)$ definable in $\pbomin$. Suppose that $\F\subseteq \V$ is a closed definable cone such that
\[
\lim_{t\to\infty}f(t)\neq 0, \ \forall f\in \F\setminus \{0\}.
\]
Then, there exists $T_0>0$ such that the family of functions $\tilde\F$ of the nonzero functions in $\F$ restricted to $[T_0,\infty)$ is $\delta$-good, for all $\de\in(0,1]$.
\end{theorem}
\begin{remark}
The $T_0$ in \Cref{thm:delta goodness for linear combinations of o-minimal} can be chosen as a $T_0$, which is obtained by the outcome of \Cref{lem:Wronskian lemma}.
\end{remark} 

We now proceed to prove Theorem  \ref{thm:C_alpha_goodness} by assuming Theorem \ref{thm:delta goodness for linear combinations of o-minimal}, and the rest of the section will be dedicated to proving Theorem \ref{thm:delta goodness for linear combinations of o-minimal}.
\subsection{Proving \texorpdfstring{Theorem~\ref{thm:C_alpha_goodness}}{Theorem 1.14} via \texorpdfstring{$\delta$}{δ}-goodness}

We first note the following corollary of Theorem \ref{thm:delta goodness for linear combinations of o-minimal}.
\begin{corollary}\label{cor: the exponent for delta goodness}
Let $\V $ be a finite-dimensional vector space of real functions on $[0,\infty)$ which are definable in $\pbomin$. Suppose that $\F\subseteq \V$ is a closed definable cone such that for all $f\in \F\setminus \{0\}$, $$\lim_{t\to\infty}f(t)\neq 0.$$Let $T_0\geq1$ such that the outcome of Theorem \ref{thm:delta goodness for linear combinations of o-minimal} holds. Then, there exist $\mathrm{m},r>0$ such that\emph{:} \begin{equation}
    \|f\|_I\leq \mathrm{m} x^r \|f\|_{I'},
\end{equation}
for all $x\geq 1$,  $f\in\F\setminus \{0\}$, and bounded intervals $I'\subseteq I\subseteq [T_0,\infty)$ such that $\frac{|I|}{|I'|}\leq x$.
\end{corollary}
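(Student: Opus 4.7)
The plan is to iterate the Remez-type $\delta$-goodness inequality of Theorem~\ref{thm:delta goodness for linear combinations of o-minimal} at a single fixed value of $\delta$. I would set $\delta_0 = 1/2$ and let $M := M(1/2)$ be the associated constant; note that $M \geq 1$ because for any $I_\delta \subseteq I$ the sup-norm satisfies $\|f\|_{I_\delta} \leq \|f\|_I$. A preliminary upgrade of $\delta$-goodness will be useful: whenever $J \subseteq I \subseteq [T_0,\infty)$ satisfies $|J| \geq |I|/2$, one can pick any subinterval $I_{1/2} \subseteq J$ of length $|I|/2$ and get
\begin{equation*}
\|f\|_I \leq M\,\|f\|_{I_{1/2}} \leq M\,\|f\|_J,
\end{equation*}
using monotonicity of the sup-norm under inclusion.

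Given $I' \subseteq I \subseteq [T_0,\infty)$ with $|I|/|I'| \leq x$, I would let $k$ be the largest nonnegative integer with $2^{k} \leq |I|/|I'|$, so that $k \leq \log_2 x$. The main step is to construct a nested chain
\begin{equation*}
I = I_0 \supseteq I_1 \supseteq \cdots \supseteq I_k \supseteq I'
\end{equation*}
where $|I_{j+1}| = |I_j|/2 = |I|/2^{j+1}$ and $I' \subseteq I_{j+1}$ for every $j < k$. At each step this is possible because the room condition $|I'| \leq |I_{j+1}| = |I|/2^{j+1}$ is exactly $j+1 \leq k$. By the maximality of $k$ we also have $2^{k+1} > |I|/|I'|$, so $|I_k|/|I'| < 2$, i.e.\ $|I'| \geq |I_k|/2$, which means the strengthened estimate above applies to the pair $I_k \supseteq I'$ as well.

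Telescoping the strengthened bound along the chain, with one final application at the last link, yields
\begin{equation*}
\|f\|_I \leq M\,\|f\|_{I_1} \leq \cdots \leq M^{k}\,\|f\|_{I_k} \leq M^{k+1}\,\|f\|_{I'} \leq M \cdot M^{\log_2 x}\,\|f\|_{I'} = M \cdot x^{\log_2 M}\,\|f\|_{I'}.
\end{equation*}
Setting $\mathrm{m} := M$ and choosing any positive $r \geq \log_2 M$ gives the claimed bound. I do not anticipate a substantive obstacle: this is a clean telescoping of the fixed-$\delta$ Remez-type inequality, and the polynomial dependence on $x$ emerges automatically from the bound $k \leq \log_2 x$ on the number of halving steps.
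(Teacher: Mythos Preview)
Your argument is correct and is genuinely different from the paper's. The paper does not iterate: it packages the $\delta$-goodness of Theorem~\ref{thm:delta goodness for linear combinations of o-minimal} into a definable set
\[
S=\Bigl\{(x,y):\ \tfrac{\|f\|_I}{\|f\|_{I'}}\le y\ \text{for all }f\in\F\setminus\{0\},\ I'\subseteq I\subseteq[T_0,\infty),\ \tfrac{|I|}{|I'|}\le x\Bigr\},
\]
applies the definable choice theorem to obtain a definable function $\phi(x)$ with $(x,\phi(x))\in S$, and then invokes polynomial boundedness of the structure to conclude $\phi(x)\le \mathrm m\,x^r$. Your dyadic telescoping is more elementary: it uses only the single constant $M=M(1/2)$ and avoids any further appeal to o-minimal tools, yielding the explicit exponent $r=\log_2 M$ and constant $\mathrm m=M$. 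The paper's route is shorter to write and fits the ambient o-minimal machinery, but yours makes clear that the polynomial dependence on $x$ is already a purely combinatorial consequence of $\delta$-goodness at one fixed $\delta$, with no second use of polynomial boundedness needed.
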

\begin{proof}
    Consider the definable set (see \Cref{lemma:sup-definable}):
    \begin{equation*}
        S:=\left\{(x,y):\forall I'\subseteq I\subseteq [T_0,\infty),~\frac{|I|}{|I'|}\leq x,~\frac{\|f\|_I}{\|f\|_{I'}}\leq y,~\forall 0\neq f\in \F \right\}.
    \end{equation*}
    By Theorem \ref{thm:delta goodness for linear combinations of o-minimal}, the projection of $S$ to the first coordinate includes $[1,\infty).$ Then, by the definable  choice function theorem (see Theorem \ref{thm:choice function}), there exists a function $\phi:[1,\infty)\to \R^2$ definable in the same polynomially bounded o-minimal structure, such that $$(x,\phi(x))\in S,~\forall x\geq 1.$$ 
    Thus, for all $x>1$,  $f\in\F\setminus \{0\}$, and intervals $I'\subseteq I\subseteq [T_0,\infty)$ such that $\frac{|I|}{|I'|}\leq x$, it holds that $$\|f\|_I\leq \phi(x)\|f\|_{I'}.$$ 
    Since $\phi$ is polynomially bounded, there exist $r>0$ and $T_1\geq 1$ such  that $\frac{\phi(x)}{x^r}\leq c_0$ for all $x\geq T_1$, for some $c_0>0$. Finally, since $\frac{\phi(x)}{x^r}$ is bounded in $[1,T_1]$, the result follows. 
\end{proof}
 Our proof below is inspired by 
 %\cite[Lemma~4.1]{Dani1993LimitDO}
\cite[proof of Proposition 3.2]{Klein_Marg_flows_and_dioph}.
\begin{proof}[Proof of Theorem \ref{thm:C_alpha_goodness} assuming Theorem \ref{thm:delta goodness for linear combinations of o-minimal}]
    We recall that the number of connected components of the family of definable sub-level sets 
    $$\{t\in I : |f(t)|\leq \e\},~\e>0,~I\subseteq [T_0,\infty),~f\in\F\setminus \{0\},
$$
is bounded uniformly, say by $K$, see Theorem~\ref{thm:uniform bound on fibers}. 
Now fix  $I\subseteq [T_0,\infty),~\e>0$ and an $f\in\F\setminus \{0\}$. Let $$I'\subseteq \{t\in I : |f(t)|\leq \e\}$$be an interval of maximum length. 
Then 
\[
L\leq  K|I'|\text{, where } L:=|\{t\in I : |f(t)|\leq \e\}|.
\]
Therefore, ${|I|}/{|I'|}\leq {|I|}/{(L/K)}$. By \Cref{cor: the exponent for delta goodness}, we get$$\|f\|_I\leq \mathrm{m}\left(\frac{|I|}{L/K}\right)^r\|f\|_{I'}\leq \mathrm{m}\left(\frac{|I|}{L/K}\right)^r\epsilon.$$
Therefore, 
$$|\{t\in I : |f(t)|\leq \e\}|=L\leq \mathrm{m}^{\frac{1}{r}}K \left(\frac{\epsilon}{\|f\|_I}\right)^{\frac{1}{r}}|I|.$$
Thus, $f$ is $(\mathrm{m}^{\frac{1}{r}}K,1/r)$-good on $[T_0,\infty)$, see \eqref{eq:C-alpha-good}.
\end{proof}

\subsection{Proving Theorem \ref{thm:delta goodness for linear combinations of o-minimal}}
  The space of polynomials of bounded degrees is the prototypical example of $\de$-good functions. We record the following well-known result, as it will play a role in the proof of Theorem \ref{thm:delta goodness for linear combinations of o-minimal}. 

\begin{proposition}[{\cite[Lemma~4.1]{Dani1993LimitDO}}]\label{prop:delta goodness of polynomials}
    Fix $n\in \N$. Then, for every $\de\in(0,1]$, nested intervals $I_\de \subseteq I$ with $|I_{\de}|=\de |I|>0$, and every nonzero polynomial $c_0+c_1x+\cdots+c_nx^n$, it holds that\emph{:}
	\begin{equation}
		\frac{\|c_0+c_1x+\cdots+c_nx^n\|_I}{\|c_0+c_1x+\cdots+c_nx^n\|_{I_{\de}}} \le (n+1)\frac{n^n}{\delta^n} .
	\end{equation} 
\end{proposition}

% \begin{proof}
% 	Fix $n\in\N$, and let $f(x):=c_0+c_1x+\cdots+c_nx^n$ be a polynomial of degree bounded by $n$. Let $I\subseteq \R$ be an interval of length $T>0$, say $I=[a,a+T]$. Fix $\de\in (0,1]$, and let $I_\de\subseteq I$ be a sub-interval such that $I_\de=[t_0,t_n]$, with $t_n-t_0=\de T$. We consider the following $(n+1)$ equidistant point in $I_\de$:
%     $$a\leq t_0<t_1<...<t_n\leq a+T,$$
%     where $t_i:=t_0+\frac{i\de T}{n}$ for $i=0,1,...,n$.  By polynomial interpolation of $f$ at the above points, we have
% 	\begin{equation}
% 		f(t)=\sum_{j=0}^n \frac{\prod_{i\ne j}(t-t_i)}{\prod_{i\ne j}(t_j-t_i)} f(t_j).
% 	\end{equation}
% 	We get that, for every $0\leq i\leq n$: 
%  $$\left|\frac{\prod_{i\ne j}(t-t_i)}{\prod_{i\ne j}(t_j-t_i)}\right|\leq\frac{n^n}{\delta^n},~\forall t\in I,$$and thus, by the triangle inequality, 
%  $$\left|f(t)\right|\leq (n+1)\frac{n^n}{\delta^n} \max{\left|f(t_i)\right|}\leq (n+1)\frac{n^n}{\delta^n} \|f\|_{I_\de},~\forall t\in I.$$ 
% \end{proof}

We will be using the following convenient fact. Recall by the monotonicity theorem (Theorem \ref{thm:monotonicity thm}) that the derivative of a definable function $f(t)$ on an unbounded interval exists for all large $t$.
\begin{lemma}\emph{\cite[Proposition 3.1]{Miller94}}\label{lem:der asymp} Let $f:[0,\infty)\to \R$ be a function definable in  $\pbomin$ such that $f(t)\sim t^r$ as $t\to\infty$, where $r\neq0$. Then, $f'(t)\sim rt^{r-1}, \text{ as } t\to\infty$. 
\end{lemma}

\begin{lemma}\label{lem:linear independence} Let $F_i:[0,\infty)\to\R$ be functions definable in $\pbomin$, for $i=0,1,\ldots,N$, and assume that none of the $F_i$ are eventually zero. Suppose that the degrees $\deg(F_0),\ldots,\deg(F_N)$ are all distinct.
Then, there exists a $T_0>0$ such that for any interval $I\subseteq [T_0,\infty)$ of positive length, it holds that $$\|c_0F_0+c_1F_1+...+c_NF_N\|_I>0$$ for all $(c_0,c_1,\ldots,c_N)\neq0$.
\end{lemma}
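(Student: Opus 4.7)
The plan is to reduce the lemma to a classical Wronskian argument. After relabeling, assume $\deg F_0>\deg F_1>\cdots>\deg F_N$, so each $F_i$ satisfies $F_i(t)\sim a_i t^{\deg F_i}$ with $a_i\neq 0$. Two structural ingredients from the o-minimal machinery will be needed: first, by cell decomposition each $F_i$ is eventually $C^N$ on a tail $[T^{(i)},\infty)$; second, the germs at $+\infty$ of one-variable definable functions form a differential field with field of constants $\R$ (the derivation $d/dt$ is well-defined on germs because of eventual smoothness).

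First I would show that the germs $[F_0],\ldots,[F_N]$ are $\R$-linearly independent. Given nonzero $c=(c_0,\ldots,c_N)$, setting $j:=\min\{i:c_i\neq 0\}$, the leading asymptotic gives $\sum_i c_i F_i(t)\sim c_j a_j\, t^{\deg F_j}$, so the germ of $\sum c_i F_i$ is nonzero. Then I would invoke the classical theorem from differential algebra: in a differential field with field of constants $C$, a finite family is $C$-linearly dependent if and only if its Wronskian vanishes. Applied here, this yields that the germ of $W(t):=\det\bigl(F_i^{(n)}(t)\bigr)_{0\le n,i\le N}$ is a nonzero element of the differential field. Combining with cell decomposition, I can then choose $T_0>0$ such that each $F_i$ is $C^N$ on $[T_0,\infty)$ and $W(t)\neq 0$ for every $t\geq T_0$.

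With this choice of $T_0$, the conclusion follows quickly. Suppose, for contradiction, that $I\subset[T_0,\infty)$ is an interval of positive length and $G_c:=\sum_i c_i F_i\equiv 0$ on $I$ for some $c\neq 0$. Pick $t^*$ in the interior of $I$. Since each $F_i$ is $C^N$ near $t^*$ and $G_c$ vanishes identically on a neighborhood of $t^*$, we get $G_c^{(n)}(t^*)=0$ for $n=0,1,\ldots,N$. In matrix form this is the linear system $\bigl(F_i^{(n)}(t^*)\bigr)_{n,i}\,\vec c=0$, whose coefficient matrix has determinant $W(t^*)\neq 0$, forcing $\vec c=0$---a contradiction. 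Hence $\|G_c\|_I>0$.

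The main obstacle in making this argument fully rigorous is to justify the two abstract inputs: that the relevant germs form a differential field (closure under $d/dt$ follows from eventual smoothness via cell decomposition) and the Wronskian-linear-dependence equivalence itself (a standard fact in differential algebra). A more hands-on alternative would be to compute the leading asymptotic of $W(t)$ directly from $F_i^{(n)}(t)\sim a_i\, d_i(d_i-1)\cdots(d_i-n+1)\, t^{d_i-n}$ and recognize the resulting determinant as a generalized Vandermonde, which is nonzero for distinct exponents; this approach is clean when no $d_i$ lies in $\{0,1,\ldots,N-1\}$, but the integer-exponent cases require tracking subleading terms of the Puiseux-type expansion available in the polynomially bounded setting, which is why I would prefer the abstract route.
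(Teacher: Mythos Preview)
Your proof is correct but follows a genuinely different route from the paper's. The paper argues by an elementary induction on $N$: after dividing through by $F_0$ (nonzero on a tail) one reduces to $F_0\equiv 1$, and then if $c_0+c_1F_1+\cdots+c_NF_N\equiv 0$ on $I$, differentiating gives $c_1F_1'+\cdots+c_NF_N'\equiv 0$ on $I$, where the $F_i'$ again have pairwise distinct degrees (using $\deg F_i\neq 0$ so that $\deg F_i'=\deg F_i-1$); the induction hypothesis then forces $c_1=\cdots=c_N=0$, hence $c_0=0$. In contrast, you pass through the Hardy field of definable germs at $+\infty$, observe that its field of constants is $\R$, and invoke the standard differential-algebra equivalence between linear dependence over the constants and vanishing of the Wronskian to conclude that $W(t)\neq 0$ on a tail; the pointwise invertibility of the Wronskian matrix then kills any $c$ with $G_c\equiv 0$ on $I$.

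Both arguments are short. The paper's is more self-contained (no outside theorems beyond basic o-minimality and Miller's $\deg f'=\deg f-1$), while yours is more conceptual and, notably, reverses the logical order the paper takes: the paper \emph{deduces} Wronskian nonvanishing (its Lemma~\ref{lem:Wronskian lemma}) from the present lemma via B\^ocher's classical result, whereas you effectively establish the Wronskian lemma first and derive linear independence on intervals from it. Your closing remark about the direct asymptotic computation of $W(t)$ and the integer-exponent obstruction is accurate; the abstract route you chose sidesteps that cleanly.
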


\begin{proof}
By o-minimality (Theorem~\ref{thm:uniform bound on fibers}), the definable set $F_0^{-1}(0)$ has finitely many connected components. Since $F_0$ is not eventually  zero, for some $T_0>0$, $F_0(t)\neq 0$ for all  $t\geq T_0$. Thus for all $I\subseteq [T_0,\infty)$ we have$$\left\|c_0F_0+c_1F_1+\cdots+c_NF_N\right\|_I>0\iff\left\|c_0+c_1\frac{F_1}{F_0}+\cdots+c_N\frac{F_N}{F_0}\right\|_I>0. $$
Since the degrees of $F_0,F_1,...,F_N$ are distinct,  the degrees of $$1,F_1/F_0,...,F_N/F_0,$$ are distinct. Thus, we are reduced to proving the statement under the assumption that $F_0(t)=1$ for all $t\geq T_0$.

%We argue by induction. The case of $N=0$ is trivial. Let $N\geq 1$.
% Since $\deg(F_1)\neq \deg(F_0)=0$, we let $T_0>0$ be large enough such that $F_1'(t)\neq0$ for all $t\geq T_0$. Assume that for an interval $I\subseteq[T_0,\infty)$  we have $\|c_0+c_1F_1\|_I=0$. Since $c_0+c_1F_1(t)=0$ for all $t\in I$, we get that $$\frac{d}{dt}\left(c_0+c_1F_1(t)\right)=c_1F_1'(t)=0,~\forall t\in I,$$and as $F_1'(t)\neq0$ for all $t\in [T_0,\infty)$, it follows that $c_1=0$. As a consequence, $c_0=0$. 

Pick $T_1\geq T_0$ such that the functions $F_1,\ldots,F_N$ are differentiable in $[T_1,\infty)$, see Theorem \ref{thm:monotonicity thm}. 
%Note that $\deg(F_i)\neq0$ for all $1\leq i\leq N$ due to the assumption that the degrees of the functions $1,F_1,\ldots,F_n$ are all distinct. 
By Lemma \ref{lem:der asymp}, $\deg(F_i')=\deg(F_i)-1$ and $F_i'$ is not eventually zero for $1\leq i\leq N$. So, the degrees of $F_1',F_2',\cdots,F_N'$ are all distinct. Now, for any interval $I\subseteq[T_1,\infty)$ of positive length, 
\[
\text{if }\|c_0+c_1F_1+...+c_NF_N\|_I=0\text{, then }
\|c_1F_1'+...+c_NF_N'\|_I=0.
\]
In view of the above observations, the result is straightforward to deduce by induction. 
\end{proof}
Next, for definable functions $F_0(t),F_1(t),...,F_N(t)$, we consider the \emph{ Wronskian matrix}  $W(F_0,...,F_N)(t)$, 
which is defined to be the $(N+1)\times (N+1)$ matrix whose $k$-th row is $(F^{(k)}_0(t),F_1^{(k)}(t),\ldots,F_N^{(k)}(t))$ for  $0\leq k\leq N$. The Wronskian  is well defined for all $t$ large enough due to Theorem \ref{thm:monotonicity thm}.

\begin{lemma}\label{lem:Wronskian lemma}
% Suppose that $F_i:[0,\infty)\to\R$ for $i\in\{0,1,...,N\}$ are functions definable in a polynomially bounded o-minimal structure such that they are not eventually $0$ and the degrees $\deg(F_i)$, for $0\leq i\leq N$ are all distinct. 
Let the notation be as in \Cref{lem:linear independence}. Then, there exists a $T_0>0$  such that the functions $F_0,...,F_N$ are $(N+1)$-time continuously differentiable and $W(F_0,...,F_N)(t)$ is non-singular for all $t\geq T_0$. 
\end{lemma}

\begin{proof}
Since $\det(W(F_0,...,F_{N})(t))$ is a definable function, either $$|\det(W(F_0,...,F_N)(t))|>0,$$for all large $t$ or $\det(W(F_0,...,F_N)(t))=0$ for all large $t$. We will prove the result by induction. Since $F_0$ is not eventually zero, the result follows for $N=0$. Now suppose $N\in\N$ and $T_0>0$ such that $\det(W(F_0,...,F_{N-1})(t))\neq 0$ for all $t\geq T_0$. Now, suppose $\det(W(F_0,...,F_N)(t))=0$ for all $t\geq T_1\geq T_0$. Then, by a classical result of B\^{o}cher \cite[Theorem~II]{Bochner_certain_cases_Wronksian_implies_dep},  $F_0,\ldots,F_N$ are linearly dependent on $[T_1,\infty)$. This contradicts \Cref{lem:linear independence}.
\end{proof}

\begin{lemma}\label{lem:convenient basis for F}
    Let $\V$ be a real finite-dimensional vector space spanned by functions definable in $\pbomin$, and suppose that the zero function is the only function in $\V$ which is eventually constantly zero. Then, there exists a basis $\{F_1,...,F_N\}$ of $\V$ such that: \begin{equation}\label{eq:basis of ordered degress}
        \deg(F_1)<\cdots<\deg(F_N).
    \end{equation}
\end{lemma}
\begin{proof}
    We argue by induction. The claim is trivial for  $N=1$. Now choose an arbitrary basis $\{H_1,...,H_N\}$ of $\V$, and suppose without loss of generality that $\deg(H_i)\leq \deg(H_N) $ for all $i$. Moreover, we claim that there is no loss of generality in assuming that $\deg(H_i)<\deg(H_N)$, for all $i<N$. Indeed, for each $i<N$ such that $\deg(H_i)=\deg(H_N)$, we can find $c_i\neq 0$ such that $\deg(H_i-c_iH_N)<\deg(H_N)$. By replacing $H_i$ with $H_i-c_iH_N$, we again get a basis for $\V$. Note that the vector space $\mathcal U:=\Span_{\R}\{H_1,...,H_{N-1}\}$ is $(N-1)$-dimensional, and $\deg(f)<\deg(H_N)$ for all $f\in \mathcal U$. By induction, there is a basis $\{F_1,...,F_{N-1}\}$ for $\U$ such that $\deg(F_i)<\deg(F_{i+1})$ for all $i$. Then, $\{F_1,...,F_{N-1},F_N\}$ with $F_N:=H_N$ is the required basis for $\V$.
\end{proof}
\begin{remark}\label{rem:trivial subspace of eventually zero constant functions}
     If $\V$ is a finite-dimensional vector spanned by definable functions, then the subset of functions which are eventually constantly zero forms a finite-dimensional subspace $\V_0\subseteq \V$. Then, there exists a uniform $T_0>0$ such that for all $f\in \V_0$, it holds that
  $f(t)=0$, for all $t\geq T_0$. Thus, by restricting the functions in $\V$ to $[T_0,\infty)$, we may assume $\V_0=\{0\}$.
\end{remark}

\begin{proof}[Proof of Theorem \ref{thm:delta goodness for linear combinations of o-minimal}] 
Let $\V$ be a finite-dimensional vector space of functions $f:[0,\infty)\to\R$ definable in a polynomially bounded o-minimal structure. 
By Remark \ref{rem:trivial subspace of eventually zero constant functions}, there is no loss in generality in assuming that only the zero function is eventually constantly zero. Using Lemma \ref{lem:convenient basis for F}, we may further assume, by possibly enlarging $\V$, that $\V$ has a basis of the form $\{F_1^-,...,F_n^-,F_0^+,F_1^+,...,F_m^+\}$ such that
$$\deg(F_1^-)<\cdots<\deg(F_n^-)<0\leq\deg(F^+_0)<\cdots<\deg(F_m^+),$$
where $m,n\in\N$. Let $N=m+n+1$. We will denote
\begin{equation}\label{eq:exponents def}
  r_i=\deg(F^+_i), \quad -\ka_j:=\deg(F^-_j),  
\end{equation}where $r_i,\ka_j\geq0$. In addition, we assume for convenience that $$\lim_{t\to\infty} F^+_i (t)/t^{r_i}=1 \text{ and } \lim_{t\to\infty}F^-_j (t)/t^{-\ka_j}=1.$$
For any $\cf:=(c_1^-,\ldots,c_n^-,c_0^+,\ldots,c^+_m)\in\R^{N}$, we denote
\begin{gather*}
     % :=c^-_1F^-_1+\cdots+c^-_nF^-_n+c^+_0F_0^+\cdots+c^+_mF^+_m,\\
     f^-_{\cf}:=c^-_1F^-_1+\cdots+c^-_nF^-_n, \ 
     f^+_{\cf}:=c^+_0F_0^+ +\cdots+c^+_mF^+_m \text{, and }  f_{\cf}=f^-_{\cf}+f^+_{\cf}.
\end{gather*}

Let $\F\subseteq\V$ be a closed definable cone such that
\[
\lim_{t\to\infty}f(t)\neq 0,\,\forall f\in\F\setminus\{0\}.
\]

Let $\tilde \F:=\{\cf\in\R^{N}:f_{\cf}\in\F\}\cong \F$. Then $\tilde \F$ is a closed cone in $\R^{N}$. 

Fix a $T_0\geq 1$ for which the outcome of Lemma \ref{lem:Wronskian lemma} holds, and suppose for contradiction that there exists a $\de\in(0,1)$ such that $\F\setminus \{0\}$ is not $\delta$-good on $[T_0,\infty)$. Consider the following definable subset (see \Cref{lemma:sup-definable}):
\begin{equation} \label{eq:definableset}
\mathcal{A}:=\left\{(s,\cf,a,l,\al): 
\begin{array}{c}
   s\geq 1, ~ \cf\in\tilde\F\setminus\{0\},~a>T_0,~l>0, \\
   a\le \al\leq a+l-\delta l,~ \frac{\|f_{\cf}\|_{[a,a+l]}}{\|f_{\cf}\|_{[\al,\al+\de l]}}>s\\
  \end{array} 
  \right\}.    
\end{equation}
By the assumption for contradiction, the projection of $\mathcal{A}$ to the first coordinate equals $[1,\infty)$. 
By the choice function theorem (\Cref{thm:choice function}), there exists a polynomially bounded definable curve $$\phi(s):=(\cf(s),a(s),l(s),\al(s))\text{ for $s\geq 1$},$$such that $(s,\phi(s))\in \mathcal A,~\forall s\geq1.$  In particular, there exists an $C_0\neq 0$ and an $\ka\in\R$ such that
\begin{equation}\label{relation between a and T}
\frac{a(s)}{l(s)} \sim C_0 s^{\kappa},\text{ as $s\to\infty$}.
\end{equation}

The remainder of the proof is organized by dividing the argument into cases based on the sign of \( \kappa \) and on whether \( l(s) \) and \( a(s) \) are bounded or unbounded. In each case, we derive a contradiction. To guide the reader, we begin with a brief overview. 

If \( \kappa \leq 0 \), this implies that the length of the interval \( I(s) = [a(s), a(s) + l(s)] \) grows faster than its starting point. The case where \( l(s) \) is bounded is easier to handle, since in that case \( I(s) \) remains a bounded interval. When \( l(s) \) is unbounded, we exploit the fact that polynomially bounded definable functions \( f(t) \) are asymptotically equivalent to functions of the form \( ct^r \). 

On the other hand, if \( \kappa > 0 \), the intervals \( I(s) \) are short relative to their left endpoints. In this case, we can apply Taylor approximation, and use the \( \delta \)-goodness of polynomials (see Proposition~\ref{prop:delta goodness of polynomials}) to reach a contradiction.

\vspace{2mm}
\noindent Before proceeding to the case analysis, we record the following observation, which will be useful throughout the proof. Let \( \|\cdot\|_1 \) denote the \( L^1 \)-norm on \( \R^N \), and define
\begin{equation}
    \label{eq:hatcf}
    \hat{\cf}(s) := \frac{\cf(s)}{\|\cf(s)\|_1},\ s \geq 1.
\end{equation}
The function \( s \mapsto \hat{\cf}(s) \) is a bounded definable function with \( \|\hat{\cf}(s)\|_1 = 1 \). It follows that
\[
\lim_{s \to \infty} \hat{\cf}(s) = \mathbf{v}
\]
for some $\mathbf{v} \in \R^N $ satisfying $ \|\mathbf{v}\|_1 = 1.$ Since  $\cf(s) \in  \tilde\F \setminus \{0\}$  and $ \tilde\F$  is a closed cone, we conclude that \( \mathbf{v} \in \tilde\F \setminus \{0\} \).

\vspace{3mm}

\noindent \textbf{Case 1:  $\kappa \le 0$}.  There are two sub-cases: $\{l(s):s\geq 1\}$ is bounded or not.
% In an overview, when $l(s)$ is bounded, it follows that the 
\vspace{1mm}

\noindent \emph{\textbf{Case 1.1.} $l(s)$ is bounded in $s$}. Then $a(s)$ is also bounded, and by o-minimality $l(s)$ and $a(s)$ converge as $s\to\infty$. We  observe that  $$\lim_{s\to\infty}l(s)\neq0.$$
Because, since  $a(s)\geq T_0\geq 1,\forall s\geq 1$, if  $\lim_{s\to\infty}l(s)=0$, then $\lim_{s\to\infty}\frac{a(s)}{l(s)}=\infty$, which contradicts the assumption that $\kappa\leq0$.  We also note that the end-points of the nested intervals $$[\al(s),\al(s)+\de l(s)]\subseteq[a(s),a(s)+l(s)],$$ converge to the end-points of intervals  $I_\delta\subseteq I\subseteq[T_0,\infty)$ of positive length,  where  $\frac{|I_{\de}|}{|I|}=\de$.  Hence  $\|f_{\hat{\cf}(s)}\|_{[a(s),a(s)+l(s)]}$ is uniformly bounded in $s$. Now, 
\begin{align*}
\frac{\|f_{\hat{\cf}(s)}\|_{[a(s),a(s)+l(s)]}}{\|f_{\hat{\cf}(s)}\|_{[\al(s),\al(s)+\de l(s)]}}
&=\frac{\|f_{\cf(s)}/\norm{\cf(s)}_1\|_{[a(s),a(s)+l(s)]}}{\|f_{\cf(s)}/\norm{\cf(s)}_1\|_{[\al(s),\al(s)+\de l(s)]}}\\
&=\frac{\|f_{\cf(s)}\|_{[a(s),a(s)+l(s)]}}{\|f_{\cf(s)}\|_{[\al(s),\al(s)+\de l(s)]}}>s,
\end{align*}
so that
$$\frac{\|f_{\hat{\cf}(s)}\|_{[a(s),a(s)+l(s)]}}{s}\geq\|f_{\hat{\cf}(s)}\|_{[\al(s),\al(s)+\de l(s)]}.$$
By taking   $s\to\infty$,  we get $$0=\|f_{\textbf{v}}\|_{I_\de}.$$This is a contradiction to Lemma \ref{lem:linear independence}.

\vspace{1mm}

\noindent \emph{\textbf{Case 1.2}. $l(s)$ is unbounded in $s$}. In this case, we will achieve a contradiction by proving that:
$$\lim_{s\to\infty}\hat \cf(s)=(v_1^-,...,v_n^-,0,...,0),$$where
$(v_1^-,...,v_n^-)\neq 0.$ This leads to a contradiction, because $$f_{(v_1^-,...,v_n^-,0,...,0)}\in \F\setminus \{0\} \text{ and } \lim_{t\to\infty} f_{(v_1^-,...,v_n^-,0,...,0)}(t)=0,$$ is contrary to our assumption on $\F$.

\begin{remark} \label{rem:nondecay}
This is the only place in the proof of Theorem~\ref{thm:C_alpha_goodness} where we use the non-decaying function assumption for the closed cone $\F$, and it highlights the necessity of the non-contracting assumption for obtaining the $C,\alpha$-good property.
\end{remark}
\noindent We now proceed with our proof. Since $l(s)$ is an unbounded definable function, we have that  $l(s)\to\infty$. For $s\geq 1$, we define
\begin{align}\label{eq:def of Cpm and C}
    C^+(s):=\sum_{i=0}^m |c^+_i(s)|l(s)^{r_i},\,
   C^-(s):=\sum_{j=1}^n |c^-_j(s)| \text{, and } 
   C(s):=C^+(s)+C^-(s).
\end{align}
Since $s\mapsto \frac{(c^+_0(s)l(s)^{r_0},...,c^+_m(s)l(s)^{r_m})}{C(s)} $ is definable and bounded, we get that:
\begin{equation} \label{eq:w+}
    (w^+_0,...,w^+_m):=\lim_{s\to\infty} \frac{(c^+_0(s)l(s)^{r_0},...,c^+_m(s)l(s)^{r_m})}{C(s)} \in [0,1]^{m+1}.
\end{equation}

\subsection*{Claim.} There exists an interval $J\subset[0,1]$ of positive length such that $$w_0^+t^{r_0}+\cdots+w_m^+t^{r_m}=0,~\forall t\in J.$$ Here, $r_i$ are as in \eqref{eq:exponents def}.  In particular, $(w^+_0,...,w^+_m)=0$.

\begin{proof}[Proof of Claim]
We consider
\begin{equation*}
    \varphi_{{\cf}(s)}(t):=\frac{f_{\cf(s)}(a(s)+tl(s))}{C(s)},\ \forall t\in[0,1].
\end{equation*}
Recall that, by our choice function for the definable set \eqref{eq:definableset}, 
\begin{equation}\label{eq:ratios of varphis}
s<\frac{\|f_{\cf(s)}\|_{[a(s),a(s)+l(s)]}}{\|f_{\cf(s)}\|_{[\al(s),\al(s)+\de l(s)]}}= \frac{\|\varphi_{{\cf}(s)}\|_{[0,1]}}{\|\varphi_{{\cf}(s)}\|_{I_{\de}(s)}}, 
\end{equation}
where $I_{\de}(s):=\left[\frac{\al(s)-a(s)}{l(s)},\frac{\al(s)-a(s)+\de l(s)}{l(s)}\right]\subseteq [0,1]$ having length $|I_\de(s)|=\de$ for all $s\geq 1.$ Since $\frac{a(s)}{l(s)}$ is a bounded definable function, we have $$\lim_{s\to\infty}\frac{a(s)}{l(s)}=x_0\in [0,\infty).$$
Also, since the end-points of $I_\de(s)$ are bounded, they converge to end-points of a sub-interval $I_\de$ of $[0,1]$ of length $\de$. We will prove that: \begin{equation} \label{eq:phiunif:bounded}
\limsup_{s\to\infty} \norm{\varphi_{{\cf}(s)}}_{I_\de(s)}=0,
\end{equation}
and that for any $t_0\in (0,1)$:
\begin{equation} \label{eq:phiunif:conv}
\lim_{s\to\infty} \sup_{t\in[t_0,1]} \abs{\varphi_{\cf(s)}(t)- w^+_0\left(x_0 +t\right)^{r_0}+\cdots+w^+_m\left(x_0+t\right)^{r_m}}=0.
\end{equation} 
We note that the claim readily follows from \eqref{eq:phiunif:bounded} and \eqref{eq:phiunif:conv}. 
To prove \eqref{eq:phiunif:bounded}, it is sufficient to show that $$\sup_{s\to\infty}\|\varphi_{\cf(s)}\|_{[0,1]}<\infty,$$
see \eqref{eq:ratios of varphis}. For each $0\leq i\leq m $, there exists a $\nu_i>0$ such that for all $t\in [0,1]$, we have as $l\to\infty$,
\begin{align}
 \frac{F^+_i(a+tl)}{l^{r_i}}&=
\frac{\left(a+lt\right)^{r_i}}{l^{r_i}}+O\left(\frac{\left(a+lt\right)^{r_i-\nu_i}}{l^{r_i}}\right)\nonumber\\
&=\left(\frac{a}{l} +t\right)^{r_i}+O\left(l^{-\nu_i}\left(\frac{a}{l} +t\right)^{r_i-\nu_i}\right)\label{eq:homogenisity of Fi+}.
\end{align}
We can pick $S_0\geq1$ such that $l(s)\geq 1$ for all $s\geq S_0$ and that the following two statements hold: 
    \begin{enumerate}
        \item \emph{$\sup_{t\in[0,1]}\left(\frac{a(s)}{l(s)} +t\right)^{r_i}$ is uniformly bounded in $s\geq S_0$ and $0\leq i\leq m$.}
        
 Indeed, because $r_i\geq0$, $\lim_{s\to\infty}\frac{a(s)}{l(s)}=x_0$, and $t\in [0,1]$.
    
    \item \label{enu:error term bdd}\emph{$\sup_{t\in[0,1]}l(s)^{-\nu_i}\left(\frac{a(s)}{l(s)} +t\right)^{r_i-\nu_i}$ is  uniformly bounded in $s\geq S_0$ and $0\leq i\leq m$.} 

This is clear if $r_i-\nu_i\geq0$. Now suppose that $r_i-\nu_i<0$. Then, for all $t\in[0,1]$, 
    \begin{align*}
       l(s)^{-\nu_i}\left(\frac{a(s)}{l(s)} +t\right)^{r_i-\nu_i}\leq& l(s)^{-\nu_i}\left(\frac{a(s)}{l(s)} \right)^{r_i-\nu_i}\\
       =&\frac{a(s)^{r_i-\nu_i}}{l(s)^{r_i}}=  O(1),
    \end{align*}
    since $\frac{a(s)}{l(s)}=O(s^\ka)$ with $\ka\leq0$, and since $a(s)\geq T_0\geq 1$.
     \end{enumerate}
     
By \eqref{enu:error term bdd},  as $s\to\infty$, uniformly in $t\in[0,1]$,
\begin{align} \label{eq:phiunif+}
    f^+_{\cf(s)}(a(s)+tl(s))
    =\sum_{i=0}^m (c_i^+(s) l(s)^{r_i}) \left(\frac{a(s)}{l(s)} +t\right)^{r_i}+O(C^+(s)). 
\end{align}
In addition, we observe that as $s\to\infty$:
\begin{equation} \label{eq:phiunif-:bound}
\sup_{t\in[0,1]}\left\vert f_{\cf(s)}^-(a(s)+tl(s))\right\vert=O(C^-(s)).
\end{equation}
Here, $C^+,C^-$ are given by \eqref{eq:def of Cpm and C}. Now:
\begin{align*}
    |\varphi_{{\cf}(s)}(t)|&=\frac{|f_{\cf(s)}(a(s)+tl(s))|}{C(s)}\\
    &=\frac{|f^+_{\cf(s)}(a(s)+tl(s))+f_{\cf(s)}^-(a(s)+tl(s))|}{C(s)}=O(1),~\text{as $s\to\infty$,}
\end{align*}
uniformly in $t\in[0,1]$. This proves \eqref{eq:phiunif:bounded}. 

Next, we  prove \eqref{eq:phiunif:conv}. We have $\lim_{s\to\infty} l(s)=\infty$, $a(s)/l(s)>0$, and $\lim_{s\to\infty} a(s)/l(s)=x_0$. 
Fix $t_0\in (0,1)$. Then $t_0\leq \abs{a(s)/l(s)+t}\leq x_0+2$ for all $t\in [t_0,1]$, for all large $s$.  So, for any $0\leq i\leq m$, by \eqref{eq:homogenisity of Fi+}, as $s\to\infty$, 
   \[
   \sup_{t\in [t_0,1]} \left\vert \frac{F^+_i(a(s)+tl(s))}{l(s)^{r_i}}-(x_0+t)^{r_i}\right\vert = O(l(s)^{-\nu_i}).
   \] 
Let $\nu=\min\{\nu_0,\ldots,\nu_m\}>0$. Then, as $s\to\infty$, uniformly in $t\in[t_0,1]$,
\begin{align}\label{eq:fplus after t0} 
    f^+_{\cf(s)}(a(s)+tl(s))
    =\sum_{i=0}^m (c_i^+(s) l(s)^{r_i}) (x_0+t)^{r_i}+O(C^+(s)l(s)^{-\nu}).
\end{align}
Also,
\begin{equation} \label{eq:phiunif-:conv}
\sup_{t\in [t_0,1]} \left\vert f_{\cf(s)}^-(a(s)+tl(s))\right\vert=O(C^-(s)l(s)^{-\ka_n}),
\end{equation}
where $\ka_n$ is as in \eqref{eq:exponents def}. By \eqref{eq:fplus after t0} and \eqref{eq:phiunif-:conv}, for $t\in [t_0,1]$, we have
\begin{align*}
    |&\varphi_{\cf(s)}(t)-(w^+_0(x_0+t)^{r_0}+\cdots+w^+_m(x_0+t)^{r_m})|\\
    &\leq\sum_{i=0}^m \left|\frac{c_i^+(s) l(s)^{r_i}}{C(s)}-w_i^+\right|(x_0+t)^{r_i}+O(l(s)^{-\ka_n}+l(s)^{-\nu}),
\end{align*}
and the last expression decays to $0$ uniformly for $t\in [t_0,1]$, as $s\to\infty$.
\end{proof}

Using our claim and by recalling \eqref{eq:w+}, we get that
\begin{align*}
  &\lim_{s\to\infty}\frac{(c_1^-(s),...,c_n^-(s),c^+_0(s)l(s)^{r_0},...,c^+_m(s)l(s)^{r_m})}{C(s)}\\&\lim_{s\to\infty}\frac{(c_1^-(s),...,c_n^-(s),c^+_0(s)l(s)^{r_0},...,c^+_m(s)l(s)^{r_m})}{\|(c^-_1(s),...,c^-_n(s),c^+_0(s)l(s)^{r_0},...,c_m^+(s)l(s)^{r_m})\|_1}\\
  &=(w^-_1,...,w^-_n,w^+_0,...,w^+_m)=(w^-_1,...,w^-_n,0,...,0).
\end{align*}
 Therefore, 
 \begin{equation} \label{eq:w-}
    \|(w^-_1,...,w^-_n)\|_1=1. 
 \end{equation}
We now again consider $\hat\cf(s)$, as in  \eqref{eq:hatcf}. We have
 \begin{align*}
 \textbf{v}=(v^-_1,...,v^-_n,v^+_0,...,v^+_m)&:=\lim_{s\to\infty} \hat\cf(s)\\
 &=\lim_{s\to\infty}
 \frac{(c_1^-(s),...,c_n^-(s),c^+_0(s),...,c^+_m(s))}
 {\|(c^-_1(s),...,c^-_n(s),c^+_0(s),...,c^+_m(s))\|_1},
 \end{align*}
where $\|\textbf{v}\|_1=1$ and $\textbf{v}\in \tilde\F\setminus \{0\}$.  
By \eqref{eq:w-}, and since $l(s)\to\infty$ and $r_i\geq 0$:
\begin{align*}
 1&=\|(w_1^-,...,w_n^-)\|_1
 \\&=\lim_{s\to\infty}\frac{\|(c_1^-(s),...,c_n^-(s))\|_1}{\|(c^-_1(s),...,c^-_n(s),c^+_0(s)l(s)^{r_0},...,c^+_m(s)l(s)^{r_m})\|_1}\\
 &\leq\lim_{s\to\infty}\frac{\|(c_1^-(s),...,c_n^-(s))\|_1}{\|(c^-_1(s),...,c^-_n(s),c^+_0(s),...,c^+_m(s))\|_1}\\
 &=\|(v_1^-,...,v_n^-)\|_1\leq 1.   
\end{align*}
Therefore, $\|(v_1^-,...,v_n^-)\|_1=1$. As a consequence, we get $v^+_0=\cdots=v^+_m=0.$
% Therefore, $f_{\textbf{v}}(t)\to 0$, as $t\to\infty$, where $\textbf{v}\in \tilde\F\setminus\{0\}$, and hence $f_{\textbf{v}}\in\F\setminus\{0\}$. This contradicts our assumption that no nonzero function in $\F$ decays to zero. 

\vspace{3mm}

\noindent\textbf{Case 2:  $\kappa > 0$.} 
Here, we consider two cases corresponding to $a(s)$ being bounded or $a(s)\to\infty$.
% Let $\lambda(s)=\|\cf(s)\|_1$ and denote:
% $\hat{\cf}:=\frac{\cf}{\la}$. As above, note that
% $$\frac{\|f_{\hat{\cf}(s)}\|_{[a(s),a(s)+l(s)]}}{\|f_{\hat{\cf}(s)}\|_{[\al(s),\al(s)+\de l(s)]}}=\frac{\|f_{\cf(s)}\|_{[a(s),a(s)+l(s)]}}{\|f_{\cf(s)}\|_{[\al(s),\al(s)+\de l(s)]}}>s.$$  
\vspace{1mm}

\noindent\emph{\textbf{Case 2.1.} $a(s)$ is bounded.} We have that $\lim_{s\to\infty}a(s)=x_0\geq T_0$. Since $\lim_{s\to\infty}\frac{a(s)}{l(s)}=\infty$, it follows that  $\lim_{s\to\infty}l(s)=0$. We again consider $\hat\cf(s)$, as in  \eqref{eq:hatcf}, and recall
$\lim_{s\to\infty}\hat{\cf}(s)=\textbf{v}$ with $\textbf{v}\in\tilde\F$ such that $\|\textbf{v}\|_1=1.$

By our choice of $T_0$, the basis functions $F^-_i,F^+_j$ are $(N+1)$-times continuously differentiable, where $N:=n+m+1$. 
For $t\geq T_0$, let $W(t)$ denote the Wronskian matrix $W(F_{-n}(t),\ldots,F_m(t))$, whose $k$-th row is $(F_{-n}^{(k)}(t),\ldots,F_{m}^{(k)}(t))$, for $k=0,\dots,N$. 
The Taylor polynomial of order $N$ of the function $h\mapsto f_{\hat{\cf(s)}}(a(s)+h)$ centered at $h=0$ is expressed by:
\begin{equation} \label{eq:Q-coeff}
Q_{N,a(s)}(h)=\hat{\cf}(s)W(a(s))^\top\cdot(h^0,\ldots, h^N),
\end{equation}
where $\cdot$ denotes the dot product on $\R^{N+1}$. By Taylor's theorem (mean value form for the remainder), for all $h\in[0,l(s)]$, there exists a $\xi\in a(s)+[0,l(s)]$ such that
\begin{equation*}
f_{\hat{\cf}(s)}(a(s)+h)-Q_{N,a(s)}(h)=\frac{f_{\hat{\cf}(s)}^{(N+1)}(\xi)}{(N+1)!}h^{N+1}.
\end{equation*} 
Since $a(s)$, $\hat{\underline{c}}(s)$, and $l(s)$ are bounded, we obtain that
$|f_{\hat{\cf}(s)}^{(N+1)}(\xi)|$ is uniformly bounded. 

Denote $I(s):=[0,l(s)]$, and let $I_\de(s):=[0,\de l(s)]$. Thus, we conclude:
\begin{equation*}
    \|f_{\hat{\cf}(s)}\|_{a(s)+I(s)}=\|Q_{N,a(s)}\|_{I(s)}+O(l(s)^{N+1}), 
\end{equation*}
and similarly,
\begin{equation}\label{eq:f_c on I_de with Taylor}
    \|f_{\hat{\cf}(s)}\|_{a(s)+I_{\de}(s)}=\|Q_{N,a(s)}\|_{I_\de(s)}+O(l(s)^{N+1}).
\end{equation}
Using Proposition \ref{prop:delta goodness of polynomials},
\begin{align*}
\|f_{\hat{\cf}(s)}\|_{a(s)+I(s)}
= & \|Q_{N,a(s)}\|_{I(s)}+O(l(s)^{N+1}) \\
\le & M_{N}(\de)\|Q_{N,a(s)}\|_{I_\de(s)}+O(l(s)^{N+1}) \\
\le & M_N(\de)\|f_{\hat{\cf}(s)}\|_{a(s)+I_{\de}(s)}+O(l(s)^{N+1})\\
= & M_N(\de)\|f_{\hat{\cf}(s)}\|_{a(s)+I_{\de}(s)}\left[1+O\left(\frac{l(s)^{N+1}}{\|f_{\hat{\cf}(s)}\|_{a(s)+I_{\de}(s)}}\right)\right].\nonumber 
\end{align*}
Thus, \begin{equation*}
    \frac{\|f_{{\cf}(s)}\|_{a(s)+I(s)}}{\|f_{{\cf}(s)}\|_{a(s)+I_\de(s)}}\ll1+O\left(\frac{l(s)^{N+1}}{\|f_{\hat{\cf}(s)}\|_{a(s)+I_{\de}(s)}}\right).
\end{equation*}
We will now show that
\begin{equation}\label{eq:f_c decays slower that T^n+1}
\lim_{s\to\infty}\frac{l(s)^{N+1}}{\|Q_{N,a(s)}\|_{I_{\de}(s)}}=0.
\end{equation} 
Before proceeding with the proof, note that by combining  \eqref{eq:f_c decays slower that T^n+1} with \eqref{eq:f_c on I_de with Taylor}, we obtain that  $\frac{l(s)^{N+1}}{\|f_{\hat{\cf}(s)}\|_{a(s)+I_{\de}(s)}}$ is bounded for all large $s$. This is a contradiction, since we are assuming that $s\leq\frac{\|f_{{\cf}(s)}\|_{I(s)}}{\|f_{{\cf}(s)}\|_{I_{\de}(s)}}$ for all $s\geq 1$.

We proceed to prove \eqref{eq:f_c decays slower that T^n+1}. Observe that for a polynomial $q(x)=c_0+c_1x+...+c_kx^k$ of degree bounded by $k$  and an interval $J\subseteq [0,1]$, it follows by Proposition~\ref{prop:delta goodness of polynomials}, 
\begin{equation} \label{eq:qJ-coeff}
    \|q\|_J\geq (k+1)^{-1}k^{-k} \|q\|_{[0,1]}|J|^k\gg \|(c_0,c_1,...,c_k)\|_1|J|^k.
\end{equation}
Here, we have also used that $\|q\|_{[0,1]}$ and $\|\underline c\|_1$ are two norms on the $(k+1)$-dimensional space of polynomials of degree at most $k$, and we note that the implied constant depends only on $k$. By \eqref{eq:Q-coeff}, $Q_{N,a(s)}$ is a polynomial of degree bounded by $N$ whose coefficients are given by $\hat{\cf}(s)W(a(s))^\top$. We have that $W(a(s))^\top\to W(x_0)^\top$ as $s\to\infty$, and we recall Lemma \ref{lem:Wronskian lemma} which implies that $W(x_0)^\top$ is non-singular. Therefore,  
\[
\norm{\hat{\cf}(s)W(a(s))^\top}\gg \norm{\hat{\cf}(s)}_1\norm{(W(a(s))^\top)^{-1}}^{-1}= \norm{(W(x_0)^\top)^{-1}}^{-1}>0
\]
for all $s\gg 1$. Then, by \eqref{eq:qJ-coeff} applied to $J=[0,\delta l(s)]$, we get
\begin{equation}
    \|Q_{N,a(s)}\|_{I_{\de}(s)}\gg \norm{(W(x_0)^\top)^{-1}}^{-1}\delta^Nl(s)^N.
\end{equation}
Since $\delta$ is fixed, and $l(s)\to0$, we get \eqref{eq:f_c decays slower that T^n+1}.

\vspace{1mm}

\noindent\emph{\textbf{Case 2.2.} $a(s)$ is unbounded.} Here $a(s)\asymp s^\theta$, for $\theta>0$. 
We recall that for each $\mathrm{n}\in\N$, there is a $T_{\mathrm{n}}$  such that all basis functions $F^-_i,F_j^+$ will be continuously differentiable $\mathrm{n}$ times in the ray $[T_{\mathrm{n}},\infty)$ for all $i,j$ (see Theorem \ref{thm:monotonicity thm}). By Lemma \ref{lem:der asymp}, it holds that  $$\left|\frac{d^p}{dt^p}F^-_i(x)\right|\ll_p x^{-\ka_i-p},\text{ and }\left|\frac{d^q}{dt^q}F^+_j(x)\right|\ll_q x^{r_j-q},$$where $\ka_i,r_j$ are as in \eqref{eq:exponents def}, $p,q$ are non-negative integers, and the implied constants depend on $p,q$. 
Then, for all sufficiently large integers $\mathrm{n}$  such that $r_i-(\mathrm{n}+1)<0$ for all $0\leq i\leq m$, it holds that 
$$\sup_{t\in[a(s),a(s)+l(s)]}\left\lvert \frac{d^{\mathrm{n}+1}}{dt^{\mathrm{n}+1}}f_{\hat{\cf}(s)}(t)\right\rvert=O_\mathrm{n}(a(s)^{r_m-(\mathrm{n}+1)}),$$ 
where the implied constant depends on $\mathrm{n}$. By Taylor's theorem, for the Taylor polynomial $P_{\mathrm{n},a(s)}(t)$ of $f_{\hat{\cf}(s)}(t)$ of degree $\mathrm{n}$ centered around $a(s)$, we have
\begin{align}
 \sup_{t\in [a(s),a(s)+l(s)]}|f_{\hat{\cf}(s)}(t)-P_{\mathrm{n},a(s)}(t)|&=O_\mathrm{n}(a(s)^{r_m-(\mathrm{n}+1)}l(s)^{\mathrm{n}+1}) \nonumber\\ 
 &=O_\mathrm{n}\left(a(s)^{r_m}\left(\frac{l(s)}{a(s)}\right)^{\mathrm{n}+1}\right) \nonumber\\
 &=O_\mathrm{n}\left(s^{r_m\theta}s^{-(\mathrm{n}+1)\kappa}\right), \label{eq:f-P}
\end{align}
where the last line follows by the assumption that $a(s)/l(s)\sim s^\ka$ with $\ka>0$. 
Due to Lemma~\ref{lem:linear independence}, the function $\Psi(s):=\|f_{\hat{\cf}(s)}\|_{[\al(s),\al(s)+\de l(s)]}$ is positive. Since $\Psi$ is definable in a polynomially bounded o-minimal structure, there exists an $\eta>0$ such that
\begin{equation} \label{lower bound for decay of sup-norms on subintervals al be}
\|f_{\hat{\cf}(s)}\|_{[\al(s),\al(s)+\de l(s)]}\gg s^{-\eta},~\text{as }s\to\infty.   
\end{equation}We fix $\mathrm{n}$ large enough such that $\nu:=r_m\theta-(\mathrm{n}+1)\kappa<-\eta.$
We denote $J(s)=[a(s),a(s)+l(s)]$ and we let $J_\de(s)=[\al(s),\al(s)+\de l(s)]$. Then:
\begin{align*}
\|f_{\hat{\cf}(s)}\|_{J(s)}
\underbrace{\le}_{\eqref{eq:f-P}} & \  \|P_{\mathrm{n},a(s)}\|_{J(s)}+O(s^{\nu})  \\
\underbrace{\le}_{\text{Prop. } \ref{prop:delta goodness of polynomials}} & M_{\mathrm{n}}(\de)\|P_{\mathrm{n},a(s)}\|_{J_{\de}(s)}+O(s^{\nu}) \\
\underbrace{\le}_{\eqref{eq:f-P}} &\  M_\mathrm{n}(\de)\|f_{\hat{\cf}(s)}\|_{J_{\de}(s)}+O(s^{\nu}) \\
= M&_\mathrm{n}(\de)\|f_{\hat{\cf}(s)}\|_{J_{\de}(s)}\left[1+O\left(\frac{s^{\nu}}{\|f_{\hat{\cf}(s)}\|_{J_{\de}(s)}}\right)\right] \nonumber\\
\underbrace{\ll}_{\eqref{lower bound for decay of sup-norms on subintervals al be}} & M_\mathrm{n}(\de)\|f_{\hat{\cf}(s)}\|_{J_{\de}(s)}\left[1+O\left(\frac{s^{\nu}}{s^{-\eta}}\right)\right] 
\end{align*}
Namely, $\frac{\|f_{\hat{\cf}(s)}\|_{J(s)}}{\|f_{\hat{\cf}(s)}\|_{J_{\de}(s)}}$ is bounded for all large $s$, contradicting the assumption $s<\frac{\|f_{\hat{\cf}(s)}\|_{J(s)}}{\|f_{\hat{\cf}(s)}\|_{J_{\de}(s)}}$.
\end{proof}

\subsection{Relative time near varieties}

The following proposition is an analog of \cite[Proposition 4.2]{Dani1993LimitDO}, and it is key for the linearization technique. 
\begin{proposition}\label{prop:relative time property}
    Let $\psi:[0,\infty)\to \GL(m,\R)$ be a continuous, unbounded curve definable in $\pbomin$ such that $\lim_{t\to\infty}\psi(t)v\neq0,\forall v\in\R^m\setminus \{0\}$. Fix a nonzero polynomial $Q:\R^m\to\R$, and denote 
    $$
    \A:=\{x\in\R^m: Q(x)=0\}.
    $$ 
    Then, there exists a $T_0>0$ such that the following holds: for an $\e>0$ and a compact subset $\emph{\text{K}}_1\subseteq\A$, there exists a compact subset $\emph{\text{K}}_2$ with 
    $$
    \emph{\text{K}}_1\subseteq \emph{\text{K}}_2\subseteq \A
    $$ 
    such that for every compact neighborhood $\Phi$ of $K_2$ in $\R^m$, there exists a compact neighborhood $\Psi$ of $K_1$ in $\R^m$, with $\Psi\subseteq\mathring{\Phi}$, where $\mathring{\Phi}$ denotes the interior of $\Phi$, such that:
    \begin{equation} \label{eq:RelativeTimeProperty}
        |\{t\in[a,b]:\psi(t)v\in\Psi\}|\leq \epsilon (b-a),
    \end{equation}for all  $v\in\R^m$ and $[a,b]\subseteq[T_0,\infty)$, for which: \begin{itemize}
      
        \item $\psi(t)v\in \Phi,~\forall t\in[a,b]$, and 
        \item $\psi(b)v\in \Phi\setminus\mathring{\Phi}$.
    \end{itemize}
\end{proposition}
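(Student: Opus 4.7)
The plan is to follow the strategy of \cite[Proposition 4.2]{Dani1993LimitDO}, which treats polynomial trajectories, adapting it to the o-minimal setting by replacing the $(C,\alpha)$-good property of polynomials with Theorem~\ref{thm:C_alpha_goodness}. Setting $f_v(t):=Q(\psi(t)v)$, the continuity of $Q$ together with $Q|_{\text{V}}\equiv 0$ allows us to shrink the compact neighborhood $\Psi$ of $K_1$ inside $\mathring\Phi$ so that $\sup_{x\in\Psi}|Q(x)|\leq\delta$ for any prescribed $\delta>0$. Hence
\[\{t\in[a,b]:\psi(t)v\in\Psi\}\subseteq\{t\in[a,b]:|f_v(t)|\leq\delta\},\]
reducing the proposition to a sub-level set estimate for $f_v$ on $[a,b]$.

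The central step is to establish a uniform $(C,\alpha)$-good property for the family $\{f_v\}_{v\in\R^m}$ on some fixed $[T_0,\infty)$. Every $f_v$ lies in the finite-dimensional vector space $\V$ of definable functions spanned by monomials of degree $\leq d:=\deg Q$ in the entries of $\psi(\cdot)$. I would apply Theorem~\ref{thm:C_alpha_goodness} to a closed definable cone $\F\subseteq\V$ containing positive scalar multiples of all $f_v$. Since $\R_{\geq 0}\{f_v:v\in\R^m\}$ is not closed under scaling when $Q$ is inhomogeneous, I would homogenize by introducing $\tilde Q(x,s):=s^d Q(x/s)$ and work with $\tilde f_{(v,s)}(t):=\tilde Q(\psi(t)v,s)$ for $(v,s)\in\R^{m+1}$, which satisfies $\tilde f_{\lambda(v,s)}=\lambda^d\tilde f_{(v,s)}$ for $\lambda>0$. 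Verifying the non-decay hypothesis of Theorem~\ref{thm:C_alpha_goodness} for (the closure of) this cone is where the non-contraction of $\psi$ on $\R^m$ enters, in tandem with the Monotonicity Theorem pinning down the asymptotic behavior of each coordinate of $\psi(t)v$.

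Granted the uniform $(C,\alpha)$-good property, the final step is to produce a lower bound $\|f_v\|_{[a,b]}\geq M(\Phi)>0$ for all $v$ and $[a,b]\subseteq[T_0,\infty)$ satisfying the hypotheses of the proposition. One chooses $K_2\subset \text{V}$ large enough (depending on $\epsilon$ and $K_1$) so that for any compact neighborhood $\Phi$ of $K_2$, the exit point $\psi(b)v\in\partial\Phi$ lies in a region where $|Q|\geq M(\Phi)$; this is arranged using the compactness and o-minimality of $\text{V}\cap\partial\Phi$. Combining these two inputs,
\[|\{t\in[a,b]:\psi(t)v\in\Psi\}|\leq C\Bigl(\frac{\delta}{M(\Phi)}\Bigr)^{\alpha}(b-a),\]
and one picks $\Psi$ (hence $\delta$) so that the coefficient is at most $\epsilon$.

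The hard part will be verifying the non-decay condition for the cone $\F$ in the central step. In contrast to $\|\psi(t)v\|^2$, which is manifestly positive and bounded below by the non-contraction assumption, $Q(\psi(t)v)$ can change sign and, \emph{a priori}, decay to zero when $\psi(t)v$ converges to a point of $\text{V}$. The homogenization device above is meant to circumvent this by tracking both $Q$ and the auxiliary variable $s$ simultaneously, but the verification genuinely uses the definability of $\psi$ to rule out decay in the closed cone after scaling and taking limits, and this is the most delicate aspect of the plan.
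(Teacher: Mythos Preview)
Your overall architecture is right, but the central step has a genuine gap, and the paper resolves it differently.

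The homogenization device does not secure the non-decay hypothesis of Theorem~\ref{thm:C_alpha_goodness}. Consider $m=2$, $\psi(t)=\begin{psmallmatrix}1+1/t & 1\\ 1 & t\end{psmallmatrix}$ for $t\geq 1$: this is unbounded and non-contracting, yet for $Q(x_1,x_2)=x_1$ and $v=(1,-1)$ one has $Q(\psi(t)v)=1/t\to 0$. So already the basic cone $\{f_v\}$ can contain nonzero decaying functions; homogenizing only adds $Q_d(\psi(t)v)$ at $s=0$, which is no better. The non-contraction assumption controls $\|\psi(t)v\|$, not the values of an arbitrary polynomial along the trajectory, and this is precisely why you flagged the verification as delicate---in fact it fails.

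The paper's remedy (Proposition~\ref{prop:composition with polynomial C,alpha goodness }) is to pick $\nu>0$ large enough that every nonzero function in the finite-dimensional span of $\{Q(\psi(t)v)\}$ has nonnegative degree after multiplication by $t^\nu$; then Theorem~\ref{thm:C_alpha_goodness} applies to $t^\nu Q(\psi(t)v)$. This does not yield $(C,\alpha)$-goodness for $Q(\psi(t)v)$ itself, but it yields a \emph{right-max} version (Corollary~\ref{cor:visiting time near varieties}): whenever $\sup_{[x,y]}|Q(\psi(t)v)|$ is attained at the right endpoint $y$, the sub-level estimate holds on $[x,y]$, because the increasing weight $t^\nu$ preserves that the supremum sits at $y$.

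Your final step also breaks: you cannot force $|Q|\geq M(\Phi)>0$ on all of $\partial\Phi$, since $\partial\Phi$ will in general meet $V$, where $Q$ vanishes. The paper instead takes $K_2=\overline{B_{R_2}(0)}\cap V$ (with $R_2\gg R_1$) and $\Psi=\overline{B_{R_1}(0)\cap\{|Q|<\e_2\}}$, so that $\psi(b)v\in\Phi\setminus\mathring\Phi$ forces either $\|\psi(b)v\|\geq R_2$ or $|Q(\psi(b)v)|\geq\e_1$. The first case is handled by the $(C,\alpha)$-goodness of $\|\psi(t)v\|$ (Proposition~\ref{prop:c alpha good ppty of curves and action in reps}). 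In the second case one decomposes $\{t\in[a,b]:|Q(\psi(t)v)|\leq\e_1\}$ into its finitely many connected components, each of which ends at a point where $|Q|=\e_1$, and applies the right-max estimate on each piece.
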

\begin{remark}
    Compared to \cite[Proposition 4.2]{Dani1993LimitDO}, the statement our proposition is different in the assumption that the curve $\psi(t)v$ remains in  $\Phi$ throughout the whole interval $[a,b]$, and exits $\mathring{\Phi}$ at $t=b$. This is a stronger requirement than the one in \cite[Proposition 4.2]{Dani1993LimitDO} which only asks for $\psi(t_0)v\notin\mathring{\Phi}$ for some $t_0\in [a,b]$.
\end{remark}
\begin{remark}\label{rem:one polynomial determines the variety}
    Notice that for any affine variety $\A\subseteq \R^m$ there exists a polynomial $Q:\R^m\to\R$ such that $\A$ is the zero set of $Q$. Indeed, by the Hilbert basis theorem, $\A$ is the zero set of finitely many polynomials $f_1,...,f_r$, and we let $Q:=f_1^2+\cdots+f_r^2$. 
\end{remark}
Our proof of Proposition~\ref{prop:relative time property} builds on the following versions of the modified $(C,\al)$-good property.

\begin{proposition}\label{prop:composition with polynomial C,alpha goodness }
  Let $\psi:[0,\infty)\to \GL(m,\R)$ be a continuous curve definable in $\pbomin$, and let $Q:\R^m\to\R$ be a polynomial. Then there exist constants $\nu, C,\al,T_0>0$ such that for any $v\in \R^m$ exactly one of the following properties hold:
  \begin{enumerate}
      \item $Q(\psi(t)v)=0,\forall t\geq T_0$;
      \item $\Theta_v(t):=t^\nu Q(\psi(t)v)$ is $(C,\alpha)$-good on $[T_0,\infty)$.
  \end{enumerate}
\end{proposition}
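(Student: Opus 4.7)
The strategy is to exhibit the family of functions $\{t^\nu Q(\psi(t)v) : v \in \R^m\}$ as a subset of a closed definable cone in a finite-dimensional vector space of polynomially bounded definable functions in which every nonzero element diverges to $\pm\infty$, and then apply Theorem \ref{thm:C_alpha_goodness}. The multiplier $t^\nu$ is introduced precisely to guarantee the no-decay hypothesis of that theorem.

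Expanding $Q$ in the coordinates of $v = (v_1,\ldots,v_m)$ after substituting $(\psi(t)v)_i = \sum_j \psi_{ij}(t)v_j$ yields a finite sum
\[
Q(\psi(t)v) = \sum_\beta v^\beta g_\beta(t),
\]
indexed by multi-indices $\beta \in \Z_{\geq 0}^m$ with $|\beta| \leq \deg Q$, where each $g_\beta$ is a polynomial combination of the definable entries $\psi_{ij}$ and hence itself definable in the polynomially bounded o-minimal structure. Put $\V' := \Span_\R\{g_\beta\}$, a finite-dimensional space of definable functions. By Remark \ref{rem:trivial subspace of eventually zero constant functions}, there is some $T_1 \geq 1$ beyond which every eventually-zero element of $\V'$ vanishes; after restricting to $[T_1,\infty)$, Lemma \ref{lem:convenient basis for F} gives a basis of $\V'$ with strictly increasing degrees, whose minimum I denote $r_{\min} \in \R$. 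Fix any $\nu > \max(0, -r_{\min})$ and put $\V := t^\nu \V'$. Every nonzero element $h \in \V|_{[T_1,\infty)}$ then satisfies $\deg h \geq \nu + r_{\min} > 0$, hence $|h(t)| \to \infty$ as $t\to\infty$.

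Next, consider the polynomial map $v \mapsto \Theta_v = t^\nu Q(\psi(t)v)$ from $\R^m$ into $\V$. Its image $S := \{\Theta_v : v \in \R^m\}$ is a definable subset of $\V \cong \R^N$, and I take
\[
\F := \overline{\{\alpha\Theta_v : v \in \R^m,\ \alpha \geq 0\}} \subset \V,
\]
which is closed under non-negative scaling and definable as the closure of a definable set, so it is a closed definable cone. By the preceding paragraph, no nonzero element of $\F \subset \V|_{[T_1,\infty)}$ satisfies $\lim_{t\to\infty} f(t) = 0$. Theorem \ref{thm:C_alpha_goodness} therefore supplies constants $C, \alpha > 0$ and $T_0 \geq T_1$ such that every $f \in \F \setminus \{0\}$ is $(C,\alpha)$-good on $[T_0,\infty)$.

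To conclude, fix $v \in \R^m$. If $\Theta_v \equiv 0$ on $[T_0,\infty)$, then since $t^\nu > 0$ for $t > 0$, we get $Q(\psi(t)v) = 0$ for all $t \geq T_0$, which is case (1); otherwise $\Theta_v \in \F \setminus \{0\}$ is $(C,\alpha)$-good on $[T_0,\infty)$, which is case (2). The only real obstruction in this scheme is verifying the no-decay hypothesis of Theorem \ref{thm:C_alpha_goodness}: the unweighted family $\{Q(\psi(t)v)\}_{v}$ will in general contain functions that decay to zero (take $\psi(t) = \diag(t,t^{-1})$ and $Q(x_1,x_2) = x_2$), so the $(C,\alpha)$-good property fails without modification. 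Absorbing the factor $t^\nu$ with $\nu$ exceeding the magnitude of the smallest degree appearing in $\V'$ pushes all degrees in $\V$ strictly above zero and thereby unlocks Theorem \ref{thm:C_alpha_goodness}.
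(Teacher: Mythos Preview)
Your proof is correct and follows essentially the same route as the paper: span the definable coefficient functions of $Q(\psi(t)v)$, multiply by $t^\nu$ with $\nu$ large enough to push all degrees nonnegative, and invoke Theorem~\ref{thm:C_alpha_goodness}. The paper simply takes $\F$ to be the entire linear space $t^\nu\V'$ (automatically a closed definable cone), which makes your construction of the smaller cone $\F=\overline{\{\alpha\Theta_v\}}$ unnecessary, but this is a harmless detour.
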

\begin{proof}
    Consider the vector space $$\mathcal V:=\Span_\R\{Q(\psi(t)v),\ t\geq0 : v\in\R^m\}.$$ It is straightforward to verify that $\mathcal V$ is finite-dimensional, and that all functions in $\mathcal V$ are definable. In view of Remark \ref{rem:trivial subspace of eventually zero constant functions}, there is a $T_0\geq 0$ such that if $f\in \mathcal{V}$  is eventually zero, then $f(t)=0,\forall t\in[T_0,\infty)$. We consider $\F$ to be the space of functions of $\V$ restricted to $[T_0,\infty).$ By Lemma \ref{lem:convenient basis for F}, there is a basis $\{f_0,f_1,...,f_n\}$ for $\F$ such that $\deg(f_0)<\deg(f_1)<...<\deg(f_n)$. Let $\nu>0$ such that $\deg(t^\nu f_0)\geq0$. Then, by Theorem \ref{thm:C_alpha_goodness}, we pick $T_1>0$ such that any nonzero function  $$f\in t^\nu\otimes\F=\Span_\R\{t^\nu Q(\psi(t)v),\ t\geq T_1 : v\in\R^m\}$$is $(C,\al)$-good.
\end{proof}  

We now note the following corollary.
\begin{corollary}\label{cor:visiting time near varieties}
Let $\psi:[0,\infty)\to \GL(m,\R)$ be a continuous curve, definable in $\pbomin$. Fix a polynomial $Q:\R^m\to\R$. Then there exist $T_0,C,\alpha>0$ such that for any $v\in \R^m$ and any interval $[x,y]\subseteq [T_0,\infty)$, if $$\sup_{t\in[x,y]} |Q(\psi(t) v)|=|Q(\psi(y)v)|,$$ then
\begin{equation}\label{eq:relative time property near varieties}
   \left|\{t\in [x,y]:|Q(\psi(t) v)|\leq \eta \}\right|\leq C\left(\frac{\eta}{|Q(\psi(y)v)|}\right)^\al(y-x), 
\end{equation}for all $\eta>0$; we may say that $t\mapsto \abs{Q(\psi(t)v)}$ is {\em right-max}-$(C,\alpha)$-good in $[T_0,\infty)$. 

\end{corollary}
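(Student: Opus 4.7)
My plan is to deduce the corollary directly from Proposition~\ref{prop:composition with polynomial C,alpha goodness } by using the hypothesis that $|Q(\psi(t)v)|$ attains its supremum on $[x,y]$ at the right endpoint $t=y$, which lets me compute the sup-norm of the weighted function $\Theta_v(t)=t^\nu Q(\psi(t)v)$ essentially exactly, and then transfer the standard $(C,\alpha)$-good inequality back to $|Q(\psi(t)v)|$.

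First, I apply Proposition~\ref{prop:composition with polynomial C,alpha goodness } to obtain constants $\nu\geq 0$, $C>0$, $\alpha>0$, and $T_0>0$ such that for every $v\in\R^m$, either $Q(\psi(t)v)\equiv 0$ on $[T_0,\infty)$, or $\Theta_v(t)=t^\nu Q(\psi(t)v)$ is $(C,\alpha)$-good on $[T_0,\infty)$. In the first (degenerate) case, the hypothesis $\sup_{t\in[x,y]}|Q(\psi(t)v)|=|Q(\psi(y)v)|$ forces $Q(\psi(\cdot)v)\equiv 0$ on $[x,y]$, so the right-hand side of \eqref{eq:relative time property near varieties} is vacuously $+\infty$ for any $\eta>0$ (and $0$ matches $0$ if one interprets $\eta/0$ as $+\infty$), and the inequality is trivial.

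In the nontrivial case, suppose $[x,y]\subseteq[T_0,\infty)$ and $\sup_{t\in[x,y]}|Q(\psi(t)v)|=|Q(\psi(y)v)|$. For any $t\in[x,y]$, since $t\leq y$ and $\nu\geq 0$,
\begin{equation*}
|\Theta_v(t)|=t^\nu|Q(\psi(t)v)|\leq y^\nu|Q(\psi(y)v)|=|\Theta_v(y)|,
\end{equation*}
so $\|\Theta_v\|_{[x,y]}=y^\nu|Q(\psi(y)v)|$. Moreover, $|Q(\psi(t)v)|\leq\eta$ together with $t\leq y$ implies $|\Theta_v(t)|\leq y^\nu\eta$, hence
\begin{equation*}
\{t\in[x,y]:|Q(\psi(t)v)|\leq \eta\}\subseteq\{t\in[x,y]:|\Theta_v(t)|\leq y^\nu\eta\}.
\end{equation*}

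Applying the $(C,\alpha)$-good property of $\Theta_v$ on $[x,y]\subseteq[T_0,\infty)$ with threshold $y^\nu\eta$, I obtain
\begin{equation*}
\bigl|\{t\in[x,y]:|\Theta_v(t)|\leq y^\nu\eta\}\bigr|\leq C\left(\frac{y^\nu\eta}{\|\Theta_v\|_{[x,y]}}\right)^{\!\alpha}(y-x)=C\left(\frac{\eta}{|Q(\psi(y)v)|}\right)^{\!\alpha}(y-x),
\end{equation*}
where the factor $y^\nu$ cancels because $\|\Theta_v\|_{[x,y]}=y^\nu|Q(\psi(y)v)|$. Combined with the inclusion above, this yields \eqref{eq:relative time property near varieties}. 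There is no real obstacle here; the only subtle point is checking that the weight $t^\nu$ is monotone and that its contribution to $\|\Theta_v\|_{[x,y]}$ factors out cleanly, which is exactly what the right-endpoint maximum hypothesis provides.
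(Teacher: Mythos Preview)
Your proof is correct and follows essentially the same approach as the paper's own proof: apply Proposition~\ref{prop:composition with polynomial C,alpha goodness } to obtain the weighted function $\Theta_v(t)=t^\nu Q(\psi(t)v)$, use the right-endpoint maximum hypothesis together with the monotonicity of $t\mapsto t^\nu$ to identify $\|\Theta_v\|_{[x,y]}=y^\nu|Q(\psi(y)v)|$, and then let the $y^\nu$ factors cancel in the $(C,\alpha)$-good estimate. The paper's argument is identical in structure and detail.
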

\begin{proof}
Fix $\nu,C,\alpha,T_0>0$ as in Proposition \ref{prop:composition with polynomial C,alpha goodness }. Let $[x,y]\subseteq [T_0,\infty)$ and let   $v\in\R^m$ be a vector such that $$|Q(\psi(y)v)|=\delta>0,$$
and such that $$|Q(\psi(t) v)|\leq \delta,\ \forall t\in[x,y].$$ Then,$$\sup_{t\in[x,y]}|t^\nu Q(\psi(t) v)|=y^\nu \delta.$$Now, by the $(C,\al)$-good property of $|t^\nu Q(\psi(t)v)|$, we get:
	\begin{align}
		\left|\{t\in [x,y] : |Q(\psi(t) v)|\leq \eta \}\right|&=\left|\{t\in [x,y] : |t^\nu Q(\psi(t) v)|\leq t^\nu \eta \}\right|\nonumber\\
 & \leq \left|\{t\in [x,y] : |t^\nu Q(\psi(t) v)|\leq y^\nu \eta \}\right|\nonumber\\
		&\leq C\left(\frac{y^\nu\eta}{y^\nu\delta}\right)^\al(y-x)=C\left(\frac{\eta}{\delta}\right)^\al(y-x).
	\end{align}
 
\end{proof}

\begin{proof}[Proof for Proposition \ref{prop:relative time property}] 
Let $\psi:[0,\infty)\to \GL(m,\R)$ be a $\pbomin$-definable, continuous unbounded curve  such that $\lim_{t\to\infty}\psi(t)v\neq0,\forall v\neq0$. Pick $T_0,C,\alpha>0$ such that  the $(C,\al)$-good property holds for the family $$\F_{\text{norm}}:=\{\|\psi(t)v\|,\ t\geq T_0 : v\in \R^m\setminus\{0\}\},$$
see Proposition~\ref{prop:c alpha good ppty of curves and action in reps}, and such that the right-max-$(C,\al)$-property holds for the  functions in $$\F_{\text{variety}}:=\{ \abs{Q(\psi(t)v)},\ t\geq T_0:v\in\R^m\},$$ which satisfy the requirements of Corollary~\ref{cor:visiting time near varieties}.

\noindent  Let $\e>0$. Given a compact set $\text{K}_1\subseteq \A$, let $R_1>0$ be such that $\text{K}_1\subseteq B_{R_1}(0)$. We choose $R_2>R_1$ is such that $C(R_1/R_2)^\alpha\leq\epsilon$, and we denote  $\text{K}_2:=\overline{\text{B}_{R_2}(0)}\cap \A$; see~\eqref{eq:R1R2} for the use of this choice. We define for $\e'>0$:
$$\A^{\e'}:=\{x\in \R^m : |Q(x)|<\e'\}.$$ 
Let $\Phi$ be a compact neighborhood of $\text{K}_2$. Then, we can pick an $\e_1>0$ such that 
\begin{equation} \label{eq:R2e1Phi}
B_{R_2}(0)\cap \A^{\e_1}\subseteq \mathring{\Phi}.
\end{equation}
We denote $\e_2:=\frac{R_1}{R_2}\e_1$, and we note that  $K_1\subseteq B_{R_1}(0)\cap \A^{\e_2}$ and we observe that $C(\e_2/\e_1)^\alpha\leq\epsilon$; see~\eqref{eq:e1e2} for the use of this choice. We consider 
\begin{equation} \label{eq:defPsi}
    \Psi:=\overline{B_{R_1}(0)\cap \A^{\e_2}}.
\end{equation}
Let $v\in\R^m$ and an interval $[a,b]\subseteq[T_0,\infty)$ such that:
    \begin{itemize}
        \item $\psi(t)v\in\Phi,~\forall t\in[a,b]$, and 
        \item $\psi(b)v\in\Phi\setminus\mathring{\Phi}$. 
    \end{itemize}
    Then by \eqref{eq:R2e1Phi}, $\|\psi(b)v\|\geq R_2$ or $|Q(\psi(b)v)|\geq\e_1.$  
    
    Suppose that $\|\psi(b)v\|\geq R_2$. By \eqref{eq:defPsi} and by Proposition \ref{prop:c alpha good ppty of curves and action in reps},  
    \begin{align}
       |\{t\in[a,b]|\psi(t)v\in\Psi\}|\leq &|\{t\in[a,b]|\|\psi(t)v\|\leq R_1\}|\nonumber\\
        \leq & C\left(\frac{R_1}{R_2}\right)^\al (b-a)\leq \e(b-a). \label{eq:R1R2}
    \end{align}
Suppose now that $|Q(\psi(b)v)|\geq\e_1$. Since $t\mapsto |Q(\psi(t)v)|$ is continuous and definable, and since a definable map takes a particular value only finitely many times, there is a decomposition 
$$\{t\in [a, b]: |Q(\psi(t)v)|\leq \e_1\}=[a_1,b_1]\sqcup\cdots\sqcup [a_k,b_k],$$
for some $k\in\N$, such that for all $1\leq i\leq k$:$$|Q(\psi(b_i)v)|=\e_1\text{ and }|Q(\psi(t)v)|\leq \e_1,\forall t\in[a_i,b_i].$$ Since $\Psi\subseteq\A^{\e_2}$, we have \begin{align*}
        \{t\in[a,b]:\psi(t)v\in\Psi\}\subseteq \{t\in[a,b]: |Q(\psi(t)v)|\leq\e_2\},
    \end{align*}and since $\e_2<\e_1$, we have \begin{align*}
       \{t\in[a,b]: |Q(\psi(t)v)|\leq\e_2\}=
       \bigsqcup_{i=1}^k\{t\in[a_i,b_i]: |Q(\psi(t)v)|\leq\e_2\}.
    \end{align*}
    We conclude by \eqref{eq:relative time property near varieties} of Corollary~\ref{cor:visiting time near varieties} that 
    \begin{align}
         |\{t\in[a_i,b_i]: |Q(\psi(t)v)|\leq\e_2\}|
        \leq C\left(\frac{\e_2}{\e_1}\right)^\al(b_i-a_i)\leq \e(b_i-a_i), \label{eq:e1e2}
    \end{align}  for each $1\leq i\leq k$. 
    This proves that $$ |\{t\in[a,b]|\psi(t)v\in\Psi\}|\leq \e(b-a). $$ 
    Thus, \eqref{eq:RelativeTimeProperty} holds in all cases.
    \end{proof}

\section{Non-escape of mass\label{sec:non escape of mass}}
Let $X^*:=X\cup \{\infty\}$ be the one-point compactification of $$X:=\SL(n,\R)/\SL(n,\Z).$$ 

In what follows, $\pbomin$ is a fixed polynomially bounded o-minimal structure. Let $\varphi:[0,\infty)\to \SL(n,\R)$ be a $\pbomin$-definable, continuous and unbounded curve. For $x\in X$ and $T>0$ let $\mu_{T,\varphi,x}$ be the measure defined in \eqref{eq:main definition of measures averaging on curve}. By the Banach-Alaoglu theorem, any weak-star limit $\mu$ as $T\to\infty$ of the measures $\mu_{T,\varphi,x}$ is a probability measure on $X^*$. We now state and prove the non-divergence property in the setup of $\SL(n,\R)/\SL(n,\Z)$. Later in the section, we extend the result to the case of finite volume quotient spaces of Lie groups. In the general case, when the Lie group is of higher rank, the non-divergence result  builds on the non-divergence in $\SL(n,\R)/\SL(n,\Z)$ by using Margulis' arithmeticity theorem see~\cite[page~3]{Zimmer:book84}.

\begin{proposition}\label{prop:non-escape of mass-SLnZ} Let $\varphi:[0,\infty)\to \SL(n,\R)$ be a $\pbomin$-definable, continuous, unbounded, non-contracting curve. Then, there exists $T_0>0$ such that given a compact set $F\subseteq \SL(n,\R)$ and $\e>0$, there exists a compact set $K\subseteq \SL(n,\R)/\SL(n,\Z)$ such that for any $g\in F$ and $T\geq T_0$, we have
\[
\abs{\{t\in[T_0,T]:\varphi(t)g\SL(n,\Z)/\SL(n,\Z)\in K\}}\geq (1-\e)(T-T_0). 
\]
\end{proposition}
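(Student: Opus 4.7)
The plan is to reduce this to the Dani--Margulis--Kleinbock--Margulis quantitative non-divergence theorem on the space of unimodular lattices, using the $(C,\alpha)$-good property (Proposition~\ref{prop:c alpha good ppty of curves and action in reps}) as the input that replaces the polynomial hypothesis in the classical versions. I would identify $\SL(n,\R)/\SL(n,\Z)$ with the space of unimodular lattices via $g\SL(n,\Z)\mapsto g\Z^n$, and recall that by Mahler's criterion the set $K_\epsilon:=\{\La: \|v\|\geq\epsilon\text{ for every nonzero }v\in\La\}$ is compact for each $\epsilon>0$. So it suffices to bound $\abs{\{t\in[T_0,T] : \varphi(t)g\Z^n\notin K_\epsilon\}}$ uniformly in $g\in F$ for a suitably small $\epsilon$ depending on $F$ and $\e$.

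The first step is to apply Proposition~\ref{prop:c alpha good ppty of curves and action in reps} to each exterior power representation $\wedge^k\R^n$ for $1\leq k\leq n-1$. Since there are finitely many such representations, I would take maxima/minima to extract uniform constants $T_0, C_0, \alpha_0>0$ such that for every $1\leq k\leq n-1$ and every nonzero $v\in\wedge^k\R^n$, the map $t\mapsto\|\varphi(t)v\|$ is $(C_0,\alpha_0)$-good on $[T_0,\infty)$. This $T_0$ depends only on $\varphi$, which matches the quantifier order in the statement. For the lower-bound hypothesis in quantitative non-divergence, I use that for every primitive rank-$k$ subgroup $\Ga\subset\Z^n$ with associated decomposable vector $w_\Ga\in\wedge^k\Z^n$ one has $\|w_\Ga\|\geq 1$; hence for any $g\in F$,
\[
\sup_{t\in[T_0,T]}\|\varphi(t)g w_\Ga\|\geq \|\varphi(T_0)g w_\Ga\|\geq \sigma_{\min}(\varphi(T_0)g)\geq \rho,
\]
where $\rho:=\inf_{g\in F}\sigma_{\min}(\varphi(T_0)g)>0$ since $F$ is compact and $\varphi(T_0)g\in\SL(n,\R)$.

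With the $(C_0,\alpha_0)$-good property holding (uniformly in $v$, in particular for the vectors $gw_\Ga$) and the uniform lower bound $\rho$, the Kleinbock--Margulis quantitative non-divergence theorem applied on the interval $[T_0,T]$ (which sits inside the ray $[T_0,\infty)$ on which we have the $(C,\al)$-good property, so no enlarged-ball issue arises) yields a constant $C_1=C_1(n,C_0,\alpha_0)$ such that
\[
\abs{\{t\in[T_0,T]:\varphi(t)g\Z^n\notin K_\epsilon\}}\leq C_1(\epsilon/\rho)^{\alpha_0}(T-T_0)
\]
for all $g\in F$, $T\geq T_0$ and $\epsilon\in(0,\rho]$. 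Choosing $\epsilon$ so that $C_1(\epsilon/\rho)^{\alpha_0}\leq\e$ and setting $K:=K_\epsilon$ completes the proof. The main conceptual obstacle is that the classical Dani--Margulis theorem was designed for polynomial (or locally polynomial) curves, and its use here rests entirely on bootstrapping the new $(C,\alpha)$-good property from Proposition~\ref{prop:c alpha good ppty of curves and action in reps}; the non-contraction hypothesis on $\varphi$ is doing double duty, ensuring both that the $(C,\al)$-good hypothesis is available and that the sup-norms $\sup_t\|\varphi(t)gw_\Gamma\|$ cannot collapse below a uniform threshold.
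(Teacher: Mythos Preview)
Your approach is essentially identical to the paper's: identify $\SL(n,\R)/\SL(n,\Z)$ with unimodular lattices, invoke the $(C,\alpha)$-good property (Proposition~\ref{prop:c alpha good ppty of curves and action in reps}) on each exterior power $\wedge^k\R^n$, and feed this together with a uniform lower bound at $t=T_0$ into Kleinbock's quantitative non-divergence theorem (stated in the paper as Theorem~\ref{thm:kleinbock's non-divergence theorem}).

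One minor technical slip worth flagging: the inequality $\|\varphi(T_0)g\,w_\Gamma\|\geq \sigma_{\min}(\varphi(T_0)g)$ is not valid as written for $k\geq 2$, since the operator acting on $w_\Gamma\in\wedge^k\R^n$ is $\wedge^k(\varphi(T_0)g)$, whose smallest singular value is the product of the $k$ smallest singular values of $\varphi(T_0)g$ and may be strictly smaller than $\sigma_{\min}(\varphi(T_0)g)$. The fix is immediate---replace $\rho$ by $\min_{1\leq k\leq n-1}\inf_{g\in F}\sigma_{\min}\bigl(\wedge^k(\varphi(T_0)g)\bigr)$, which is still positive by compactness of $F$; the paper does essentially this by setting $\rho=\bigl(\inf_{g\in F,\,\gamma\in\SL(n,\Z),\,k}\|\varphi(T_0)g\gamma\cdot(e_1\wedge\cdots\wedge e_k)\|\bigr)^{1/k}$ to match the $\rho^k$ in condition~(2) of Theorem~\ref{thm:kleinbock's non-divergence theorem}.
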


We need the following preliminaries for the proof. In the discussion that follows, we will refer to rank-$k$ discrete subgroups of $\R^n$ as \textit{$k$-lattices}. For a $k$-lattice $\La\subseteq \Z^n$, let $\|\La\|$ be the volume of a fundamental domain  $F\subseteq \Span_{\R}\{\La\}$ of $\La$ with respect to the usual measure on the subspace $\Span_{\R}\{\La\}\subseteq\R^n$ (obtained by restricting the usual Euclidean inner product).  Recall that if $\{\vv_1,..., \vv_k\}$ forms a $\Z$-basis for $\La$,
then $$\|\La\|=\|\vv_1 \wedge \cdots \wedge \vv_k\|.$$ 
The norm above is defined by choosing the inner product for which the pure wedges of $k$-positively oriented tuples of the canonical basis vectors $e_i,i=1,...,n$ are orthonormal.
Consider the representation of $\SL(n,\R)$ on $\bigwedge^k\R^n$ defined by \begin{equation}\label{eq:rep of SL(n) on wedges}
    g\cdot(\vv_1\wedge \vv_2...\wedge\vv_k):=g\vv_1\wedge g\vv_2\wedge\cdots\wedge g\vv_k,~g\in\SL(n,\R).
\end{equation}For a fixed $k< n$, the action of $\SL(n,\R)$ is transitive on the space of $k$-lattices of rank $k< n$. We denote $\Z^k:=\Span_\Z\{e_1,...,e_k\}$, and we observe that for a unimodular $n$-lattice $L=g\Z^n\leq \R^n$, where $g\in\SL(n,\R)$, the collection of primitive $k$-sublattices of $L$ is given by 
$$g\ga\Z^k,\ga\in\SL(n,\Z).$$
The following powerful theorem on quantitative non-divergence due to Kleinbock \cite{Kleinbockclay} will be needed. 

\begin{theorem}\label{thm:kleinbock's non-divergence theorem}
	Let $B \subseteq \R$ be an interval, let $C\in\R, \al > 0$, $0 < \rho < 1$ and let $h : B \to \SL(n, \R)$ be a continuous map. Assume that for every $\ga\in\SL(n,\Z)$, and for every $1\leq k< n$  both of the following conditions are satisfied
	\begin{enumerate}
		\item\label{enu:Kleinbocks c,alpha good condition} the function $x \to \|h(x)\ga\cdot e_1\wedge\cdots\wedge e_k\|$ is $(C, \al)$-good on $B$, and
		\item\label{enu:Kleinbocks discreteness condition} $\sup_{x\in B}\|h(x)\ga\cdot e_1\wedge\cdots\wedge e_k\|\ge \rho^k$.
	\end{enumerate}
Then, for every positive number $\rho$ such that $\e<\rho$, we have
\begin{equation}
	|\{x\in B : \la_1(h(x)\Z^n)\le \e\}|\le C  n 2^n \left( \frac{\e}{\rho}\right)^{\al}|B|,
\end{equation}
where $\la_1(\cdot)$ is the function that outputs the length of the shortest nonzero vector of an Euclidean lattice.
\end{theorem}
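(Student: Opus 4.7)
The plan is to identify $X:=\SL(n,\R)/\SL(n,\Z)$ with the space of unimodular lattices in $\R^n$ via $g\SL(n,\Z)\mapsto g\Z^n$. By Mahler's compactness criterion, compact subsets of $X$ are exactly the sets $K_\e := \{\Lambda : \la_1(\Lambda)\geq \e\}$ for $\e>0$, so the proposition reduces to producing, for each $\e'>0$ and each compact $F\subset\SL(n,\R)$, an $\e > 0$ such that
\[
|\{t\in [T_0,T]: \la_1(\varphi(t)g\Z^n) < \e\}| \leq \e' (T-T_0)
\]
uniformly in $g\in F$ and $T\geq T_0$. I would obtain this from Kleinbock's Theorem \ref{thm:kleinbock's non-divergence theorem} applied to $h(t) := \varphi(t)g$ on $B := [T_0, T]$, after verifying both of its hypotheses uniformly in $\gamma\in\SL(n,\Z)$ and $1\leq k\leq n-1$.

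I would take $T_0$ to be the uniform threshold from Proposition \ref{prop:c alpha good ppty of curves and action in reps} applied to the exterior representations $V_k := \bigwedge^k\R^n$ for $1\leq k\leq n-1$. This yields constants $C,\alpha > 0$ depending only on $\varphi$ such that for every nonzero $v\in V_k$, the map $t\mapsto \|\varphi(t)v\|$ is $(C,\alpha)$-good on $[T_0,\infty)$. Applied with $v := g\gamma\cdot(e_1\wedge\cdots\wedge e_k)$, this verifies the $(C,\alpha)$-good hypothesis of Kleinbock's theorem uniformly in $\gamma$ and $k$.

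The key step is the uniform lower bound hypothesis: I claim there exists $\rho>0$, depending only on $F$ and $\varphi(T_0)$, such that for every $\gamma \in \SL(n,\Z)$, every $1\leq k<n$, and every $g\in F$,
\[
\sup_{t\in [T_0,T]}\|\varphi(t)g\gamma\cdot(e_1\wedge\cdots\wedge e_k)\|\geq \rho^k.
\]
The point is that $\gamma\cdot(e_1\wedge\cdots\wedge e_k) = \gamma e_1\wedge\cdots\wedge \gamma e_k$ is a primitive decomposable integer vector in $\bigwedge^k \Z^n$: its coordinates in the standard basis $\{e_{i_1}\wedge\cdots\wedge e_{i_k}\}$ are the $k\times k$ minors of the first $k$ columns of $\gamma$, and these minors have gcd $1$ because the columns of $\gamma$ extend to a $\Z$-basis of $\Z^n$. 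Hence $\|\gamma\cdot(e_1\wedge\cdots\wedge e_k)\|\geq 1$. Evaluating the supremum at the single point $t=T_0\in B$ and using compactness of $F$ together with the continuity of the $\bigwedge^k$-action of $\SL(n,\R)$ gives a uniform bound $\|\varphi(T_0)g\gamma\cdot(e_1\wedge\cdots\wedge e_k)\|\geq c_F > 0$; taking $\rho:=\min(1,c_F)$ then satisfies $\rho^k\leq c_F$ for every $1\leq k\leq n-1$.

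Feeding these inputs into Kleinbock's theorem produces $|\{t\in[T_0,T]:\la_1(\varphi(t)g\Z^n)<\e\}|\leq C n 2^n(\e/\rho)^\alpha(T-T_0)$ for every $\e<\rho$. Choosing $\e$ small enough that $Cn 2^n(\e/\rho)^\alpha \leq \e'$ and setting $K:=K_\e$ completes the proof. I expect the only nontrivial point to be the uniform verification of the lower bound hypothesis across all $\gamma$; the resolution is the primitivity-of-minors observation above, which lets us get by with a single evaluation at $t=T_0$. The non-contraction assumption on $\varphi$ enters only indirectly, through Proposition \ref{prop:c alpha good ppty of curves and action in reps}, which is what ensures the $(C,\alpha)$-good hypothesis is available in the first place.
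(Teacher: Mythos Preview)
Your proposal does not prove the stated theorem. Theorem~\ref{thm:kleinbock's non-divergence theorem} is Kleinbock's quantitative non-divergence theorem; the paper does not prove it but quotes it from \cite{Kleinbockclay} as an external input. What you have written is a proof of a \emph{different} statement, namely Proposition~\ref{prop:non-escape of mass-SLnZ} (non-escape of mass for $\SL(n,\R)/\SL(n,\Z)$), and your argument explicitly invokes Theorem~\ref{thm:kleinbock's non-divergence theorem} as a black box: you write ``I would obtain this from Kleinbock's Theorem~\ref{thm:kleinbock's non-divergence theorem}''. That is circular if the goal is to prove Theorem~\ref{thm:kleinbock's non-divergence theorem} itself.

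If the intended target was in fact Proposition~\ref{prop:non-escape of mass-SLnZ}, then your argument is correct and matches the paper's proof essentially line for line: the $(C,\alpha)$-good hypothesis comes from Proposition~\ref{prop:c alpha good ppty of curves and action in reps} applied to the exterior representations, and the uniform lower bound $\rho$ comes from evaluating at the single time $t=T_0$ together with the fact that $\gamma\cdot(e_1\wedge\cdots\wedge e_k)$ is a nonzero integer vector (the paper phrases this as taking the infimum of $\|\varphi(T_0)g\gamma\cdot(e_1\wedge\cdots\wedge e_k)\|$ over $g\in F$ and $\gamma\in\SL(n,\Z)$). But as written, you have simply confused which statement you were proving.
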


\begin{proof}[Proof of Proposition~\ref{prop:non-escape of mass-SLnZ}] 
We identify $\SL(n,\R)/\SL(n,\Z)$ with the space of unimodular lattices $$\Lat_n:=\{L:=\Span_\Z\{v_1,...,v_n\}: \det(v_{i,j})=1\}.$$ Consider$$\B_\de:=\{L\in\Lat_n:  (L\setminus \{0\})\cap B_\de(0)\neq\emptyset\}.$$Here  $B_\de(0)\subseteq \R^n$ is the ball of radius $\de$ centered at the origin. By Mahler's criterion, $\Lat_n\setminus \B_\de$ is compact for all $\de>0$ and thus $\B_{\de}$ is a neighborhood of $\infty$. Fix $g\Z^n\in\Lat_n$ for $g\in\SL(n,\R)$, and consider the measures\begin{equation}
    \mu_T(f)=\frac{1}{T} \int_0^Tf(\varphi(t)g\Z^n)dt,\ \forall f\in C_c(\Lat_n).
\end{equation}Let $\mu$ be a weak-* limit of the measures $\mu_T$ as $T\to\infty$.	In order to show that $\mu(\infty)=0$, it is enough to prove that for every $\e>0$, there is $\de$ such that
	\begin{align}
		\limsup_{T\to\infty}\frac{|\{t\in [0,T] : \la_1(\varphi(t)g\Z^n)\le \de\}|}{T}\le \e. 
	\end{align}
This will be concluded by Theorem \ref{thm:kleinbock's non-divergence theorem} 
as follows. We first verify the conditions.
For $k\in\{1,2,..,n-1\}$, consider the representation \eqref{eq:rep of SL(n) on wedges}.  By Proposition \ref{prop:c alpha good ppty of curves and action in reps}, there exists $T_0,C,\al>0$ such that for any fixed $k\in\{1,2,..,n-1\}$, and $\ga\in\SL(n,\Z)$ it holds that\begin{align}
     \Theta(t)=\|\varphi(t)g\ga\cdot (e_1\wedge\cdots\wedge e_k)\|
\end{align}
is $(C,\al)$-good in $[T_0,\infty)$. Let $F\subseteq \SL(n,\R)$ be a given compact subset. Then, the number $\rho$ defined by
 $$\rho:=(\inf\{\|\varphi(T_0)g\ga\cdot( e_1\wedge\cdots\wedge e_k)\|:g\in F,\,\ga\in\SL(n,\Z)\})^{\frac{1}{k}}$$
is positive.
Thus, by Theorem \ref{thm:kleinbock's non-divergence theorem}, for any $g\in F$, 
	\begin{align}
		|\{t\in [T_0,T] : \la_1(\varphi(t)g\Z^n)\le \de\}|\le C \left( \frac{\de}{\rho}\right)^{\al}(T-T_0).
	\end{align}
\end{proof}

\subsection{Non-divergence for homogeneous space of Lie groups}

For the following statement, we recall that for a Lie group $G$, $G_u$ denotes the Lie-subgroup generated by the one-parameter unipotent subgroups of $G$.

\begin{proposition}
    \label{prop:nondiv-Lie}
    Let $G$ be a Lie subgroup of $\SL(n,\R)$ and let $\Gamma$ be a lattice in $G$. Suppose that $\varphi:[0,\infty)\to \SL(n,\R)$ is a $\pbomin$-definable, non-contracting curve. Assume further that $\varphi([0,\infty))\subseteq G_u$. Then, there exists a $T_0>0$ such that given $\e>0$ and a compact set $F\subseteq G$, there exists a compact set $K\subseteq G/\Gamma$ such that for all $g\in F$ and all $T>T_0$, 
    \begin{equation} \label{eq:nondiv-Lie}
        \abs{\{t\in [T_0,T]: \varphi(t)g\Gamma\in K\}}\geq (1-\e) (T-T_0). 
    \end{equation}
\end{proposition}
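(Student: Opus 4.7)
The plan is to adapt the proof of Proposition~\ref{prop:non-escape of mass-SLnZ} to the Lie group setting by replacing the representation of $\SL(n,\R)$ on $\bigwedge^k\R^n$ (together with Mahler's criterion) by Dani's representation-theoretic description of the cusps of $G/\Gamma$. Specifically, I would invoke the following standard consequence of Dani's non-divergence machinery for lattices in Lie groups: there exist finitely many rational representations $\rho_i \colon G\to \GL(V_i)$, $1\le i\le r$, and vectors $v_i\in V_i\smallsetminus\{0\}$ such that each orbit $\Gamma\cdot v_i\subset V_i$ is discrete, and such that the family
\begin{equation*}
K_\eta := \bigl\{g\Gamma\in G/\Gamma : \|\rho_i(g)\gamma v_i\|\ge \eta\text{ for all }i\text{ and all }\gamma\in\Gamma\bigr\}
\end{equation*}
is a compact exhaustion of $G/\Gamma$ as $\eta\to 0^+$. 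The hypothesis $\varphi([0,\infty))\subset G_u$ together with the discreteness of each $\Gamma v_i$ is what allows this framework to apply uniformly.

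Next, for each $i$, the composition $\rho_i\circ\varphi$ is continuous, definable, and non-contracting in $\GL(V_i)$ by Lemma~\ref{lem:image of non contracting is non contracting}. Therefore Proposition~\ref{prop:c alpha good ppty of curves and action in reps} supplies uniform constants $T_0,C,\alpha>0$ (take the maximum of $T_0$ and the worst $C,\alpha$ over the finite collection) such that for every $i$, every $g\in G$, and every $\gamma\in\Gamma$ the function $t\mapsto \|\rho_i(\varphi(t)g)\gamma v_i\|$ is $(C,\alpha)$-good on $[T_0,\infty)$. Given a compact set $F\subset G$, I would then establish a supremum lower bound on this function exactly as in the proof of Proposition~\ref{prop:non-escape of mass-SLnZ}: since the set $\{\rho_i(\varphi(T_0)g) : g\in F\}$ is compact in $\GL(V_i)$ and $\Gamma v_i$ is discrete, only finitely many $\gamma v_i$ are short at time $T_0$ uniformly over $g\in F$; for each such $\gamma$, non-contraction plus monotonicity forces $\|\rho_i(\varphi(t)g)\gamma v_i\|$ to be bounded below by some $\rho>0$ for all sufficiently large $t$, and for all other $\gamma$ the value $\|\rho_i(\varphi(T_0)g)\gamma v_i\|$ is already $\ge\rho$.

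With the $(C,\alpha)$-good property in hand together with this sup-norm lower bound, I would conclude by applying the Dani--Margulis--Kleinbock non-divergence scheme (the general-representation version of Theorem~\ref{thm:kleinbock's non-divergence theorem}): for $\eta>0$ chosen small enough that $Cr(\eta/\rho)^\alpha\le \varepsilon$, a union bound over the $r$ representations and the finitely many dangerous $\gamma$ yields
\begin{equation*}
\bigl|\{t\in[T_0,T] : \varphi(t)g\Gamma\notin K_\eta\}\bigr|\le \varepsilon(T-T_0),
\end{equation*}
proving \eqref{eq:nondiv-Lie} with $K=K_\eta$. The main obstacle is the uniform supremum lower bound over the \emph{infinite} family $\{\gamma v_i : \gamma\in\Gamma\}$: the discreteness of $\Gamma v_i$ and compactness of $F$ reduce this to a finite set of "dangerous" $\gamma$, but one must argue that the definable non-contracting trajectory $t\mapsto\rho_i(\varphi(t)g)\gamma v_i$ cannot dip below $\rho$ throughout an arbitrarily long subinterval $[T_0,T]$, which follows from the Monotonicity Theorem and the non-contraction hypothesis applied to the fixed non-zero vector $\gamma v_i$.
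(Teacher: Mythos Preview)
Your proposal has a genuine gap at the final step. The ``union bound over the $r$ representations and the finitely many dangerous $\gamma$'' does not work: the set $\{t\in[T_0,T]:\varphi(t)g\Gamma\notin K_\eta\}$ is the set of $t$ where \emph{some} $\|\rho_i(\varphi(t)g)\gamma v_i\|<\eta$, and this involves \emph{all} $\gamma\in\Gamma$, not just the finitely many that are short at $t=T_0$. Each non-dangerous $\gamma$ still contributes a sublevel set of measure $\leq C(\eta/\rho)^\alpha(T-T_0)$ by $(C,\alpha)$-goodness, but summing over infinitely many $\gamma$ gives nothing. This is precisely why Theorem~\ref{thm:kleinbock's non-divergence theorem} is a deep result and not a union bound: it relies on the poset structure of primitive sublattices of $\Z^n$ (flags). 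There is no ``general-representation version'' of Kleinbock's theorem for an arbitrary collection of discrete orbits $\Gamma v_i$ lacking such a poset structure. Relatedly, the ``standard consequence of Dani's non-divergence machinery'' you invoke---a Mahler-type compactness criterion for arbitrary $G/\Gamma$ via finitely many rational representations with discrete $\Gamma$-orbits---is not a black box one can cite; establishing it is essentially the content of the proposition.

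The paper's proof proceeds quite differently: it first reduces to the case where $G$ is semisimple with trivial center and no compact factors and $\Gamma$ is irreducible (using that the quotient by the maximal such normal subgroup has compact fibers). Then it splits into two cases. If the real rank of $G$ is at least $2$, Margulis arithmeticity gives an algebraic $\Q$-group $H\subset\SL(m,\R)$ and a surjection $\rho:H^0\to G$ with compact kernel such that $\rho(H^0\cap\SL(m,\Z))$ is commensurable with $\Gamma$; one then lifts $\varphi$ to a non-contracting definable curve in $H^0$ via Lemma~\ref{lem:non-contraction under homomorphisms} and applies Proposition~\ref{prop:non-escape of mass-SLnZ} directly in $\SL(m,\R)/\SL(m,\Z)$, where Kleinbock's theorem is available. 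If the real rank is $1$, one adapts Dani's rank-one argument, which only needs the $(C,\alpha)$-good property for a \emph{single} representation on $\wedge^d\mathfrak{g}$ (here the cusp structure is simple enough that no poset is needed).
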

\begin{remark}
    We note that the assumption that $\varphi([0,\infty))\subseteq G_u$ is non restrictive for our purposes in view of the following. In Proposition~\ref{prop:hull in lie group}, we show that the hull of a non-contracting curve, definable in a polynomially bounded o-minimal structure and taking values in a Lie subgroup $G \subseteq \SL(n, \R)$, is contained in $G_u$. Thus, by multiplying the original curve with a correcting curve, the assumption $\varphi([0,\infty))\subseteq G_u$ will be satisfied.
\end{remark}

\begin{proof}
We follow the arguments as in~\cite[Theorem 6.1]{Dani1993LimitDO} to reduce the problem to the case when $G$ is a semisimple group and $\Gamma$ is an irreducible lattice in $G$. We note that if $G/\Gamma$ is compact, there is nothing to prove. So, we now assume that $G/\Gamma$ is not compact. 

Let $M$ be the smallest closed normal subgroup of $G$ such that $\bar G=G/M$ is semisimple with the trivial center and no compact factors. Then $M/(M\cap\Gamma)$ is compact, see~\cite[Theorem~8.24]{RA72} and \cite[Proof of Theorem~2.2]{Shah1996LimitDO}. Let $q:G\to \bar G$ denote the quotient homomorphism. Then $q(\Gamma)$ is a lattice in $\bar G$, and the natural quotient map $\bar q:G/\Gamma\to \bar G/q(\Gamma)$ is proper. We note that if $\tilde{G}$ and $\tilde M$ denote the Zariski closures of $G$ and $M$ in $\SL(n,\R)$, respectively, then $\bar G=G/M=\tilde{G}^0/\tilde{M}^0$. Therefore, $q|_{G_u}:G_u\to \bar G$ is a rational homomorphism of real algebraic groups. Thus $q\circ \varphi$ is a curve definable in $\pbomin$.
By Lemma \ref{lem:image of non contracting is non contracting}, $q\circ \varphi:[0,\infty)\to \bar G$ is a non-contracting.
Since $\bar q$ is proper, proving the theorem for $\bar G$ in place of $G$ is sufficient. 

Therefore, we assume that $G$ is semisimple with the trivial center and no compact factors. Then, by \cite[Theorem~5.22]{RA72}, there exist closed normal subgroups $G_1,\ldots,G_k$ of $G$ for some $k$ such that $G$ equals the direct product $G_1\times \cdots \times G_k$, such that  $\Gamma_i:=G_i\cap \Gamma$ is an irreducible lattice in $G_i$, for all $i$. Moreover, $\Gamma_1\times\cdots\times\Gamma_k$ is a normal subgroup of finite index in $\Gamma$. Hence, $G/\Gamma$ is finitely covered by $G_1/\Gamma_1\times \cdots \times G_k/\Gamma_k$. Let $q_i:G\to G_i$ denote the natural factor map, and observe that  $q_i\circ \varphi$ is a non-contracting curve in $G_i$ definable in $\pbomin$ by Lemma \ref{lem:image of non contracting is non contracting}. So, proving the theorem for $G_i/\Gamma_i$ separately for each $i$ will imply the theorem for $G/\Gamma$. Therefore, without loss of generality, we may assume that $\Gamma$ is an irreducible lattice in $G$, which is semisimple with the trivial center and no compact factors. 

First, suppose that the real rank of $G$ is $1$.  It is straightforward to adapt the proof of \cite[Proposition~1.2]{Dani:rank-1} to conclude \eqref{eq:nondiv-Lie}. For this purpose, it is enough to use the following property of the map $\varphi$ in place of \cite[Lemmas~2.5 and 2.7]{Dani:rank-1}: Let $V=\wedge^d\mathfrak{g}$, where $d$ is the dimension of the maximal unipotent subgroup of $G$ and $\mathfrak{g}$ the Lie algebra of $G$. Now, $G$ acts on $V$ via $\wedge^d\Ad$. Then there exists $T>0$, $C>0$ and $\alpha>0$ such that for any $g\in G$, the map $t\mapsto \norm{\varphi(t)g\cdot p}$ has the $(C,\alpha)$-good property on $[T_0,\infty)$ by Proposition~\ref{prop:c alpha good ppty of curves and action in reps}. 

Now we suppose that the real rank of $G$ is at least $2$. Then, by the arithmeticity theorem due to G. A. Margulis, $\Gamma$ is arithmetic; that is, see~\cite[page~3]{Zimmer:book84}: There exists an $m\in \N$, an algebraic semisimple group $H\subseteq \SL(m,\R)$  defined over $\Q$ and a surjective algebraic homomorphism $$\rho:H^0\to G,$$such that $\ker\rho$ is compact and $\rho(H^0(\Z))$ is commensurable with $\Gamma$, where $H^0(\Z)=H^0\cap \SL(m,\Z)$. Since $G$ has no compact factors, the map $\rho$ restricted to $H_u$ is a finite cover of $G$. By Lemma \ref{lem:non-contracting under homomorphisms}, we can lift $\varphi$ to a $\pbomin$-definable curve $\psi:[0,\infty)\to H_u\subseteq H^0\subseteq \SL(m,\R)$ which is non-contracting in $H_u$, such that $\rho\circ\psi=\varphi$. By Proposition \ref{prop:non contracting is intrinsic when radical is unipotent}, $\psi$ is non-contracting in $\SL(m,\R)$. Denote $Y:=\SL(m,\R)/\SL(m,\Z)$.  As in Proposition~\ref{prop:non-escape of mass-SLnZ}, let $T_0>0$ be such that for any compact set $F_1\subseteq \SL(m,\R)$, there exists a compact set $K_1\subseteq Y$ such that for any $T\geq T_0$ and $h\in F_1$
\begin{equation} \label{eq:nondiv-SLN}
\abs{\{t\in [T_0,T]:\psi(t)h\SL(m,\Z)\in K_1\}}\geq (1-\e) (T-T_0).
\end{equation}
Therefore, since $H^0/H^0(\Z)\hookrightarrow
 Y$ is a proper $H^0$-equivariant continuous injection, the conclusion of the proposition holds for $H^0$ in place of $G$, $H^0(\Z)$ in place of $\Gamma$, and $\psi$ in place of $\varphi$. And, since $\rho\circ\psi=\varphi$, the conclusion of the proposition holds for $\rho(H^0(\Z))$ in place of $\Gamma$, and hence it holds for $\Gamma$ due to the commensurability. 
\end{proof}

As an immediate consequence of Proposition~\ref{prop:nondiv-Lie} we obtain the following. 

\begin{theorem} \label{thm:non-escape of mass} Let $\varphi:[0,\infty)\to \SL(n,\R)$ be a $\pbomin$-definable, continuous, unbounded, non-contracting curve. Let $G\subseteq\SL(n,\R)$ be a Lie subgroup, $\Gamma$ be a lattice in $G$, and suppose that $\varphi([0,\infty))\subseteq G_u$. Then, any weak-* limit $\mu$ of $\mu_{T,\varphi,x}$ as $T\to\infty$ is a probability measure on $G/\Ga$, for all $x\in G/\Ga$.
\end{theorem}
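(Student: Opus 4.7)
The plan is to derive Theorem~\ref{thm:non-escape of mass} directly from the quantitative non-divergence result in Proposition~\ref{prop:nondiv-Lie}, which already does all the real work; the only remaining task is to convert the uniform lower bound on the time spent in a compact set into a statement about weak-$\ast$ limits of probability measures.

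First I fix $x = g\Gamma \in G/\Gamma$ and apply Proposition~\ref{prop:nondiv-Lie} with the compact set $F := \{g\} \subset G$ and an arbitrary $\varepsilon > 0$. This produces a threshold $T_0 > 0$ (independent of $\varepsilon$) and a compact set $K_\varepsilon \subset G/\Gamma$ such that
\begin{equation*}
\abs{\{t \in [T_0, T] : \varphi(t) g \Gamma \in K_\varepsilon\}} \geq (1 - \varepsilon)(T - T_0)
\end{equation*}
for every $T > T_0$. Next I choose by Urysohn's lemma a continuous function $f_\varepsilon : (G/\Gamma) \cup \{\infty\} \to [0,1]$ which is compactly supported in $G/\Gamma$ and satisfies $f_\varepsilon \equiv 1$ on $K_\varepsilon$ (so in particular $f_\varepsilon(\infty) = 0$).

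Then, for any $T > T_0$,
\begin{equation*}
\mu_{T,\varphi,x}(f_\varepsilon) \;=\; \frac{1}{T}\int_0^T f_\varepsilon(\varphi(t)x)\,dt \;\geq\; \frac{1}{T}\int_{T_0}^T \mathbbm{1}_{K_\varepsilon}(\varphi(t)x)\,dt \;\geq\; \frac{(1-\varepsilon)(T - T_0)}{T},
\end{equation*}
which tends to $1 - \varepsilon$ as $T \to \infty$. Hence, for any weak-$\ast$ limit $\mu$ of $\mu_{T,\varphi,x}$ along a subsequence $T_k \to \infty$, the continuity of $f_\varepsilon$ on the one-point compactification gives $\mu(f_\varepsilon) \geq 1 - \varepsilon$. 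Since $f_\varepsilon$ is supported inside $G/\Gamma$, we obtain $\mu(G/\Gamma) \geq \mu(f_\varepsilon) \geq 1 - \varepsilon$.

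Finally, because $\varepsilon > 0$ was arbitrary, $\mu(G/\Gamma) = 1$ and therefore $\mu(\{\infty\}) = 0$, as required. There is essentially no obstacle here beyond correctly quoting Proposition~\ref{prop:nondiv-Lie}: the non-contraction hypothesis and the assumption $\varphi([0,\infty)) \subset G_u$ are exactly what is needed to invoke it, and the rest is a standard tightness-to-weak-limit argument.
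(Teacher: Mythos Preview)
Your proof is correct and is exactly the approach intended by the paper, which simply states that Theorem~\ref{thm:non-escape of mass} is an immediate consequence of Proposition~\ref{prop:nondiv-Lie} without spelling out the details. Your Urysohn-function argument is the standard way to pass from the uniform lower bound on time spent in compacta to the conclusion $\mu(\{\infty\})=0$, and you have correctly noted that $T_0$ in Proposition~\ref{prop:nondiv-Lie} is chosen before $\varepsilon$.
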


\section{Unipotent invariance\label{sec:unip invariance} }
We say that a measure $\mu$ on $G/\Ga$ is \textit{invariant} under $g\in G$ if $g_*\mu=\mu$, where $g_*$ denotes the pushforward by the left-translation map by $g$ on $G/\Ga$. We say that a measure is \textit{invariant under a subgroup}, if it is invariant under every element of the subgroup. 

For the rest of the section, $\pbomin$ is a fixed polynomially bounded o-minimal structure.
\begin{proposition}\label{prop:invariance}Let $G\subseteq \GL(n,\R)$ be a closed subgroup. Consider a continuous, unbounded, $\pbomin$-definable curve $\varphi:[0,\infty)\to G$ such that $ \det(\varphi(t))$ is constant in $t$. Suppose further that for all $v\in\R^n\setminus \{0\}$, $\lim_{t\to\infty}\varphi(t)v\neq0$. Then there exists a nontrivial one-parameter unipotent group $P\subseteq G$ such that if  $\Ga\subseteq G$ is a discrete subgroup, $g\in G$ and $\mu$ is weak-$\ast$ limit of the measures $$\mu_{T,\varphi,g\Ga}(f):=\frac{1}{T}\int_0^Tf(\varphi(t)g\Ga)dt, f\in C_c(G/\Ga),$$then $\mu$ is invariant under $P$.
\end{proposition}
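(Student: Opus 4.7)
I follow the Peterzil--Steinhorn scheme alluded to in the introduction: extract a nontrivial element $u \in G$ as a limit of short relative translations of $\varphi$, verify $u$-invariance of $\nu$ by a change of variables, and promote $u$ to a one-parameter unipotent subgroup $P$ via rescaling together with the hypotheses that $\varphi$ is non-contracting with constant determinant.

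\emph{Step 1 (the PS-limit and invariance of $\nu$).} Fix a small $\delta_0>0$. For each large $t$, the definable map $s\mapsto\|\varphi(t+s)\varphi(t)^{-1}-I\|$ vanishes at $s=0$ and cannot remain below $\delta_0$ indefinitely (since $\varphi$ is unbounded), so by definable choice and monotonicity there is a definable $s(t)>0$ with $\|\varphi(t+s(t))\varphi(t)^{-1}-I\|=\delta_0$. Compactness of the sphere in $G$ of radius $\delta_0$ about $I$, together with the monotonicity theorem, gives a limit $u:=\lim_{t\to\infty}\varphi(t+s(t))\varphi(t)^{-1}\in G$ with $\|u-I\|=\delta_0$; in particular $u\neq I$. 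Polynomial boundedness of the structure (via Miller's results on asymptotic behavior) forces $s'(t)\to 0$ and $s(t)/t\to 0$. Then for any $f\in C_c(G/\Gamma)$, uniform continuity of $f$ together with the convergence above gives
\[
\frac{1}{T}\int_0^T f(u\varphi(t)g\Gamma)\,dt=\frac{1}{T}\int_0^T f(\varphi(t+s(t))g\Gamma)\,dt+o(1).
\]
The change of variables $\tau=t+s(t)$, with $d\tau/dt=1+s'(t)\to 1$ and with $s(T)/T\to 0$ absorbing the endpoint discrepancy, reduces the right side to $\frac{1}{T}\int_0^T f(\varphi(\tau)g\Gamma)\,d\tau+o(1)$. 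Taking the weak-$\ast$ limit along the subsequence defining $\nu$ yields $u_\ast\nu=\nu$.

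\emph{Step 2 (from $u$ to a one-parameter unipotent subgroup).} Rescaling $s(t)\mapsto c\,s(t)$ for $c\ge 0$ and repeating Step 1 gives a family $\{u(c)\}_{c\ge 0}\subset G$ of $\nu$-preserving elements with $u(0)=I$ and $u(1)=u$. The telescoping identity
\[
\varphi(t+(c_1+c_2)s(t))\varphi(t)^{-1}=\bigl(\varphi(t+(c_1+c_2)s(t))\varphi(t+c_1 s(t))^{-1}\bigr)\bigl(\varphi(t+c_1 s(t))\varphi(t)^{-1}\bigr),
\]
together with the asymptotic $s(t+c_1 s(t))/s(t)\to 1$ (a consequence of polynomial boundedness and $s'(t)\to 0$), produces the group law $u(c_1+c_2)=u(c_1)u(c_2)$. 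Hence $u(c)=\exp(cX)$ for some $X\in\lie(G)$, and since $\det\varphi$ is constant, $X$ has zero trace. To show $X$ is nilpotent I argue by contradiction: if $X$ had an eigenvalue with nonzero real part, pick one with negative real part to obtain a nonzero $v\in\R^n$ with $u(c)^nv\to 0$ as $n\to\infty$. Iterating the defining limit (via the telescoping) shows $\varphi(t+ncs(t))\varphi(t)^{-1}v\to u(c)^nv$, and a diagonal argument in $(n,t)$ combined with the non-contracting hypothesis applied to $\varphi$ (read on suitable sequences of vectors $\varphi(t_k)^{-1}v$, normalized and passed to nonzero limits) yields the required contradiction. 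Thus $X$ is nilpotent, each $u(c)$ is unipotent, and $P:=\{u(c):c\in\R\}$ is the desired nontrivial one-parameter unipotent subgroup of $G$ preserving $\nu$.

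\emph{Main obstacle.} The delicate points are concentrated in Step 2. Proving that $\{u(c)\}$ is a continuous one-parameter subgroup requires uniform control of $s(t+c_1 s(t))/s(t)$, and the unipotency argument demands a careful alignment of diagonal limits along $(n_k,t_k)$ so that the vectors produced by a hypothetical non-unit eigenvalue give a concrete conflict with $\lim_{t\to\infty}\varphi(t)w\neq 0$. Polynomial boundedness of the o-minimal structure is essential at both stages: for the group law, through the fact that definable functions have well-defined degrees and translates behave asymptotically like $1$; and for unipotency, by ruling out oscillations that would disconnect the diagonal limits from the non-contraction hypothesis.
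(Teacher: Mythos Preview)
Your outline has the right architecture but a genuine gap in the logical order. In Step~1 you assert that polynomial boundedness alone forces $s'(t)\to 0$ and $s(t)/t\to 0$; this is false without the non-contraction hypothesis. For the diagonal curve $\varphi(t)=\diag(t,t^{-1})$ (definable, unbounded, constant determinant) one finds $s(t)\sim \delta_0 t$, so $s'(t)\to\delta_0\neq 0$. Of course this $\varphi$ is contracting, but that is precisely the point: it is non-contraction, not polynomial boundedness, that rules out $s(t)\asymp t$, and you only invoke non-contraction in Step~2. So the change of variables in Step~1 is unjustified until Step~2 succeeds --- and Step~2 as written does not. Your nilpotency argument only treats eigenvalues of $X$ with nonzero real part; it says nothing about purely imaginary eigenvalues, so you have not excluded $u(c)$ being a rotation. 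Even in the negative-real-part case, the ``diagonal argument'' linking $u(c)^nv\to 0$ back to $\lim_{t\to\infty}\varphi(t)w\neq 0$ for a \emph{fixed} $w$ is not carried out: the vectors $\varphi(t_k)^{-1}v$ vary with $k$, and there is no reason their normalized limits witness contraction of $\varphi$.

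The paper closes both gaps by a structural route rather than a dynamical one. Poulios' dichotomy (Proposition~\ref{prop:poulios summary}) says the PS generator $M_\varphi$ is either nilpotent or $\R$-diagonalizable --- purely imaginary eigenvalues simply do not arise for curves in a polynomially bounded structure. The paper then shows (Proposition~\ref{prop:r is 1 iff essentially diagonal} via the decomposition of Lemma~\ref{lem:stable-unstable-constant (SUC) decomposition}, and Lemma~\ref{lem:p.s. group of non-contracting is unip}) that the diagonalizable case forces $\varphi$ to be \emph{essentially diagonal}: $\varphi(t)=\sigma(t)b(t)C$ with $\sigma$ convergent and $b$ diagonal. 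Since $\det b$ is constant and $b$ unbounded, some entry $b_{ii}(t)\to 0$, whence $\varphi(t)C^{-1}e_i\to 0$, contradicting non-contraction directly. Only after $r<1$ is established does the change-of-variables argument (your Step~1, the paper's Lemma~\ref{lem:invariance for unipotent p.s.}) go through. Your outline can be repaired by inserting Poulios' dichotomy and the essentially-diagonal reduction \emph{before} your Step~1; without them the argument is circular.
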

The key idea behind the proof is that o-minimal curves exhibit a kind of ``tangency at infinity.'' Specifically, one can construct definable \emph{change-of-speed} functions $h(t)$ that diverge to infinity and such that 
$$\lim_{t \to \infty} \varphi(h(t))\varphi(t)^{-1} = \rho,$$
where $\rho$ is a nontrivial element. Under our assumptions, these limiting elements $\rho$ generate a one-parameter unipotent subgroup, and moreover, it holds that  $h'(t) = 1 + o(t^{-\nu})$. The latter derivative asymptotic ensures that any limiting measure of $\mu_{T,\varphi,g\Gamma}$ as $T \to \infty$ is invariant under the subgroup generated by $\rho$; see Lemma~\ref{lem:invariance for unipotent p.s.} below.

This idea was previously applied by Shah in~\cite{Shah1994LimitDO} in the context of averages along polynomial curves. In the polynomial case, one can construct suitable change-of-speed maps using Taylor expansions. In the o-minimal setting, Peterzil and Steinhorn~\cite{PS99} showed that the collection of all possible limits of the form $\lim_{t \to \infty} \varphi(h(t))\varphi(t)^{-1}$, where $h$ is a definable function diverging to $\infty$, forms a one-dimensional torsion-free group. We refer to this group as the \emph{Peterzil-Steinhorn subgroup}, which we define in a simplified form in Definition~\ref{def:p.s. group} to suit our needs. 

Poulios, in his thesis~\cite{Poulios_thes}, further studied Peterzil-Steinhorn groups and showed that the group associated with an unbounded curve definable in a polynomially bounded structure is either unipotent or $\mathbb{R}$-diagonalizable. Importantly, Poulios establishes a convenient criterion for when the Peterzil-Steinhorn group is unipotent.

\vspace{2mm}

\noindent We now recall the required notions and results from \cite{Poulios_thes}.
For the following, for $r<1$, consider 
\begin{equation}\label{eq:change of speed r<1}
    h_{r,s}(t):=\left(t^{1-r}+(1-r)s\right)^{\frac{1}{1-r}}=t+st^{r}+o(t^r), \text{ where }s\in\R, 
\end{equation}
and let
\begin{equation}\label{eq:change of speed r=1}
    h_{1,s}(t):=st, \text{ where }s>0.
\end{equation}

\begin{proposition}\emph{\cite[Theorem 3.2.4, Theorem 3.3.1, Corollary 3.4.3]{Poulios_thes}}\label{prop:poulios summary} Let $G\subseteq\GL(n,\R)$ be a closed subgroup, and let $\varphi:[0,\infty)\to G$ be a $\pbomin$-definable, unbounded curve. Then, there exists a unique $r\leq 1$ such that \begin{equation}\label{eq:limit in Poulious condition}
	M_\varphi:=\lim_{t \to \infty} t^r \cdot \varphi'(t) \cdot \varphi(t)^{-1}
\end{equation}
with $M_\varphi\in \mathfrak{g}\neq \{0\}$, where  $\mathfrak{g}$ represents the Lie-algebra of $G$. 

\noindent It holds that $M_\varphi$ is nilpotent $\iff$ $r<1$, and $M_\varphi$ is $\R$-diagonalizable $\iff r=1$.  Moreover: 
\begin{enumerate}
    \item  If $r<1$, consider $\rho(s):=\exp(sM_\varphi),~s\in \R$. Then:
\begin{equation} 
    \rho(s):=\lim_{t\to\infty}\varphi(h_{r,s}(t))\varphi(t)^{-1},\ s\in \R. \label{eq:definition of invariance group elements rho}
\end{equation}
\item If $r=1$, let $d(s):=\exp(\log(s)M_\varphi)$, then:
\begin{equation*}
    d(s):=\lim_{t\to\infty}\varphi(h_{1,s}(t))\varphi(t)^{-1},\ s>0.
\end{equation*}
\end{enumerate}
\end{proposition}

\begin{definition}\label{def:p.s. group}
    For an unbounded curve  $\varphi:[0,\infty)\to \GL(n,\R)$ definable in a polynomially  bounded o-minimal structure, we say that $r\leq 1$ is the \emph{P.S. order} of $\varphi$, if $r$ satisfies  \eqref{eq:limit in Poulious condition}. We call the one-parameter subgroup $P$ generated by $\exp(M_\varphi)$ as the \emph{P.S. group} of $\varphi$.
\end{definition}

\begin{remark} \label{rem:phi-unip}
    In the notations of Proposition~\ref{prop:poulios summary}, if $\varphi$ is a restriction of a one-parameter group to $[0,\infty)$, then $r=0$, $h_{r,s}(t)=t+s$, and $\rho(s)=\varphi(s)$ for all $s\in \R$; that is, $\varphi$ is a restriction of a unipotent one-parameter subgroup. 
\end{remark}

\subsection{Unipotent P.S. groups and limiting unipotent invariance} 
We now show that 
when the P.S. group is unipotent,   any limiting measure of measures of the form \eqref{eq:main definition of measures averaging on curve} will be invariant under the P.S. group. This proof is found in \cite{Shah1994LimitDO} and in \cite{Peterzil2018OminimalFO}, and we give it here also for completeness. 
The following elementary calculus lemma will be needed.

\begin{lemma}\label{difference of intergal lemma}
Let $h:(0,\infty) \to \R$ be a differentiable function such that
$\lim_{t\to \infty}h'(t)= 1$. Then, for any bounded continuous function $F:[0,\infty)\to \R$, we have
\begin{equation}
    \lim_{T\to\infty}\frac{1}{T}\int_0^T (F(h(t))-F(t))dt=0
\end{equation}
\end{lemma}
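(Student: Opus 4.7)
The identity as written cannot hold for individual $T$, so I read the conclusion as the limit statement $\lim_{T\to\infty}\frac{1}{T}\int_0^T[F(h(t))-F(t)]\,dt=0$. My plan is the natural one: perform the change of variable $s=h(t)$ in the first integral and show that each resulting error term is $o(T)$.

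First I would use $h'(t)\to 1$ to fix $t_0\geq 0$ with $h'(t)\geq 1/2$ for all $t\geq t_0$, so that $h$ is strictly increasing on $[t_0,\infty)$ with $h(t)\to\infty$ and admits a differentiable inverse $h^{-1}\colon[h(t_0),\infty)\to[t_0,\infty)$. Since $F$ is bounded by some $M$, the contribution of $[0,t_0]$ to the integral is $O(1)$, hence negligible after division by $T$, and it suffices to analyze $\frac{1}{T}\int_{t_0}^T[F(h(t))-F(t)]\,dt$. Substituting $s=h(t)$ in the first piece gives
\[
\int_{t_0}^T F(h(t))\,dt=\int_{h(t_0)}^{h(T)}\frac{F(s)}{h'(h^{-1}(s))}\,ds.
\]

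Write $1/h'(h^{-1}(s))=1+\epsilon(s)$, where $\epsilon(s)\to 0$ as $s\to\infty$. The quantity to estimate then splits as
\[
\frac{1}{T}\int_{h(t_0)}^{h(T)}F(s)\epsilon(s)\,ds+\frac{1}{T}\left(\int_{h(t_0)}^{t_0}F(s)\,ds+\int_T^{h(T)}F(s)\,ds\right),
\]
after cancelling the common $\int_{t_0}^T F$. The first (error) term is bounded by $M\bigl(\sup_{s\geq h(t_0)}|\epsilon(s)|\bigr)(h(T)-h(t_0))/T$; combined with the estimate $h(T)/T\to 1$ explained below, its $\limsup$ as $T\to\infty$ is at most $M\sup_{s\geq h(t_0)}|\epsilon(s)|$, which can be made arbitrarily small by first choosing $t_0$ large enough before sending $T\to\infty$.

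The boundary term is bounded in absolute value by $M\bigl(|h(t_0)-t_0|+|h(T)-T|\bigr)/T$. Writing $h(T)-T=h(0)+\int_0^T(h'(s)-1)\,ds$ and applying the standard Ces\`aro-type argument to the function $h'(s)-1$, which tends to $0$, yields $|h(T)-T|/T\to 0$; this also justifies $h(T)/T\to 1$ used above. Combining the two estimates shows $\limsup_{T\to\infty}\left|\frac{1}{T}\int_0^T[F(h(t))-F(t)]\,dt\right|\leq M\sup_{s\geq h(t_0)}|\epsilon(s)|$, and letting $t_0\to\infty$ finishes the argument. The only mildly delicate step is the Ces\`aro estimate for $|h(T)-T|=o(T)$ and the uniform control of $\epsilon(s)$ in the change-of-variables error; everything else is a direct consequence of boundedness of $F$ and $h'\to 1$.
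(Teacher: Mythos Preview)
Your argument is correct, and your reading of the statement as a limit is the intended one. The paper does not prove this lemma at all---it is stated as an ``elementary calculus lemma'' and used without proof---so there is no approach to compare against; the change-of-variables decomposition together with the Ces\`aro estimate $|h(T)-T|=o(T)$ is the natural route.

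One minor quibble: writing $h(T)-T=h(0)+\int_0^T(h'(s)-1)\,ds$ presupposes that $h$ is defined at $0$ and that $h'$ is integrable on $[0,T]$, neither of which follows from the bare hypothesis of differentiability. It suffices to work on $[t_0,\infty)$ where $1/2\leq h'\leq 3/2$, so that $h$ is bi-Lipschitz and hence absolutely continuous; then both the change of variables and the fundamental theorem of calculus are justified. In the paper's application $h=h_{r,s}$ is smooth, so this point is moot.
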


\noindent For the proof below, notice that for $r<1$, it holds that $h'_{s,r}(t)=1+O(t^{r-1})$.

\begin{lemma}\label{lem:invariance for unipotent p.s.}
Let $G\subseteq\GL(n,\R)$ be a closed subgroup and let $\Ga\subseteq G$ be a discrete subgroup. Suppose that $\varphi:[0,\infty)\to G$ is a $\pbomin$-definable, continuous,  unbounded curve such that its P.S. group is unipotent \emph{(}namely, \eqref{eq:limit in Poulious condition} holds for $r< 1$\emph{)}. Let $\rho(s)=\exp(sM_\varphi)$, as in \eqref{eq:definition of invariance group elements rho}.  Then \begin{equation}
\lim_{T\to\infty}\frac{1}{T}\int_{0}^T (f(\rho(s)\varphi(t)x)- f(\varphi(t)x))dt,
\end{equation}
for all $f\in C_c(G/\Ga)$, $x\in G/\Ga$ and $s\in \R$.
\end{lemma}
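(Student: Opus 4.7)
The plan is to reduce the claim to Lemma~\ref{difference of intergal lemma} by approximating $\rho(s)\varphi(t)$ with $\varphi(h_{r,s}(t))$ and then performing a change-of-variables. First, I will use that $f\in C_c(G/\Gamma)$ is left-uniformly continuous: for each $\e>0$ there exists a neighborhood $V$ of $e\in G$ such that $|f(gy)-f(y)|<\e$ for all $y\in G/\Gamma$ and $g\in V$. This is standard, since the lift $\tilde f:G\to\R$ is bounded, continuous, and vanishes outside a set of the form $K\Gamma$ for some compact $K\subset G$. By the definition of $\rho(s)$ in \eqref{eq:definition of invariance group elements rho}, we have $\rho(s)\varphi(t)\varphi(h_{r,s}(t))^{-1}\to e$ as $t\to\infty$. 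Hence there exists $T_0=T_0(\e,s)$ such that $|f(\rho(s)\varphi(t)x)-f(\varphi(h_{r,s}(t))x)|<\e$ for all $t\geq T_0$, from which
\[
\limsup_{T\to\infty}\left|\frac{1}{T}\int_0^T\bigl[f(\rho(s)\varphi(t)x)-f(\varphi(h_{r,s}(t))x)\bigr]\,dt\right|\leq \e.
\]

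Next, I will apply Lemma~\ref{difference of intergal lemma} to the bounded continuous function $F(t):=f(\varphi(t)x)$ with $h=h_{r,s}$. Differentiating \eqref{eq:change of speed r<1} directly yields
\[
h_{r,s}'(t)=\left(1+\frac{(1-r)s}{t^{1-r}}\right)^{r/(1-r)}\xrightarrow[t\to\infty]{}1,
\]
since $r<1$. Thus the hypothesis of Lemma~\ref{difference of intergal lemma} is satisfied, and we obtain
\[
\lim_{T\to\infty}\frac{1}{T}\int_0^T\bigl[f(\varphi(h_{r,s}(t))x)-f(\varphi(t)x)\bigr]\,dt=0.
\]
Combining the two estimates via the triangle inequality yields $\limsup_{T\to\infty}\bigl|\tfrac{1}{T}\int_0^T[f(\rho(s)\varphi(t)x)-f(\varphi(t)x)]\,dt\bigr|\leq \e$, and since $\e>0$ was arbitrary, the required limit equals $0$.

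I do not expect any genuine obstacle: the substantive content is packaged into Proposition~\ref{prop:poulios summary} (which provides both $\rho(s)$ and the explicit change-of-speed map $h_{r,s}$) and into Lemma~\ref{difference of intergal lemma} (the calculus lemma). The only minor technicality is that for negative $s$ the function $h_{r,s}(t)$ is well-defined only once $t^{1-r}+(1-r)s>0$; but this holds for all $t$ larger than some $T_1=T_1(r,s)$, so one truncates the integral to $[T_1,T]$ and observes that the contribution of $[0,T_1]$ to the averaged integrals is at most $O(\|f\|_\infty T_1/T)=o(1)$, which does not affect the Ces\`aro limit.
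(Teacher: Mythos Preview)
Your proposal is correct and follows essentially the same route as the paper's proof: approximate $\rho(s)\varphi(t)$ by $\varphi(h_{r,s}(t))$ via uniform continuity of $f$, then invoke Lemma~\ref{difference of intergal lemma} after checking $h_{r,s}'(t)\to 1$. If anything, your write-up is slightly more careful, since you make explicit the domain issue for $h_{r,s}$ when $s<0$ and spell out why $f$ is left-uniformly continuous.
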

\begin{proof}
For $f\in C_c(G/\Ga)$ and an $\e>0$, by uniform continuity of $f$, there is an  $T_{\e}$ such for all $t\ge T_{\e}$: \begin{align*}
    |f(\rho(s)\varphi(t)x)-f(\varphi(h_{r,s}(t)x)|&=|f(\rho(s)\varphi(t)x)-f([\varphi(h_{r,s}(t))\varphi(t)^{-1}]\varphi(t)x)|\\&\le \e/2,
\end{align*}
Then,
\begin{align*}
    &\frac{1}{T}\int_{0}^T \left(f(\rho(s)\varphi(t)x)-f(\varphi(t)x)\right)dt\\
    &=O\left(\frac{T_\e}{T}\right)+O(\e)
    +\frac{1}{T}\int_{T_{\e
    }}^T  \left(f(\varphi(h_{r,s}(t)x)-f(\varphi(t)x)\right)dt.
\end{align*}
The last term converges to zero by Lemma~\ref{difference of intergal lemma}.
\end{proof}
If a curve is contained in a unipotent group, as in \cite{Peterzil2018OminimalFO}, then the P.S. group is automatically unipotent. In an overview, to treat our more general setup, we will distinguish between two type of curves to those that are  \textit{essentially diagonal} (see the definition below), and those that are or not. We will show that if a curve is not essentially diagonal, then its P.S. group is unipotent. \begin{definition}\label{def:essentially diagonal}
	We call a curve $\{\varphi(t)\}_{t\geq 0}\subseteq \GL(n,\R)$  \textit{essentially diagonal} if there exists a decomposition \begin{equation}
		\varphi(t)=\sigma(t)b(t)C,
	\end{equation}
	where $b(t)\in \GL(n,\R)$ is a diagonal matrix for all large $t$, $C\in \GL(n,\R)$ is a constant matrix, and $\sigma(t)\in\SL(n,\R)$ is convergent   as $t\to\infty$.
\end{definition}
The following is our key observation. 
\begin{proposition}\label{prop:r is 1 iff essentially diagonal} 	Let $\varphi:[0,\infty)\to \GL(n,\R)$ be a $\pbomin$-definable, unbounded,  continuous curve. Suppose that $\det(\varphi(t))$ is constant for all large enough  $t$. Then, the P.S. group of  $\varphi$ is unipotent if and only if the curve $\varphi$ is not essentially diagonal.
\end{proposition}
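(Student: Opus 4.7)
The plan is to handle the two directions of the equivalence separately. The easy direction ``essentially diagonal $\Rightarrow$ $r=1$ with $M_\varphi$ diagonalizable'' will follow by direct computation, while the converse is more delicate and will require building an essentially diagonal factorization out of a $QR$-type decomposition together with the Poulios relation.

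For the easy direction, suppose $\varphi(t)=\sigma(t)b(t)C$ is essentially diagonal with $\sigma\to\sigma_\infty$, $b(t)=\diag(b_1(t),\ldots,b_n(t))$, and $C$ constant. By Miller's theorem each $b_i(t)\sim c_it^{r_i}$, and unboundedness of $\varphi$ forces some $r_i\neq 0$. Writing
\[
\varphi'(t)\varphi(t)^{-1}=\sigma'(t)\sigma(t)^{-1}+\sigma(t)\bigl(b'(t)b(t)^{-1}\bigr)\sigma(t)^{-1},
\]
I would use $t\sigma'\sigma^{-1}\to 0$ (each entry of $\sigma-\sigma_\infty$ being a convergent definable function in a polynomially bounded structure, hence of decay rate $t^{-\nu_{ij}}$) and $tb'b^{-1}\to\diag(r_1,\ldots,r_n)$ to conclude that $t\varphi'\varphi^{-1}\to\sigma_\infty\diag(r_i)\sigma_\infty^{-1}$, a nonzero $\R$-diagonalizable matrix. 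By the uniqueness clause of Proposition \ref{prop:poulios summary}, this forces $r=1$ and shows that $M_\varphi$ is $\R$-diagonalizable, so the P.S.\ group is not unipotent.

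For the converse, assume the P.S.\ group is not unipotent, so $r=1$ and $M_\varphi=PDP^{-1}$ with $D=\diag(\lambda_1,\ldots,\lambda_n)$ real. Setting $\eta(t):=P^{-1}\varphi(t)$, Proposition \ref{prop:poulios summary} yields $\eta(st)\eta(t)^{-1}\to s^D$ for each $s>0$. My approach is to use the definable $QR$ factorization $\eta(t)=Q(t)R(t)$, with $Q(t)\in\mathrm{O}(n)$ and $R(t)$ upper triangular with positive diagonal, and then to split $R(t)=\tilde b(t)N(t)$ where $\tilde b(t)$ is diagonal and $N(t)$ is upper unipotent. The key asymptotic claims to establish are: $Q(t)\to Q_\infty$; each diagonal entry $\tilde b_i(t)\sim c_it^{\lambda_i}$; and $N(t)\to N_\infty$ with the matched decay $(N(t)-N_\infty)_{ij}=O(t^{\lambda_j-\lambda_i})$ for $i<j$.

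Granting these, the conjugation $\tilde b(t)\bigl(N(t)N_\infty^{-1}\bigr)\tilde b(t)^{-1}$ converges to $I$, so the curve $\sigma(t):=Q(t)\,\tilde b(t)(N(t)N_\infty^{-1})\tilde b(t)^{-1}$ converges to $Q_\infty$, and one checks $\eta(t)=\sigma(t)\tilde b(t)N_\infty$ identically. Setting $\tilde\sigma:=P\sigma\to PQ_\infty$, $\tilde b$ as above (diagonal), and $\tilde C:=N_\infty$ (constant) delivers the essentially diagonal decomposition $\varphi=\tilde\sigma\tilde b\tilde C$. The hard part will be establishing the matched decay of $N-N_\infty$: although every entry has a well-defined leading-order asymptotic $\sim ct^\alpha$ by Miller's theorem, extracting the bound $\alpha\leq \lambda_j-\lambda_i$ requires comparing leading asymptotic orders on both sides of the Poulios relation $\eta(st)\eta(t)^{-1}\to s^D$ applied to the $QR$ factorization; any strictly slower decay of $N-N_\infty$ would be amplified by the conjugation by $\tilde b$ on precisely the $(i,j)$-block and would produce a term contradicting $\eta(st)\eta(t)^{-1}\to s^D$. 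This asymptotic bookkeeping, forced by the combination of the Poulios relation with o-minimal definability in a polynomially bounded structure, is the technical core of the argument.
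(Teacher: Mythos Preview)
Your overall strategy is sound and the ``easy'' direction is essentially correct, but both directions diverge from the paper's route and carry gaps worth flagging.

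\medskip
\textbf{Easy direction.} Your computation $\varphi'\varphi^{-1}=\sigma'\sigma^{-1}+\sigma(b'b^{-1})\sigma^{-1}$ assumes $\sigma$ (and hence $b$) is definable, so that Miller's theorem applies to $\sigma-\sigma_\infty$. Definition~\ref{def:essentially diagonal} does not require this. The fix is short but not entirely trivial: from $\sigma_{ij}=\psi_{ij}/b_j$ with $\psi:=\varphi C^{-1}$ definable and $\sigma_{ij}\to(\sigma_\infty)_{ij}$, one extracts $b_j(t)\sim c_jt^{\mu_j}$ with $\mu_j=\max_i\deg\psi_{ij}$, and then replaces $(\sigma,b)$ by the definable pair $(\psi\,\tilde b^{-1},\tilde b)$ with $\tilde b=\diag(c_jt^{\mu_j})$. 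After this your differentiation argument goes through.

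\medskip
\textbf{Hard direction: comparison with the paper.} The paper does not try to read off the essentially diagonal factorization from the Poulios relation. Instead it proves a normal-form lemma (Lemma~\ref{lem:stable-unstable-constant (SUC) decomposition}): starting from the definable $KAN$ decomposition $\varphi=k\cdot p$ with $p$ upper triangular, one performs a finite sequence of constant column operations (from the right) and convergent row operations (from the left) so that the resulting definable $b$ is either diagonal, or its first nonzero off-diagonal entry $f_{i,j}$ satisfies $\deg f_{i,i}\neq\deg f_{i,j}$ and $\deg f_{i,j}>\deg f_{j,j}$. In the non-diagonal case a one-line computation shows the $(i,j)$ entry of $tb'(t)b(t)^{-1}$ has degree $\deg f_{i,j}-\deg f_{j,j}>0$, so $r\neq 1$; in the diagonal case $b(st)b(t)^{-1}$ converges to a diagonal matrix, so $r=1$. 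This avoids your matched-decay bookkeeping entirely.

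\medskip
\textbf{Your gap.} Your ``technical core'' is genuinely the crux, and as written it is a heuristic, not a proof. Concretely: you assert $\tilde b_i(t)\sim c_it^{\lambda_i}$ with $\lambda_i$ the eigenvalues of $D$ in the given order, but the $QR$ diagonal degrees $\mu_i=\deg R_{ii}$ are only a \emph{permutation} of the $\lambda_i$ (one shows $Q_\infty^{-1}s^DQ_\infty$ is symmetric upper-triangular hence diagonal, forcing $Q_\infty$ to act as a signed permutation on eigenspaces). More seriously, you must first prove $N$ is bounded before speaking of $N_\infty$; only then can you bootstrap from $\tilde b(t)N(st)N(t)^{-1}\tilde b(t)^{-1}\to I$ to the decay $(N-N_\infty)_{ij}=o(t^{\mu_j-\mu_i})$. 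For $n=2$ this reduces to $n_{12}(st)-n_{12}(t)=o(t^{\mu_2-\mu_1})$, which together with the power-law dichotomy does force $\deg n_{12}\leq 0$; but for larger $n$ the entries of $N(st)N(t)^{-1}$ involve products of several $N_{kl}$, and you would need an induction on $j-i$. This is doable, but it is real work of roughly the same order as the paper's row/column algorithm, and your proposal does not supply it.
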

We will prove the above Proposition \ref{prop:r is 1 iff essentially diagonal} in the subsection below. Before we proceed, we consider the following lemma which proves Proposition \ref{prop:invariance} by assuming Proposition \ref{prop:r is 1 iff essentially diagonal} in combination with Lemma \ref{lem:invariance for unipotent p.s.}.

\begin{lemma}\label{lem:p.s. group of non-contracting is unip}
   Let $\varphi:[0,\infty)\to \GL(n,\R)$ be a $\pbomin$-definable,  unbounded, continuous curve. Suppose that the determinant $\det(\varphi(t))$ is constant in $t$ and suppose that $\lim_{t\to\infty}\varphi(t)v\neq 0,$ for all  $v\in\R^n\setminus \{0\}$. Then, $\varphi$ is not essentially diagonal and in particular, the P.S. group of  $\varphi$  is unipotent.
\end{lemma}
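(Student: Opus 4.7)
The plan is to derive a contradiction from the assumption that $\varphi$ is essentially diagonal, and then invoke Proposition~\ref{prop:r is 1 iff essentially diagonal} to conclude. So suppose $\varphi(t)=\sigma(t)b(t)C$ in the sense of Definition~\ref{def:essentially diagonal}: $\sigma:[0,\infty)\to\SL(n,\R)$ is convergent to some $\sigma_\infty$, $C\in \GL(n,\R)$ is constant, and $b(t)=\diag(b_1(t),\ldots,b_n(t))$ for all $t$ large. Since $\det$ is continuous and $\det\sigma(t)\equiv 1$, the limit $\sigma_\infty$ lies in $\SL(n,\R)$, hence is invertible; in particular $\{\sigma(t)e_j\}_{t\ge 0}$ is bounded and eventually bounded away from $0$ for each $j$.

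Next I would show that some diagonal entry of $b(t)$ tends to $0$. Since $\det \varphi(t)$ is constant, call it $d\neq 0$, and $\det\sigma(t)\equiv 1$, we have $\det b(t)=d/\det C$, a nonzero constant, so $\prod_i b_i(t)$ is constant nonzero. On the other hand, $\varphi$ is unbounded while $\sigma$ and $C^{-1}$ are bounded, so $b(t)$ must be unbounded; hence some entry $b_{i_0}(t)$ is unbounded. By the Monotonicity Theorem each $b_i(t)$ converges in $[-\infty,\infty]$, so $\abs{b_{i_0}(t)}\to\infty$; as $\prod_i b_i(t)$ is a nonzero constant, there must exist $j$ with $b_j(t)\to 0$.

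Now let $v:=C^{-1}e_j\in \R^n\smallsetminus\{0\}$. Then
\begin{equation*}
\varphi(t)v=\sigma(t)b(t)Ce_j\cdot \tfrac{1}{\cdots}=\sigma(t)b(t)e_j=b_j(t)\,\sigma(t)e_j,
\end{equation*}
and since $\sigma(t)e_j$ is bounded and $b_j(t)\to 0$, we conclude $\varphi(t)v\to 0$. This contradicts the non-contraction hypothesis on $\varphi$, so $\varphi$ cannot be essentially diagonal. Applying Proposition~\ref{prop:r is 1 iff essentially diagonal} (whose hypothesis on the constancy of $\det\varphi$ we have) then yields that the P.S. order $r<1$, i.e., the P.S. group $P$ of $\varphi$ is unipotent.

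There is no serious obstacle here; the only point that requires a little care is the extraction of a vanishing diagonal entry, which uses both the unboundedness of $\varphi$ (to force some $b_i$ to blow up) and the constancy of $\det\varphi$ (to force a compensating $b_j\to 0$), together with the fact that definable functions in a polynomially bounded o-minimal structure are eventually monotonic and thus have limits in $[-\infty,\infty]$.
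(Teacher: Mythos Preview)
Your proof is correct and follows essentially the same approach as the paper: assume $\varphi$ is essentially diagonal, use unboundedness of $\varphi$ together with constancy of $\det\varphi$ to force some diagonal entry $b_j(t)\to 0$, and then exhibit $v=C^{-1}e_j$ as a contracted vector. The only cosmetic issue is the stray ``$\cdot\tfrac{1}{\cdots}$'' in your displayed computation, which should be deleted.
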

\begin{proof}
    Suppose for contradiction that the P.S. group is not unipotent. Then,  by Proposition \ref{prop:r is 1 iff essentially diagonal}, $\varphi$ is essentially diagonal. Namely $\varphi(t)=\sigma(t)b(t)C$, where $C$,  $\sigma(t)\in\SL(n,\R)$ are bounded and $b(t)$ is a diagonal matrix with $\det(b(t))=c_0$ for all $t$. Since we assume that $\varphi$ is unbounded, it follows that $b(t)$ is unbounded, which in turn implies that there is an index $1\leq i\leq n$ such that the corresponding entry function $b_{ii}(t)$ on the diagonal of $b(t)$  decays to zero. In particular, for  $v:=C^{-1}e_i$ it holds that $\lim_{t\to\infty}\varphi(t)v=0$, which is a contradiction. 
\end{proof}

\subsubsection{Proving Proposition \ref{prop:r is 1 iff essentially diagonal}} 
We will call the $(i,j)$ index of  an upper-triangular matrix $b\in \SL(n,\R)$ of the form\begin{equation}
    b=\begin{bsmallmatrix}
		f_{1,1} &0 & \cdots & 0 & 0 & \cdots & 0 & 0 & \cdots  & 0 \\
		0 &  \ddots & \ddots & \vdots &\vdots & & \vdots & \vdots & & \vdots \\
  \vdots &  \ddots & \ddots & 0 & 0 & \cdots & 0 & 0 & \cdots & 0\\
		0 & \cdots & 0 &f_{i,i} &0 &\cdots & 0 & \pmb{f_{i,j}}& \cdots & f_{i,n}\\
		\vdots &  & \vdots & \ddots &\ddots & & &\vdots & & \vdots \\
		\vdots &  & \vdots &   & \ddots & \ddots & & \vdots & & \vdots \\
  	\vdots &  & \vdots &   &  & \ddots & \ddots & \vdots & & \vdots \\
    	\vdots &  & \vdots &   &  & & 0 & f_{j,j} & \cdots & f_{j,n}\\
  	\vdots &  & \vdots & & &  & & \ddots & \ddots  & \vdots \\
		0 & \cdots & 0 & \cdots & \cdots & \cdots & \cdots & \cdots & 0 & f_{n,n} 
	\end{bsmallmatrix}\label{eq:first non zero off diag},
\end{equation}
where $f_{i,j}\neq0$,  the \emph{first nonzero off-diagonal entry}. More precisely, $(i,j)$ is the first nonzero entry among the off-diagonal entries according to the following lexicographic order on $\N^2$:
\begin{align}\label{eq:lexicographic order}
    (i,j)\prec (k,l) \iff i<k \text{ or }
     (i=k \text{ and } j<l).
\end{align}Let  $B\leq \SL(n,\R)$ be the subgroup of upper-triangular matrices. Let $b:[0,\infty)\to B$ be a definable curve. Recall that each definable function $f(t)$ is either zero for all large enough $t$ or $|f(t)|>0$  for all large $t$. Thus, for all large enough $t$, there exists a unique first nonzero off-diagonal entry in $b(t)$, or $b(t)$ is diagonal for all large $t$. We will refer to this entry as \textit{the first nonzero off-diagonal entry} of the curve $b(t)$.
\begin{lemma}\label{lem:stable-unstable-constant (SUC) decomposition}
	% Let $\varphi:[0,\infty)\to\SL(n,\R)$ be a curve definable in a polynomially bounded o-minimal structure. Then there is a definable curve $b:[0,\infty)\to B$, such that $b(t)$ is either diagonal for all large $t$, or the  first nonzero off-diagonal entry $f_{i,j}$ satisfies
	% \begin{enumerate}
	% 	\item $\deg f_{i,i} \ne \deg f_{i,j}$, and
	% 	\item $\deg f_{i,j}>\deg f_{j,j}$,
	% \end{enumerate}and importantly, 
	% \begin{equation}
	% 	\varphi(t)=\sigma(t)b(t)C,
	% \end{equation}
	% where $C\in \SL(n,\R)$ is constant, and $\sigma(t)\in\SL(n,\R)$ is convergent as $t\to\infty$. 

 %    ******

Let $\varphi:[0,\infty)\to \SL(n,\R)$ be a $\pbomin$-definable curve. Then there exists a definable curve $b:[0,\infty)\to B$ and a constant matrix $C \in \SL(n,\R)$ such that
\[
\varphi(t) = \sigma(t) b(t) C,
\]
where $\sigma(t) \in \SL(n,\R)$ converges as $t \to \infty$. Furthermore, for sufficiently large $t$, the curve $b(t)$ satisfies one of the following:
\begin{itemize}
    \item $b(t)$ is diagonal, or
    \item the first nonzero off-diagonal entry $f_{i,j}$ of $b(t)$ satisfies:
    \begin{enumerate}
        \item $\deg f_{i,i} \ne \deg f_{i,j}$, and
        \item $\deg f_{i,j} > \deg f_{j,j}$.
    \end{enumerate}
\end{itemize}
\end{lemma}

\begin{proof}
	Using the KAN decomposition, we first write $\varphi(t)\in \SL(n,\R)$, as  $\varphi(t)=k(t)p(t)$, where $k(t)\in \SO(n,\R)$ and $p(t)\in B$ an upper-triangular matrix. In view of the Gram-Schimdt process,  $k(t)$ and $p(t)$ are definable. Since $k(t)$ is bounded and definable, $\lim_{t\to\infty}k(t)$ exists. For all $t$ large,  $p(t)$ is either diagonal or $p(t)$ takes the form of \eqref{eq:first non zero off diag}.
 
\noindent We employ the following algorithm to achieve the outcome described in the lemma. At each step, we perform either a column or a row operation on $p(t)$:
\begin{enumerate}
\item  All off-diagonal entries are eventually $0$. Then we are done. 
\item \label{enu:initialize} We pick the first index $(i,j)$ (with $i<j$) such that the $(i,j)$-th entry is not eventually $0$, and proceed to the next step.
    \item\label{enu:comapring horiz. in the alg} Suppose that $\deg(f_{i,j})=\deg(f_{i,i})$. Then there is an $c\in\R$ such that $\deg(f_{i,j}-cf_{i,i})<\deg(f_{i,i})$. For this step, subtract from the $j$-th
column the  $i$-th column multiplied by $c$.  This amounts to multiplying $p(t)$ by a constant unipotent matrix from the right. The obtained matrix is the same besides the $i,j$-th entry which is replaced with $f_{i,j}-cf_{i,i}$.  There are two possibilities now:
    \begin{enumerate}
        \item\label{enu:substracting horiz is zero} $f_{i,i}-cf_{i,j}$ is eventually zero. If all off-diagonal entries are eventually $0$, then we are done. Otherwise, the first eventually nonzero off-diagonal entry in the resulting matrix has a strictly larger index (according to the lexicographic order), and we go back to step \ref{enu:initialize}.
        \item $f_{i,i}-cf_{i,j}$ is not eventually zero: Then the first requirement of Lemma~\ref{lem:stable-unstable-constant (SUC) decomposition} is satisfied. We continue then with the following step.
    \end{enumerate}

    \item If $\deg(f_{i,j})>\deg(f_{j,j})$, then the second requirement of Lemma~\ref{lem:stable-unstable-constant (SUC) decomposition} is satisfied, and we are done, otherwise we proceed to the next step.
    
    \item Now suppose $\deg(f_{i,j})\leq \deg(f_{j,j})$. Then subtract from the $i$-th row  the $j$-th row multiplied by $\frac{f_{i,j}}{f_{j,j}}$. This amounts to multiplying from the left by a unipotent matrix, which converges as $t\to\infty$. This is so because
    $$\deg\left(\frac{f_{i,j}}{f_{j,j}}\right)=\deg(f_{i,j})-\deg(f_{j,j})\leq0.$$
    Then, either all the off-diagonal entries in the resulting matrix are eventually $0$, and we are done; or the first eventually nonzero off-diagonal entry in the resulting matrix has a strictly larger index (according to the lexicographic order), and we go back to step \ref{enu:initialize}. 
\end{enumerate}
The algorithm ends with finitely many steps with either a diagonal matrix or a definable curve $b(t)$ satisfying the requirements of the lemma.
\end{proof}

\begin{proof}[Proof for Proposition \ref{prop:r is 1 iff essentially diagonal}]
	First, note that it is enough to prove the statement for $\varphi\subseteq \SL(n,\R)$ since the P.S. group is unchanged if we multiply the curve by a constant invertible matrix from the right. In the notations of Lemma \ref{lem:stable-unstable-constant (SUC) decomposition}:
 $$\varphi(t)=\sigma(t)b(t)C,$$ where $\lim_{t\to\infty}\sigma(t)=g\in\SL(n,\R).$ Let $r\leq 1$ be such that  \eqref{eq:limit in Poulious condition} holds for $\{b(t)\}$. Let $h_{s,r}(t)$ be is as in  \eqref{eq:change of speed r<1}--\eqref{eq:change of speed r=1}. We note that
 \begin{equation} \label{eq:phi-b}
 \lim_{t\to\infty}\varphi(h_{s,r}(t))\varphi(t)^{-1}
 =g\left[\lim_{t\to\infty}b(h_{s,r}(t))b(t)^{-1}\right]g^{-1}.
 \end{equation}
 Therefore, by Proposition \ref{prop:poulios summary}, the $r$ corresponding to \eqref{eq:limit in Poulious condition} for $\varphi(t)$ and $b(t)$ are the same.
Now, $b(t)$ is either eventually diagonal or upper-triangular and satisfies the conditions of the Lemma \ref{lem:stable-unstable-constant (SUC) decomposition} for the first (eventually) nonzero off-diagonal entry. If $b(t)$ is eventually diagonal, then  for all $s>0$, $\lim_{t\to\infty}b(h_{1,s}(t))b^{-1}(t)$ converges to a diagonal matrix. Namely,  the P.S. group is diagonal in this case.
 
\noindent Suppose that $b(t)$ is not eventually diagonal. Let $(i,j)$ be the first  nonzero off-diagonal entry in $b(t)$ (for all large enough $t$). We observe that $(i,j)$-th entry in the matrix $b'(t) b(t)^{-1}$ is
	\begin{align}
		&\frac{-f_{i,i}'f_{i,j}+f_{i,j}'f_{i,i}}{f_{i,i}f_{j,j}}=\left(\frac{f_{i,j}}{f_{i,i}}\right)'\cdot \frac{f_{i,i}}{f_{j,j}}.
	\end{align}

Since $r_{i,i}:=\deg f_{i,i}\ne r_{i,j}:=\deg f_{i,j}$, we have that $$\deg\left(\frac{f_{i,j}}{f_{i,i}}\right)'={r_{i,j}-r_{i,i}-1}.$$ Thus, $$\deg\left(\left(\frac{f_{i,j}}{f_{i,i}}\right)'\cdot \frac{f_{i,i}}{f_{j,j}}\right)=r_{i,j}-r_{j,j}-1.$$ Recall by Lemma \ref{lem:stable-unstable-constant (SUC) decomposition} that $r_{i,j}-r_{j,j}>0$. Thus the $(i,j)$-th entry in  $tb'(t) b(t)^{-1}$ is unbounded.  Hence $r\neq 1$.  
\end{proof}

\subsection{Unipotent invariance in quotients}
For our purposes, it will not suffice only to establish unipotent invariance for our limiting measures, but we will also require unipotent invariance in certain quotients. We will now describe more precisely our motivation for the results in this section and how they play a role in the proof of Theorem \ref{thm: main equidistribution theorem} in Section \ref{sec:linearization}.

Consider a non-contracting curve $\varphi:[0,\infty)\to G:=\mathbf G(\R)$ definable in $\pbomin$, and consider the measures $\mu_{T,\varphi,x}$; see \eqref{eq:main definition of measures averaging on curve}. By Theorem \ref{thm:non-escape of mass}, there exists a subsequence $\mu_{T_i,\varphi,x}$ which converges to a probability measure $\mu$,  as $i\to \infty$. By Proposition \ref{prop:invariance},  $\mu$ is invariant under a nontrivial unipotent one-parameter subgroup. Let $W$ be the subgroup generated by all unipotent one-parameter subgroups that preserve $\mu$. We employ Ratner's measure classification theorem for the $W$-invariant ergodic measures and the linearization technique 
% (which is applicable due to the $(C,\alpha)$-good property for the norm of trajectories of $\varphi(t)$ on representations of $\SL(n,\R)$), 
to prove that $\varphi(t)$ is contained in a certain algebraic subgroup $N$, such that $W\trianglelefteq N$ and such that $H_\varphi \subseteq N$, where $H_\varphi$ is the hull of $\varphi$. Our goal will be to prove that $H_\varphi\subseteq W$. To achieve this goal, we would like to show that if $\varphi(t)$ is not bounded modulo $W$, then the image of $\varphi$ in $N/W$, which is definable, is, in fact, non-contracting, and hence its corresponding P.S. group in $N/W$ is unipotent. The latter outcome implies  that $\mu$ is invariant under a unipotent one-parameter subgroup of $N$, which is not contained in $W$. This  contradicts our choice of $W$ as the group generated by \textit{all} unipotents preserving $\mu$.
Before proceeding with the relevant result, we have the following remark which emphasizes that the requirement that $H_\varphi\subseteq N$ is important.

\begin{remark}\label{rem:conter example}
    Consider \(\varphi(t):=\begin{bsmallmatrix}
        t&t^2\\
        0&1/t
    \end{bsmallmatrix}\) for $t\geq 1$. Then $\varphi$ is non-contracting in $\SL(2,\R)$, and its P.S. group is \(U:=\{ u(s):=\begin{bsmallmatrix}
        1&s\\
        0&1
    \end{bsmallmatrix}:s\in \R\}\). Now consider the group $B\leq \SL(2,\R)$ of upper triangular matrices. Then, $B/U$ is naturally identified with the group $A\leq\SL(2,\R)$ of diagonal matrices. In particular, $\varphi(t)$ modulo $U$ is unbounded, but it is a diagonal curve. Due to the next result, this is explained by the fact that $H_\varphi$ is not contained in $B$. 
\end{remark} 

\begin{proposition}\label{prop:Curve modulo H}
    Let $\psi:[0,\infty)\to \SL(n,\R)$ be an unbounded, non-contracting curve definable in $\pbomin$. Suppose that  $\psi([0,\infty))\subseteq H_\psi$, where $H_\psi$ is the hull of $\psi$. Let $N\subseteq \SL(n,\R)$ be a Zariski closed subgroup such that $H_\psi \subseteq N$, and let $H\trianglelefteq N$ be a closed normal subgroup.  
    
    Assume  that the image of $\psi$ in $N/H_u$ is unbounded. Let $q:N\to N/H$ be the natural quotient map. Then, there exists $r<1$ such that
    \begin{equation}\label{eq:convergence with r<1 in quotient}
    \lim_{t\to\infty}q(\psi(h_{s,r}(t)))q(\psi(t))^{-1}=p(s),\,\forall s\in \R,
    \end{equation}
    where $\{p(s):s\in\R\}$ is a nontrivial one-parameter subgroup of $N/H$. Here, $h_{s,r}(t)$ is given by
    \eqref{eq:change of speed r<1}. Importantly, there exists a one-parameter unipotent subgroup $\{u(s):s\in\R\}\subseteq N$ such that $p(s)=q(u(s))$ for all $s\in\R$.
\end{proposition}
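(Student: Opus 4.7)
The plan is to transport the machinery of Section~\ref{sec:unip invariance} to the quotient curve $\bar\psi:=q\circ\psi$ in the algebraic group $\bar N:=N/H$. The key preliminary step is to upgrade the hypothesized unboundedness modulo $H_u$ to unboundedness of $\bar\psi$ in $\bar N$; after that, Lemma~\ref{lem:p.s. group of non-contracting is unip} produces the required unipotent one-parameter limit in $\bar N$, and a Jordan-decomposition argument lifts it to $N$.

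First I would show $\bar\psi$ is unbounded in $\bar N$. Since $N$ is Zariski closed in $\SL(n,\R)$ and $H\trianglelefteq N$ is closed, $\bar N$ is an affine real algebraic group, $q$ is a rational surjective homomorphism, and $H$ is a normal closed subgroup of an affine algebraic group; thus $N/H$ is affine and $H$ is observable in $N$. If $\bar\psi$ were bounded in $\bar N$, the minimality of the hull (Theorem~\ref{thm:main result on the hull and correcting curve}(\ref{enu:hull is minimal-Lie})) would give $H_\psi\leq H$. Since $H_\psi$ is generated by unipotent one-parameter subgroups, each of these lies in $H$ and therefore in $H_u$, so $H_\psi\leq H_u$. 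But $\psi$ is bounded modulo $H_\psi$ (Theorem~\ref{thm:main result on the hull and correcting curve}(\ref{enu:curve contained in hull-Lie})), hence modulo $H_u$, contradicting the hypothesis.

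Next, I embed $\bar\psi$ into a special linear group. By Lemma~\ref{lem:image of non contracting is non contracting}, $\bar\psi$ is non-contracting in $\bar N$, and by construction it is continuous and definable in a polynomially bounded o-minimal structure. Pick a faithful rational representation $\rho_0:\bar N\to \GL(m,\R)$ and set $\rho:=\rho_0\oplus(\det\rho_0)^{-1}:\bar N\to \SL(m+1,\R)$; this is a faithful rational homomorphism with closed image, hence an isomorphism of real algebraic groups onto $\rho(\bar N)$. Then $\rho\circ\bar\psi$ is an unbounded continuous definable curve in $\SL(m+1,\R)$ with $\det(\rho\circ\bar\psi(t))\equiv 1$, and non-contracting on the standard representation $\R^{m+1}$. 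Lemma~\ref{lem:p.s. group of non-contracting is unip} then applies: the P.S.\ group of $\rho\circ\bar\psi$ is a nontrivial one-parameter unipotent subgroup of $\SL(m+1,\R)$, and by Proposition~\ref{prop:poulios summary} there exists $r<1$ such that for every $s\in\R$
\[
\lim_{t\to\infty}\rho(\bar\psi(h_{s,r}(t)))\rho(\bar\psi(t))^{-1}=\rho(p(s))
\]
for a nontrivial unipotent one-parameter subgroup $\{\rho(p(s))\}\subset \rho(\bar N)$. Since $\rho$ is a homeomorphism onto its image, $p(s):=\lim_{t\to\infty}\bar\psi(h_{s,r}(t))\bar\psi(t)^{-1}$ exists in $\bar N$ and is a nontrivial unipotent one-parameter subgroup.

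Finally, to produce $U\leq N$ unipotent with $q(U)=\{p(s):s\in\R\}$, I work at the Lie algebra level. Let $\bar X:=\dot p(0)\in\lie(\bar N)$; this is nilpotent and nonzero. Choose any preimage $X\in\lie(N)$ with $dq(X)=\bar X$, and take the additive Jordan decomposition $X=X_s+X_n$ in the algebraic Lie algebra $\lie(N)$. Since $dq$ is an algebraic Lie algebra homomorphism, it preserves Jordan decompositions, so $dq(X_s)$ is semisimple and $dq(X_n)$ is nilpotent, with $dq(X_s)+dq(X_n)=\bar X$ nilpotent. Uniqueness of Jordan decomposition forces $dq(X_s)=0$ and $dq(X_n)=\bar X\neq 0$. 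Setting $U:=\exp_N(\R X_n)\leq N$ yields a one-parameter unipotent subgroup of $N$ with $q(\exp_N(sX_n))=\exp_{\bar N}(s\bar X)=p(s)$, so $q(U)=\{p(s):s\in\R\}$, as required.

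The main obstacle is exactly the first step: unboundedness modulo $H_u$ is a priori weaker than unboundedness modulo $H$, and the bridge requires combining the hull's minimality with the fact that $H_\psi$, being generated by unipotents, is automatically contained in $H_u$ whenever it lies in $H$ --- which in turn rests on the observability of $H$ as a normal closed subgroup of the affine algebraic group $N$. Once unboundedness of $\bar\psi$ is available, the remainder is a standard transport of the essentially-diagonal dichotomy through a faithful rational embedding followed by a Jordan-decomposition lift.
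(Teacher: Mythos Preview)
The gap is that the proposition does not assume $H$ is Zariski closed --- it is only a closed normal Lie subgroup of $N$ (note the explicit ``Zariski closed'' for $N$, and that in the application in Section~\ref{sec:linearization} one takes $H\in\mathcal H$, which are merely real-closed connected subgroups). Consequently $\bar N=N/H$ need not be an affine algebraic group, $q$ need not be a rational homomorphism, Lemma~\ref{lem:image of non contracting is non contracting} does not apply to $q$, and there is no reason for a faithful rational representation $\rho_0:\bar N\to\GL(m,\R)$ to exist. Your step~1 also breaks: observable subgroups are by definition algebraic, and in any case Theorem~\ref{thm:main result on the hull and correcting curve}\eqref{enu:hull is minimal-Lie} requires observability in the ambient group $\SL(n,\R)$ in which the hull is taken, not merely in $N$.

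The paper circumvents this by never quotienting by $H$ until the very last step. Instead it works with $\tilde H_u:=\zcl(H_u)$, which is algebraic, normal in $N$, and has unipotent radical, hence is observable in $\SL(n,\R)$. One realizes $\tilde H_u$ as the stabilizer of a vector $v$ in a rational representation $V$ of $\SL(n,\R)$, and applies Lemma~\ref{lem:p.s. group of non-contracting is unip} to the curve $t\mapsto\rho(\psi(t))|_W$ in $\GL(W)$, where $W\subset V$ is the $\tilde H_u$-fixed subspace (which is $N$-invariant by normality). Non-contraction on $W$ is immediate from non-contraction of $\psi$ in $\SL(n,\R)$, since $W$ sits inside an $\SL(n,\R)$-representation; unboundedness follows from the hypothesis because $H_u$ has finite index in $\tilde H_u$ and $N/\tilde H_u$ embeds properly in $\GL(W)$. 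The resulting unipotent P.S.\ group is then lifted to $U\leq N$ via Jordan decomposition (as you do), convergence is pulled back to $N/H_u$ via the proper finite-fiber map $N/H_u\to N/\tilde H_u\hookrightarrow\GL(W)$, and only then does one project $N/H_u\to N/H$. Nontriviality of $q(U)$ in $N/H$ follows because a unipotent one-parameter $U\not\subset H_u$ cannot lie in $H$.
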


\begin{proof}[Proof of Proposition \ref{prop:Curve modulo H}]
Let $\tilde H_u:=\zcl (H_u)$, where $H_u$ is the subgroup generated by all one-parameter unipotent subgroups contained in $H$. Since $H\trianglelefteq N$, it follows that $H_u\trianglelefteq N$, and by Zariski density $\tilde H_u \trianglelefteq N$. Now, $H_u$ is of finite index in $\tilde H_u$ (see \Cref{rem:hull is identity compo of zcl}), therefore, since the image of $\psi$ is unbounded in $N/H_u$ it follows that the image of $\psi$ is unbounded in $N/\tilde H_u$. 

We realize $N/\tilde H_u$ as a real algebraic subgroup of $\SL(m,\R)$ for some $m$. For each $t\in [0,\infty)$, let $\bar\psi(t)$ denote the image of $\psi(t)$ in $\SL(m,\R)$. We claim that $\bar\psi$ is non-contracting for $\SL(m,\R)$.  Indeed, because any finite-dimensional linear representation of $\SL(m,\R)$ is an algebraic representation of $\tilde H_\psi\subseteq N$, and $\psi$ is non-contracting for $\tilde H_\psi$, where $\tilde H_\psi$ is the Zariski closure of the hull of $\psi$, see \Cref{thm:main result on the hull and correcting curve}, (\ref{enu:fixing property after correction-Lie}).

Therefore by Lemma \ref{lem:p.s. group of non-contracting is unip}, there exists $r<1$ such that 
\begin{equation} \label{eq:pbar}
    \lim_{t\to\infty}\bar{\psi}(h_{s,r}(t))\bar{\psi}(t)^{-1}=:\bar p(s),\, \forall s\in \R,
\end{equation}
where $s\mapsto\overline p(s)$ is a nontrivial one-parameter unipotent subgroup of $\SL(m,\R)$ contained in $N/\tilde H_u$. Let $\vartheta:N\to N/H_u$ denote the quotient homomorphism. Since the quotient map $N/H_u\to N/\tilde H_u$ is a homomorphism with a finite kernel, 
\begin{comment}
Since images of algebraic groups under algebraic homomorhpisms are closed (see e.g.\ \cite[Proposition 2.2.5]{Springer_book}), we conclude that $\tilde \tau$ is an injective proper map. Since $H_u$ is of finite index in $\tilde H_u$, the map $\tau$ is a covering map with finite fibers, and it follows that $\tilde\tau\circ \tau$ is a proper map with finite fibers. Therefore by \eqref{eq:pbar} and  the commutative diagram \eqref{eq:CommDiag}, 
\end{comment}
for each $s\in \R$, the following limit exists:
\begin{equation}\label{eq:convergence of differences in N modulu Hu}
    \tilde p(s):=\lim_{t\to\infty}\vartheta(\psi(h_{s,r}(t)))\vartheta(\psi(t))^{-1}. 
\end{equation}

In view of Jordan decomposition, there exists a unipotent one-parameter subgroup $\{u(s):s\in\R\}\subseteq N$ such that for all $s\in\R$, the image of $u(s)$ in $N/\tilde H_u$ is $\bar p(s)$, and hence $\tilde p(s)=\vartheta(u(s))$. Since $\{\bar p(s):s\in\R\}$ is a unipotent group, we obtain that $\{u(s):s\in\R\}$ is a nontrivial unipotent one-parameter subgroup of $N$ not contained in  $H_u$, and hence it is not contained in $H$. Thus, $\{q(u(s)):s\in\R\}$ is a nontrivial subgroup of $N/H$. 

\begin{comment}
Now we show that $\{\tilde p(s):s\in \R\}\subseteq N/H_u$ is a one-parameter group. This argument appears in \cite{Poulios_thes} and is given here for completeness. A direct computation shows that for $r<1$ and for all $s,l\in \R$: $$h_{s,r}\circ h_{l,r}=h_{s+l,r},$$
see \eqref{eq:change of speed r<1} for the definition of $h_{s,r}$. Denote $\xi(t):=\vartheta(\psi(t))$. Since \eqref{eq:convergence of differences in N modulu Hu} holds for $r<1$, then for $s,l\in \R$ we have \begin{align*}
\tilde p(s)\tilde p(l)&=(\lim_{t\to\infty}\xi(h_{s,r}(t))\xi(t)^{-1})(\lim_{t\to\infty}\xi(h_{l,r}(t))\xi(t)^{-1})\\
&=(\lim_{t\to\infty}\xi(h_{s,r}(h_{l,r}(t)))\xi(h_{l,r}(t))^{-1})(\lim_{t\to\infty}\xi(h_{l,r}(t))\xi(t)^{-1})\\
&=\lim_{t\to\infty}\xi(h_{s,r}(h_{l,r}(t)))\xi(h_{l,r}(t))^{-1}\xi(h_{l,r}(t))\xi(t)^{-1}\\
&=\lim_{t\to\infty}\xi(h_{s+l,r}(t))\xi(t)^{-1}=\tilde p(s+l).
\end{align*}

Since $\tilde\tau\circ\tau(\tilde p(s))=\bar p(s)$,  we get that  $\vartheta(U)=\{\tilde p(s):s\in\R\}$ is a nontrivial subgroup of $N/ H_u$. 
\end{comment}

%Consider the quotient homomorphism $\eta:N/H_u\to N/H$.
Denote $\eta:N/H_u\to N/H$ the quotient homomorphism, and note that  $\eta\circ\vartheta=q$. Then, for all $s\in \R$, $p(s):=\eta(\tilde p(s))=q(u(s))$, and by \eqref{eq:convergence of differences in N modulu Hu},  $$p(s)=\lim_{t\to\infty}\eta\circ\vartheta(\psi(h_{s,r}(t))\psi(t)^{-1})=\lim_{t\to\infty}q(\psi(h_{s,r}(t))) q(\psi(t))^{-1}.$$ 
\end{proof}

\subsection{Inclusion of the hull in a Lie subgroup}\label{sec:inclusion of hull in lie}   
% The following result states that if our non-contracting curve in $\SL(n,\R)$ takes values in a unimodular Lie subgroup $G$, then its hull is also contained in $G$. 

\begin{proposition}\label{prop:hull in lie group}
Let $G\subseteq \SL(n,\R)$ be a connected unimodular closed subgroup, and let $\varphi:[0,\infty)\to \SL(n,\R)$ be a $\pbomin$-definable, non-contracting, continuous curve  such that $\varphi([0,\infty))\subseteq G$. Then, $H_\varphi\subseteq G_u$, and in particular, any correcting curve of $\varphi$ takes values in $G_u$.

% where $H_\varphi$ is the hull of $\varphi$ and $G_u\leq G$ is the closed subgroup generated by the one-parameter unipotent subgroups contained in $G$. 
\end{proposition}

The result does not follow from \Cref{thm:main result on the hull and correcting curve}, because we do not assume that $G$ is an observable subgroup of $\SL(n,\R)$. 

If $G$ is not algebraic, the hull $H_\varphi$ may be contained in the Zariski closure of $G$, but not in $G$. The unimodularity of $G$ ensures that $H_\varphi\subseteq G$. 

The unimodularity assumption is essential even in the algebraic subgroup case.
%consider the Adjoint representation. 
For example, in \Cref{rem:conter example}, the curve is non-contracting in $\SL(2,\R)$ and its image is contained in the non-unimodular Borel subgroup $B$ of $\SL(2,\R)$. But its hull is not contained in $B$.

The following is an immediate corollary of \Cref{prop:hull in lie group}.

\begin{corollary}
    Let $G_1\subseteq G_2$ be real points of real algebraic groups. Let $\varphi:[0,\infty)\to G_2$ be a $\pbomin$-definable, non-contracting, continuous curve. Suppose that $\varphi([0,\infty))\subseteq G_1$ and $G_1$ is unimodular. Then $\varphi:[0,\infty)\to G_1$ is non-contracting.
\end{corollary}

The proof will be based on Proposition \ref{prop:Curve modulo H} and the following key lemma.
\begin{lemma}\label{lem:trace of ad restricted to algebra gen. by nilpotents}
    Let $\mathfrak{g}$ be a Lie subalgebra of $\mathfrak{gl}(n,R)$. Let $\mathfrak{g}_u$ be the subalgebra generated by nilpotent matrices in $\mathfrak{g}$. Then for any $X\in\mathfrak{g}$, 
    \[
    \tr(\ad X)=\tr(\ad X|_{\mathfrak{g}_u}),
    \]
    where $\tr(A)$ denotes the trace of $A$.

    So, for the connected Lie subgroup $G$ corresponding to $\mathfrak{g}$, we have 
 \begin{equation} \label{eq:detAd}
 \det(\Ad (g)|_{\mathfrak{g}_u})=\det(\Ad(g)),\ \forall g\in G. 
 \end{equation}
\end{lemma}

\begin{proof}
    We will be using repeatedly the following fact --  If $T$ is a linear map on a vector space $V$ preserving a subspace $W$, and  $T_{V/W}$ is the factored map on $V/W$, then 
    \begin{equation} \label{eq:tr1}
    \tr(T|_V)=\tr(T|_W)+\tr(T_{V/W}),
    \end{equation}
    where $T|_W$ is the restriction of $T$ to $W$.  
    
    Let $X\in \mathfrak{g}$. Then $\ad X$ acts trivially on $\mathfrak{g}/[\mathfrak{g},\mathfrak{g}]$. Hence,
    \begin{equation} \label{eq:tr2}
    \tr(\ad X|_\mathfrak{g})=\tr(\ad X|_{[\mathfrak{g},\mathfrak{g}]}).
    \end{equation}
    Let $\tilde{\mathfrak{g}}$ denote the Lie algebra of the Zariski closure of the Lie group associated with $\mathfrak{g}$ in $\GL(n,\R)$. By \cite[Chapter II, Theorem 13]{Chevalley-LieII}, 
    \[
    [\tilde{\mathfrak{g}},\tilde{\mathfrak{g}}]=[\mathfrak{g},\mathfrak{g}]\subseteq \mathfrak{g}.
    \]
    % Thus,  
    % \begin{equation} \label{eq:tr3}
    % \tr(\ad X|_\mathfrak{g})=\tr(\ad X|_{[\tilde{\mathfrak{g}},\tilde{\mathfrak{g}}]}).
    % \end{equation}
    
    Let $\mathfrak{u}$ denote the radical of the $[\tilde{\mathfrak{g}},\tilde{\mathfrak{g}}]$. Then $\mathfrak{u}$ consists of nilpotent matrices; we can derive this from~\cite[(2.5) A structure theorem]{RA72}. So, $\mathfrak{u}\subseteq \mathfrak{g}_u$.
    
    Since $[\tilde{\mathfrak{g}},\tilde{\mathfrak{g}}]$ and $\mathfrak{u}$ are ideals in $\tilde{\mathfrak{g}}$ and $[\tilde{\mathfrak{g}},\tilde{\mathfrak{g}}]/\mathfrak{u}$ is semisimple, the trace of the derivation on it corresponding to $\ad X$ is $0$. Therefore,
    \begin{equation} \label{eq:tr4}
    \tr(\ad X|_{[\tilde{\mathfrak{g}},\tilde{\mathfrak{g}}]})=\tr(\ad X|_\mathfrak{u}).
    \end{equation}
    
    Let $\mathfrak{v}$ denote the radical of $\mathfrak{g}_u$. 
    Then, $\mathfrak{u}\subseteq \mathfrak{v}$. 
    Since $\mathfrak{g}_u$ is a Lie subalgebra generated by nilpotent matrices, $\mathfrak{v}$ consists of nilpotent matrices. 
    Also, $\mathfrak{g}_u$, and hence $\mathfrak{v}$ are ideals of $\mathfrak{g}$. So, by definition of $\tilde {\mathfrak{g}}$, $\mathfrak{v}$ is an ideal of $\tilde{\mathfrak{g}}$. So, $\mathfrak{v}\subseteq \mathfrak{u}$. Hence $\mathfrak{v}=\mathfrak{u}$. 
    
    As above, since $\mathfrak{g}_u/\mathfrak{v}$ is semisimple, we conclude that
    \begin{equation} \label{eq:tr5}
    \tr(\ad X|_{\mathfrak{g}_u})=\tr(\ad X|_{\mathfrak{v}}).
    \end{equation}
   Now, \eqref{eq:tr1} follows from \eqref{eq:tr2}-\eqref{eq:tr5}.
\end{proof}

\begin{comment}
For the corresponding Lie groups, we have the following: 

\begin{corollary}  \label{cor:det1}
 Let $G$ be a closed connected Lie subgroup of $\GL(n,\R)$. 
 %Suppose that $G$ is unimodular. 
 % Let $G_u$ be the closed subgroup generated by unipotent one-parameter subgroups contained in $G$ and let $\mathfrak{g}_u$ denote the Lie algebra of $G_u$. 
 Then for any $g\in G$,
 \[
 \det(\Ad (g)|_{\mathfrak{g}_u})=\det(\Ad(g)).
 \]
\end{corollary}

\begin{proof}  
For any $X\in\mathfrak{g}$, by Lemma~\ref{lem:trace of ad restricted to algebra gen. by nilpotents},
\[
\det(\exp(\ad X)|_{\mathfrak{g}_u})=\exp(\tr(\ad X)|_{\mathfrak{g}_u})=\exp(\tr(\ad X))=\det(\exp(\ad X)).
\]
\end{proof}  
\end{comment}

\begin{proof}[Proof of Proposition \ref{prop:hull in lie group}]
Consider the exterior product of the Adjoint representation of $\SL(n,\R)$ on $\wedge^{\dim\mathfrak{g}_u} \mathfrak{sl}(n,\R)$, and fix $p\in \wedge^{\dim\mathfrak{g}_u} \mathfrak{g}_u\setminus\{0\}$. Let
\begin{align*}
N&:=\{g\in \SL(n,\R): g\cdot p=p\}\\
&=\{g\in \SL(n,\R):\Ad(g)\mathfrak{g}_u=\mathfrak{g}_u,\, \det(\Ad(g)|_\mathfrak{g_u})=1\}
\end{align*}
So, $N$ normalizes $G_u$. Since $G$ is unimodular, by \eqref{eq:detAd}, $G\subseteq N$. 

We have $\varphi:[0,\infty)\to G$. Let $\be:[0,\infty)\to\SL(n,\R)$ be a correcting curve for $\varphi$, and denote $\psi(t):=\be(t)\varphi(t)$. 
Now $N$ is observable in $\SL(n,\R)$ (\Cref{def:observable gps}), and   $\varphi([0,\infty))\subseteq G\subseteq N$. So $H_\varphi\subseteq N$ by the minimality of the hull, Theorem \ref{thm:main result on the hull and correcting curve},\eqref{enu:hull is minimal-Lie}. Now, by Remark \ref{rem:hull of corrected curve is hull}, we have that $\psi([0,\infty))\subseteq H_\psi=H_\varphi$. 

Assume for contradiction that $\psi$ is not bounded modulo $G_u$. Let $q:N\to N/G_u$ be the quotient map. Then, by Proposition \ref{prop:Curve modulo H}, there exists a one-parameter unipotent subgroup of $\{u(s)\}_{s\in\R}\subseteq N$ such that 
\[
q(u(s))=\lim_{t\to\infty}q(\psi(h_{s,r}(t)))q(\psi(t))^{-1},\,
\forall s\in \R,
\]
and $q\circ u$ is a nontrivial unipotent one-parameter subgroup of $N/G_u$. Let $\be_\infty=\lim_{t\to\infty}\be(t)$. Then, for all $s\in\R$, $$q(\be_\infty^{-1}u(s)\be_\infty)=\lim_{t\to\infty}q(\varphi(h_{s,r}(t)))q(\varphi(t))^{-1},$$and since $\varphi([0,\infty))\subseteq G\subseteq N$, we get that $\{q(\be_\infty^{-1}u(s)\be_\infty):s\in\R\}$ is a nontrivial one-parameter subgroup of $G/G_u$. It then follows that $\{\be_\infty^{-1}u(s)\be_\infty:s\in\R\}$  is a nontrivial one-parameter unipotent subgroup contained in $G$ not contained in $G_u$, a contradiction. 
Finally, since $\varphi$ is bounded modulo $G_u$, and since $\zcl(G_u)$ is observable, we get that $H_\varphi\subseteq G_u$  by the minimality of the hull, Theorem \ref{thm:main result on the hull and correcting curve},\eqref{enu:hull is minimal-Lie}.
\end{proof}

\section{Avoidance of singular sets\label{sec:linearization}}
 Suppose that $\Ga\subseteq \SL(n,\R)$ is a discrete subgroup, and $\mu$ is a probability measure on $X:=\SL(n,\R)/\Ga$ which is invariant under a nontrivial one-parameter unipotent subgroup of $\SL(n,\R)$. Then, by the celebrated Ratner's theorem (see \cite{Ratner91a}), every $W$-ergodic component of $\mu$ is homogeneous. 
 
 The measure $\mu$ that we will consider is a weak-* limit of the measures $\mu_{T,\varphi,x}$ (see \eqref{eq:main definition of measures averaging on curve}),  for a non-contracting, definable polynomially bounded curve $\varphi$. By Theorem~\ref{thm:non-escape of mass} and Proposition~\ref{prop:invariance}, $\mu$ is a probability measure, and it is invariant under a nontrivial unipotent subgroup. 

 As an immediate consequence of Ratner's theorem (see~Theorem~\ref{Ratner's Theorem}) if $\mu$ is not $\SL(n,\R)$-invariant, then it is strictly positive on a Ratner's singular set, which is the image of a certain strictly lower dimensional algebraic subvariety of $\SL(n,\R)$ on $X=\SL(n,\R)/\Ga$. To analyze this situation, we will consider a finite-dimensional representation $V$ of $\SL(n,\R)$ and represent the singular set via an algebraic subvariety, say $\cA$, in $V$. The trajectory $t\mapsto \varphi(t)x$ defining $\mu$ will be represented by countably many simultaneous trajectories $t\mapsto\varphi(t)\gamma\cdot p$ ($\forall\gamma\in\Gamma$) for a well chosen $p\in V$ with $\Gamma\cdot p$ discrete. We will show that $\mu$ cannot be positive on the singular set unless for some $\gamma\in\Gamma$, $\varphi(t)\gamma \cdot p$ converges to a point of $\cA$ as $t\to\infty$. This leads to group-theoretic constraints. 
 
 Studying the dynamics of $\varphi(t)x$-trajectories in thin neighborhoods of a singular set in $X$ via simultaneously analyzing countably many trajectories of $\varphi$ in a thin neighborhood of an algebraic variety in a vector space is known as the {\em linearization technique}.  Showing that an unbounded trajectory of $\varphi$ avoids remaining too close to an algebraic variety for too long is called the {\em avoidance of singular sets} - this is achieved via the $(C,\alpha)$-good property. The remainder of the section is devoted to employing these techniques in our situation. 

\subsection{Description of Ratner's singular sets}

\begin{definition}\label{def:Ratner's Class}
	Let $\mathcal{H}$ be the class of all closed connected subgroups $H$ of $\SL(n,\R)$ such that  $H/H\cap\Ga$ admits an $H$-invariant probability measure and the closed subgroup $H_u\subseteq H$ which is generated by all unipotent one-parameter subgroups of $H$ acts ergodically on $H/H\cap\Ga$ concerning the $H$-invariant probability measure. 
\end{definition}

\begin{theorem} [{\cite[Theorem 1.1]{Ratner91a}}]
	The collection $\mathcal{H}$ is countable.
\end{theorem}

We note that if $\Ga=\SL(n,\Z)$, then $\zcl(H\cap\Gamma)$ is defined over $\Q$ and $H=\zcl(H\cap\Gamma)(\R)^0$  \cite[(2.3) and (3.2)]{Shah1991} for all $H\in\mathcal{H}$, so $\mathcal{H}$ is countable.

\bigskip
Let $H\in\mathcal{H}$. Define \label{definition of N and S} 
\begin{align*}
	N(H,W)=&\{g\in \SL(n,\R): W\subseteq  gHg^{-1}\},\text{ and}\\
	S(H,W)=&\bigcup_{H'\in \mathcal{H},H'\subsetneq H}N(H',W).
\end{align*}
Note that $N(W)N(H,W)N(H)=N(H,W)$, where $N(F)$ denotes the normalizer of $F$ in $\SL(n,\R)$. Also, for all $\gamma\in \Ga$, we have $\gamma H\gamma^{-1}\in\mathcal{H}$, 
\[
N(H,W)\gamma=N(\gamma^{-1} H\gamma,W) \text{ and } S(H,W)\gamma=S(\gamma^{-1} H\gamma,W).
\]
Let $\pi:\SL(n,\R)\to X=\SL(n,\R)/\Ga$ denote the quotient map. Consider the refined singular set
\begin{equation}\label{definition of tube}
T_H(W)=\pi(N(H,W)\setminus S(H,W))=\pi(N(H,W))\setminus \pi(S(H,W)). 
\end{equation}
for the second equality see~\cite[Lemma 2.4]{Mozes1995OnTS}. Also, $T_H(W)=T_{\gamma H\gamma^{-1}}(W)$ for all $\gamma\in\Gamma$.

\begin{theorem}\emph{(Ratner~\cite{Ratner91a}) }\emph{\cite[Theorem 2.2]{Mozes1995OnTS}}\label{Ratner's Theorem}
	 Let $\nu$ be a $W$-invariant Borel probability measure on $X$.  For each $H \in \mathcal{H}$,
	let $\nu_H$ denote the restriction of $\nu$ on $T_H(W)$, which is $W$-invariant. Then $\nu$ decomposes as:
		$$\nu=\sum_{H\in \mathcal H^*} \nu_H,$$
		where $\mathcal H^* \subseteq \mathcal H$ is a countable set of subgroups, each of which is a representative of a
		distinct $\Ga$-conjugacy class. 
\end{theorem}

\subsection{O-minimal curves in representations -- a dichotomy theorem}
For $H\in\mathcal{H}$, consider
\begin{equation}\label{definition of V_H}
  V_H:=\bigwedge^{\dim(H)} \mathfrak{sl}(n,\R)\text{ and } p_H\in \bigwedge^{\dim(H)} \mathfrak{h}\setminus \{0\},
\end{equation}
where $\mathfrak{h}$ denotes the Lie algebra of $H$. Consider the action of $\SL(n,\R)$ on $V_H$ induced by its Adjoint representation its Lie algebra;  that is, $g\cdot(\bigwedge_{i=1}^d x_{i}):=\bigwedge_{i=1}^d \Ad_g(x_{i})$ and extended linearly.   We denote by $\eta_H: \SL(n,\R)\to  V_{H}$  the orbit map \begin{equation}\label{eq:def of orbit map eta H}
       \eta_H(g)=g\cdot p_H,\,\forall g\in \SL(n,\R).
    \end{equation}
 The notation $\eta_H$ is used only for denoting preimages under the orbit map. We note some basic observations. 

\begin{theorem}\label{thm:DM results on A_H and discrteness of Gamma orbit in rep}
	Let $H\in \mathcal{H}$. Then:
\begin{enumerate}

    \item\label{enu:inverse of A_H}\emph{\cite[Proof of Proposition 3.2]{Dani1993LimitDO}} Let $\mathfrak w$ be the Lie algebra of $W$, and consider the linear subspace
    \begin{equation}\label{the variety A_H}
        A_H:=\{v\in V_H : v\wedge w=0,\forall w\in \mathfrak w\}.
    \end{equation}  
    Then $\eta_H^{-1}(A_H)=N(H,W)$.
    
    \item \label{enu:DM discretness Ga pH}\emph{\cite[Theorem 3.4]{Dani1993LimitDO}} 
    \[
   \text{$\Ga\cdot  p_H$ is closed, and hence discrete, in $V_H$.}
   \]   
    \end{enumerate}
\end{theorem}

For example, if $\Ga=\SL(n,\Z)$, then $\mathfrak{h}$ is defined over $\Q$, so we can pick $p_H\in \wedge^d\mathfrak{h}(\Z)\setminus\{0\}$, and hence, $\Ga\cdot p_H\subseteq V_H(\Z)$.

Next, we will provide a version of \cite[Proposition 3.4]{Mozes1995OnTS} for definable curves. Our proof is based on the linearization technique developed by Dani and Margulis~\cite[Proof of Theorem~1]{Dani1993LimitDO}. The result can be considered an analogue of Theorem~\ref{thm:kleinbock's non-divergence theorem}.  In what follows, $\pbomin$ denotes a polynomially bounded o-minimal structure.

\begin{theorem}\label{dichotomy theorem}
Let $\varphi:[0,\infty)\to \SL(n,\R)$ be a $\pbomin$-definable, continuous, unbounded, non-contracting curve. Let $\be:[0,\infty)\to\SL(n,\R)$ be a correcting curve of $\varphi$ and let $H_\varphi$ be the hull of $\varphi$ such that $\beta(t)\varphi(t)\subseteq H_\varphi$ for all $t$, see \Cref{def:hull and correcting curve}. Fix $H\in \mathcal{H}$. Let $\emph{\text{K}}_1 \subseteq A_H$ be a compact set. Then, given $x\in X$, one of the following holds\emph{:}
\begin{enumerate}
		\item\label{enu:first outcome dichotomy} There exists  $w\in \pi^{-1}(x)\cdot p_H\cap \emph{\text{K}}_1$ such that 
	\begin{equation}\label{eq:stabilizer condition}
			H_\varphi\cdot w=\{w\}.
		\end{equation}
    \item \label{enu:second outcome dichotomy} Given $0 < \e < 1$, there exists a closed set $\mathcal S\subseteq \pi(S(H,W))$ such that given a compact set $K \subseteq X\setminus \mathcal S$, there exist a compact neighborhood $\Psi$ of $\emph{\text{K}}_1$ in $ V_H$ and a $T_x>0$ such that for all $T>T_x$,
		\begin{equation}\label{relative time equation}
			|\{t\in [0,T]:\be(t)\varphi(t)x\in K\cap \pi(\eta_H^{-1}(\Psi))\}|\le \e T.
		\end{equation}
  % Here, $\eta_H$ is the orbit map \eqref{eq:def of orbit map eta H}.
	\end{enumerate}
\end{theorem}

% Before reading the proof of this technical result, the reader may find it helpful to see how the result is applied in the proof of Theorem \ref{thm: main equidistribution theorem} given below.

We now recall some observations and results that will be used in the proof. Let $N(H)$ be the normalizer of $H$ in $\SL(n,\R)$. Note that, 
\begin{equation} \label{eq:N1H}
g\cdot p_H=\det(\Ad_g|_{\mathfrak h})p_H,\,\forall g\in N(H).
\end{equation}
% \begin{equation} \label{eq:N1H}
% N(H)=\{g\in G: g\cdot p_H\in\R p_H\} \text { and } g\cdot p_H=\det(\Ad_g|_{\mathfrak h})p_H,\,\forall g\in N(H).
% \end{equation}
Set $N_{\Ga}:=N(H)\cap \Ga.$ By \cite[Lemma 3.1]{Dani1993LimitDO}, $N_{\Ga}\subseteq \{\pm p_H\}$. If $N_{\Ga}=\{\pm p_H\}$, let $\tilde{V}_H:=V_H/\{\pm 1\}$, the space $V_H$ modulo the equivalence relation defined by identifying $v\in V_H$ with $-v$. If $N_{\Ga}\cdot p_H=\{p_H\}$, let $\tilde V_H:= V_H$. 

For any $g\in \SL(n,\R)$ and $y=\pi(g)\in X$,  we treat $\pi^{-1}(y)\cdot \tilde p_H=g\Gamma\cdot \tilde p_H$ as the discrete \emph{set of representatives} of $y$ in $\tilde V_H$, where $\tilde p_H $ denotes the image of $p_H$ in $\tilde V_H$. We need the following result on the {\em uniqueness of representatives\/} in thin neighborhoods. 

\begin{proposition}\emph{\cite[Proposition 3.2]{Mozes1995OnTS}}\label{prop: mozes shah statemnt for uniquness of representatives}
	Let $D$ be a compact subset of $A_H$, and consider 
	\begin{equation*}
		S(D)=\{g\in \eta_H^{-1}(D):g\ga \in  \eta_H^{-1}(D) \text{ for some } \ga\in \Ga\setminus N_{\Ga}\}\subseteq \SL(n,\R).
	\end{equation*}
	Then:
	\begin{enumerate}
\item $S(D)\subseteq S(H,W)$.
\item $\pi(S(D))$ is closed in $X$.
\item\label{enu:neighb of inject} Suppose that $K \subseteq X \setminus \pi(S(D))$ is compact. Then, there is a compact neighborhood
$\Phi$ of $D$ in $V_H$  such that any $y\in K$ has at most one representative in $\tilde\Phi$, where $\tilde {{\Phi}}$ is the image of ${\Phi}$ in $\tilde V_H$.

That is, for all $y\in  \pi( \eta_H^{-1}({\Phi})) \cap K$,  the set $ \pi^{-1}(y).\tilde p_H \cap \tilde{{\Phi}}$ consists of a single element. 
	\end{enumerate}
\end{proposition}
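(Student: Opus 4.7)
The plan is to establish the three items in order by leveraging Theorem \ref{thm:DM results on A_H and discrteness of Gamma orbit in rep}: the identity $\eta_H^{-1}(A_H) = N(H,W)$ and the discreteness of $\Gamma.p_H$ in $V_{d_H}$.

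For item (1), take $g \in S(D)$ with witness $\gamma \in \Gamma \setminus N_\Gamma$, so that both $g$ and $g\gamma$ lie in $\eta_H^{-1}(D) \subset N(H,W)$. Applying Ratner's theorem to the $W$-orbit closure of $g\Gamma$, one obtains a connected subgroup $F \supset W$ with $\overline{Wg\Gamma} = Fg\Gamma$ and $F \cap g\Gamma g^{-1}$ a lattice in $F$; since this orbit lies in the closed homogeneous sets $gH\Gamma/\Gamma$ and $g\gamma H\Gamma/\Gamma$, connectedness of $F$ forces $F \subset g(H \cap \gamma H\gamma^{-1})g^{-1}$. Because $\gamma \notin N(H)$ and $H$ is connected, a dimension argument gives $H \cap \gamma H\gamma^{-1} \subsetneq H$; hence $H' := g^{-1}Fg$ lies in $\mathcal{H}$, satisfies $H' \subsetneq H$ and $g \in N(H',W)$, so $g \in S(H,W)$.

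For item (2), suppose $\pi(g_n) \in \pi(S(D))$ converges to $\pi(g)$ in $X$; after adjusting each $g_n$ by an element of $\Gamma$ we may assume $g_n \to g$ in $\SL(n,\R)$. Choose $\gamma_n \in \Gamma \setminus N_\Gamma$ with $g_n.p_H,\, g_n\gamma_n.p_H \in D$. Then $\gamma_n.p_H = g_n^{-1}(g_n\gamma_n.p_H)$ stays bounded, and by discreteness of $\Gamma.p_H$ we may pass to a subsequence with $\gamma_n.p_H = v := \gamma_0.p_H$ for some fixed $\gamma_0 \in \Gamma$. Since $\gamma_n\gamma_0^{-1}$ then fixes $p_H$ and so lies in $N_\Gamma$, while $\gamma_n \notin N_\Gamma$, we conclude $\gamma_0 \notin N_\Gamma$. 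Passing to the limit yields $g.p_H \in D$ and $g\gamma_0.p_H = g.v = \lim g_n\gamma_n.p_H \in D$, so $g \in S(D)$ and thus $\pi(g) \in \pi(S(D))$.

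For item (3), argue by contradiction. Fix a decreasing sequence of compact neighborhoods $\Phi_n$ of $D$ with $\bigcap_n \Phi_n = D$; if no $\Phi$ works, one finds $y_n \in \pi(\eta_H^{-1}(\Phi_n)) \cap K$ together with $g_n, h_n \in \pi^{-1}(y_n)$ such that $g_n.p_H, h_n.p_H \in \Phi_n$ but their images in $\tilde V_{d_H}$ are distinct. Writing $h_n = g_n\gamma_n$ with $\gamma_n \in \Gamma$, this forces $\gamma_n.p_H \neq \pm p_H$ and hence $\gamma_n \notin N_\Gamma$ in view of \eqref{eq:det of N Gamma}. Using compactness of $K$ and a $\Gamma$-adjustment, we may pass to $g_n \to g$ with $y := \pi(g) \in K$; then repeating the discreteness argument from item (2) produces $\gamma_0 \in \Gamma \setminus N_\Gamma$ with $g, g\gamma_0 \in \eta_H^{-1}(D)$, so $y \in \pi(S(D))$, contradicting $y \in K \subset X \setminus \pi(S(D))$. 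The main obstacle is the Ratner-class step in item (1) that identifies $g^{-1}Fg$ as an element of $\mathcal{H}$ strictly contained in $H$; the remaining items reduce to compactness/limit arguments crucially driven by the discreteness of $\Gamma.p_H$.
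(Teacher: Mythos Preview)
The paper does not provide its own proof of this proposition; it is quoted directly from \cite[Proposition~3.2]{Mozes1995OnTS} and then immediately used in the proof of Theorem~\ref{dichotomy theorem}. Your argument is correct and follows the standard Mozes--Shah approach: item~(1) via Ratner's orbit-closure theorem applied inside the finite-volume space $gH\Gamma/\Gamma$, and items~(2)--(3) via the discreteness of $\Gamma.p_H$.

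One small point in item~(3): after the ``$\Gamma$-adjustment'' replacing $g_n$ by $g_n\delta_n$ so that $g_n\delta_n\to g$, the vectors $g_n\delta_n.p_H$ need no longer lie in $\Phi_n$. The clean fix is to keep the original $g_n$ (with $g_n.p_H\in\Phi_n$), write $g_n=g\epsilon_n\delta_n$ with $\epsilon_n\to e$ and $\delta_n\in\Gamma$, and then use discreteness of $\Gamma.p_H$ twice to make both $\delta_n.p_H$ and $\delta_n\gamma_n.p_H$ eventually constant, equal to $v_1\neq\pm v_2$; this yields $g\delta_0\in\eta_H^{-1}(D)$ and $g\delta_0(\delta_0^{-1}\sigma_0)\in\eta_H^{-1}(D)$ with $\delta_0^{-1}\sigma_0\notin N_\Gamma$, exactly as you indicate by ``repeating the discreteness argument from item~(2)''. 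With that adjustment your proof is complete.
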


\begin{proof}[Proof of Theorem \ref{dichotomy theorem}]
We assume for simplicity that $N_{\Ga}\cdot p_H=\{p_H\}$, namely $\tilde V_H=V_H$. The case for which $N_{\Ga}\cdot p_H=\{\pm p_H\}$ can be treated in essentially the same way, and is left to the readers.  

By choosing a basis for $V_H$,  we identify  $V_H\cong\R^m$ for some $m$. We will
denote by $\psi(t)\in \GL(m,\R)$ the action of $\be(t)\varphi(t)$ on $V_H$. We will apply Proposition \ref{prop:relative time property} for $\psi(t)$. Since $A_H$ is a linear subspace, $A_H=\{Q=0\}$ for a polynomial map $Q:V_H\to\R$, see  Remark~\ref{rem:one polynomial determines the variety}. Let $K_1\subseteq A_H$ and let $0<\e<1$ be given. Let $\text K_2\subseteq A_H$ be a compact set that is given by Proposition \ref{prop:relative time property} with $\text K_1 \subseteq \text K_2$. 

\subsubsection*{Choices of $\mathcal{S}$ and $\Phi$ for injective property}
For applying Proposition \ref{prop: mozes shah statemnt for uniquness of representatives}, we use $\text{K}_2$  in place of $D$, and let $\mathcal{S}:=\pi(S(\text{K}_2))$. Let $K\subseteq X\setminus \mathcal S$ be a compact set, and let $\Phi\subseteq V_H$ be a compact neighborhood of $\text{K}_2$ which satisfies the outcome of Proposition \ref{prop: mozes shah statemnt for uniquness of representatives},\eqref{enu:neighb of inject}. As a consequence, for any $t\geq 0$, $w_1,w_2\in\Ga\cdot p_H$, and $g\in \SL(n,\R)$, if $g\Ga\in K$ and $g\cdot w_i\in \Phi$ for $i=1,2$, then $w_1=w_2$; let us call this the {\em injective property of $K$ in $\Phi$}. 

\subsubsection*{Choice of $\Psi$ using $(C,\alpha)$-good property}
We take $T'_0>0$ to satisfy the outcomes of Propositions \ref{prop:c alpha good ppty of curves and action in reps} and \ref{prop:relative time property} for $\psi$. By Proposition~\ref{prop:relative time property}, we choose a compact neighborhood $\Psi$ of $\text{K}_1$ in $V_H$ such that $\Psi\subseteq \mathring{\Phi}$ and for any $v\in V$ and any closed interval $[a,b]\subseteq [T_0',\infty)$, 
if $\psi([a,b])v\subseteq \Phi$ and $\psi(b)v\notin \mathring{\Phi}$, then
\begin{equation} \label{eq:relative-time}
    \abs{\{s\in [a,b]:\psi(s)v\in\Psi\}}\leq \epsilon (b-a).
\end{equation}

It may be noted that in view of \eqref{eq:R2e1Phi} and \eqref{eq:defPsi}, there exist $R_2>0$ and $\epsilon_1>0$ such that 
\begin{equation} \label{eq:Psi-Phi}
\Psi\subseteq \{v\in V_H:\norm{v}<R_2 \text{ and }\abs{Q(v)}<\epsilon_1\}\subseteq \mathring{\Phi}.
\end{equation}

\subsubsection*{Modifying $\Psi$ for dichotomy}
Let $x\in X$ be given. Since $\pi^{-1}(x)\cdot p_H\cap\Psi$ is finite, by  replacing $\Psi$ with a smaller compact neighborhood of $\text{K}_1$, we may assume that $\pi^{-1}(x)\cdot p_H\cap\Psi=\pi^{-1}(x)\cdot p_H\cap K_1$.

By Theorem \ref{thm:main result on the hull and correcting curve},\eqref{enu:fixing property after correction-Lie}, the following two mutually exclusive cases exhaust all possibilities. 

\begin{itemize}
\item[Case~1] -- \emph{There exists $w\in\pi^{-1}(x)\cdot p_H\cap \Psi$  such that $H_\varphi\cdot w=\{w\}$}.

\item[Case~2] -- \emph{For all $w\in \pi^{-1}(x)\cdot p_H\cap \Psi$, $\lim_{s\to\infty}\psi(s)w=\infty$.}
\end{itemize}
If Case~1 holds, then by our choice of $\Psi$, the first conclusion of the theorem holds. Therefore, to complete the proof, we assume that Case~2 holds. 

Consider
\begin{equation} \label{eq:defJ}
	J=\{t\ge T'_0: \be(t)\varphi(t)x\in \pi( \eta^{-1}_H(\Psi))\cap K \}.
\end{equation}
Then $J$ is closed. Because $\pi( \eta^{-1}_H(\Psi))$ is closed in $X$. To show this, suppose $g_i\to g$ in $\SL(n,\R)$ and $\gamma_i\in\Gamma$ are such that $g_i\gamma_i\cdot p_H\in \Psi$. Since $\Psi$ is compact and $\Gamma\cdot p_H$ is discrete, we can pick $\gamma\in\Gamma$ such that $\gamma_i\cdot p_H=\gamma\cdot p_H$ for infinitely many $i$'s. Therefore $g\gamma\cdot p_H\in\Psi$. 

Then, to prove \eqref{relative time equation}, it is enough to show that there exists $T_x\geq T_0'$ depending on $x$ such that 
\begin{equation} \label{eq:relative time equation-2}
\abs{J\cap [T_0',T]}\leq 2\epsilon T, \ \forall\, T\geq T_x.
\end{equation}
Without loss of generality, we may assume that $J$ is unbounded.

\subsubsection*{Choice of a representative of $x$ in for every $t\in J$}

For every $t\in J$, there exists a representative of $x$, say $w\in \pi^{-1}(x)\cdot p_H$, such that $\psi(t) w\in \Psi \subseteq\Phi$. Since $\beta(t)\varphi(t)x\in K$, by the injective property of $K$ in $\Phi$, $w$ is unique, and we denote it by $w_{t}$.

Let $t\in J$. We note that  
\begin{equation*} \label{eq:unbounded}
\lim_{s\to\infty}\psi(s)w_{t}=\infty,
\end{equation*}
because otherwise $w_{t}$ is fixed by $H_\varphi$ due to (\ref{enu:fixing property after correction-Lie}) of Theorem \ref{thm:main result on the hull and correcting curve}, and since $\psi(t)\subseteq H_\phi$, we have $w_{t}\in \Psi\cap \pi^{-1}(x)\cdot p_H$, contradicting the Case~2.

Therefore, $\{s\geq T_0':\psi(s) w_{t}\in \mathring{\Phi}\}$ is a bounded open subset of $[T_0',\infty)$ containing $t$. Let $U(t)$ be its connected component containing $t$, and denote $$t^+:=\sup U(t)<\infty.$$ Then 
\begin{equation} \label{eq:inPhi}
\psi([t,t^+))w_{t}\subseteq\mathring{\Phi} \text{ and } \psi(t^+)w_{t}\in \Phi\setminus \mathring{\Phi}.
\end{equation}

\subsubsection*{Bound on $|J\cap [t,t^+]|$}
For any $s\in J\cap [t,t^+]$, we have  
$\beta(s)\varphi(s)x\in K$, 
$\beta(s)\varphi(s)\cdot w_{s}\in \Psi\subseteq \mathring{\Phi}$, and $\beta(s)\varphi(s)\cdot w_{t}\in \Phi$, 
and hence by  the injectivity of $K$ in $\Phi$, $w_{s}=w_{t}$. Then,
\begin{equation}
\begin{array}{ll}
J\cap [t,t^+]&=\{s\in [t,t^+]\cap J: \psi(s) w_{s}\in\Psi\} \\
&=\{s\in [t,t^+]\cap J: \psi(s) w_{t}\in\Psi\}.  \label{eq:Omega-Psi}
\end{array}
\end{equation}

By \eqref{eq:relative-time}, \eqref{eq:inPhi},  and \eqref{eq:Omega-Psi}, 
\begin{equation} \label{eq:J-relative}
   \abs{J\cap [t,t^+]}\leq\left|\{s\in [t,t^+]:\psi(s)w_{t}\in \Psi\}\right|\leq \e (t^+-t). 
 \end{equation}

\subsubsection*{Choosing a sequence $t_i\in J$}
For each $i\in\N$, we define $t_i\in J$ inductively as follows: Let $t_1=\inf J\in J$. Let $i\in\N$ be such that $t_i$ is defined. Then we let $t_{i+1}=\inf J\cap (t_{i}^+,\infty)$. Since $J$ is closed, $t_i\in J$.

By our construction, $t_i<t_i^+<t_{i+1}$ for all $i\in\N$. Let $i\in\N$. Since $$J\cap [T_0',t_i^+]=\cup_{j=1}^i J\cap [t_j,t_{j}^+),$$
by \eqref{eq:J-relative},
\begin{equation} \label{eq:uptoi}
\abs{J\cap [T_0',t_i^+]}\leq \epsilon (t_i^+-T_0').
\end{equation}

We will bound $|J\cap[t_{i+1},T]|$ using \Cref{prop:c alpha good ppty of curves and action in reps}. We need the following:

\subsubsection*{Claim.} As $i\to\infty$, $\norm{w_{t_i}}\to\infty$ and $t_i\to\infty$. 

\medskip
%We note that for each $i\in\N$,  there exists $s\in (t_i,t_i^+)$ such that $\norm{\psi(s)w_{t_i}}=R_2$ or $\abs{Q(\psi(s)w_{t_i})}=\epsilon_1$. 

For $w\in \pi^{-1}(x)\cdot p_H$, the maps $s\mapsto \norm{\psi(s)w}$ and $s\mapsto Q(\psi(s)w)$ are definable and for $s\geq T_0'$. Let
\[
S_w:=\{s\geq T_0':\norm{\psi(s)w}=R_2 \text{ or } \abs{Q(\psi(s)w)}=\epsilon_1\}.
\]
Since $S_w$  is a definable subset of $\R$, it has only finitely many connected components, see Theorem \ref{thm:uniform bound on fibers}.
For any $i\in\N$, if $w=w_{t_i}$, then  $(t_i,t_i^+)$ contains at least one connected component of $S_w$;
indeed, since $\psi(t_i)w_{t_i}\in \Psi$ and  $\psi(t_i^+)w_{t_i}\not\in\mathring{\Phi}$, by \eqref{eq:Psi-Phi}, $t_i,t_i^+\not\in S_w$ and $(t_i,t_i^+)\cap S_w\neq \emptyset$. 
Therefore, since $\{t_i\}_{i=1}^\infty$ is an infinite sequence, 
the set  $\{i\in\N:w=w_{t_i}\}$ is finite. 

So, $\{w_{t_i}:i\in\N\}$ is infinite. Since $\pi^{-1}(x)\cdot p_H$ is discrete in $V_H$, we get $\lim_{i\to\infty}\|w_{t_i}\|=\infty$. Since $\psi(t_i)w_{t_i}\in \Psi$ for all $i$ and the map $s\mapsto \norm{\psi(s)^{-1}}$ is continuous on $[T_0',\infty)$, we conclude that $t_i\to\infty$. This proves the claim.

By Proposition~\ref{prop:c alpha good ppty of curves and action in reps}, and recalling \eqref{eq:C-alpha-good}, given $\e>0$ we can pick $M_0>0$ such for any $w\in V_H$, if $\norm{\psi(T'_0)w}\geq M_0$, then
 \begin{equation} \label{eq:KM}
 \left|\{s\in [T_0',T]:\psi(s)w\in \Psi\}\right|\leq \e (T-T_0').
 \end{equation}

Due to the above claim, let $i_x\in\N$ be such that $\norm{w_{t_{i}}}\geq \norm{\psi(T_0')^{-1}}\cdot M_0$ for all $i\geq i_x$. Set $T_x=t_{i_x}$. Let $T\geq T_x$. Since $t_j\to\infty$ as $j\to\infty$, 
$i:=\max\{j:t_j\leq T\}<\infty$. Then $i\geq i_x$, so $\norm{\psi(T'_0)w_{t_{i}}}\geq M_0$. By \eqref{eq:KM}, 
\begin{equation} \label{eq:Tlast}
    \abs{J\cap [t_{i},T]} \leq \abs{\{s\in [t_{i},T]:\psi(s)w_{t_{i}}\in \Psi\}}\leq \e(T-T_0'). 
 \end{equation}
By combining \eqref{eq:uptoi} and \eqref{eq:Tlast}, we obtain \eqref{eq:relative time equation-2}.
\end{proof}

\subsection{Proof of Theorem \ref{thm: main equidistribution theorem} }
Let $G:=\mathbf G(\R)$. Suppose that $\G\subseteq G$ is a connected, closed Lie subgroup with a lattice $\Ga\subseteq \G$. Let $\varphi:[0,\infty)\to G$ be a $\pbomin$-definable, continuous, unbounded, non-contracting curve for $G$ such  that $\varphi\left([0,\infty)\right)\subseteq \G$. Let $H_\varphi$ be the hull of $\varphi$ in $G$ and let $\be$ be a definable correcting curve in $G$, so that $\be\varphi\subseteq H_\varphi$, see Definition \ref{def:hull and correcting curve}. Then, by Proposition $\ref{prop:hull in lie group}$, $H_\varphi\subseteq \G$ and $\beta([0,\infty))\subseteq \G$.

We choose an injective algebraic map $\iota:G\hookrightarrow \SL(n,\R)$ for some $n\geq 2$. Note that $\iota(\Ga)$ is a discrete subgroup of $\SL(n,\R)$.  Importantly, $\iota\circ\varphi$ is non-contracting in $\SL(n,\R)$ by Lemma~\ref{lem:image of non contracting is non contracting}, and $\iota(H_\varphi)=H_{\iota\circ\varphi}$ by Lemma~\ref{lem:image of hull under homo}. 
By abuse of notations, in the following we replace $\varphi$ with $\iota\circ \varphi$, replace $\Ga$ with $\iota(\Ga)$ and replace $G$ with $\iota(G)$. 

Let $x_0\in \G/\Ga\hookrightarrow \SL(n,\R)/\Ga$, and fix a limiting measure $\mu$  as $T\to\infty$ of $\mu_{T,\be\varphi,x_0}$  (defined in \eqref{eq:main definition of measures averaging on curve}), with respect to the weak-$\ast$ topology on the space of probability measures on $\SL(n,\R)/\Ga$. We fix  a sequence $\{T_i\}$ such that $T_i\to\infty$ and\begin{equation}\label{eq:limiting seq for mu}
    \text{weak*-}\lim_{i\to\infty}\mu_{T_i,\be\varphi,x_0}=\mu.
\end{equation}
By Theorem~\ref{thm:non-escape of mass}, $\mu$ is a probability measure. 

Let $W$ be the subgroup of $\SL(n,\R)$ generated by all unipotent one-parameter subgroups under which the limiting measure $\mu$ is invariant. By Proposition \ref{prop:invariance}, $\dim(W)>0$. 
Using Theorem \ref{Ratner's Theorem},  there exists $H\in \mathcal{H}$ such that 
\begin{align}\label{eq:tube of lowest dimension with positive measure}
	\mu(\pi(N(H,W)))>0\text{ and }\mu(\pi(S(H,W)))=0.
\end{align}
Here $H\in\mathcal H$ is of smallest dimension such that $\mu(\pi(N(H,W)))>0$.
%In view of \eqref{eq:N1H}, \textbf{Michael says: why we need the N1H eqref?} 

In view of \eqref{eq:N1H}, we define
\begin{align} \label{definition of $N^1(H)$}
	N^1(H)&:=\eta_H^{-1}(p_H)=\{g\in \SL(n,\R): g\cdot p_H=p_H\}\\
    &=\{g\in N(H):\det((\Ad g)|_\mathfrak{h})=1\}, \notag
\end{align}
where $p_H$ and $\eta_H$ are as in \eqref{eq:def of orbit map eta H}. 
%Notice that $N^1(H)$ is a subgroup of the normalizer $N(H)$ of $H$ in $\SL(n,\R)$. 
Since $H\cap\Gamma$ is a lattice in $H$, $H$ is unimodular (see \cite[Remark 1.9]{RA72}). So, $H\subseteq N^1(H)$. 

The remainder of the proof proceeds as follows. In \Cref{lem:correcting phi into N1(H)}, we show that there exists an element $g_0 \in \SL(n,\R)$ such that $g_0 \Gamma = x_0$ and 
\[
\{\be(t)\varphi(t)g_0\}_{t \geq 0} \subseteq g_0 N^1(H).
\]
We first note that $\Gamma N^1(H)=\eta_H^{-1}(\Gamma\cdot p_H)$ is closed, as $\Gamma \cdot p_H$ is discrete in $V_H$ (see \Cref{thm:DM results on A_H and discrteness of Gamma orbit in rep}, \eqref{enu:DM discretness Ga pH}). Consequently, the quotient $N^1(H)\Gamma / \Gamma \subseteq X$ is also closed.

By \eqref{eq:limiting seq for mu}, it follows that $\mu$ is supported on $g_0 N^1(H) x_0$. One may view $g_0 N^1(H) \Gamma / \Gamma$ as foliated by orbits of $g_0 H g_0^{-1}$, and we will show that $\mu$ is invariant under $g_0 H g_0^{-1}$. The key step is, to prove that $\mu$ is actually supported on the single orbit of $g_0 H g_0^{-1}$ passing through $x_0$, a fact established in \Cref{cor:key cor establishing main thm}. This will then immediately imply our main result, \Cref{thm: main equidistribution theorem}, as it shows that
\[
\overline{H_\varphi x_0} = g_0 H g_0^{-1} x_0,
\]
and that $\be \varphi$ is equidistributed on this orbit.

\begin{lemma}\label{lem:correcting phi into N1(H)}
  Fix $H\in \mathcal{H}$ such that \eqref{eq:tube of lowest dimension with positive measure} holds. Then, there exists  $g_0\in \SL(n,\R)$ such that $x_0=\pi(g_0):=g_0\Ga$ and \begin{equation}\label{eq:the corrected curve psi in N1(H)}
           \left\{g_0^{-1}\be(t)\varphi(t)g_0\right\}_{t\geq 0}\subseteq N^1(H).
       \end{equation}Importantly, \begin{equation}\label{eq:translate of mu}
           \nu:=(g_0^{-1})_*\mu,
       \end{equation}is supported on $\pi(N^1(H))$, $\nu$ is $  H$-invariant, and the largest group generated by unipotent one-parameter subgroups preserving $\nu$ is $H_u$. 
\end{lemma}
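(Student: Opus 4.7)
The plan is to use the dichotomy Theorem \ref{dichotomy theorem} to produce $g_0$, and then extract the invariance properties of $\nu$ from Ratner's Theorem \ref{Ratner's Theorem} combined with support analysis. First, I would exploit $\mu(\pi(N(H,W))) > 0$ and $\mu(\pi(S(H,W))) = 0$ to choose a continuous non-negative test function $\chi \in C_c(X)$ supported in some compact $K \subset T_H(W) := \pi(N(H,W)) \smallsetminus \pi(S(H,W))$ with $\int \chi \,d\mu > 0$. Using a local section of $\pi$ together with the orbit map $\eta_H$, I would lift $K$ to a compact $K_1 \subset A_H$ such that $K \subset \pi(\eta_H^{-1}(K_1))$, and then invoke Theorem \ref{dichotomy theorem} with $K_1$ and $\e$ chosen so that $\e < \int\chi\,d\mu$. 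The resulting closed set $\mathcal{S}\subset \pi(S(H,W))$ has $\mu$-measure zero, so after shrinking $K$ slightly I may take $K \cap \mathcal{S}=\emptyset$.

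Applying the dichotomy to our point $x_0$: if outcome \eqref{enu:second outcome dichotomy} held, then \eqref{relative time equation} would give $\mu_{T_i,\be\varphi,x_0}(K) \leq \e + o(1)$ along the sequence $T_i\to\infty$ of \eqref{eq:limiting seq for mu}; however weak-$\ast$ convergence yields $\mu_{T_i}(\chi)\to \int\chi\,d\mu > \e$, and because $\chi \leq \mathbf{1}_K$ this forces $\mu_{T_i}(K) > \e$ for large $i$, a contradiction. Hence outcome \eqref{enu:first outcome dichotomy} occurs, producing $w \in \pi^{-1}(x_0).p_H \cap \Psi$ with $H_\varphi . w = w$. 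Writing $w = g_0 . p_H$ for some lift $g_0 \in \pi^{-1}(x_0) \subset \SL(n,\R)$, we obtain $g_0^{-1} H_\varphi g_0 \subseteq N^1(H)$, and \eqref{eq:the corrected curve psi in N1(H)} follows since $\be(t)\varphi(t) \in H_\varphi$ for all $t\geq 0$.

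For $\nu := g_0^{-1}\mu$: each pushforward $g_0^{-1}\mu_{T_i,\be\varphi,x_0}$ is supported on $\pi(N^1(H))$, which is closed by Theorem \ref{thm:DM results on A_H and discrteness of Gamma orbit in rep}; weak-$\ast$ convergence then gives $\nu(\pi(N^1(H))^c)=0$. For the $H$-invariance and maximality statements, I would apply Ratner's Theorem \ref{Ratner's Theorem} to the $W$-invariant $\mu$: the restriction to $T_H(W)$ decomposes into $W$-ergodic components, each the unique $gHg^{-1}$-invariant probability on a closed orbit $gH\Ga/\Ga$ for some $g \in N(H,W)\smallsetminus S(H,W)$. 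Combined with the minimality of $\dim H$ and the concentration of $\nu$ on $\pi(N^1(H))$, one identifies every non-trivial ergodic component of $\nu$ as an $H$-invariant measure on a closed $H$-orbit, yielding $H$-invariance of $\nu$. The maximality of $H_u$ among unipotent-generated subgroups preserving $\nu$ then follows from maximality of $W$ for $\mu$ together with the fact that $H_u$ acts ergodically on $H/(H\cap\Ga)$ by the definition of $\mathcal{H}$.

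The main obstacle lies in the last step: the dichotomy theorem only places $w = g_0 . p_H$ in $\Psi$, a $V_{d_H}$-neighborhood of $K_1 \subset A_H$, not in $A_H$ itself, so a priori $g_0$ need not lie in $N(H,W) = \eta_H^{-1}(A_H)$, and matching $\nu$ with Ratner's ergodic components requires exactly this. The resolution uses discreteness of $\pi^{-1}(x_0).p_H$ in $V_{d_H}$ (Theorem \ref{thm:DM results on A_H and discrteness of Gamma orbit in rep}): any compact set contains only finitely many of its points, so by iteratively shrinking $K_1$ toward a limit in $A_H$ (and correspondingly $\Psi$), the $H_\varphi$-fixed points $w$ stabilize to a point of the discrete orbit lying in $A_H$, forcing $g_0 \in N(H,W)$. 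With this in hand, the identification of $\nu$'s ergodic decomposition via Ratner's theorem runs without obstruction.
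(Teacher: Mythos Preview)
Your proposal is correct and follows essentially the same route as the paper: apply the dichotomy Theorem~\ref{dichotomy theorem} to a compact $K_1=C.p_H\subset A_H$ coming from $C\subset N(H,W)\smallsetminus S(H,W)$ with $\mu(\pi(C))>\e$, rule out outcome~\eqref{enu:second outcome dichotomy} via weak-$\ast$ convergence, and then use discreteness of $\pi^{-1}(x_0).p_H$ together with the freedom to shrink $\Psi$ (not $K_1$---that set is already in $A_H$) to force $g_0.p_H\in A_H$, i.e.\ $g_0\in N(H,W)$. The $H$-invariance of $\nu$ and the identification $H_u=g_0^{-1}Wg_0$ are then extracted exactly as you indicate, the paper invoking \cite[Lemma~2.4]{Mozes1995OnTS} and \cite[Theorem~2.3]{Shah1991} to pin down the $W$-ergodic components on $\pi(g_0N^1(H)\smallsetminus S(H,W))$ as $g_0Hg_0^{-1}$-invariant.
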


\begin{proof}

In view of \eqref{definition of tube}, due to \eqref{eq:tube of lowest dimension with positive measure}, there is a compact subset  $C \subseteq N(H, W)\setminus S(H, W)$ such that $\mu(\pi(C))=\al$ for some $\al>0$, and $\pi(C)\cap \pi(S(H,W))= \varnothing$.  Consider the compact subset $\text K_1\subseteq A_H$ given by  $\text{K}_1:=C\cdot p_H$. We will now apply Theorem \ref{dichotomy theorem} to $x_0\in X$. 

We  show  that outcome \eqref{enu:second outcome dichotomy} of Theorem \ref{dichotomy theorem} cannot hold. Let $\epsilon:=\frac{\al}{2}$.  Denote by $\mathcal S \subseteq \pi(S(H,W))$  the closed subset given by outcome \eqref{enu:second outcome dichotomy} of Theorem \ref{dichotomy theorem}. Let $K\subseteq X\setminus \mathcal S$ be an open neighborhood of $\pi(C)$, which exists as $\pi(C)$ is compact and $\mathcal S$ is closed. Also, fix an arbitrary compact neighborhood $U$  of $\text{K}_1$ in $V_H$. Then, $K\cap\pi(\eta_H^{-1}(U))$ is an open neighborhood of $\pi(C)$. By \eqref{eq:limiting seq for mu},  for all $i$ large enough, $$|\{t\in [0,T_i]:\be(t)\varphi(t) x_0\in K\cap \pi( \eta_H^{-1}(U))\}|\geq \e T_i,$$ which contradicts outcome  \eqref{enu:second outcome dichotomy} of Theorem \ref{dichotomy theorem}.  

Therefore, the first outcome \eqref{enu:first outcome dichotomy} of Theorem \ref{dichotomy theorem} must hold. That is, we can pick $g_0\in\pi^{-1}(x_0)$ such that $g_0\cdot p_H\in \text{K}_1\subseteq A_H$ and such that:
\begin{equation}\label{eq:consequence of first outcome of dichotomy}
\be(t)\varphi(t)g_0\cdot p_H=g_0\cdot p_H, \text{ for }t\geq 0. 
\end{equation}
Namely, $\be(t)\varphi(t)g_0\in g_0N^1(H)$ for all $t\geq 0$. Since $\pi(N^1(H))$ is closed, we obtain that $\mu$ is supported on a subset of $\pi(g_0N^1(H))$. In fact, we claim that $\mu(\pi(g_0N^1(H)))=1$

% \begin{comment}
% % Since $\mu(K\cap \pi(\eta_H^{-1}(U))>0$, there exists $h\in N^1(H)$ such that $\pi(g_0h)\in K\cap \pi(\eta_H^{-1}(U))$. Therefore, there exists $\gamma\in\Gamma$ such that $g_0h\gamma \cdot p_H\in U\subseteq \Psi$. But we already know that $g_0h\cdot p_H=g_0\cdot p_H\in\Psi$, so by the injectivity of $K$ in $\Psi$, we get $g_0.p_H=g_0h\gamma\cdot  p_H\in U$.
% \end{comment}

To see this, observe that since $g_0\cdot p_H \in A_H$, then $g_0\in N(H,W)$ by Theorem \ref{thm:DM results on A_H and discrteness of Gamma orbit in rep},\eqref{enu:inverse of A_H}. Moreover,  $g_0N^1(H)\subseteq N(H,W)$, since $N(H,W)$ is right-$N^1(H)$ invariant.

In particular, we conclude that $\mu(\pi(g_0N^1(H)\setminus S(H,W)))=1$ (see \eqref{eq:tube of lowest dimension with positive measure}). Therefore, by \cite[Lemma 2.4]{Mozes1995OnTS},  almost every $W$-ergodic component of $\mu$ is invariant under $gHg^{-1}=g_0Hg_0^{-1}$ for some $g\in g_0N^1(H)\setminus S(H,W)$. Therefore $\mu$ is $g_0Hg_0^{-1}$-invariant. Then, $\nu=(g_0^{-1})_*\mu$ is $H$-invariant, and especially,  $H_u$ preserves $\nu$.

\begin{comment}
By \cite[Lemma 2.4]{Mozes1995OnTS}, for each $g_0n\in g_0N^1(H)\setminus S(H,W)$, the group 
$$H'=(g_0n)H(g_0n)^{-1}=g_0Hg_0^{-1}$$ 
is the smallest group containing $W$ such that the orbit $H'\pi(g_0n)$ is closed. 
Then,  by \cite[Theorem 2.3]{Shah1991},  $W$ acts ergodically on $H'\pi(g_0n)$ with respect to $(g_0)_*\mu_H$, where $\mu_H$ is the $H$ invariant measure on $H/H\cap\Ga\cong H\Ga/\Ga$. 
Namely, $\mu$-almost every 
$W$-ergodic component of $\mu$ is $g_0Hg_0^{-1}$-invariant. By performing ergodic decomposition on $\mu$, we obtain that $\mu$ is $g_0Hg_0^{-1}$-invariant. 
\end{comment}

Finally, we claim that $H_u=g_0^{-1}Wg_0\subseteq H$. To see this equality, recall that  $g_0\in N(H,W)$, so that $g_0^{-1}Wg_0\subseteq H$, and recall that $W$ is chosen to be the subgroup of $\SL(n,\R)$ generated by all unipotent one-parameter subgroups preserving $\mu$.
\end{proof}

We will use the following notations: $\psi(t):=g_0^{-1}\be(t)\varphi(t)g_0$, for $t\geq 0$, $N:=N^1(H)$, $\De:=N^1(H)\cap\Ga$ and $q:N\to N/H$ denotes the natural quotient map.

Consider $\bar N:=q(N)$, $\bar \De:=q(\De)$. Since $H\De=\De H$ is closed, we have that $\bar \De\leq \bar N$ is discrete (since $\De H$ is closed, then $q(\De)=\De H/H\cong \De/\De\cap H$). We let $\bar q:N/\De \to \bar N/ \bar \De$ be the natural map.
Notice that the fibers of $\bar q$ are  $H$-orbits. Namely, $$\bar q^{-1}(\bar n \bar \De)=nH\De/\De=Hn\De/\De.$$
  Since $\nu$ is a probability measure on $N/\De\cong\pi(N^1(H))$, it follows that $\bar \nu:=\bar q_*\nu$  is a probability measure on $\bar N/ \bar \De$. 
\begin{lemma}\label{lem:invariance of the quotient measure by unip}
     Suppose that $\psi(t)$ is unbounded in $N/H_u$. Then there exists a unipotent one-parameter subgroup $U \leq N$ such that $U$ is not contained in $H$, the measure $\bar{\nu}$ is invariant under $q(U)$, and $q(U)$ is a nontrivial one-parameter subgroup. 

     Moreover, $\nu$ is $U$-invartiant. 
\end{lemma}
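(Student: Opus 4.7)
The plan is to apply Proposition \ref{prop:Curve modulo H} to $\psi$ with the normal pair $H \trianglelefteq N$, and then transfer the resulting Peterzil--Steinhorn-type convergence in $N/H$ to an invariance statement for $\bar\nu$ via an averaging argument modeled on Lemma \ref{lem:invariance for unipotent p.s.}.

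First I would verify the hypotheses of Proposition \ref{prop:Curve modulo H} for $\psi$. The curve $\psi(t) = g_0^{-1}\be(t)\varphi(t)g_0$ is continuous and definable in the same polynomially bounded o-minimal structure, and non-contracting by Lemma \ref{lem:image of non contracting is non contracting} applied to the inner automorphism by $g_0$. Unboundedness of $\psi$ itself follows from the assumed unboundedness in $N/H_u$. Since $\be\varphi \subset H_\varphi$ we have $\psi \subset g_0^{-1}H_\varphi g_0$; both sides are groups generated by unipotent one-parameter subgroups, so the minimality clause of Theorem \ref{thm:main result on the hull and correcting curve}, \eqref{enu:hull is minimal-Lie}, forces $H_\psi = g_0^{-1}H_\varphi g_0$, and in particular $\psi \subset H_\psi$. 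The group $N = N^1(H)$ is the stabilizer of $p_H$, hence Zariski closed and observable; since $\psi \subset N$, minimality again gives $H_\psi \subseteq N$. Finally $H \trianglelefteq N$, because any $n \in N$ has $\Ad_n$ preserving $\wedge^{d_H}\mathfrak{h}$, hence normalizes $\mathfrak{h}$ and thus $H$. Applying Proposition \ref{prop:Curve modulo H} therefore yields some $r<1$ and a unipotent one-parameter subgroup $U = \{u(s):s\in\R\} \le N$ with $q(U)$ a non-trivial one-parameter subgroup of $N/H$ satisfying
$$q(u(s)) = \lim_{t\to\infty} q(\psi(h_{s,r}(t)))\,q(\psi(t))^{-1}, \qquad s\in\R,$$
with $h_{s,r}$ as in \eqref{eq:change of speed r<1}. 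In particular $U \not\subset H$.

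Next I would realize $\bar\nu$ as a weak-$\ast$ limit of averages along $\psi$. Using $x_0 = \pi(g_0)$, one has $g_0^{-1}\mu_{T,\be\varphi,x_0} = \mu_{T,\psi,\pi(e)}$, so by Theorem \ref{thm:non-escape of mass} (which lets us extend from $C_c$ to $C_b$)
$$\bar\nu(f) = \lim_{i\to\infty}\frac{1}{T_i}\int_0^{T_i} f(q(\psi(t))\bar\De)\,dt, \qquad f \in C_c(\bar N/\bar\De).$$
To prove $q(u(s))$-invariance it therefore suffices to show, for each such $f$ and each $s\in\R$,
\begin{equation}\label{eq:planinv}
\lim_{T\to\infty}\frac{1}{T}\int_0^T \bigl[f(q(u(s))\,q(\psi(t))\bar\De) - f(q(\psi(t))\bar\De)\bigr]\,dt = 0.
\end{equation}

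The proof of \eqref{eq:planinv} then proceeds exactly as in Lemma \ref{lem:invariance for unipotent p.s.}. Setting $F(t) = f(q(\psi(t))\bar\De)$, and using uniform continuity of $f$ together with the Peterzil--Steinhorn convergence of the previous paragraph, for any $\epsilon>0$ there is $T_\epsilon$ such that
$$\bigl|f(q(u(s))\,q(\psi(t))\bar\De) - F(h_{s,r}(t))\bigr| < \epsilon \quad \text{for all } t \ge T_\epsilon.$$
Splitting the integral in \eqref{eq:planinv} at $T_\epsilon$, the contribution from $[0,T_\epsilon]$ is $O(T_\epsilon\|f\|_\infty/T) \to 0$, the substitution error contributes at most $\epsilon$, and what remains is
$$\frac{1}{T}\int_{T_\epsilon}^T \bigl[F(h_{s,r}(t)) - F(t)\bigr]\,dt,$$
which tends to $0$ by Lemma \ref{difference of intergal lemma}, using that $r<1$ implies $h'_{s,r}(t) = 1 + O(t^{r-1}) \to 1$. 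Since $\epsilon$ is arbitrary, \eqref{eq:planinv} holds, and $\bar\nu$ is $q(U)$-invariant. The essential new input, already produced by Proposition \ref{prop:Curve modulo H}, is the lift of the Peterzil--Steinhorn subgroup of $N/H$ to an honest unipotent one-parameter subgroup $U \le N$; the averaging step above is then routine and there is no serious further obstacle.
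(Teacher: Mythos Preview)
Your proposal is correct and follows essentially the same route as the paper's proof: apply Proposition~\ref{prop:Curve modulo H} to obtain the unipotent $U\le N$ with nontrivial image in $N/H$, realize $\bar\nu$ as a limit of averages along $q\circ\psi$, and then run the argument of Lemma~\ref{lem:invariance for unipotent p.s.} using Lemma~\ref{difference of intergal lemma}. The paper's own proof is much terser---it simply cites these ingredients and remarks that the only difference from Lemma~\ref{lem:invariance for unipotent p.s.} is that $N/H$ is not a matrix group---while you spell out the verification of the hypotheses of Proposition~\ref{prop:Curve modulo H} and the averaging step in full.

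One minor point: your justification that $H_\psi = g_0^{-1}H_\varphi g_0$ via ``minimality'' as stated only gives the inclusion $H_\psi \subseteq g_0^{-1}H_\varphi g_0$. For the reverse inclusion (which you need in order to conclude $\psi\subset H_\psi$), either invoke Lemma~\ref{lem:image of hull under homo} applied to the conjugation automorphism together with Remark~\ref{rem:hull of corrected curve is hull}, or run the symmetric minimality argument: $\varphi$ is bounded modulo $g_0 H_\psi g_0^{-1}$, which is observable, so $H_\varphi\subseteq g_0 H_\psi g_0^{-1}$.
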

\begin{proof}
   We note that the measure push-forward map $q_*$ is continuous, and in particular,$$\bar\nu(f)=\lim_{i\to\infty}\frac{1}{T_i}\int_0^{T_i}f(q(\psi(t))\bar\De) dt,\forall f\in C_c(\bar N/ \bar \De).$$Thus, we may conclude the statement by applying Proposition \ref{prop:Curve modulo H}, and Lemma \ref{difference of intergal lemma} (see the proof of Lemma \ref{lem:invariance for unipotent p.s.}. The only difference here is that $ N/ H$ is not a subgroup of $\GL(n,\R)$).

By the ergodic decomposition for the $H$-action on $N/\De$ (see e.g. \cite[Theorem 8.20]{EW11}), for all $f\in C_c(N/\De)$,
\begin{align}\label{eq:disintegration of mu}
	\int_{N/\De}f(n\De)d\nu(n\De)=
\int_{\bar N/\bar \De}\int_{H\De/\De}f(nh\De)d\mu_{H\De/\De}(h\De)d\bar \nu(q(n)\bar \De),
		\end{align}
 where $\mu_{H\De/\De}$ denotes the   $H$-invariant probability measure on $H\De/\De$.

Therefore, since $\bar\nu$ is $q(U)$-invariant,  $\nu$ is $U$-invariant.
\end{proof}

\begin{corollary}\label{cor:key cor establishing main thm}
    It holds that $\psi(t)\in H_u$ for all $t\geq 0$, and $\nu$ is the $H$-invariant probability on $H\De/\De\cong H\Ga/\Ga$. Moreover, $H$ is the smallest subgroup such that $ g_0^{-1}H_\varphi g_0\subseteq H$ and that the orbit $H\De/\De$ is closed.
\end{corollary}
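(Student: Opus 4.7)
The plan is to first prove $\psi$ is bounded modulo $H_u$ (not merely a trivial consequence of what is already set up), then upgrade this to $\psi(t)\in H_u$ via the minimality of the hull, and finally read off the description of $\nu$ and the minimality of $H$.

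\textbf{Step 1: $\psi$ is bounded modulo $H_u$.} The idea is to argue by contradiction using Lemma~\ref{lem:invariance of the quotient measure by unip} and the fact that $g_0^{-1}Wg_0 = H_u$ is the largest subgroup generated by unipotent one-parameter subgroups preserving $\nu$. Assume $\psi$ is unbounded in $N/H_u$; then Lemma~\ref{lem:invariance of the quotient measure by unip} yields a unipotent one-parameter subgroup $U\leq N$, not contained in $H$ (hence not in $H_u$), such that $\bar\nu = \bar q_*\nu$ is $q(U)$-invariant. Using the $N$-normality of $H$ (every $g\in N^1(H)$ preserves $\mathfrak h$ by construction of $p_H$, hence normalizes the connected group $H$) together with right $H$-invariance of the Haar measure on $H\Delta/\Delta$, the disintegration of Lemma~\ref{lem:disintegration of mu} gives $P(f)(q(u^{-1}n)\bar\Delta) = P(f)(q(u)^{-1}q(n)\bar\Delta)$, so a change of variables in the $\bar\nu$-integral shows $\nu$ is $U$-invariant. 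This contradicts the maximality of $H_u$.

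\textbf{Step 2: $\psi(t)\in H_u$ for $t\geq T_0$.} Since $\psi$ is bounded modulo $H_u$, it is also bounded modulo $\zcl(H_u)$, which is observable because its solvable radical is unipotent (cf.\ \cite[Corollary~2.8]{Observable_group_grosshans}). Remark~\ref{rem:hull of corrected curve is hull}, applied after conjugation by $g_0$, identifies the hull of $\psi$ with $g_0^{-1}H_\varphi g_0$. By the minimality in Theorem~\ref{thm:main result on the hull and correcting curve}\eqref{enu:hull is minimal-Lie}, $g_0^{-1}H_\varphi g_0\subseteq \zcl(H_u)$; since the left-hand side is connected and generated by unipotent one-parameter subgroups, it in fact lies in $H_u$. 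Therefore $\psi(t) = g_0^{-1}\beta(t)\varphi(t)g_0 \in g_0^{-1}H_\varphi g_0 \subseteq H_u$ for all $t\geq T_0$.

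\textbf{Step 3: Identification of $\nu$.} Because $\psi(t)\pi(e)\in H_u\pi(e)$ for all large $t$, every averaging measure $\mu_{T_i,\psi,\pi(e)}$ is supported on $\overline{H_u\pi(e)}$, and so is $\nu$. By definition of $\mathcal H$, $H_u$ acts ergodically on $H\pi(e)=H\Delta/\Delta$ with respect to the (unique) $H$-invariant probability, so $H_u\pi(e)$ is dense in the closed orbit $H\pi(e)$, giving $\overline{H_u\pi(e)}=H\pi(e)$. Since $\nu$ is an $H$-invariant probability supported on a periodic $H$-orbit, it must be the unique $H$-invariant probability on $H\pi(e)$.

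\textbf{Step 4: Minimality of $H$.} If $L\leq H$ is a closed subgroup with $g_0^{-1}H_\varphi g_0 \leq L$ and $L\pi(e)$ closed, then $\psi(t)\pi(e)\in L\pi(e)$ for all $t$, so $\nu$ is supported on $L\pi(e)$; combined with full support of the $H$-invariant probability on $H\pi(e)$, this forces $L\pi(e)=H\pi(e)$, and hence $L=H$. The main obstacle is really concentrated in Step~1: the delicate interplay between the $q(U)$-invariance of $\bar\nu$ produced by Lemma~\ref{lem:invariance of the quotient measure by unip} and the $H$-disintegration of $\nu$, which must be shown to lift $q(U)$-invariance back to $U$-invariance on $N/\Delta$; the other steps are essentially extractions of information from tools already assembled.
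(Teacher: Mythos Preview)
Your proposal is correct and follows essentially the same route as the paper: Step~1 (contradiction via Lemma~\ref{lem:invariance of the quotient measure by unip} and the disintegration Lemma~\ref{lem:disintegration of mu}) and Step~2 (observability of $\zcl(H_u)$ plus minimality of the hull) match the paper exactly. Two small remarks: in Step~3 the paper argues more directly that $\bar\nu$ is the Dirac mass at the identity coset (since $\psi\subset H_u$), so the disintegration formula immediately gives $\nu=\mu_H$; and in Step~4 you should not restrict to $L\leq H$---the paper instead notes that $g_0^{-1}H_\varphi g_0\,\pi(e)$ is dense in $H\pi(e)$, which gives $H\pi(e)\subseteq L\pi(e)$ for \emph{any} closed $L$ with the stated properties.
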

\begin{proof}
     We first show that $\psi(t)$ is bounded in $N/H_u$.  Suppose $\psi(t)$ is unbounded in $N/H_u$. Then by Lemma  \ref{lem:invariance of the quotient measure by unip}, $\nu$ is invariant by the group generated by $U$ and $H$, which strictly includes $H_u$. This is a contradiction, because $H_u$ is the maximal subgroup generated by unipotent one-parameter subgroups preserving $\nu$. 
     
    Now, since the image of $\psi(t)=g_0^{-1}\be(t)\varphi(t)g_0$ is boundned in $\SL(n,\R)/H_u$, we get that the image of $\be(t)\varphi(t)$ is bounded in $\SL(n,\R)/g_0H_ug_0^{-1}$. Thus,  since $\zcl(H_u)$ is observable (see Lemma \ref{lem:generated by uni implies observ}), by minimality of the hull (see Theorem~\ref{thm:main result on the hull and correcting curve}),  $H_\varphi\subseteq g_0H_ug_0^{-1}$, and in particular $\be(t)\varphi(t)\in g_0H_ug_0^{-1}$ for all $t$. Namely,  $\psi(t)\in g_0^{-1}H_\varphi g_0\subseteq H_u$ for all $t\geq 0$. In particular, $\bar \nu$ is the Dirac measure at the identity coset.  Therefore $\nu=\mu_{H\De/\De}$ by \eqref{eq:disintegration of mu}. In particular $\{\psi(t)\De\}_{t\geq0}$ is dense in $H\De/\De$. In particular $g_0^{-1}H_\varphi g_0\De$ is dense in $H\De/\De$, which finishes the proof.
     
\end{proof}

Theorem \ref{thm: main equidistribution theorem} follows directly from the above corollary.

\section{Curves with a largest possible hull\label{sec:curves with a maximal hull}} \label{sec:7}

The main goal of this section is to prove \Cref{prop:main example for equidistribution} on the equidistribution of the curve \eqref{eq:main example for equidistributing curve}. As an overview, the key idea is that the curve \eqref{eq:main example for equidistributing curve} will be strongly non-contracting, in the sense that it takes nonzero wedges to $\infty$, Lemma \ref{lemma:phi-expands}. The following result, of independent interest, implies the hull of \eqref{eq:main example for equidistributing curve} is semi-simple and acts irreducibly on $\R^n$. Finally, a key classification of \cite[Theorem A.7]{shah2023equidistribution} will allow to prove that the hull equals to the full group $\SL(n,\R)$.

\begin{lemma}\label{lem:hull is semisimple if curve expands}
 Suppose that $\varphi:[0,\infty)\to \SL(n,\R)$ is a continuous curve definable in an  o-minimal structure such that $\varphi$ has the following divergence property:
 \begin{equation}\label{eq:expanding property}
     \lim_{t\to\infty}\varphi(t) \cdot (v_1\wedge \dots \wedge v_k)=\infty,
 \end{equation}for all $1\leq k\leq n-1$ and all linearly independent vectors $v_1,\dots,v_k\in\R^n$. Then, the  hull $H_\varphi$ of $\varphi$ is a connected semi-simple with no compact factors, and the action of $H_\varphi$ on $\R^n$ is irreducible.
\end{lemma}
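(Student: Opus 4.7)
The plan is to establish the four assertions in sequence: connectedness of $H_\varphi$ is immediate from Definition~\ref{def:hull and correcting curve}, so the substantive work is to prove irreducibility of the $H_\varphi$-action on $\R^n$, then derive semi-simplicity from it, and finally rule out compact factors.

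For irreducibility, I would argue by contradiction. Suppose $W\subset\R^n$ were a proper nontrivial $H_\varphi$-invariant subspace of dimension $k$, and pick a basis $v_1,\dots,v_k$ of $W$. Then $w:=v_1\wedge\cdots\wedge v_k\in\bigwedge^k\R^n\setminus\{0\}$ spans an $H_\varphi$-invariant line on which $H_\varphi$ acts through the character $\det(\rho_W)$, where $\rho_W$ is the restriction of the standard representation to $W$. Because restrictions of unipotent matrices to invariant subspaces remain unipotent, this character is trivial on every one-parameter unipotent subgroup, and since Theorem~\ref{thm:main result on the hull and correcting curve} guarantees that $H_\varphi$ is generated by such subgroups, the character is trivial on all of $H_\varphi$; thus $H_\varphi.w=w$. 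Combining $\be(t)\varphi(t)\in H_\varphi$ with the boundedness of $\be$ then yields that $\varphi(t).w=\be(t)^{-1}.w$ is bounded, directly contradicting the divergence hypothesis $\lim_{t\to\infty}\varphi(t).w=\infty$. Alternatively, Lemma~\ref{lem:expanding implies orbit closed} would force $\SL(n,\R).w$ to be Zariski closed in $\bigwedge^k\R^n$, which it is not, since $0$ lies in its Zariski closure while $0\notin \SL(n,\R).w$.

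For semi-simplicity, let $\tilde H$ denote the Zariski closure of $H_\varphi$ in $\SL(n,\R)$. The unipotent radical $R_u(\tilde H)$ is a connected normal unipotent subgroup of $H_\varphi$, and by Kolchin's theorem its common fixed subspace $(\R^n)^{R_u(\tilde H)}$ is nonzero; since $R_u(\tilde H)$ is normal in $H_\varphi$, this subspace is $H_\varphi$-invariant, and irreducibility forces it to equal $\R^n$. Hence $R_u(\tilde H)=\{e\}$, so $\tilde H$ is reductive, and its abelianization $\tilde H/[\tilde H,\tilde H]$ is a torus, which contains no nontrivial unipotent elements. Every one-parameter unipotent subgroup of $H_\varphi$ therefore lies in $[\tilde H,\tilde H]$, and since these generate $H_\varphi$, we obtain $H_\varphi\subset[\tilde H,\tilde H]$, which is semi-simple.

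For the absence of compact factors, suppose $K$ were a compact simple factor of $H_\varphi$ in an almost direct product decomposition. A compact subgroup of $\GL(n,\R)$ contains no nontrivial unipotent element, because $\{\exp(tX):t\in\R\}$ is unbounded whenever $X$ is a nonzero nilpotent matrix. Hence every one-parameter unipotent subgroup of $H_\varphi$ projects trivially to $K$, and being generated by such subgroups, $H_\varphi$ itself projects trivially to $K$, forcing $K=\{e\}$. The step I expect to be the crux is the irreducibility argument, as it is the sole place where the divergence hypothesis is consumed; the wedge construction is precisely what converts the hypothesis on pure wedges of linearly independent vectors into the desired structural statement about $H_\varphi$, after which semi-simplicity and the no-compact-factor conclusion follow from standard properties of groups generated by unipotent one-parameter subgroups.
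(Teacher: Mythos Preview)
Your proof is correct and, in fact, takes a different and more elementary route than the paper. The paper first establishes that $\zcl(H_\varphi)$ is reductive by invoking Lemma~\ref{lem:expanding implies orbit closed} (which rests on Kempf's GIT machinery) together with Matsushima's criterion, then deduces semi-simplicity and the absence of compact factors from the fact that $H_\varphi$ is generated by unipotents, and only at the end proves irreducibility using that semi-simple groups have no nontrivial characters. You reverse the logic: you prove irreducibility first, using directly that a group generated by unipotent one-parameter subgroups has no nontrivial characters (so any invariant pure wedge is fixed), and then use irreducibility together with Kolchin's fixed-point theorem to kill the unipotent radical of the Zariski closure. This bypasses both Matsushima's criterion and Lemma~\ref{lem:expanding implies orbit closed} entirely, which is a genuine simplification.

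One small point worth making explicit in your semi-simplicity step: from $H_\varphi\subset[\tilde H,\tilde H]$ and the Zariski density of $H_\varphi$ in $\tilde H$ you get $\tilde H=[\tilde H,\tilde H]$, so $\tilde H$ itself is semi-simple; since $H_\varphi$ is the real identity component of the algebraic group $\tilde H_\varphi$ (Theorem~\ref{thm:main result on the hull and correcting curve} and Lemma~\ref{lemma:genbyuni}), its Lie algebra coincides with that of $\tilde H$, hence is semi-simple. The paper's proof leaves exactly the same passage implicit, so this is not a gap relative to the original.
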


\begin{proof}
Since $\zcl(H_\varphi)$ is observable, we have  a finite-dimensional rational representation $V$   of $\SL(n,\R)$ and  $v\in V$ such that $\zcl(H_\varphi)$ is the isotropy group of $v$. Note that $\varphi(t)\cdot v,t\geq 0$ is bounded since $\varphi$ is bounded in $\SL(n,\R)/H_\varphi$. Since we assume \eqref{eq:expanding property}, it follows by Lemma \ref{lem:expanding implies orbit closed}  that the orbit $\SL(n,\R)\cdot v$ is Zariski closed. Then, Matsushima criterion \cite{Matsushima_crit} implies that $\zcl(H_\varphi)$ is reductive. Since $H_\varphi$ is connected and generated by one-parameter unipotent subgroups, it follows that  $H_\varphi$ is a connected semi-simple group with no compact factors. Finally, we prove that $H_\varphi$ acts on $\R^n$ irreducibly. Suppose for contradiction that $\R^n=V_1\oplus V_2$ where both $V_1$ and $V_2$ are nontrivial sub-spaces invariant under $H_\varphi$. Let $v_1,\dots,v_k$ be a basis for $V_1$. Here $1\leq k<n$ because $V_2$ is not trivial.   Since semi-simple groups have no nontrivial characters, we obtain that $H_\varphi\cdot(v_1\wedge\cdots\wedge v_k)=v_1\wedge\cdots\wedge v_k$. Since $\varphi([0,\infty))$ is bounded in $\SL(n,\R)/H_\varphi$, we get $\varphi(t)\cdot (v_1\wedge\cdots\wedge v_k)$ is bounded as $t\to\infty$. This contradicts \eqref{eq:expanding property}.
\end{proof}

Let $\varphi$ be the curve given by \eqref{eq:main example for equidistributing curve}. 
We now verify that $\varphi$ satisfies a divergence property, which is stronger than \eqref{eq:expanding property}. 

\begin{lemma}  \label{lemma:phi-expands}
    For any $1\leq k\leq n$, and $0\neq v\in \bigwedge^k\R^{n+1}$, $\lim_{t\to\infty} \varphi(t)\cdot v\to\infty$ as $t\to\infty$.
\end{lemma}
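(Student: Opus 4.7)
The plan is to expand $\varphi(t).v$ in the standard basis of $\wedge^k \R^{n+1}$ and exhibit one coefficient that grows to infinity; since a single diverging coefficient forces $\norm{\varphi(t).v}\to\infty$, this is enough. Let $e_0,\dots,e_n$ be the standard basis of $\R^{n+1}$, so that $\varphi(t)e_0 = f_0(t)e_0$ and $\varphi(t)e_j = f_j(t)e_0 + h_j(t)^{-1}e_j$ for $1\le j\le n$. Writing $v = \sum_I c_I e_I$ with $I$ ranging over $k$-subsets of $\{0,\dots,n\}$, a direct wedge expansion gives: the coefficient of $e_I$ for $0\notin I$ is $c_I \prod_{j\in I} h_j(t)^{-1}$ (which decays, since $\ka_j := \deg h_j > 0$), while for $L\subseteq\{1,\dots,n\}$ with $|L|=k-1$, the coefficient of $e_{\{0\}\cup L}$ takes the form
\[
C_L(t) \;=\; \Bigl(\prod_{l\in L} h_l(t)^{-1}\Bigr)\, B_L(t), \quad B_L(t) := c_{\{0\}\cup L}\,f_0(t) + \sum_{j\in\{1,\dots,n\}\setminus L}\epsilon_{L,j}\,c_{L\cup\{j\}}\,f_j(t),
\]
for some signs $\epsilon_{L,j}\in\{\pm 1\}$. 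So it suffices to exhibit a single $L$ with $\deg B_L > \sum_{l\in L}\ka_l$: then $\deg C_L > 0$ and that coefficient diverges.

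If $c_{\{0\}\cup L}\neq 0$ for some $L$, condition~\ref{itm:ex2} yields $\deg B_L\ge n$; since $|L|=k-1<n$ and $\sum_{l=1}^n\ka_l=n$ force $\sum_{l\in L}\ka_l < n$, we are done. The real work lies in the opposite case, in which $c_{\{0\}\cup L}=0$ for every $L$; equivalently, $v$ lies in $\wedge^k \Span(e_1,\dots,e_n)$. Here one cannot pick $L$ arbitrarily, because the decay from $\prod h_l^{-1}$ can dominate $B_L$ for a poor choice. My strategy is to set
\[
j^* \;:=\; \min\bigl\{\min I : I\subseteq\{1,\dots,n\},\ |I|=k,\ c_I\neq 0\bigr\},
\]
pick any $I_0$ achieving this minimum with $c_{I_0}\neq 0$, and take $L_0 := I_0\setminus\{j^*\}\subseteq\{j^*+1,\dots,n\}$.

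By the minimality of $j^*$, $c_{L_0\cup\{j\}}=0$ for every $j<j^*$, so $B_{L_0} = \epsilon_{L_0,j^*}\, c_{I_0}\,(f_{j^*}+g)$ for some $g$ in the linear span of $\{f_j : j>j^*\}$, and condition~\ref{itm:ex3} then gives $\deg B_{L_0} > n-j^*$. To compare with the weight $\sum_{l\in L_0}\ka_l$, since $L_0\subseteq\{j^*+1,\dots,n\}$,
\[
\sum_{l\in L_0}\ka_l \;\le\; \sum_{l=j^*+1}^n \ka_l \;=\; n - (\ka_1+\cdots+\ka_{j^*}).
\]
The key ingredient --- and the reason Case~2 is the main obstacle --- is the elementary fact that for a non-increasing sequence of positive reals with overall average $1$ (here $\sum_{l=1}^n\ka_l/n = 1$), each prefix sum is at least as large as its length: $\ka_1 + \cdots + \ka_{j^*} \ge j^*$. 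Combining, $\sum_{l\in L_0}\ka_l \le n-j^* < \deg B_{L_0}$, hence $\deg C_{L_0} > 0$, $C_{L_0}(t)\to\infty$, and so $\varphi(t).v\to\infty$.
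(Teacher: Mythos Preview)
Your proof is correct and follows essentially the same approach as the paper: both arguments expand $\varphi(t).v$ in the standard basis, isolate the coefficient $C_{\{0\}\cup L}$ for a carefully chosen $(k-1)$-subset $L$ determined by the minimum index $j^*$ (the paper's $i_1$) appearing in the support of $v$, and then combine conditions~\ref{itm:ex2}--\ref{itm:ex3} with the prefix-sum inequality $\ka_1+\cdots+\ka_{j^*}\ge j^*$ to force $\deg C_{\{0\}\cup L}>0$. The only cosmetic difference is that you make the dichotomy ``some $c_{\{0\}\cup L}\neq 0$'' versus ``$v\in\wedge^k\Span(e_1,\dots,e_n)$'' explicit, whereas the paper treats both at once via its $i_1$.
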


\begin{proof}
We denote by $e_0,e_1,\dots,e_n$ the canonical basis vectors of $\R^{n+1}$, where $e_i$ denotes the vectors whose coordinates equal $0$ besides the $(i+1)$th position in which $1$ appears. Fix an integer $1\leq k\leq n$ and let $\mathcal{I}$ be the collection of all subsets of $\{0,\ldots,n\}$ of cardinality $k$. Let $\varphi$ be the curve given by \eqref{eq:main example for equidistributing curve}. For any $I:=\{i_1,\ldots,i_k\}\in \mathcal I$, where $i_1<i_2<\cdots<i_k$, let $e_I=e_{i_1}\wedge \cdots\wedge e_{i_k}$. There are two cases for $\varphi(t)\cdot e_I$: 

If $i_1=0$, then 
 \begin{align}\varphi(t)\cdot e_{I}
 =&f_0(t)e_0\wedge[f_{i_2}(t)e_0+h_{i_2}(t)^{-1}e_{i_2}]\wedge\cdots\wedge[f_{i_{k}}(t)e_0+h_{i_{k}}(t)^{-1}e_{i_k}]\nonumber\\
 =&f_0(t)\left(\prod_{l=2}^k h_{i_l}(t)^{-1}\right)e_I;\label{eq:action of varphi canonical exterior vecs with e0}
\end{align}
and if $i_1\geq 1$, then 
\begin{align}
          \varphi(t)\cdot e_{I}
          =&[f_{i_1}(t)e_0+h_{i_1}(t)^{-1}e_{i_1}]\wedge\cdots\wedge[f_{i_k}(t)e_0+h_{i_k}(t)^{-1}e_{i_k}]\nonumber\\
          =&\sum_{l=1}^k (-1)^{l-1} f_{i_l}(t)\left(\prod_{j\in\{1,\ldots,k\}\setminus \{l\}}h_{i_j}(t)^{-1}\right)e_{\{0\}\cup I\setminus\{i_l\}} \nonumber\\
          &+\prod_{j\in\{1,\ldots,k\}} h_{i_j}(t)^{-1}e_I.
          \label{eq:action of varphi on exteriors not including e0}
    \end{align}
Let $\mathbf{v}\in\bigwedge^k \R^{n+1}\setminus\{0\}$. Then $\mathbf{v}=\sum_{I\in\mathcal I} c_I e_I$, 
where $c_I\in\R$. Then, \begin{equation} \label{eq:CI}
\varphi(t)\cdot\mathbf{v}=\sum_{I\in \mathcal I} C_I(t)e_I, 
\end{equation}
where $C_I(t)\in \R$. The coefficients of our interest are $C_I(t)$ such that $0\in I$. For any $1\leq i_2<\cdots<i_k\leq n$, and $J=\{i_2,\ldots,i_k\}$,
\begin{align} \label{eq:C*}
C_{\{0\}\cup J}(t)=
    \left(\prod_{j\in J} h_j(t)^{-1}\right)\sum_{l\in\{0,1,\ldots,k\}\setminus J} f_l(t)c_{J\cup\{l\}}(-1)^{j_l},
\end{align}
where $j_l=\min\{j\in \{2,\ldots,k\}:i_{j}>l\}$. 

Let $\mathcal{I}_{\mathbf v}=\{I\in \mathcal{I}:c_I\neq 0\}\neq\varnothing$, as $\mathbf{v}\neq 0$. Let $i_1=\min\cup\mathcal{I}_\mathbf{v}$. We pick $I\subseteq \mathcal{I}_\mathbf{v}$ such that $i_i\in I$. Let $J=I\setminus\{i_1\}$. Then, $c_{J\cup\{l\}}=0$ for every $l<i_1$. Therefore, by \eqref{eq:C*}
\begin{equation} \label{eq:C_J}
C_{\{0\}\cup J}(t)=
    \left(\prod_{j\in J} h_j(t)\right)^{-1}
    \left(f_{i_1}c_{I}+
    \sum_{l\in\{i_1+1,\ldots,k\}\setminus J} f_l(t)c_{J\cup\{l\}}(-1)^{j_l}\right).
\end{equation}

By condition~\eqref{itm:ex1}, $\sum_{i=1}^n \deg h_i=n$ and $\deg h_1\geq \cdots \geq \deg h_n$. Therefore 
\begin{equation} \label{eq:sumhi}
\sum_{i=1}^j \deg h_i\geq j
\end{equation}
for each $1\leq j\leq n$. Since $1\leq k\leq n$ and $\deg h_n>0$, we get
\begin{equation} \label{eq:degh1}
\deg h_{i_2}+\cdots+\deg h_{i_k}\leq \deg h_1+\cdots+\deg h_{k-1}\leq n-\deg h_n<n,
\end{equation}
and if $i_1\geq 1$, then by \eqref{eq:sumhi},
\begin{equation} \label{eq:degh2}
\deg h_{i_2}+\cdots+\deg h_{i_k}\leq  n-(\deg h_1+\cdots+\deg h_{i_1})\leq n-i_1.
\end{equation}

Since $c_I\neq 0$, by our assumptions \eqref{itm:ex2} and \eqref{itm:ex3}, 
\begin{equation} 
\deg\left(c_{I}f_{i_1}+
    \sum_{l\in\{i_1+1,\ldots,k\}\setminus J} (-1)^{j_l}c_{J\cup\{l\}}f_l(t)\right)
    \begin{array}{ll}
    \geq n & \text{if $i_1=0$}\\ 
     > n-{i_1} & \text{if $i_1\geq 1$}.
     \end{array} \label{eq:degf}
\end{equation}
By combining \eqref{eq:C_J}, \eqref{eq:degh1}, \eqref{eq:degh2} and \eqref{eq:degf}, we conclude that $\deg C_{\{0\}\cup J}(t)>0$. Hence, the conclusion of the lemma follows from \eqref{eq:CI}.
\end{proof}

\begin{lemma}\label{lem:ps group of our main example}
   Let $\{\rho(s):s\in\R\}$ be the P.S. group of $\varphi$ (Definition~\ref{def:p.s. group}). Then, there exists $v=(v_1,\ldots,v_n)\in\R^n\setminus\{0\}$, such that 
       \begin{equation} \label{eq:ps group of our main example}
         \rho(s):=\begin{bsmallmatrix}
    1&sv\\
       0& I_n\end{bsmallmatrix}\in\SL(n+1,\R),\, \forall s\in \R.
    \end{equation}
\end{lemma}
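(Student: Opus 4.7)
The plan is to compute $M_\varphi := \lim_{t\to\infty} t^r \varphi'(t)\varphi(t)^{-1}$ from Proposition \ref{prop:poulios summary} directly, read off the exponent $r$, verify $r<1$, and then identify $\rho(s) = \exp(sM_\varphi)$ with the matrix in \eqref{eq:ps group of our main example}.

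A direct inversion exploiting the upper-triangular block structure $\varphi(t)=\begin{psmallmatrix} f_0 & \mathbf{f}^T \\ 0 & D^{-1} \end{psmallmatrix}$, with $D=\mathrm{diag}(h_1,\ldots,h_n)$ and $\mathbf{f}=(f_1,\ldots,f_n)^T$, yields
\begin{equation*}
\varphi'(t)\varphi(t)^{-1}=\begin{pmatrix} f_0'/f_0 & h_1 f_0 (f_1/f_0)' & \cdots & h_n f_0 (f_n/f_0)' \\ 0 & -h_1'/h_1 & & \\ \vdots & & \ddots & \\ 0 & & & -h_n'/h_n \end{pmatrix}.
\end{equation*}
Each diagonal entry has degree $-1$, so it vanishes in the scaled limit for any $r<1$; the behavior of $M_\varphi$ is thus controlled entirely by the top-row entries $h_j f_0(f_j/f_0)'$.

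The one nontrivial algebraic input is that condition \eqref{itm:ex2} forces $\deg f_j \neq n$ for every $j\geq 1$. Indeed, if some $f_{j_0}$ had $\deg f_{j_0} = n$ with leading coefficient $c_{j_0}\neq 0$, then $g := -(c_0/c_{j_0})f_{j_0} \in \Span(f_1,\ldots,f_n)$ would cancel the leading $t^n$ term of $f_0$, forcing $\deg(f_0+g)<n$; this contradicts \eqref{itm:ex2} unless $f_0+g=0$, but that possibility is also ruled out by applying \eqref{itm:ex2} to $g=-f_0$. With $\deg f_j \neq n$, the standard differentiation rules for polynomially bounded definable functions give $\deg (f_j/f_0)' = \deg f_j - n - 1$, whence
\[
\deg\!\bigl[h_j f_0(f_j/f_0)'\bigr] = \deg h_j + \deg f_j - 1,
\]
which is strictly greater than $-1$ by \eqref{itm:ex1} ($\deg h_j > 0$) and \eqref{itm:ex3} ($\deg f_j > n - j\geq 0$).

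Setting $r := 1 - \max_{1\leq j\leq n}(\deg h_j + \deg f_j)$, one has $r<1$, and $M_\varphi = \lim_{t\to\infty} t^r\varphi'(t)\varphi(t)^{-1}$ exists as a strictly upper-triangular matrix whose only possibly nonzero entries lie in the top row; write $M_\varphi = \begin{psmallmatrix} 0 & v \\ 0 & 0 \end{psmallmatrix}$ with $v=(v_1,\ldots,v_n)\in\R^n$. The coordinate $v_j$ is the leading coefficient of $t^r h_j(t)f_0(t)(f_j/f_0)'(t)$, which is nonzero precisely for the indices $j$ realizing the maximum defining $r$; at least one such index exists by construction, so $v\neq 0$. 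Since $r<1$, Proposition \ref{prop:poulios summary} identifies the P.S. group of $\varphi$ with $\{\exp(sM_\varphi):s\in\R\}$, and because $M_\varphi^2 = 0$ we have $\exp(sM_\varphi) = I + sM_\varphi$, which is exactly $\rho(s)$ in \eqref{eq:ps group of our main example}. The only step that genuinely requires care is the preliminary claim $\deg f_j \neq n$; the remainder is routine degree bookkeeping.
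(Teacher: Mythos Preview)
Your proof is correct, but it takes a genuinely different route from the paper's. The paper argues qualitatively: it first invokes Lemma~\ref{lemma:phi-expands} (the divergence property) and then Lemma~\ref{lem:p.s. group of non-contracting is unip} to conclude that the P.S.\ group is unipotent; then it simply observes that $\varphi$ lies in the closed subgroup $F\leq \SL(n+1,\R)$ of matrices with nonzero entries only on the diagonal and in the top row, so $\rho(s)=\lim_t \varphi(h_{r,s}(t))\varphi(t)^{-1}\in F$, and the only unipotent one-parameter subgroups of $F$ have the form \eqref{eq:ps group of our main example}. No computation of $M_\varphi$ or of the exponent $r$ is needed.

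Your approach, by contrast, is a direct calculation: you compute $\varphi'\varphi^{-1}$ entrywise, extract the degrees using conditions \eqref{itm:ex1}--\eqref{itm:ex3} (including the nice observation that \eqref{itm:ex2} forces $\deg f_j\neq n$), read off $r=1-\max_j(\deg h_j+\deg f_j)<1$, and identify $M_\varphi$ explicitly. This bypasses Lemmas~\ref{lemma:phi-expands} and~\ref{lem:p.s. group of non-contracting is unip} and the ``essentially diagonal'' machinery of Section~\ref{sec:unip invariance} entirely, at the cost of more bookkeeping. What you gain is explicit information: the actual value of $r$ and which coordinates $v_j$ are nonzero (those $j$ achieving the maximum). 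One small cosmetic point: Proposition~\ref{prop:poulios summary} gives $\rho:\R\to P$ as an isomorphism onto the group with Lie algebra $\R M_\varphi$, so a priori $\rho(s)=\exp(csM_\varphi)$ for some $c\neq 0$; since $M_\varphi^2=0$ this still yields the form \eqref{eq:ps group of our main example} after absorbing $c$ into $v$, so the conclusion is unaffected.
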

\begin{proof}
Due to Lemma~\ref{lemma:phi-expands}, by Lemma~\ref{lem:p.s. group of non-contracting is unip}, $\rho(s)$ is unipotent. 
Moreover, there exists $r<1$ such that  for any $s\in\R$, 
\begin{equation} \label{eq:rhos}
\varphi(h_{r,s}(t))\varphi(t)^{-1}\to \rho(s), 
\end{equation}where $h_{r,s}$ is as in \eqref{eq:change of speed r<1}.
We note that $\varphi(s)$ is contained in the subgroup $F$ of $\SL(n+1,\R)$ whose nonzero entries are only on the diagonal and in the top row. Therefore, by \eqref{eq:rhos}, $\rho(s)$ is a one-parameter unipotent subgroup of $F$. Hence, $\rho(s)$ is of the form given by \eqref{eq:ps group of our main example}.
\end{proof}

\begin{proof}[Proof of Proposition~\texorpdfstring{\ref{prop:main example for equidistribution}}{1.10}]
Let $\{e_0,\ldots,e_n\}$ denote the standard basis of $\R^{n+1}$. Let $(v_1,\ldots,v_n)\in\R^n$ be as in Lemma~\ref{lem:ps group of our main example}. Let $W_1$ denote the orthogonal complement of $v=\sum_{i=1}^n v_ie_i$ in the span of $\{e_1,\ldots,e_n\}$. Then the fixed points space of $\rho(s)$ in $\R^{n+1}$ is $W:=\R e_0+W_1$. 

Now let $\beta$ denote the correcting definable curve such that $\beta(t)\varphi(t)\subseteq H_\varphi$. Since $\beta(t)\to\beta(\infty)\in \SL(n+1,\R)$ as $t\to\infty$, by \eqref{eq:rhos}, we conclude that $\rho_1(s):=\beta(\infty)\rho(s)\beta(\infty)^{-1}\in H_\varphi$ for all $s\in \R$. By Lemma~\ref{lemma:phi-expands} and Lemma~\ref{lem:hull is semisimple if curve expands}, $H_\varphi$ is a semisimple group acting irreducibly on $\R^{n+1}$. Therefore, by Jacobson-Morosov theorem, there exists a homomorphism $\eta:\SL(2,\R)\to H_\varphi$ such that $\eta(u(s))=\rho_1(s)$, where
$u(s):=\begin{bsmallmatrix}
    1 & s \\0 &1
\end{bsmallmatrix}\in\SL(2,\R)$. 
As noted earlier, the fixed points space of $\rho_1$ on $\R^{n+1}$ is $\beta(\infty)W$, and it has dimension $n$. 
Expressing the representation of $\SL(2,\R)$ on $\R^{n+1}$ via $\eta$ as a direct sum of irreducible representations of $\SL_2(\R)$, and noting that the dimension of the $u(s)$-fixed subspace is $n$, we conclude that $\R^{n+1}=V_1\oplus V_2$, where $V_1$ is isomorphic to the standard representation of $\SL(2,\R)$ on $\R^2$, and $\SL(2,\R)$ acts trivially on $V_2$. Therefore, by \cite[Theorem A.7]{shah2023equidistribution}, either $H_\varphi=\SL(n+1,\R)$ or $n+1=2d$ for some $d\geq 2$ and $H_\varphi$ is conjugate to the standard symplectic group $\SP(\R^{2d})$. 

Consider the case when $H_\varphi\neq \SL(n+1,\R)$. Then, there exists a basis $w_1,w'_1,w_2,w'_2,\ldots,w_d,w'_d$ of $\R^{2d}$  such that $H_\varphi$ stabilizes
\[
\mathbf{v}=w_1\wedge w'_1+w_2\wedge w'_2+\cdots+w_n\wedge w'_n\in \wedge^2\R^{2d}.
\]
Now, by Lemma~\ref{lemma:phi-expands}, $\varphi(t)$ cannot be bounded in $\SL(2d,\R)/H_\varphi$, contradictions the assumption that $H_\varphi$ is the hull of $\varphi$. Therefore, $H_\varphi=\SL(n+1,\R)$. This completes the proof of Proposition~\ref{prop:main example for equidistribution}. 
\end{proof}

\begin{proof}[Proof of Corollary~\ref{cor:curve}] If $G$ is a linear Lie group, the result follows immediately from Proposition~\ref{prop:main example for equidistribution} and Theorem~\ref{thm: main equidistribution theorem}. If $G$ is not a linear Lie group, for our proof to work, it is sufficient that after the composition with the Adjoint representation of $G$ on its Lie algebra $\mathfrak{g}$, the curve is definable and non-contracting in $\GL(\mathfrak{g})$. This property is valid in our situation.
\end{proof}

 \appendix
 \section{Kempf's Lemma for real algebraic groups}
We note that \cite[Lemma 1.1(b)]{Kem78} is stated for reductive algebraic groups over an arbitrary field. We need the same result for any algebraic group defined over $\R$. Here, we verify that the proof given by Kempf is valid for real algebraic groups $G$ using the most naive language.

For finite-dimensional real vector spaces $V$ and $W$, we say that $P:V\to W$ is a \textit{polynomial map} if, after choosing bases for $V$ and $W$, the map $P$ in those coordinates is a multi-variable polynomial in each coordinate. We say that the map is $G$-equvairant if $g\cdot P(x)=P(g\cdot x)$ for all $g\in G$ and $x\in V$.

 \begin{lemma}[Kempf]\label{lem:kempf lemma adjusted}
    Let $G$ be the set of $\R$-points of an algebraic group defined over $\R$. Let $V$ be a rational representation of $G$. Suppose that for $v\in V$, it holds that $S:=\zcl(G\cdot v)\setminus G\cdot v$ is non-empty. Then there exists a rational representation $W$ and an equivariant polynomial map $P:V\to W$ such that $P(S)=\{0\}$ and $P(v)\neq 0$.
 \end{lemma}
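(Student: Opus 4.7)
The plan is to adapt Kempf's proof of Lemma 1.1(b) to the real-algebraic setting, noting that reductivity is not actually needed --- only that $S$ is Zariski closed and that the $G$-action on $\R[V]$ is locally finite. First I would find a separating polynomial $f\in \R[V]$ with $f|_S\equiv 0$ and $f(v)\neq 0$. Since $v\in G.v$ and $G.v\cap S=\emptyset$ we have $v\notin S$; granting that $S$ is Zariski closed in $V$ (an orbit of an algebraic group is Zariski open in its Zariski closure, by Chevalley's theorem on constructible images of morphisms), such $f$ exists by the definition of the Zariski closure.

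Next I would promote $f$ to a finite-dimensional $G$-submodule of $\R[V]$. Let $G$ act on $\R[V]$ by $(g\cdot h)(x):=h(g^{-1}x)$; this action preserves the grading by degree. Consequently $M:=\Span_{\R}\{g\cdot f:g\in G\}$ is contained in the finite-dimensional subspace $\R[V]_{\le \deg f}$, and is therefore a finite-dimensional rational $G$-module. Since $S$ is $G$-invariant and $f|_S\equiv 0$, every translate $g\cdot f$ also vanishes on $S$, and hence so does every element of $M$.

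Finally I would build $P$ via the dual construction. Set $W:=M^{*}$ with the dual $G$-action, and define $P:V\to W$ by $P(x)(h):=h(x)$; choosing a basis $h_1,\ldots,h_N$ of $M$ expresses $P$ in coordinates as the polynomial map $x\mapsto(h_1(x),\ldots,h_N(x))$. Equivariance is the direct check
\[
P(gx)(h)=h(gx)=(g^{-1}\cdot h)(x)=P(x)(g^{-1}\cdot h)=(g\cdot P(x))(h),
\]
while $P(v)(f)=f(v)\neq 0$ yields $P(v)\neq 0$, and $P(s)=0$ for every $s\in S$ since each $h\in M$ vanishes on $S$.

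The main obstacle is the first step in the real-algebraic setting, because over $\R$ an orbit $G.v$ need not be Zariski open in its real Zariski closure. I would resolve this by passing to the complexification: the complex orbit $\mathbf{G}(\C).v$ is Zariski open in its complex Zariski closure by the classical Chevalley argument, and because $v$ is real and $\mathbf{G}$ is defined over $\R$, the defining ideal of its complement is invariant under complex conjugation and therefore generated by real polynomials --- so the required separating $f$ can be taken in $\R[V]$, vanishing on $S$ and with $f(v)\neq 0$.
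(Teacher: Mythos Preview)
Your main construction matches the paper's: take a finite-dimensional $G$-stable space of polynomials vanishing on $S$ (the paper uses all of $I(d)$, the degree-$\le d$ part of the ideal of $S$; you use the $G$-span of a single separating $f$), dualize to get $W$, and let $P$ be evaluation. The paper handles the Zariski-closedness of $S$ by a bare citation of Borel's closed orbit lemma, without discussing real versus complex points.

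You are right to flag that step, but your complexification does not deliver $f|_S\equiv 0$. It produces a real $f$ vanishing on the real points of $\overline{\mathbf G(\C).v}^{\mathrm{Zar}}\setminus\mathbf G(\C).v$, which can be strictly smaller than $S=\zcl(G.v)\setminus G.v$: take $G=\R^\times$ acting on $\R$ by $t.x=t^2x$ with $v=1$; then $G.v=(0,\infty)$ and $S=(-\infty,0]$, whereas the complex orbit is $\C^\times$ and its boundary meets $\R$ only at $\{0\}$. Since no nonzero polynomial vanishes on $(-\infty,0]$, the lemma as literally stated is actually false in this example. What survives --- and what both your argument and the paper's appeal to Borel really establish --- is the version with $S$ replaced by the real locus of the complex-orbit boundary; that weaker statement is all that Lemma~\ref{lem:correction of bounded trajectory in reps} needs, because a real limit $v'=\lim_{t\to\infty}\varphi(t).v$ lying outside $G.v$ already lies outside $\mathbf G(\C).v$ (the $\mathbf G(\R)$-orbits being open and closed in the real points of the complex orbit).
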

 
 \begin{proof}
   First, note that since $\zcl (G\cdot v)$ and  $G \cdot v$ are $G$-invariant, we get that $S$ is $G$-invariant. Let $\R[V]$ be the coordinate ring on the vector space $V$ and consider $I\subseteq \R[V]$ the polynomial ideal of the polynomials vanishing on $S$. For $d\in \N$, let $I(d)\subseteq I$ be the subset of polynomials in $I$ with degree bounded by $d$. We may choose $d$ such that the ideal generated by $I(d)$ is $I$. Notice that $I(d)$ is a finite-dimensional vector space. Consider the right action of $G$ on $\R[V]$ defined by\begin{equation}\label{eq:right action on Id}
       (f. g)(x):=f(g\cdot x),~g\in G,~x\in V,~f\in \R[V].
   \end{equation}
   The action preserves the degree of a polynomial. So, it preserves $I(d)$ because $S$ is $G$-invariant. Let $I(d)^*$ be the space of linear functionals on $I(d)$. Using the representation \eqref{eq:right action on Id}, we get a left action on $I(d)^*$ defined by\begin{equation}\label{eq:left action on dual of Id}
       g\cdot l(f):=l(f\cdot g),~g\in G,~l\in I(d)^*,~f\in I(d).
   \end{equation}

 Now consider the map $P:V\to I(d)^*$ defined by\begin{equation}\label{eq:definition of poly equivariant map}P(x):=\text{ev}_x,  
 \end{equation}where $\text{ev}_x$ is the functional mapping $f\in I(d)$ to $f(x)$. It is straightforward to verify that $P$ is an equivariant polynomial map. Finally, we claim that $P(v)=\text{ev}_v$ is a nonzero functional. By the definition of $S$ and $P$, $P(x)=0$ for all $x\in S$. It remains to show that $P(v)\neq 0$. For that, recall the closed orbit lemma  \cite[Section 1.8]{Bo91}, which states that $S$ is Zariski closed. This implies that there exists a $p\in I(d)$ such that $p(S)=0$ and $p(v)\neq 0$. Since $P(v)(p)=p(v)$, $P(v)\neq 0$. 
\end{proof}
\section{\label{sec:Basic-notions-in o-minim}Basic notions in the theory
of o-minimal structures}

In order to keep our paper self contained, we will now recall  the basic definitions and results on o-minimal structures on the field of real numbers. We refer to \cite{van_den_dries_omin_book} and \cite{Geom_cat_and_o_min_str} for more details.

We begin with the notion of a structure \emph{on the ordered field of real numbers}, and later introduce the stronger notion of an o-minimal structure.
\begin{definition}
\label{def:structure-}\emph{A} \emph{structure $\St$ on the ordered field of real numbers} is a sequence of families of sets $$\St_d\subseteq \{\text{subsets of }\R^d\},~d\in\N,$$such that the following requirements are satisfied: 
\begin{enumerate}
\item For each $n\in\N$, $\mathscr S_n$ is a Boolean algebra of sets, and $\R^n\in \mathscr S_n$.
        \item For each $n\in\N$, $\mathscr S_n$ contains the diagonals $\{(x_1,\dots,x_n):x_i=x_j\},$ for all $1\leq i<j\leq n$.
        \item For each $n\in \N$, if $A\in \mathscr S_n$, then $A\times \R\in\mathscr S_{n+1}$ and $\R\times A\in\mathscr S_{n+1}$.
        \item For each $n\in \N$, if $A\in \mathscr S_{n+1}$, then $\pi(A)\in \mathscr S_n$, where $\pi:\R^{n+1}\to\R^n$ is the projection to the first $n$ coordinates.
        \item $\mathscr S_3$ contains the graph of addition $\{(x,y,x+y):x,y\in\R\}$ and the graph of multiplication $\{(x,y,xy):x,y\in\R\}$.
        
\end{enumerate} 
\end{definition}
Given a structure $\St$, we say
that $A\subseteq\R^{d}$ is definable if $A$ belongs to $\St_d$.
We say that a function $f:B\to\R^{m}$ is definable, for $B\subseteq \R^d$, 
if its graph $\{(x,f(x):x\in B\}$ belongs to $\St_{d+m}$.

We note the following basic properties, which are proved directly from Definition \ref{def:structure-}
\begin{lemma}
\label{lem:basic properties of structure}Let $\St$ be a structure on the field of real numbers. Then\emph{:}
\begin{enumerate}
\item The restriction of a definable function to a definable set is definable function on that set. 
\item \label{enu:For-any-structure}If $f:\R^{d}\to\R$ is definable, 
and $A\subseteq \R^d$, is definable, then the subsets: 
\[
\left\{ \mathbf{x}\in A\mid f(\mathbf{x})=0\right\} ,\ \left\{ \mathbf{x}\in A\mid f(\mathbf{x})>0\right\} 
\]
are definable.
\item If $f:A\to B$ and $g:B\to C$ are definable, then
the composition $g\circ f$ is also definable.
\item If $f,g:A\to\R$ are definable, then $f+g,$ $f-g,$ $f\cdot g$
are definable, and $f/g$ on the domain $\{x\in A: g(x)\neq0\}$
is definable.
\end{enumerate}
\end{lemma}

The notion of a structures belongs to model theory, and it is most natural to describe definable sets using first order logic formulae, see \cite[Appendix A]{Geom_cat_and_o_min_str} for a concise introduction. The advantage of the definition above is in allowing to define structures without resorting to notions of model theory. 

The proof of the statement below demonstrates the technique of proving the definability of functions using first-order logic formulae. The claim  is useful for us since the type of the function considered there is used in the proof of Theorem  \ref{thm:delta goodness for linear combinations of o-minimal}.

\begin{lemma} \label{lemma:sup-definable}
    Suppose that $f:\R^2\to \R$ is a bounded definable function, and let $a:\R\to\R$ and $b:\R\to \R$ be definable functions such that $a(s)< b(s),~\forall s$. Then the function $\Psi:\R\to\R$ defined by
    $$\Psi(s):=\sup\left\{|f(s,t)|:t\in \left[a(s),b(s)\right]\right\},\ \forall s\in \R,$$
    is definable.
\end{lemma}
\begin{proof}
    To prove the $\Psi$ is definable, we need to verify that the graph of $\Psi$, $\Ga(\Psi):=\{(s,\Psi(s)):s\in \R\}$ is definable. 
    Consider the definable set:
    $$A_1:=\{(s,y,t):a(s)\leq t\leq b(s)\text{ and }|f(s,t)|>y\},$$
    and consider the projection $\pi_{3,2}:\R^3\to\R^2$ to the first coordinates. Then  $B_1:=(\pi_{2,3}(A_1))^c$ is definable, and it can be alternatively described by:
    $$B_1:=\{(s,y):(\forall t\in[a(s),b(s)])\  |f(s,t)|\leq y\}.$$
    In a similar way (while slightly complicated), the set:
    $$B_2:=\{(s,y):(\forall \e>0)(\exists t\in[a(s), b(s)]) \ y-\epsilon\leq |f(s,t)|\}$$
    is definable.
    The graph of $\Psi$ is then $B_1 \cap B_2$.
    \end{proof}
    % This is verified by observing that the following formula defines $\Ga(\Psi)$:
    % \begin{align*}
    %     &\ \forall t\in\R \ (a(s)\leq t\leq b(s)\wedge f(s,t)\leq y)\\
    %     &\wedge \forall \e>0\ \exists t\in\R\ (a(s)\leq t\leq b(s)\wedge y-\epsilon\leq f(s,t)).\end{align*}

\subsection{O-minimal structures}
The notion of o-minimal structures was introduced in \cite{generalization_tarski_seind_dries}, as a generalization of the real semi-algebraic geometry.
\begin{definition}(\emph{o-minimal structures})
A structure $\St$  expanding the real field is called o-minimal if $\St_1$ consists of  all possible sets which are finite unions of points and intervals.
\end{definition}

The semi-algebraic sets are definable in every o-minimal structure. Here, the semi-algebraic sets are boolean combinations of sets of the form $\{x\in \R^d: P(x)=0\}$, $\{x\in \R^d: P(x)>0\}$, where $P\in\R[X_1,\dots,X_d]$. By the Tarski-Seidenberg theorem, the semi-algebraic sets form an o-minimal structure.

We collect the following important results, see  \cite{van_den_dries_omin_book} for detailed proofs.  For the following, fix an o-minimal structure.
\begin{theorem}\label{thm:monotonicity thm}\emph{\cite[Ch. 3, Thm. 1.2 and Ch. 7, Prop. 2.5]{van_den_dries_omin_book}} 
 Suppose that $f:(a,b)\to\R$ is definable, 
 where $-\infty\leq a<b\leq\infty$.  
 Then, there exist $a_0,a_1,...,a_{k+1}$ such that
 $a =: a_0 < a_1 < \dots < a_k < a_{k+1} := b$ and such that in each sub-interval $I=(a_i,a_{i+1})$, 
 $f|_I$ is differentiable   and $f|_I$ is either strictly monotonic, or constant. 
\end{theorem}
A direct consequence of the monotonicity theorem is that a definable function $f:\R\to\R$ either converges as $t\to \infty$ or diverges to $\infty$ or to $-\infty$.
\begin{theorem}
\label{thm:uniform bound on fibers}\emph{\cite[Ch.3,  Cor. 3.7]{van_den_dries_omin_book}}. If $A\subseteq \R^m$ is definable, than $A$ has finitely many connected components. Moreover, the following uniformity holds.
For $A\subseteq\R^d\times \R^n$  and $x_0\in\R^d$, consider $$A_{x_0}:=\{(x_0,y):(x_0,y)\in A\}.$$  Then, for a definable $A$, there exists an $N\in\N$ such for all $x\in\R^{m}$,
the set $A_{x}$ has at most $N$ connected components.
\end{theorem}

\begin{theorem}
\label{thm:choice function}\emph{\cite[Ch. 3, Prop. 1.2]{van_den_dries_omin_book}}.
Suppose that $A\subseteq \R^{m+n}$ is definable. Let $\pi_{m+n,m}:\R^{m+n}\to\R^m$ be the projection to the first $m$ coordintates. Then, there is a definable function $f:\pi_{m+n,m}(A)\to\R^{n}$
such that the graph of $f$ is contained in $A$.
\end{theorem}

\printbibliography[title={Bibliography}]
\end{document}